\newcolumntype{M}[1]{>{\centering\arraybackslash}m{#1}}
\newtheorem{theorem}{Theorem}[section]
\newtheorem{corollary}[theorem]{Corollary}
\newtheorem{lemma}[theorem]{Lemma}
\newtheorem{proposition}[theorem]{Proposition}
\newtheorem{question}[theorem]{Question}
\newtheorem{remark}[theorem]{Remark}
\theoremstyle{definition}
\DeclarePairedDelimiter{\parens}{\lparen}{\rparen}
\DeclarePairedDelimiterX{\pres}[2]{\langle}{\rangle}{#1\,\delimsize\vert\,\mathopen{}#2}
\newcommand*{\cont}[1]{\mathrm{cont}\parens{#1}}
\newcommand*{\supp}[1]{\mathrm{supp}\parens{#1}}
\newcommand{\bbN}{\mathbb{N}}
\newcommand{\bfV}{\mathbf{V}}
\newcommand{\bfW}{\mathbf{W}}
\newcommand{\calR}{\mathcal{R}}
\newcommand{\calX}{\mathcal{X}}
\newcommand{\uord}{\mathbf{u}}
\newcommand{\vord}{\mathbf{v}}
\newcommand{\word}{\mathbf{w}}
\newcommand{\pord}{\mathbf{p}}
\newcommand{\qord}{\mathbf{q}}
\newcommand{\rord}{\mathbf{r}}
\newcommand{\sord}{\mathbf{s}}
\newcommand*{\hypo}{{\mathsf{hypo}}}
\newcommand*{\hypon}{{\mathsf{hypo}_n}}
\newcommand*{\sylv}{{\mathsf{sylv}}}
\newcommand*{\sylvn}{{\mathsf{sylv}_n}}
\newcommand*{\sylvh}{{\mathsf{sylv}^{\#}}}
\newcommand*{\sylvhn}{{\mathsf{sylv}^{\#}_n}}
\newcommand*{\baxt}{{\mathsf{baxt}}}
\newcommand*{\baxtn}{{\mathsf{baxt}_n}}
\newcommand*{\rst}{{\mathsf{rSt}}}
\newcommand*{\rstn}{{\mathsf{rSt}_n}}
\newcommand*{\lst}{{\mathsf{lSt}}}
\newcommand*{\lstn}{{\mathsf{lSt}_n}}
\newcommand*{\mst}{{\mathsf{mSt}}}
\newcommand*{\mstn}{{\mathsf{mSt}_n}}
\newcommand*{\jst}{{\mathsf{jSt}}}
\newcommand*{\jstn}{{\mathsf{jSt}_n}}
\newcommand*{\rtg}{{\mathsf{rTg}}}
\newcommand*{\ltg}{{\mathsf{lTg}}}
\newcommand*{\hs}{{\mathsf{hs}}}
\newcommand*{\hsn}{{\mathsf{hs}_n}}
\newcommand*{\ms}{{\mathsf{ms}}}
\newcommand*{\msn}{{\mathsf{ms}_n}}
\newcommand{\VarMd}{\mathbf{M}_\mathrm{2}}
\newcommand{\VarS}{\mathbf{S}}
\newcommand*{\vhypo}{\bfV_\hypo}
\newcommand*{\vsylv}{\bfV_\sylv}
\newcommand*{\vsylvh}{\bfV_\sylvh}
\newcommand*{\vbaxt}{\bfV_\baxt}
\newcommand*{\vrst}{\bfV_\rst}
\newcommand*{\vlst}{\bfV_\lst}
\newcommand*{\vmst}{\bfV_\mst}
\newcommand*{\vjst}{\bfV_\jst}
\newcommand*{\vhs}{\bfV_\hs}
\newcommand{\hlAo}{\hyperlink{id_content_prefix}{\textup{(C\textsubscript{pre})}}}
\newcommand{\hlAt}{\hyperlink{id_content_suffix}{\textup{(C\textsubscript{suf})}}}
\newcommand{\hlDo}{\hyperlink{id_first_occurrence}{\textup{(S\textsubscript{pre})}}}
\newcommand{\hlDt}{\hyperlink{id_last_occurrence}{\textup{(S\textsubscript{suf})}}}
\newcommand{\hlEo}{\hyperlink{id_simple_support_prefix}{\textup{(S\textsubscript{1,pre})}}}
\newcommand{\hlEt}{\hyperlink{id_simple_support_suffix}{\textup{(S\textsubscript{1,suf})}}}
\newcommand{\hlF}{\hyperlink{id_simple_word}{\textup{(Rst\textsubscript{1})}}}
\newcommand{\hlB}{\hyperlink{id_subsequences}{\textup{(Sub\textsubscript{2})}}}
\newcommand{\hlC}{\hyperlink{id_simple_content}{\textup{(Rst\textsubscript{1,v})}}}
\newcommand{\hlId}{\hyperlink{identity_form}{\uord \approx \vord}}
\newcommand{\hrLo}{\hyperref[idlst]{\textup{(L\textsubscript{1})}}}
\newcommand{\hrRo}{\hyperref[idrst]{\textup{(R\textsubscript{1})}}}
\newcommand{\hrLt}{\hyperref[idsylvh]{\textup{(L\textsubscript{2})}}}
\newcommand{\hrRt}{\hyperref[idsylv]{\textup{(R\textsubscript{2})}}}
\newcommand{\hrMd}{\hyperref[idM2]{\textup{(M\textsubscript{2})}}}
\newcommand{\hrMt}{\hyperref[idM3]{\textup{(M\textsubscript{3})}}}
\newcommand{\hrMf}{\hyperref[idM4]{\textup{(M\textsubscript{4})}}}
\newcommand{\hrOot}{\hyperref[idO12]{\textup{(O\textsubscript{1,2})}}}
\newcommand{\hrEot}{\hyperref[idT12]{\textup{(E\textsubscript{1,2})}}}
\newcommand{\hrOto}{\hyperref[idO21]{\textup{(O\textsubscript{2,1})}}}
\newcommand{\hrEto}{\hyperref[idT21]{\textup{(E\textsubscript{2,1})}}}
\newcommand{\hrLato}{\hyperref[fig:lattice_with_sylv_stal]{$\mathbb{L}_1$}}
\newcommand{\hrLatd}{\hyperref[fig:lattice_with_sylv_stal_hypo]{$\mathbb{L}_2$}}
\newcommand{\hrLatt}{\hyperref[fig:lattice_with_sylv_stal_hypo_and_Md]{$\mathbb{L}_3$}}
\title[Lattices of varieties of plactic-like monoids]{Lattices of varieties of plactic-like monoids}
\date{January 26, 2024}
\author[Thomas Aird]{Thomas Aird}
\address[Thomas Aird]{Department of Mathematics, The University of Manchester, Alan Turing Building, Oxford Rd, Manchester, M13 9PL, UK.}
\email{thomas.aird@manchester.ac.uk}
\thanks{The first author's work was supported by the London Mathematical Society, the Heilbronn Institute for Mathematical Research, and NOVA University Lisbon.}
\author[Duarte Ribeiro]{Duarte Ribeiro}
\address[Duarte Ribeiro]{Center for Mathematics and Applications (NOVA Math), FCT NOVA, 2829-516 Caparica, Portugal.}
\email{dc.ribeiro@campus.fct.unl.pt}
\thanks{The second author's work was supported by National Funds through the FCT -- Funda\c{c}\~{a}o para a Ci\^{e}ncia e a Tecnologia, I.P., under the scope of the projects PTDC/MAT-PUR/31174/2017, UIDB/04621/2020 and UIDP/04621/2020 (Center for Computational and Stochastic Mathematics), and UIDB/00297/2020 (\url{https://doi.org/10.54499/UIDB/00297/2020}) and UIDP/00297/2020 (\url{https://doi.org/10.54499/UIDP/00297/2020}) (Center for Mathematics and Applications).}
\begin{document}

\begin{abstract}
    We study the equational theories and bases of meets and joins of several varieties of plactic-like monoids. Using those results, we construct sublattices of the lattice of varieties of monoids, generated by said varieties. We calculate the axiomatic ranks of their elements, obtain plactic-like congruences whose corresponding factor monoids generate varieties in the lattice, and determine which varieties are joins of the variety of commutative monoids and a finitely generated variety. We also show that the hyposylvester and metasylvester monoids generate the same variety as the sylvester monoid.
\end{abstract}
\keywords{Plactic-like monoids, Varieties, Equational theories, Finite bases, Axiomatic ranks, Lattices of varieties}
\subjclass{20M07, 05E16, 08B05, 08B15, 20M05}

\maketitle

\section{Introduction}

Plactic-like monoids, whose elements can be uniquely identified with combinatorial objects, have been the focus of intense study in recent years, in particular with regard to their equational theories. The initial motivation to study this was to obtain natural examples of finitely-generated polynomial-growth semigroups that did not satisfy non-trivial identities, as an alternative to the constructions given in \cite{shneerson_identities}. The plactic monoid, whose elements can be viewed as semistandard Young tableaux, was defined by Lascoux and Schützenberger \cite{LS1978}, and found to have important applications in several different subjects, such as representation theory \cite{green2006polynomial}, symmetric functions \cite{macdonald_symmetric} and crystal bases \cite{bump_crystalbases}. Its finite-rank versions were candidates for the previously mentioned problem. However, after some initial results on the rank 2 and 3 cases \cite{jaszunska_chinese,izhakian_cloaktic,kubat_identities}, Johnson and Kambites \cite{johnson_kambites_tropical_plactic} gave faithful representations of said monoids in monoids of upper triangular matrices over the tropical semiring, which are known to satisfy non-trivial identities \cite{izhakian_identities,okninski_identities,taylor2017upper}. On the other hand, Cain et al.~\cite{ckkmo_placticidentity} gave a lower bound for the length of identities satisfied by plactic monoids of finite rank, dependant on said rank, thus showing that the infinite-rank case does not satisfy any non-trivial identity. Furthermore, the first author \cite{aird2023semigroup} has shown that plactic monoids of different ranks generate different varieties, by obtaining a new set of identities satisfied by these monoids.

The study of equational theories has extended to other plactic-like monoids, that arise in the context of combinatorial Hopf algebras whose bases are indexed by combinatorial objects. Of note, the hypoplactic monoid $\hypo$ \cite{novelli_hypoplactic}, the sylvester and \#-sylvester monoids $\sylv$ and $\sylvh$ \cite{hivert_sylvester}, the Baxter monoid $\baxt$ \cite{giraudo_baxter} and the left and right stalactic monoids $\ltg$ and $\rtg$ \cite{hnt_stalactic} have been studied by several authors (including the second author, in joint work with Cain and Malheiro) and different means \cite{cain_johnson_kambites_malheiro_representations_2022,cm_identities,cain_malheiro_ribeiro_sylvester_baxter_2023,cain_malheiro_ribeiro_hypoplactic_2022,han2021preprint}. It was shown that, within each of these classes, monoids of rank greater than or equal to 2 generate the same variety, and full characterisations of equational theories, finite bases and axiomatic ranks were obtained for each case.

Following on the authors' work on factor monoids of the free monoid by meets and joins of left and right stalactic congruences \cite{aird_ribeiro_join_meet_stalactic_2023}, where it was shown that the varieties generated by these monoids are, respectively, the varietal join and meet of the varieties generated by the left and right stalactic monoids, we propose the study of the sublattice of the lattice of varieties of monoids generated by varieties of these plactic-like monoids, so as to understand the underlying connections between these monoids, as well as to motivate the study of congruences given by meets and joins of plactic-like congruences.

The paper is organised as follows: Necessary background is given in Section~\ref{section:background}. We then study three sublattices of the lattice of varieties of monoids in the following three sections. In each section, we first study the varietal meets and joins arising from the generators, with regards to their equational theories, finite bases, if they are generated by factor monoids of the free monoid by meets and joins of plactic-like congruences, and whether they are the varietal join of the variety of commutative monoids and a finitely generated variety, or if they are not contained in any such varietal join. Then, we prove the correctness of the lattice given at the start of the section, and finally, we obtain the axiomatic ranks of the varieties in the lattice. In Section~\ref{section:lattice_L1}, we study the sublattice generated by the varieties generated, respectively, by the \#-sylvester, sylvester, left stalactic and right stalactic monoids. Then, in Section~\ref{section:lattice_L2}, we add a generator, the variety generated by the hypoplactic monoid. Finally, in Section~\ref{section:lattice_L3}, we add another generator, the variety defined by the identity $xzxyty \approx xzyxty$. We then show, in Section~\ref{section:hypo_metasylvester}, that the hyposylvester and metasylvester monoids, recently introduced by Novelli and Thibon \cite{novelli_hypo_meta}, generate the same variety as the sylvester monoid, and conclude our paper in Section~\ref{section:conclusions} with some corollaries and an open question, as well as the collected results in Table~\ref{table:results}.

\section{Background} \label{section:background}

\subsection{Words} \label{subsection:words}

Let $\bbN = \{1,2,\dots\}$ denote the set of natural numbers, without zero. For $n \in \bbN$, we denote by $[n]$ the set $\{1 < \cdots < n\}$. 

For any non-empty finite or countable set $\calX$, we denote by $\calX^*$ the \emph{free monoid} generated by $\calX$, that is, the set of all words over $\calX$ under concatenation. We refer to $\calX$ as an \emph{alphabet}, and its elements as \emph{letters}. The empty word is denoted by $\varepsilon$. For a word $\word \in \calX^*$, we denote its \emph{length} by $|\word|$ and, for each $x \in \calX$, we denote the number of occurrences of $x$ in $\word$ by $|\word|_x$. We say $x$ is a \emph{simple letter} of $\word$ if $|\word|_x = 1$. The subset of $\calX$ of letters $x$ such that $|\word|_x \geq 1$ is called the \emph{support} of $\word$, denoted by $\supp{\word}$, and the function from $\calX$ to $\bbN_0$ given by $x \mapsto |\word|_x$ is called the \emph{content} of $\word$, denoted by $\cont{\word}$. 

For words $\uord,\vord \in \calX^*$ we say that $\uord$ is a \emph{factor} of $\vord$ if there exist $\vord_1,\vord_2 \in \calX^*$ such that $\vord = \vord_1 \uord \vord_2$, and that $\uord$ is a \emph{subsequence} of $\vord$ if there exist $u_1, \dots, u_k \in \calX$ and $\vord_1, \dots, \vord_{k+1} \in \calX^*$ such that $\uord = u_1 \cdots u_k$ and $\vord = \vord_1 u_1 \vord_2 \cdots \vord_k u_k \vord_{k+1}$. 

Given a congruence $\rho$ on $\calX^*$ and a word $\word$, we denote its congruence class by $[\word]_\rho$.

\subsection{Identities and varieties} \label{subsection:identities_and_varieties}

For a general background on universal algebra, see \cite{Bergman_universal_algebra,bs_universal_algebra}. For recent results on varieties of semigroups and monoids, see \cite{gusev_lee_vernikov_survey,lee_book}. The following background is given in the context of monoids.

An \emph{identity} over an alphabet of variables $\calX$ is a formal equality $\uord \approx \vord$, where $\uord, \vord \in \calX^*$. A variable $x$ is said to \emph{occur} in an identity if $x$ occurs in at least one of the sides of the identity, and is said to be a \emph{simple variable} in the identity if it is a simple letter of each side of the identity. An identity $\uord \approx \vord$ is \emph{non-trivial} if $\uord \neq \vord$, and \emph{balanced} if $\cont{\uord} = \cont{\vord}$. Two words with the same content must have the same length, hence we say the \emph{length} of a balanced identity is the length of its left or right-hand side. Two identities are \emph{equivalent} if one can be obtained from the other by renaming variables or swapping both sides of the identities.  

A monoid $M$ \emph{satisfies} the identity $\uord \approx \vord$ if for every morphism $\psi\colon \calX^* \to M$, we have $\psi(\uord) = \psi(\vord)$. We refer to these morphisms as \emph{evaluations}. Notice that if $M$ satisfies $\uord \approx \vord$, then it satisfies any other identity obtained by removing all occurrences of a variable in $\uord \approx \vord$. If an evaluation $\psi$ is such that $\psi(\uord) \neq \psi(\vord)$, we say $\psi$ \emph{falsifies} the identity. A word $\uord$ is an \emph{isoterm} for $M$ if no non-trivial identity of the form $\uord \approx \vord$ is satisfied by $M$. The \emph{identity-checking problem} of $M$ is the combinatorial decision problem of deciding whether an identity is satisfied or not by $M$. Its time complexity is measured in terms of the size of the input, that is, the sum of the lengths of each side of the formal equality. 

The set of identities that are satisfied by all monoids in a class $\mathbf{K}$ is called its \emph{equational theory}, and the class of monoids that satisfy all identities in a set of identities $\Sigma$ is called its \emph{variety}. By Birkhoff's $HSP$-theorem, a class of monoids is a variety if and only if it is closed under taking homomorphic images, submonoids and direct products. We say an identity is satisfied by a variety if it lies in its equational theory, and a word is an isoterm for a variety if it is an isoterm for at least one monoid in said variety. A \emph{subvariety} is a subclass of a variety that is itself a variety. A variety is \emph{generated} by a monoid $M$ if it is the smallest variety containing $M$, and is denoted by $\bfV_M$. The identity-checking problem of $\bfV_M$ is equivalent to that of $M$. A variety is \emph{finitely generated} if it is generated by a finite monoid.

The variety of commutative monoids, denoted by $\mathbf{COM}$, is defined by the set of all balanced identities. A variety is \emph{overcommutative} if it contains the variety of all commutative monoids. As such, all identities satisfied by an overcommutative variety are balanced.

A congruence $\equiv$ on a monoid $M$ is \emph{fully invariant} if $a \mathrel{\equiv} b$ implies $f(a) \mathrel{\equiv} f(b)$, for every $a,b \in M$ and every endomorphism $f$ of $M$. Equational theories over an alphabet $\calX$ are fully invariant congruences on $\calX^*$ (see, for example, \cite[II\S{14}]{bs_universal_algebra}), and any variety is generated by the factor monoid of $\calX^*$ by its equational theory. An identity $\uord \approx \vord$ is a \emph{consequence} of a set of identities $\Sigma$ if, for $1 \leq i \leq k$, there exist words $\pord_i, \qord_i, \rord_i, \sord_i, \word_i, \word_{k+1} \in \mathcal{X}^*$ and endomorphisms $\psi_i$ of $\calX^*$ such that $\uord = \word_1$, $\vord = \word_{k+1}$ and 
\[
     \word_i = \rord_i \psi_i\parens{\pord_i} \sord_i \quad \text{and} \quad\word_{i+1} = \rord_i \psi_i\parens{\qord_i} \sord_i,
\]
where $\pord_i \approx \qord_i$ or $\qord_i \approx \pord_i$ are in $\Sigma$. Notice that any consequence of a set of balanced identities must also be balanced. An \emph{equational basis} of a variety is a subset of its equational theory whose set of consequences is the equational theory itself. We denote a variety with equational basis $\Sigma$ by $\bfV_\Sigma$. A variety is \emph{finitely based} if it admits a finite equational basis. The \emph{axiomatic rank} of a variety is the least natural number such that the variety admits a basis where the number of distinct variables occurring in each identity of the basis does not exceed said number. A finitely based variety is \emph{hereditarily finitely based} if it only has finitely based subvarieties, and a monoid is hereditarily finitely based if it generates a hereditarily finitely based variety.

The class of all varieties of monoids forms a lattice under set-theoretical inclusion, denoted by $\mathbb{MON}$. Given two varieties $\bfV$ and $\bfW$, the equational theory of the varietal meet $\bfV \wedge \bfW$ is the join of the equational theories of $\bfV$ and $\bfW$, and the equational theory of the varietal join $\bfV \vee \bfW$ is the meet of their equational theories. Since equational theories are fully invariant congruences, the meet of two equational theories is their intersection. Furthermore, if both $\bfV$ and $\bfW$ are finitely based, then $\bfV \wedge \bfW$ is also finitely based, and admits the union of the respective finite bases for $\bfV$ and $\bfW$ as a finite basis. Notice that the join of two finitely generated varieties is also finitely generated, since the direct product of the finite generators is not only finite, but also generates the join.  

\subsection{Properties defining equational theories} \label{subsection:properties_equational_theories}

Let $\uord \approx \vord$ be an identity over the alphabet of variables $\calX$, such that $\uord$ and $\vord$ share the same support and simple variables. We say $\uord \approx \vord$ satisfies the property
\begin{enumerate}
    \hypertarget{id_content_prefix}{\item[(C\textsubscript{pre})]} if, for any variable $x \in \supp{\uord \approx \vord}$, the shortest prefix of $\uord$ where $x$ occurs has the same \emph{content} as the shortest prefix of $\vord$ where $x$ occurs;
    \hypertarget{id_content_suffix}{\item[(C\textsubscript{suf})]} if, for any variable $x \in \supp{\uord \approx \vord}$, the shortest suffix of $\uord$ where $x$ occurs has the same \emph{content} as the shortest suffix of $\vord$ where $x$ occurs;
    \hypertarget{id_subsequences}{\item[(Sub\textsubscript{2})]} if $\uord$ and $\vord$ share the same subsequences of length 2;
    \hypertarget{id_simple_content}{\item[(Rst\textsubscript{1,v})]} if, for any variable $x \in \supp{\uord \approx \vord}$, the word obtained from $\uord$ by restricting it to $x$ and its simple variables is the same as that obtained from $\vord$;
    \hypertarget{id_first_occurrence}{\item[(S\textsubscript{pre})]} if, for any variable $x \in \supp{\uord \approx \vord}$, the shortest prefix of $\uord$ where $x$ occurs has the same \emph{support} as the shortest prefix of $\vord$ where $x$ occurs;
    \hypertarget{id_last_occurrence}{\item[(S\textsubscript{suf})]} if, for any variable $x \in \supp{\uord \approx \vord}$, the shortest suffix of $\uord$ where $x$ occurs has the same \emph{support} as the shortest suffix of $\vord$ where $x$ occurs;
    \hypertarget{id_simple_support_prefix}{\item[(S\textsubscript{1,pre})]} if, for any \emph{simple} variable $x \in \supp{\uord \approx \vord}$, the shortest prefix of $\uord$ where $x$ occurs has the same \emph{support} as the shortest prefix of $\vord$ where $x$ occurs;
    \hypertarget{id_simple_support_suffix}{\item[(S\textsubscript{1,suf})]} if, for any \emph{simple} variable $x \in \supp{\uord \approx \vord}$, the shortest suffix of $\uord$ where $x$ occurs has the same \emph{support} as the shortest suffix of $\vord$ where $x$ occurs;
    \hypertarget{id_simple_word}{\item[(Rst\textsubscript{1})]} if the word obtained from $\uord$ by restricting it to its simple variables is the same as that obtained from $\vord$.
\end{enumerate}

Clearly, some of these properties are stronger than others. For example, an identity satisfying property \hlB{} also satisfies property \hlEo{}, as with the case of $xzxytx \approx xzyxtx$, but the converse does not necessarily hold, as with the case of $x^2y \approx xyx$. The following diagram illustrates the connections between these properties:

\[
\begin{tikzcd}
    & \hlAo{} \arrow[dl, Rightarrow] \arrow[d, Rightarrow] \arrow[drr, Rightarrow] & & \hlAt{} \arrow[dr, Rightarrow] \arrow[d, Rightarrow]  \arrow[dll, Rightarrow] & \\
    \hlDo{} \arrow[dr, Rightarrow] & \hlB{} \arrow[d, Rightarrow] \arrow[drr, Rightarrow] & & \hlC{} \arrow[d, Rightarrow] \arrow[dll, Rightarrow] & \hlDt{} \arrow[dl, Rightarrow] \\
    & \hlEo{} \arrow[dr, Rightarrow] & & \hlEt{} \arrow[dl, Rightarrow] & \\
    & & \hlF{} & & \\
\end{tikzcd}
\]

These properties define equational theories (see, for example, \cite{gerhard_petrich_orthogroups} and \cite{sapir_finitely_based_monoids}), some of which are of finitely-generated varieties: Consider the four-element monoids $J^1$ and $\overleftarrow{J^1}$ given by the monoid presentations $\pres{a,b}{ab=0, ba=a,b^2=b}$ and $\pres{a,b}{ab=a, ba=0,b^2=b}$, respectively. On one hand, it was shown by Edmunds \cite{edmunds_order_four} that $J^1$ is finitely based by the set of identities 
\[
    \{x^2 \approx x^3, yx^2 \approx xyx, x^2 y^2 \approx y^2 x^2\}.
\]
On the other hand, Gusev and Vernikov \cite[Proposition~4.2]{gusev_vernikov_18_chain} showed that the equational theory defined by the dual set of these identities is described by \hlEo{}. From these two results, we obtain the following:

\begin{lemma} \label{lemma:J1_dual_identities}
    The equational theory of $\bfV_{J^1}$ is the set of identities that satisfy \hlEt{}, and the equational theory of $\bfV_{\overleftarrow{J^1}}$ is the set of identities that satisfy \hlEo{}.
\end{lemma}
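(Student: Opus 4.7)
The plan is to read off the multiplication of $J^1$ directly from the presentation and then argue by case analysis on evaluations $\psi \colon \X^* \to J^1$; the argument for $\overleftarrow{J^1}$ will be entirely dual. First I would observe that associativity forces $a^2 = a(ba) = (ab)a = 0$, so $J^1 = \{1, a, b, 0\}$, and then verify the following multiplication rule: a product of elements of $\{a, b\}$ equals $b$ if no factor is $a$, equals $a$ if exactly one factor is $a$ and no $b$'s occur after it, and equals $0$ in every other case. In particular, any string in $\{a,b\}^*$ containing two or more $a$'s evaluates to $0$, which will be the observation that makes the sufficiency direction short.

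For the ``only if'' direction, I would use three specific evaluations to show that every identity $\uord \approx \vord$ satisfied by $J^1$ has the same support, the same simple variables, and satisfies \hlEt{}. Sending one variable $x$ to $0$ and all others to $1$ forces $x \in \supp{\uord} \iff x \in \supp{\vord}$. Sending $x \mapsto a$ and all others to $1$ gives $\psi(\uord) = a^{|\uord|_x}$, and because $a^0 = 1$, $a^1 = a$ and $a^k = 0$ for $k \geq 2$, this forces simple variables on both sides to agree. Finally, for a simple variable $x$ of both sides and any other variable $y \in \supp{\uord}$, the evaluation $\psi(x) = a$, $\psi(y) = b$, and all other variables to $1$ yields $\psi(\uord) = a$ precisely when $y$ does not occur in the suffix of $\uord$ after the unique occurrence of $x$; this forces the shortest suffix of $\uord$ and $\vord$ containing $x$ to have the same support, which is exactly \hlEt{}.

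For sufficiency, assume \hlEt{} and fix $\psi$. If some $x \in \supp{\uord} = \supp{\vord}$ is mapped to $0$, both sides evaluate to $0$. Otherwise $\psi$ takes values in $\{1, a, b\}$, and I split according to the number of occurrences in $\uord$ of variables sent to $a$. If there are none, both sides lie in $\{1, b\}$ and agree because supports coincide. If there are two or more, the same happens in $\vord$ (since supports and simple variables agree), and both sides evaluate to $0$ by the multiplication rule. If there is exactly one such occurrence, it must be the unique occurrence of a single simple variable $x_0$ with $\psi(x_0) = a$, and \hlEt{} applied to $x_0$ guarantees that the set of variables occurring strictly after $x_0$ in $\uord$ coincides with the corresponding set for $\vord$; hence the criterion ``some variable mapped to $b$ occurs after $x_0$'' has the same outcome on both sides, so the two products agree. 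The case of $\overleftarrow{J^1}$ is obtained by replacing ``after'' by ``before'' throughout. The main obstacle will be keeping the case analysis organised when several variables share the image $a$ or $b$; the key simplification is that the multiplication rule collapses every such configuration to $0$ unless exactly one $a$-occurrence is present, which is the single subcase where \hlEt{} is genuinely used.
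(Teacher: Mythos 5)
Your proposal is correct and follows essentially the same route as the paper's proof: the same three falsifying evaluations ($x\mapsto 0$ or $b$ for support, $x\mapsto a$ for simple variables, and $x\mapsto a$, $y\mapsto b$ for \hlEt{}) for necessity, and the same case analysis on how many occurrences of variables sent to $a$ appear for sufficiency. The only cosmetic difference is that you make the multiplication rule of $J^1$ fully explicit up front, which the paper leaves implicit.
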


Notice that if we only consider balanced identities, then property \hlC{} is equivalent to the following: for any \emph{simple} variable $x \in \supp{\uord \approx \vord}$, the shortest prefix of $\uord$ where $x$ occurs has the same \emph{content} as the shortest prefix of $\vord$ where $x$ occurs. As we mostly deal with balanced identities, we will predominantly work with this equivalent definition. On the other hand, property \hlB{} is equivalent to the following property, denoted by $\mathcal{P}_{1,2}$ in \cite{sapir_finitely_based_monoids}: for any variables $x,y \in \supp{\uord \approx \vord}$, the first occurrence of $x$ occurs before the last occurrence of $y$ in $\uord$ if and only if it does so in $\vord$. 

\begin{remark} \label{remark:polynomial_time}
    It is clear that checking if a balanced identity satisfies any of the properties \hlAo{}--\hlF{}, or any combination of them, can be done in polynomial time.
\end{remark}

\subsection{Plactic-like monoids} \label{subsection:plactic_like_monoids}

`Plactic-like' monoids are an informal class of monoids whose elements can be bijectively identified with certain combinatorial objects. Its namesake is the plactic monoid \cite{LS1978}, also known as the monoid of Young tableaux. In this work, we will approach these monoids from a syntactic perspective, as plactic-like monoids can be defined as factors of the free monoid, over a finite or countable alphabet, by their respective plactic-like congruences. To be precise, for a plactic-like congruence $\equiv$, the infinite rank plactic-like monoid is the factor monoid $\bbN^*/{\equiv}$, and the plactic-like monoid of finite rank $n$ is the factor monoid $[n]^*/{\equiv}$.

\hypertarget{sylvh_sylv_definition}{The \#-sylvester and sylvester congruences \cite{hivert_sylvester,giraudo_baxter} are generated, respectively, by the relations
\begin{align*}
    \calR_\sylvh = \{(b \uord ac,b \uord ca) \colon a < b \leq c, \uord \in \bbN^*\} &\quad \text{and} \\ 
    \calR_\sylv = \{(ca \uord b,ac \uord b) \colon a \leq b < c, \uord \in \bbN^*\}&.
\end{align*}
The \#-sylvester monoids of countable rank and finite rank $n$ are denoted, respectively, by $\sylvh$ and $\sylvhn$, while the sylvester monoids of countable rank and finite rank $n$ are denoted, respectively, by $\sylv$ and $\sylvn$.}

For a word $\word \in \bbN^*$ and $a,b \in \supp{\word}$ such that $a<b$, we say $\word$ has an $a$-$b$ \emph{left precedence} (of index $k$) if $\word = \word_1 b \word_2$, where $|\word_1|_a = k$ and $|\word_1|_c = 0$, for all $a < c \leq b$. Two words are $\equiv_\sylvh$-congruent if and only if they share the same content and left precedences \cite[Proposition~2.10]{cain_malheiro_ribeiro_sylvester_baxter_2023}. 

Similarly, we say $\word$ has a $b$-$a$ \emph{right precedence} (of index $k$) if $\word = \word_1 a \word_2$, where $|\word_2|_b = k$ and $|\word_2|_c = 0$, for all $a \leq c < b$. Two words are $\equiv_\sylv$-congruent if and only if they share the same content and right precedences \cite[Proposition~2.7]{cain_malheiro_ribeiro_sylvester_baxter_2023}.

The following was proven independently in \cite[Theorems~4.1~and~4.2]{cain_malheiro_ribeiro_sylvester_baxter_2023} and \cite[Lemma~3.3~and~Theorem~3.4]{han2021preprint}:

\begin{theorem} \label{theorem:vsylvh_vsylv_identities}
    The equational theory of $\vsylvh$ is the set of balanced identities that satisfy the property \hlAo{}, and the equational theory of $\vsylv$ is the set of balanced identities that satisfy the property \hlAt{}.
\end{theorem}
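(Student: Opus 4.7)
The plan is to prove the statement for $\vsylvh$; the $\vsylv$ case follows by a dual argument, replacing shortest prefixes with shortest suffixes and left precedences with right precedences. By Birkhoff's theorem, an identity belongs to the equational theory of $\vsylvh$ if and only if it is satisfied by $\sylvh$, so it suffices to characterise the identities of $\sylvh$; I would lift evaluations into $\sylvh$ to evaluations into $\N^*$ and check congruence via content and left precedences. As a preliminary step I would show that any identity $\uord \approx \vord$ satisfied by $\sylvh$ is balanced: if $|\uord|_z \neq |\vord|_z$ for some variable $z$, setting $\psi(z) = 1$ and $\psi(w) = \varepsilon$ for every other variable produces powers of $1$ of different lengths, hence distinct in $\sylvh$.

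For necessity of \hlAo{}, I would suppose a balanced identity fails \hlAo{}, pick a variable $x$ witnessing the failure, and write $\uord = \uord_1 x \uord_2$, $\vord = \vord_1 x \vord_2$ with $x \notin \supp{\uord_1} \cup \supp{\vord_1}$ and $\cont{\uord_1} \neq \cont{\vord_1}$. Balance then forces some $y \neq x$ with (say) $|\uord_1|_y \neq |\vord_1|_y$. Picking $a < b$ in $\N$ and setting $\psi(y) = a$, $\psi(x) = b$, $\psi(z) = \varepsilon$ otherwise, the image words lie in $\{a,b\}^*$ and share their content, but the prefix of $\psi(\uord)$ up to its first $b$ contains $|\uord_1|_y$ copies of $a$ and no letters in $(a,b]$, while the analogous prefix of $\psi(\vord)$ contains $|\vord_1|_y$ such copies; so the two images carry distinct $a$-$b$ left precedences and are not $\equiv_\sylvh$-congruent.

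For sufficiency, let $\uord \approx \vord$ be balanced and satisfy \hlAo{}, and fix any $\psi \colon \X^* \to \N^*$; since contents agree, it suffices to match $a$-$b$ left precedences for each pair $a < b$. Set $\X_{a,b} = \{z \in \X \colon \psi(z) \text{ contains a letter in } (a,b]\}$. The first letter of $\psi(\uord)$ lying in $(a,b]$ occurs within the image of the first occurrence in $\uord$ of any variable from $\X_{a,b}$; since \hlAo{} implies \hlDo{} (as recorded in the implication diagram), the first $\X_{a,b}$-variable of $\uord$ and of $\vord$ coincide. Calling this common variable $x$ and writing $\uord = \uord_1 x \uord_2$, $\vord = \vord_1 x \vord_2$, \hlAo{} gives $\cont{\uord_1} = \cont{\vord_1}$ and hence $|\psi(\uord_1)|_a = |\psi(\vord_1)|_a$. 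The position within $\psi(x)$ of the first letter in $(a,b]$ is the same for both sides, so either both $\psi(\uord)$ and $\psi(\vord)$ fail to have an $a$-$b$ left precedence (if that letter is not $b$) or have equal indices.

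The main obstacle is the sufficiency step, since $\psi$ can send variables to arbitrary words and so the first $(a,b]$-contribution is not immediately visible from the variable structure of $\uord$ and $\vord$; the trick is to use the implication $\hlAo{} \Rightarrow \hlDo{}$ to identify the responsible variable across the two sides, and then the full strength of \hlAo{} to match the count of $a$'s preceding it. The $\vsylv$ case is symmetric: the characterisation via right precedences replaces "first occurrence" by "last occurrence" and prefixes by suffixes throughout, making \hlAt{} the corresponding property.
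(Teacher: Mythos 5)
The paper does not prove this theorem itself --- it is quoted from the cited sources (Cain--Malheiro--Ribeiro and Han--Zhao) --- so there is no in-paper proof to compare against; your argument is correct and is essentially the standard one from those references. It correctly reduces everything to the content-plus-left-precedence characterisation of $\equiv_\sylvh$ recorded in Subsection~\ref{subsection:plactic_like_monoids}, falsifies any balanced identity violating \hlAo{} with a two-letter evaluation that produces distinct $a$-$b$ left precedences, and in the converse direction uses \hlAo{} $\Rightarrow$ \hlDo{} to identify, on both sides, the first variable whose image meets $(a,b]$ before matching the count of preceding $a$'s; the dual argument for $\vsylv$ via suffixes and right precedences goes through as you describe.
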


\begin{proposition}[{\cite[Proposition~6.6~(3)]{cain_johnson_kambites_malheiro_representations_2022}}] \label{prop:vsylvh_vsylv_varietal_join}
    Neither $\vsylvh$ nor $\vsylv$ are contained in the join of $\mathbf{COM}$ and any finitely generated variety. 
\end{proposition}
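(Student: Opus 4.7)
My plan is to proceed by contradiction. Suppose $\vsylv \subseteq \mathbb{COM} \vee \W$ for some finitely generated variety $\W$. Then $\W = \V_M$ for some finite monoid $M$, and the equational theory of $\mathbb{COM} \vee \V_M$ is the intersection of those of $\mathbb{COM}$ and $\V_M$, namely the set of balanced identities satisfied by $M$. The assumed inclusion would therefore force every balanced identity satisfied by $M$ to lie in the equational theory of $\sylv$, and hence, by Theorem~\ref{theorem:vsylvh_vsylv_identities}, to satisfy property \hlAt{}. Thus it suffices to exhibit, for each finite monoid $M$, a single balanced identity that $M$ satisfies but that fails \hlAt{}. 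The same plan with \hlAo{} in place of \hlAt{} disposes of $\vsylvh$.

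To construct the identity I would use the fact that every finite monoid $M$ admits integers $n,k \geq 1$ (for instance $n = |M|$ and $k = |M|!$) such that $M$ satisfies $x^n \approx x^{n+k}$. From this I would write down the candidate balanced identity
\[
    x^{n+k} y x^n \approx x^n y x^{n+k},
\]
whose two sides share the content $x^{2n+k} y$. Under any evaluation $x \mapsto a$, $y \mapsto b$ with $a, b \in M$, the equality $a^n = a^{n+k}$ reduces both sides to $a^n b a^n$, so $M$ indeed satisfies the identity.

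I would then verify, via Theorem~\ref{theorem:vsylvh_vsylv_identities}, that the identity fails both \hlAt{} and \hlAo{}. The shortest suffix of $x^{n+k} y x^n$ containing $y$ is $y x^n$, while that of $x^n y x^{n+k}$ is $y x^{n+k}$; their contents differ by $x^k$ with $k \geq 1$, so \hlAt{} fails and $\sylv$ does not satisfy the identity. Dually, the shortest prefixes of the two sides containing $y$ are $x^{n+k} y$ and $x^n y$, which also differ by $x^k$, so \hlAo{} fails and $\sylvh$ does not satisfy it either. Combining this with the reduction in the first paragraph yields the required contradictions.

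I do not anticipate any substantial technical obstacle; the only creative step is recognising that a single symmetric balanced identity, obtained as a purely formal consequence of the index--period identity $x^n \approx x^{n+k}$ enjoyed by every finite monoid, simultaneously obstructs both \hlAo{} and \hlAt{} for nontrivial $k$.
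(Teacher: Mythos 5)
Your argument is correct. Note that the paper does not prove this proposition in-house (it is cited from \cite{cain_johnson_kambites_malheiro_representations_2022}), but it does prove the closely analogous Proposition~\ref{prop:VarMd_meet_VarS_varietal_join} for $\VarMd \wedge \VarS$, and the comparison is instructive. There, the argument is structural: the relatively free monoid $F$ of rank $2$ in the variety is realised as a homomorphic image of a submonoid of a direct product of copies of $\N$ with a finite monoid, pigeonhole on powers of an element of the finite part yields $p \neq q$ with $b^p = b^q$, and one concludes that $y^p x y^q$ and $y^q x y^p$ are forced to coincide in $F$ although the corresponding identity fails the defining property. Your route reaches the same obstruction purely at the level of equational theories: since the equational theory of $\mathbb{COM} \vee \Var_M$ is exactly the set of balanced identities satisfied by the finite monoid $M$, it suffices to exhibit one balanced identity of $M$ violating \hlAt{} (resp.\ \hlAo{}), and the index--period identity $x^n \approx x^{n+k}$ hands you $x^{n+k} y x^n \approx x^n y x^{n+k}$, which is essentially the paper's $y^p x y^q \approx y^q x y^p$ in disguise. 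Your version buys a shorter and more self-contained argument -- it needs only the background facts that $\mathbb{COM}$ is defined by all balanced identities and that the equational theory of a join is the intersection, avoiding the representation of the free object -- and the single symmetric identity disposes of $\vsylvh$ and $\vsylv$ simultaneously; the structural version generalises more readily to situations where one wants to track which elements of the free object actually collapse. All the small checks go through: the identity is balanced, both sides share support and simple variables so \hlAo{}/\hlAt{} are applicable, and the variable $y$ alone witnesses the failure of both properties since $k \geq 1$.
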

    
The following was proven independently in \cite[Theorems~4.16~and~4.17]{cain_malheiro_ribeiro_sylvester_baxter_2023}, \cite[Theorem~6.7~and~Remark~6.8]{cain_johnson_kambites_malheiro_representations_2022} and \cite[Theorem~3.4]{han2021preprint}:

\begin{theorem} \label{theorem:vsylvh_vsylv_finite_basis}
    The variety $\vsylvh$ admits a finite equational basis consisting of the identity 
    \begin{align*}
        xzytxy &\approx xzytyx, \tag{L\textsubscript{2}} \label{idsylvh}
    \end{align*}
    and the variety $\vsylv$ admits a finite equational basis consisting of the identity 
    \begin{align*}
        xyzxty &\approx yxzxty. \tag{R\textsubscript{2}} \label{idsylv}
    \end{align*}
\end{theorem}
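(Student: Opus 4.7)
The plan is to invoke Theorem~\ref{theorem:vsylvh_vsylv_identities}, which identifies the equational theory of $\vsylvh$ with the balanced identities satisfying \hlAo{}. First we observe that \hrLt{} is balanced, and that in both $xzytxy$ and $xzytyx$ the shortest prefixes containing $x$, $z$, $y$, $t$ are $x$, $xz$, $xzy$, $xzyt$ with matching contents; hence \hrLt{} satisfies \hlAo{} and lies in the equational theory of $\vsylvh$. The main content of the proof is the converse: that every balanced identity $\uord \approx \vord$ satisfying \hlAo{} is a consequence of \hrLt{} alone.

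The engine of the argument will be a key lemma: for any letters $a, b$ and any words $\pord, \qord$ such that both $a$ and $b$ occur in $\pord$, the identity $\pord ab \qord \approx \pord ba \qord$ is a consequence of \hrLt{}. We prove it by splitting on the relative order of the first $a$ and first $b$ in $\pord$. If the first $a$ precedes the first $b$, write $\pord = \alpha a \beta b \gamma$ and apply \hrLt{} under the substitution $x \mapsto a$, $z \mapsto \beta$, $y \mapsto b$, $t \mapsto \gamma$: the factor $a \beta b \gamma ab$ of $\pord ab \qord$ rewrites to $a \beta b \gamma ba$, producing $\pord ba \qord$. If instead the first $b$ precedes the first $a$, write $\pord = \alpha b \beta a \gamma$ and apply \hrLt{} in the reverse direction under $x \mapsto b$, $z \mapsto \beta$, $y \mapsto a$, $t \mapsto \gamma$: the factor $b \beta a \gamma ab$ becomes $b \beta a \gamma ba$.

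Given this lemma, the main argument proceeds as follows. Applying \hlAo{} letter by letter, we see that $\uord$ and $\vord$ must share the same sequence $x_{i_1}, \ldots, x_{i_k}$ of letters in order of first occurrence. We therefore write $\uord = x_{i_1} \pord_1 x_{i_2} \pord_2 \cdots x_{i_k} \pord_k$ and $\vord = x_{i_1} \pord'_1 x_{i_2} \pord'_2 \cdots x_{i_k} \pord'_k$, where each $\pord_j$ and $\pord'_j$ uses only letters from $\{x_{i_1}, \ldots, x_{i_j}\}$. Comparing cumulative contents of the prefixes ending at successive first occurrences via \hlAo{} yields $\cont{\pord_j} = \cont{\pord'_j}$ for every $j$. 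Since every letter appearing in $\pord_j$ has already occurred in the preceding prefix, the key lemma permits any adjacent transposition of letters within $\pord_j$; composing such transpositions transforms each $\pord_j$ into $\pord'_j$ blockwise, yielding a derivation of $\uord \approx \vord$ from \hrLt{}.

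The statement for $\vsylv$ is handled by a symmetric argument, with \hlAt{} in place of \hlAo{}, last occurrences in place of first occurrences, and \hrRt{} in place of \hrLt{}; the dual key lemma reads: if both $a$ and $b$ occur in $\qord$, then $\pord ab \qord \approx \pord ba \qord$ is a consequence of \hrRt{}. The main obstacle lies in the key lemma, and specifically in recognising that the case ``first $b$ before first $a$'' is handled by the reverse orientation of the same identity \hrLt{} rather than requiring an additional axiom---this is what allows a single identity to suffice as a basis.
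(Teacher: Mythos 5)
Your argument is correct. Note that the paper does not prove this theorem itself --- it is quoted from the literature (Cain--Malheiro--Ribeiro, Cain--Johnson--Kambites--Malheiro, and Han et al.) --- but your proof is a sound, self-contained version of the standard argument: the verification that \hrLt{} satisfies \hlAo{}, the key lemma that $\pord ab \qord \approx \pord ba \qord$ follows from \hrLt{} whenever $a$ and $b$ both occur in $\pord$ (with the reverse orientation of the identity covering the case where $b$ first-occurs before $a$), and the block decomposition along first occurrences with $\cont{\pord_j} = \cont{\pord'_j}$ extracted from \hlAo{} are all exactly what is needed, and the dualisation to $\vsylv$ is routine.
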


\begin{corollary}[{\cite[Corollary~4.22]{cain_malheiro_ribeiro_sylvester_baxter_2023}}] \label{cor:vsylvh_vsylv_axiomatic_rank}
    The axiomatic rank of $\vsylvh$ and $\vsylv$ is $4$.
\end{corollary}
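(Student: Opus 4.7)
The upper bound on the axiomatic ranks of $\vsylvh$ and $\vsylv$ is immediate from Theorem~\ref{theorem:vsylvh_vsylv_finite_basis}: the singleton bases \hrLt{} and \hrRt{} each involve exactly the four variables $x,y,z,t$. The substance of the corollary is therefore the matching lower bound, namely that no subset of the equational theory of $\vsylvh$ (respectively $\vsylv$) consisting of identities in at most three variables can axiomatise the variety.

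By Theorem~\ref{theorem:vsylvh_vsylv_identities}, the equational theory of $\vsylvh$ is precisely the set of balanced identities satisfying \hlAo{}; let $\Sigma$ denote the subset of this theory consisting of identities in at most three distinct variables. The plan is to show that \hrLt{} is \emph{not} a consequence of $\Sigma$. Equivalently, if $\W$ denotes the variety defined by $\Sigma$, then $\vsylvh\subsetneq\W$, which forces the axiomatic rank of $\vsylvh$ to be at least $4$; combined with the upper bound this gives exactly $4$. An entirely dual argument with \hlAt{}, the corresponding three-variable subtheory of the equational theory of $\vsylv$, and \hrRt{} would handle $\vsylv$.

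To realise $\vsylvh \subsetneq \W$, the cleanest route is to exhibit a monoid $M\in \W$ together with an evaluation $\psi\colon \{x,y,z,t\}^*\to M$ such that $\psi(xzytxy)\neq\psi(xzytyx)$. A natural candidate is a small finite monoid designed to detect the left-precedence structure characterising $\equiv_\sylvh$ (recalled in Section~\ref{subsection:plactic_like_monoids}) but only for inputs with at most three distinct letters --- for example, a Rees-type quotient of $\sylvhn$ for a suitable small $n$, or a transformation monoid on a small set whose action encodes restricted information about four-letter inputs. Verification that $M$ satisfies all of $\Sigma$ would reduce, by finiteness of $M$ and balancedness of every member of $\Sigma$, to checking a finite family of word-pairs.

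The main obstacle is making this construction precise. The difficulty is that many proper identifications of variables in \hrLt{} yield non-trivial three-variable identities already lying in $\Sigma$ --- for instance, identifying $z$ with $x$ gives $xxytxy\approx xxytyx$, which still satisfies \hlAo{} --- so the standard ``irredundant variables'' shortcut does not apply here. What is required is a feature of $M$ sensitive precisely to the \emph{simultaneous} presence of four distinct generators, able to separate $xzytxy$ from $xzytyx$ while collapsing on every three-variable evaluation. Pinning down such an $M$ minimally, and verifying that every three-variable identity satisfying \hlAo{} holds in $M$, would be the technical crux of the argument.
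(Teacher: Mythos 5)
Your upper bound is right, and you have correctly isolated what the lower bound requires: that \hrLt{} is not a consequence of the set $\Sigma$ of identities in at most three variables lying in the equational theory of $\vsylvh$. You are also right that the usual ``identify two variables and the identity trivialises'' shortcut fails here, since e.g.\ $xxytxy \approx xxytyx$ is a genuine non-trivial member of $\Sigma$. However, the proposal stops exactly where the proof has to happen: no monoid $M$ satisfying $\Sigma$ but falsifying \hrLt{} is ever produced, and you explicitly defer both its construction and the verification that it satisfies every three-variable identity with property \hlAo{} as ``the technical crux''. As written, this is a strategy outline with the decisive step missing, so the lower bound --- and hence the corollary --- is not established.

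For comparison: the paper does not prove this statement itself but cites \cite[Corollary~4.22]{cain_malheiro_ribeiro_sylvester_baxter_2023}, and neither that proof nor the analogous axiomatic-rank arguments this paper does carry out (Lemmas~\ref{lemma:vmst_meet_VarS_shortest_identity}, \ref{lemma:L2_Md_R2_not_consequences}, \ref{lemma:O12_E12_O21_E21_not_consequences} and \ref{lemma:Mt_not_consequence}) go through a separating monoid at all. The established route is syntactic: one first proves a lower bound on the length of non-trivial identities in $n$ variables satisfied by the variety (length at least $n+2$), which makes every proper factor of $xzytxy$ an isoterm; then, if \hrLt{} were a consequence of $\Sigma$, the first deduction step would write $xzytxy = \word_1 \psi(\uord) \word_2$ for some $\uord \approx \vord$ in $\Sigma$ with $\psi(\uord) \neq \psi(\vord)$, forcing $\word_1 = \word_2 = \varepsilon$; finally, a content and length count on $\psi$ forces $\uord \approx \vord$ to be equivalent to \hrLt{} itself, which has four variables --- a contradiction. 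If you wish to complete your model-theoretic route instead, you must actually exhibit the monoid; otherwise the derivation-based argument above is the concrete replacement for the object you were unable to pin down.
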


\hypertarget{baxt_definition}{The Baxter congruence \cite{giraudo_baxter} is the meet of the sylvester and \#-sylvester congruences, and it is generated by the relation
\begin{align*}
	\mathcal{R}_{\baxt} =& \left\{ (c \uord da \vord b, c \uord ad \vord b): a \leq b < c \leq d, \uord,\vord \in \bbN^* \right\}\\
	& \cup \left\{ (b \uord da \vord c, b \uord ad \vord c): a < b \leq c < d, \uord,\vord \in \bbN^* \right\}.
\end{align*}
The Baxter monoids of countable rank and finite rank $n$ are denoted, respectively, by $\baxt$ and $\baxtn$.}

Two words are $\equiv_\baxt$-congruent if and only if they share the same content and left and right precedences \cite[Corollary~2.11]{cain_malheiro_ribeiro_sylvester_baxter_2023}.

As a consequence of \cite[Proposition~3.7]{giraudo_baxter}, we have that $\vsylvh \vee \vsylv = \vbaxt$.

The following was proven independently in \cite[Theorem~4.3]{cain_malheiro_ribeiro_sylvester_baxter_2023} and \cite[Lemma~3.3 and Theorem~3.8]{han2021preprint}:

\begin{theorem} \label{theorem:vbaxt_identities}
    The equational theory of $\vbaxt$ is the set of balanced identities that satisfy the properties \hlAo{} and \hlAt{}.
\end{theorem}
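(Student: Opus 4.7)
The plan is to deduce this directly from the decomposition $\vbaxt = \vsylvh \vee \vsylv$, noted in the paragraph preceding the statement as a consequence of \cite[Proposition~3.7]{giraudo_baxter}, together with Theorem~\ref{theorem:vsylvh_vsylv_identities} and the general fact recalled in Subsection~\ref{subsection:identities_and_varieties} that the equational theory of a varietal join is the intersection of the equational theories of the joined varieties.

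More precisely, I would first observe that an identity $\uord \approx \vord$ is satisfied by $\vbaxt$ if and only if it is satisfied by both $\vsylvh$ and $\vsylv$, since the equational theory of $\vsylvh \vee \vsylv$ is the intersection of the equational theories of $\vsylvh$ and $\vsylv$. Then I would invoke Theorem~\ref{theorem:vsylvh_vsylv_identities} to characterise each of these two equational theories as the sets of balanced identities satisfying \hlAo{} and \hlAt{}, respectively. Intersecting these two sets yields exactly the set of balanced identities satisfying both \hlAo{} and \hlAt{}, which is the claimed equational theory of $\vbaxt$.

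Since the two ingredients are already stated as theorems in the excerpt, there is no genuine obstacle. The only minor point worth verifying explicitly is that ``balanced and satisfying \hlAo{}'' intersected with ``balanced and satisfying \hlAt{}'' really is ``balanced and satisfying both \hlAo{} and \hlAt{}'', which is immediate from the definitions. Thus the proof should be a short one-paragraph argument consisting of this chain of equalities and citations, with no new combinatorial content.
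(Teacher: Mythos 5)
Your argument is correct: given that the paper records $\vbaxt = \vsylvh \vee \vsylv$ immediately before the statement, and that the equational theory of a join is the intersection of the equational theories (Subsection~\ref{subsection:identities_and_varieties}), Theorem~\ref{theorem:vsylvh_vsylv_identities} does yield the claimed characterisation, since intersecting the two sets of balanced identities plainly gives the balanced identities satisfying both \hlAo{} and \hlAt{}. Note only that the paper does not prove this theorem itself --- it is imported from the cited references --- so there is no internal proof to compare against; your one-paragraph derivation is the natural one and is consistent with how the paper assembles the surrounding material.
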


\begin{proposition}[{\cite[Proposition~6.10~(4)]{cain_johnson_kambites_malheiro_representations_2022}}] \label{prop:vbaxt_varietal_join}
    The variety $\vbaxt$ is not contained in the join of $\mathbf{COM}$ and any finitely generated variety. 
\end{proposition}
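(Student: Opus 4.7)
The plan is to deduce this statement directly from Proposition~\ref{prop:vsylvh_vsylv_varietal_join}, using the decomposition $\vbaxt = \vsylvh \vee \vsylv$ recorded immediately above Theorem~\ref{theorem:vbaxt_identities}. Since the varietal join contains each of its summands, we have $\vsylvh \subseteq \vbaxt$ (and likewise $\vsylv \subseteq \vbaxt$).

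The argument is then by contradiction. Suppose, toward a contradiction, that $\vbaxt \subseteq \mathbb{COM} \vee \mathbb{W}$ for some finitely generated variety $\mathbb{W}$. By the inclusion above, this yields $\vsylvh \subseteq \vbaxt \subseteq \mathbb{COM} \vee \mathbb{W}$, which directly contradicts Proposition~\ref{prop:vsylvh_vsylv_varietal_join}. Hence no such $\mathbb{W}$ can exist, and $\vbaxt$ is not contained in the join of $\mathbb{COM}$ and any finitely generated variety.

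The main obstacle would have been establishing the analogous statement for $\vsylvh$ and $\vsylv$ in the first place; but that work has already been carried out and is cited as Proposition~\ref{prop:vsylvh_vsylv_varietal_join}, so once it is in hand, the step up to $\vbaxt$ is a one-line monotonicity observation on the lattice of varieties. If one wanted to avoid invoking Proposition~\ref{prop:vsylvh_vsylv_varietal_join}, one would instead have to replicate the representation-theoretic argument from \cite{cain_johnson_kambites_malheiro_representations_2022}, typically by exhibiting an explicit infinite family of identities witnessing that $\vbaxt$ separates from every variety of the form $\mathbb{COM} \vee \mathbb{W}$ with $\mathbb{W}$ finitely generated; this is unnecessary here, since the deduction from the already-proven Proposition~\ref{prop:vsylvh_vsylv_varietal_join} suffices.
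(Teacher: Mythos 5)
Your argument is correct. Note, however, that the paper does not actually prove this statement: it is imported wholesale as a citation of \cite[Proposition~6.10~(4)]{cain_johnson_kambites_malheiro_representations_2022}, just as Proposition~\ref{prop:vsylvh_vsylv_varietal_join} is imported as a citation of \cite[Proposition~6.6~(3)]{cain_johnson_kambites_malheiro_representations_2022}. So your route is genuinely different from the paper's: rather than citing the external result, you derive it internally from the already-cited sylvester case via the decomposition $\vbaxt = \vsylvh \vee \vsylv$ and the monotonicity observation $\vsylvh \subseteq \vbaxt$, which immediately transfers non-containment in $\mathbb{COM} \vee \mathbb{W}$ upward. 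This is logically airtight: a variety containing $\vsylvh$ cannot sit inside any $\mathbb{COM} \vee \mathbb{W}$ with $\mathbb{W}$ finitely generated, since containment of varieties is transitive. What your approach buys is a cleaner dependency structure within this paper --- only one external non-containment result needs to be taken on faith, and the Baxter case becomes a corollary --- whereas the paper's choice to cite both results separately keeps each attribution pointing at its original source. The only caveat is cosmetic: the same one-line deduction would equally well prove the analogous statement for every variety in the lattice lying above $\vsylvh$ or $\vsylv$, which the paper instead handles later and more generally via Proposition~\ref{prop:VarMd_meet_VarS_varietal_join} (everything containing $\VarMd \wedge \VarS$ fails to embed in such a join), so your observation is subsumed by, but consistent with, the paper's eventual global picture.
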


The following was proven independently in \cite[Theorem~4.18]{cain_malheiro_ribeiro_sylvester_baxter_2023}, \cite[Theorem~6.11]{cain_johnson_kambites_malheiro_representations_2022} and \cite[Theorem~3.8]{han2021preprint}:

\begin{theorem} \label{theorem:vbaxt_finite_basis}
    The variety $\vbaxt$ admits a finite equational basis consisting of the identities
    \begin{align*}
        xzytxyrxsy &\approx xzytyxrxsy, \tag{O\textsubscript{2,2}} \label{idbaxtO}\\
        xzytxyrysx &\approx xzytyxrysx. \tag{T\textsubscript{2,2}} \label{idbaxtT}
    \end{align*}
\end{theorem}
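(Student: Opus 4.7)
By Theorem~\ref{theorem:vbaxt_identities}, the equational theory of $\vbaxt$ is precisely the set of balanced identities that satisfy both \hlAo{} and \hlAt{}. The plan is therefore to check that each of $(\mathrm{O}_{2,2})$ and $(\mathrm{T}_{2,2})$ belongs to this set, and that, conversely, every element of the set is a consequence of $\{(\mathrm{O}_{2,2}),(\mathrm{T}_{2,2})\}$.

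Soundness is immediate by inspection: in both of $(\mathrm{O}_{2,2})$ and $(\mathrm{T}_{2,2})$, the positions of the first and of the last occurrences of every variable coincide on the two sides of the identity, so the shortest prefix (respectively suffix) of each side containing any given variable has matching content, yielding \hlAo{} and \hlAt{}.

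For completeness, the central technical tool I would establish is a \emph{generalised swap} lemma: if $\word = \pord \cdot ab \cdot \qord$ with $a \neq b$ and each of $a, b$ occurs at least once in $\pord$ and at least once in $\qord$, then $\word \approx \pord \cdot ba \cdot \qord$ is a consequence of $\{(\mathrm{O}_{2,2}),(\mathrm{T}_{2,2})\}$. I would prove this by selecting one $a$-occurrence and one $b$-occurrence in $\pord$, and similarly in $\qord$, and splitting into four cases based on their relative orders. In each case an endomorphism mapping $\{x,y\}$ to $\{a,b\}$ in the order matching the chosen pair in $\pord$, with $z,t,r,s$ absorbing the intermediate factors, together with left- and right-multiplication by the outer portions of $\pord$ and $\qord$, realises the swap as a substitution instance of $(\mathrm{O}_{2,2})$ or $(\mathrm{T}_{2,2})$ applied in the forward or reverse direction as dictated by the order in $\qord$.

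With the lemma in hand, let $\uord \approx \vord$ be a balanced identity satisfying \hlAo{} and \hlAt{}. These properties force the first and last occurrences of every variable in $\uord$ to lie at the same positions as in $\vord$, so every position $k$ of disagreement is an interior occurrence of the two letters involved in each word. The main argument is an induction on a measure of disagreement (for instance, the number of positions at which $\uord$ and $\vord$ differ): at the leftmost such position $k$, one transports $\vord[k]$ leftwards into position $k$ of $\uord$ by a chain of generalised swaps. The chief obstacle is to guarantee that each invocation of the generalised swap has legal prefix- and suffix-witnesses, since a careless intermediate swap could exhaust the supply of a witness letter on one side of the swap position. I expect to manage this by always moving the leftmost admissible occurrence of $\vord[k]$ strictly to the right of $k$ and, where necessary, performing preparatory swaps, verifying along the way that the positions of first and last occurrences of each variable in the current word remain unaltered; this bookkeeping is the most delicate part of the proof.
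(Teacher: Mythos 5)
The paper does not actually prove this theorem: it is imported as background, with three independent proofs cited from the literature, so there is no in-paper argument to compare you against (the closest in-paper analogues, e.g.\ Proposition~\ref{prop:vlst_meet_vsylv_vrst_meet_vsylvh_identities}, run their induction in the opposite direction, extending a common \emph{suffix}). Your soundness check is fine, and your generalised swap lemma is correct as stated: since both swapped letters occur in $\pord$ and in $\qord$, the four possible relative orders of a chosen occurrence of each letter on each side are exactly covered by applying $(\mathrm{O}_{2,2})$ or $(\mathrm{T}_{2,2})$ forwards or backwards, with $z,t,r,s$ absorbing the intermediate factors. The genuine gap is the step you yourself flag and then defer: the legality of every swap in the transport chain. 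You ``expect to manage this'' via ``preparatory swaps,'' but preparatory swaps would themselves need prefix- and suffix-witnesses and threaten a regress, so as written the proof is missing its load-bearing step.

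The gap does close, with no preparatory swaps, but it needs an actual argument. Let $k$ be the leftmost disagreement, $\uord[k]=a$, $\vord[k]=b$, and let $m>k$ be the least position at which $b$ occurs in $\uord$. Because first and last occurrences sit at identical positions in $\uord$ and $\vord$, position $k$ is an interior occurrence of $a$ in $\uord$, $b$ occurs in the common prefix of length $k-1$, and the last occurrence $\ell$ of $b$ satisfies $\ell>m$ (if $\ell=m$, the portion of $\vord$ from position $k$ onwards would contain at least two occurrences of $b$ while that of $\uord$ contains only one, contradicting balancedness together with agreement on the first $k-1$ letters). The key claim is that for every $j$ with $k<j<m$, the letter $c=\uord[j]$ occurs in $\uord$ both strictly before and strictly after position $j$. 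Indeed, if position $j$ were the first occurrence of $c$ in $\uord$, then \hlAo{} would force the length-$j$ prefixes of $\uord$ and $\vord$ to have equal content; but positions $k,\dots,j$ of $\uord$ contain no $b$ (by minimality of $m$) while $\vord[k]=b$, so the $b$-counts of these prefixes differ by at least one --- a contradiction. Dually, \hlAt{} rules out position $j$ being the last occurrence of $c$. Hence each adjacent transposition carrying the $b$ from position $m$ down to position $k$ swaps two letters that both occur on both sides of the swap point, so each step is an instance of your lemma; the common prefix with $\vord$ strictly lengthens, the new identity still lies in the equational theory, and the induction terminates. With this lemma inserted in place of the hoped-for bookkeeping, your proof is complete.
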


\begin{corollary}[{\cite[Corollary~4.24]{cain_malheiro_ribeiro_sylvester_baxter_2023}}] \label{cor:vbaxt_axiomatic_rank}
    The axiomatic rank of $\vbaxt$ is $6$.
\end{corollary}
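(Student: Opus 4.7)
The plan splits into an upper and a lower bound.

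For the upper bound, Theorem~\ref{theorem:vbaxt_finite_basis} already exhibits the basis $\{\eqref{idbaxtO}, \eqref{idbaxtT}\}$, and each of these identities uses exactly the six distinct variables $x,y,z,t,r,s$; hence the axiomatic rank of $\vbaxt$ is at most $6$.

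For the lower bound, I would rule out the existence of any basis of $\vbaxt$ consisting only of identities in five or fewer variables. Let $\Sigma$ denote the set of all identities of $\vbaxt$ in at most five variables, and let $\W$ be the variety it defines; then $\vbaxt \subseteq \W$, and the goal is to prove that this inclusion is strict. By Theorem~\ref{theorem:vbaxt_identities}, $\Sigma$ consists exactly of the balanced five-variable identities satisfying both \hlAo{} and \hlAt{}, so it suffices to exhibit a monoid $M \in \W \setminus \vbaxt$; any such $M$ certifies that no subset of $\Sigma$ can be a basis for $\vbaxt$.

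Two complementary routes are natural. The semantic route is to construct $M$ explicitly --- for instance as a suitable quotient of a small finite-rank Baxter-like monoid, or as a submonoid of upper-triangular tropical matrices in the spirit of \cite{cain_johnson_kambites_malheiro_representations_2022} --- and to verify that some evaluation into $M$ falsifies \eqref{idbaxtT} while every balanced five-variable identity obeying \hlAo{} and \hlAt{} still holds in $M$. The syntactic route, modelled on Corollary~\ref{cor:vsylvh_vsylv_axiomatic_rank}, proceeds by contradiction: assume \eqref{idbaxtT} is a consequence of identities in $\Sigma$, unwind the derivation into a finite sequence of endomorphic substitutions of five-variable identities $\pord_i \approx \qord_i \in \Sigma$, and show that no endomorphism $\psi_i$ can produce the distinct first- and last-occurrence patterns of the six variables $x,y,z,t,r,s$ on the two sides of \eqref{idbaxtT} while respecting \hlAo{} and \hlAt{} throughout.

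The principal obstacle I expect lies precisely in passing from five to six variables: whichever route is taken, the argument must pin down exactly what extra \emph{six-variable} combinatorial information is encoded by \eqref{idbaxtO} and \eqref{idbaxtT} beyond what is already forced by every five-variable identity of $\vbaxt$. In the semantic route, this amounts to showing that the evaluation of any balanced five-variable word into $M$ depends only on statistics preserved by \hlAo{} and \hlAt{}, which cannot be checked identity-by-identity since $\Sigma$ is infinite; in the syntactic route, it reduces to a finite but delicate case analysis, enumerating the ways in which an endomorphism can identify variables of a five-variable identity satisfying \hlAo{} and \hlAt{} and confirming that none of the resulting substitutions contributes to a derivation of \eqref{idbaxtT}.
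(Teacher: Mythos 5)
Your upper bound is fine: the basis $\{\eqref{idbaxtO}, \eqref{idbaxtT}\}$ from Theorem~\ref{theorem:vbaxt_finite_basis} uses six variables, so the axiomatic rank is at most $6$. But the lower bound, which is the entire substance of the corollary, is not proved. You correctly reduce it to showing that the inclusion $\vbaxt \subseteq \W$ is strict, where $\W$ is defined by the five-variable part $\Sigma$ of the equational theory, and you then list two candidate strategies without executing either; indeed you explicitly flag the ``principal obstacle'' (identifying the genuinely six-variable information in \eqref{idbaxtO} and \eqref{idbaxtT}) as unresolved. A plan that names the hard step and defers it is not a proof, so there is a genuine gap here.

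For what it is worth, the paper does not prove this corollary itself but imports it from \cite[Corollary~4.24]{cain_malheiro_ribeiro_sylvester_baxter_2023}, and the argument there is the syntactic route you mention, carried out by the same mechanism this paper uses in Lemmas~\ref{lemma:vmst_meet_VarS_shortest_identity}--\ref{lemma:O12_E12_O21_E21_not_consequences} and \ref{lemma:Mt_not_consequence}. The missing ingredient in your sketch is a length lower bound: one first shows that the shortest non-trivial identity in $n$ variables satisfied by $\vbaxt$ has length $n+4$ (each of the two ``swapped'' variables must, by \hlAo{} and \hlAt{}, occur before and after the first point of disagreement, hence at least three times each). This makes every proper factor of $xzytxyrxsy$ an isoterm for $\vbaxt$, which forces any derivation of \eqref{idbaxtO} from a set $\mathcal{S}$ of identities to begin with $xzytxyrxsy = \psi(\uord)$ for some $\uord \approx \vord$ in $\mathcal{S}$; a content count combined with the length bound then forces $\psi$ to be a renaming of variables, so $\uord \approx \vord$ itself involves six variables. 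Without this isoterm/length machinery, your ``finite but delicate case analysis'' of endomorphisms has no starting point, because a priori the substituted identity could sit inside a proper factor or collapse several variables. I would also caution against the semantic route as stated: constructing a single finite or tropical-matrix monoid separating $\W$ from $\vbaxt$ is not known to be possible here (compare Proposition~\ref{prop:vbaxt_varietal_join}), whereas the free object of $\W$ on six generators does the job but then the verification is exactly the syntactic argument again.
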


\hypertarget{hypo_definition}{The hypoplactic congruence \cite{novelli_hypoplactic} is the join of the sylvester and \#-sylvester congruences and, as such, generated by the relations $\calR_\sylvh \cup \calR_\sylv$. The hypoplactic monoids of countable rank and finite rank $n$ are denoted, respectively, by $\hypo$ and $\hypon$.}

For a word $\word \in \bbN^*$ and $a,b \in \supp{\word}$ such that $a<b$ and there is no $c \in \supp{\word}$ such that $a<c<b$, we say $\word$ has a $b$-$a$ inversion if it admits the subsequence $ba$. Two words are $\equiv_\hypo$-congruent if and only if they share the same content and inversions \cite[Subsection~4.2]{novelli_hypoplactic}.

By the following results, we can see that $\vhypo$ is not the meet of $\vsylvh$ and $\vsylv$:

\begin{theorem}[{\cite[Theorem~4.1]{cain_malheiro_ribeiro_hypoplactic_2022}}] \label{theorem:vhypo_identities}
    The equational theory of $\vhypo$ is the set of balanced identities that satisfy the property \hlB{}.
\end{theorem}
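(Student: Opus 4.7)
The plan is to prove both inclusions of equational theories using the combinatorial characterisation of $\equiv_\hypo$ just recalled: two words are $\equiv_\hypo$-congruent if and only if they share the same content and the same inversions.

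For the direction showing $\vhypo$ satisfies every balanced identity with property \hlB{}, I would take such an identity $\uord \approx \vord$ and an arbitrary evaluation $\psi\colon \X^* \to \hypo$ determined by words $w_x = \psi(x) \in \N^*$, and verify that $\psi(\uord) \equiv_\hypo \psi(\vord)$. Same content is immediate from balancedness. For inversions, balancedness forces $\supp{\uord} = \supp{\vord}$, so $\psi(\uord)$ and $\psi(\vord)$ have a common support and hence the same adjacent-in-support pairs. For any such pair $a < b$, the key observation is that a $ba$ subsequence in $\psi(\uord)$ arises for exactly one of two reasons: either (i) some $w_x$ with $x \in \supp{\uord}$ already contains $ba$ as a subsequence, or (ii) there exist variables $x, y$ (possibly equal, when the variable occurs at least twice) with $xy$ a subsequence of $\uord$, $b \in \supp{w_x}$ and $a \in \supp{w_y}$. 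Condition (i) is the same for $\uord$ and $\vord$ because their supports agree, and condition (ii) is the same by \hlB{}. Hence the inversions of $\psi(\uord)$ and $\psi(\vord)$ match.

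For the converse direction, suppose $\uord \approx \vord$ lies in the equational theory of $\vhypo$. The evaluation obtained from any injection $\X \hookrightarrow \N$ yields $\uord \equiv_\hypo \vord$ directly, so $\uord$ and $\vord$ share the same content and the identity is balanced. To obtain property \hlB{}, I would handle subsequences of the form $xx$ by balancedness ($xx$ is a subsequence of $\uord$ iff $|\uord|_x \geq 2$ iff $|\vord|_x \geq 2$), while for distinct variables $x \neq y$ I would use the evaluation $\psi(x) = 2$, $\psi(y) = 1$, and $\psi(z) = \varepsilon$ for every other variable. Then $\psi(\uord)$ is a word over $\{1,2\}$ whose $2$-$1$ inversion is present precisely when $xy$ is a subsequence of $\uord$; since $\psi(\uord) \equiv_\hypo \psi(\vord)$, the same must hold for $\vord$.

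The main subtlety will be the bookkeeping in case (i)--(ii) of the forward direction, in particular allowing $x = y$ in (ii) and correctly reducing to adjacent-in-support pairs using $\supp{\psi(\uord)} = \supp{\psi(\vord)}$. The rest is a routine application of the characterisation of $\equiv_\hypo$.
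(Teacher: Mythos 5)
This theorem is stated in the paper only as an import from the cited source (Theorem~4.1 of the hypoplactic paper), so there is no in-paper proof to compare against; your argument is correct and follows essentially the same route as that source, resting both directions on the characterisation of $\equiv_\hypo$ by content and inversions. In particular, your decomposition of an inversion of $\psi(\uord)$ into one occurring inside a single block $\psi(x)$ (controlled by the common support, hence by balancedness) versus one spanning two blocks (controlled by \hlB{}), together with the two-letter evaluations for the converse, is exactly the standard argument and handles the $x=y$ and adjacency subtleties correctly.
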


The variety $\mathbf{J}_2$, generated by the five-element monoid of all order-preserving and extensive transformations of the three-element chain, is defined by the set of identities that satisfy property \hlB{} \cite[Theorem~2]{volkov_reflexive_relations}.

\begin{corollary}[{\cite[Corollary~4.4]{cain_malheiro_ribeiro_hypoplactic_2022}}] \label{cor:vhypo_varietal_join}
    The variety $\vhypo$ is the varietal join of $\mathbf{COM}$ and $\mathbf{J}_2$.
\end{corollary}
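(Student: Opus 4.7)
The plan is to establish the equality of varieties via their equational theories, using the fact that for any varieties $\V, \W$, the equational theory of $\V \vee \W$ equals the intersection of the equational theories of $\V$ and $\W$ (as recalled in Subsection~\ref{subsection:identities_and_varieties}).

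First, I would collect the three equational theories involved. By Theorem~\ref{theorem:vhypo_identities}, the equational theory of $\vhypo$ consists precisely of the balanced identities satisfying \hlB{}. By the result of Volkov cited immediately before the corollary, the equational theory of $\mathbb{J}_2$ consists of \emph{all} identities (balanced or not) satisfying \hlB{}. Finally, the equational theory of $\mathbb{COM}$ is, by definition, the set of all balanced identities over $\X$.

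Next, I would compute the intersection: an identity lies in the equational theory of $\mathbb{COM} \vee \mathbb{J}_2$ if and only if it is both balanced and satisfies property \hlB{}. This is exactly the equational theory of $\vhypo$ as described above. Since two varieties with the same equational theory coincide, we conclude $\vhypo = \mathbb{COM} \vee \mathbb{J}_2$.

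There is no real obstacle here beyond matching the descriptions of the three equational theories; the identification is essentially immediate from Theorem~\ref{theorem:vhypo_identities} and Volkov's characterisation of $\mathbb{J}_2$. The only subtlety worth flagging explicitly is that, while \emph{every} identity of $\vhypo$ is automatically balanced (being in an overcommutative variety), the equational theory of $\mathbb{J}_2$ contains non-balanced identities as well, and it is precisely the balancedness enforced by intersecting with the equational theory of $\mathbb{COM}$ that cuts down to the equational theory of $\vhypo$.
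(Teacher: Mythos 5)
Your argument is correct and complete: since the equational theory of a varietal join is the intersection of the equational theories, combining Theorem~\ref{theorem:vhypo_identities}, Volkov's characterisation of $\mathbb{J}_2$, and the fact that the equational theory of $\mathbb{COM}$ is the set of all balanced identities immediately yields the claim. The paper itself only cites this result from \cite[Corollary~4.4]{cain_malheiro_ribeiro_hypoplactic_2022}, and your proof is precisely the argument used there, so there is nothing to add.
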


\begin{theorem}[{\cite[Theorem~4.8]{cain_malheiro_ribeiro_hypoplactic_2022}}] \label{theorem:vhypo_finite_basis}
    The variety $\vhypo$ admits a finite equational basis consisting of the identities \hrLt{}, \hrRt{} and
    \begin{align*}
        xyxzx &\approx x^2yzx. \tag{M\textsubscript{3}} \label{idM3} 
    \end{align*}
\end{theorem}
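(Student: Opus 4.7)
The plan is first to verify that each of \hrLt{}, \hrRt{}, and \hrMt{} lies in the equational theory of $\vhypo$. By Theorem~\ref{theorem:vhypo_identities}, this amounts to checking that each is a balanced identity whose two sides share the same length-$2$ subsequences, i.e.\ that it satisfies \hlB{}; all three identities are short enough for this to be a direct computation of the length-$2$ subsequence multisets (regarded as sets) on the two sides.

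For the converse direction---every balanced \hlB{}-satisfying identity is a consequence of $\{\hrLt{}, \hrRt{}, \hrMt{}\}$---I would exploit that $\vhypo \subseteq \vsylvh \wedge \vsylv$, which follows because the hypoplactic congruence is the join of the sylvester and $\#$-sylvester congruences, so $\hypo$ is a common homomorphic image of $\sylv$ and $\sylvh$. By the general fact recalled in Section~\ref{subsection:identities_and_varieties}, the meet of two finitely based varieties is axiomatised by the union of their bases, so Theorem~\ref{theorem:vsylvh_vsylv_finite_basis} immediately yields $\{\hrLt{}, \hrRt{}\}$ as a finite basis for $\vsylvh \wedge \vsylv$. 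It therefore suffices to show that adding \hrMt{} closes the remaining gap down to $\vhypo$. Note that \hrMt{} genuinely lies in that gap: its two sides have the same length-$2$ subsequence set, but the shortest prefix containing $y$ has $x$-content $1$ in $xyxzx$ and $x$-content $2$ in $xxyzx$, so \hrMt{} satisfies neither \hlAo{} nor (symmetrically) \hlAt{}.

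The concrete strategy for deriving an arbitrary balanced \hlB{}-identity $\uord \approx \vord$ is to rewrite each side to a common normal form representative of its $\equiv_\hypo$-class, such as the reading word of its quasi-ribbon tableau, which is uniquely determined by the content together with the length-$2$ subsequence set. Endomorphic instances of \hrLt{} and \hrRt{} reshuffle adjacent letters subject to the precedence constraints characterising \hlAo{} and \hlAt{}, while endomorphic instances of \hrMt{}---in particular the specialisation sending $z$ to the empty word, which yields $xyx^2 \approx x^2yx$---coalesce or separate repeated occurrences of a single letter in ways that preserve all length-$2$ subsequences but change prefix or suffix contents. The main obstacle will be producing a clean inductive argument, on a suitable distance-to-normal-form measure, that realises each reduction step as an application of one of the three identities inside a context. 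The delicate case is the one in which several repeated letters are rearranged without altering any prefix or suffix support---a move invisible to both \hrLt{} and \hrRt{} alone---and it is precisely here that \hrMt{} becomes indispensable.
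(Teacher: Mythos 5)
This statement is not proved in the paper at all: it is quoted from \cite[Theorem~4.8]{cain_malheiro_ribeiro_hypoplactic_2022}, so there is no internal proof to compare against (the closest in-paper analogues, such as Proposition~\ref{prop:vlst_meet_vsylv_vrst_meet_vsylvh_identities} or Proposition~\ref{prop:vhypo_join_vlst_vhypo_join_vrst_finite_basis}, establish completeness of a basis by induction on the length of the common suffix, moving one occurrence of one variable at a time). Your soundness half is correct and routine: all three identities are balanced and satisfy \hlB{}, so by Theorem~\ref{theorem:vhypo_identities} they hold in $\vhypo$, and the reduction ``\hrLt{} and \hrRt{} already axiomatise $\VarS \supseteq \vhypo$'' is sound via Corollary~\ref{cor:VarS_finite_basis}.

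The completeness half, which is the entire content of the theorem, is missing, and the strategy you sketch for it has a concrete flaw. The quasi-ribbon reading word is a canonical form for $\equiv_\hypo$, i.e.\ for the data (content, inversions); but the equational theory of $\vhypo$ on words over variables is governed by the strictly finer data (content, set of length-$2$ subsequences). For example, with $x<y<z$ the words $xzy$ and $zxy$ lie in the same $\equiv_\hypo$-class and hence share a quasi-ribbon reading word, yet $xzy \approx zxy$ violates \hlB{} (subsequence $xz$ on one side, $zx$ on the other) and is not satisfied by $\vhypo$; since every consequence of \hrLt{}, \hrRt{} and \hrMt{} preserves length-$2$ subsequences, no derivation can reach that common reading word from both sides. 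You would need a canonical representative for the finer relation, and then the actual induction realising each rewriting step as an instance of one of the three identities in context --- precisely the part you defer as ``the main obstacle''. A smaller slip: showing that \hrMt{} fails \hlAo{} and \hlAt{} does not show it is independent of $\{$\hrLt{}, \hrRt{}$\}$, because the equational theory of the meet $\VarS = \vsylvh \wedge \vsylv$ is the \emph{join} of the two equational theories, not their union; the correct check is that \hrMt{} fails property \hlC{} (Proposition~\ref{prop:VarS_identities}). That aside is not needed for the theorem, but the union/join confusion must not leak into the main argument.
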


\begin{remark}
    The identity \hrMt{} is different from its corresponding identity given in \cite[Theorem~4.8]{cain_malheiro_ribeiro_hypoplactic_2022}, however they are consequences of one another, thus we can replace one with another and still obtain a basis for $\vhypo$.
\end{remark}

\begin{corollary}[{\cite[Corollary~4.12]{cain_malheiro_ribeiro_hypoplactic_2022}}] \label{cor:vhypo_axiomatic_rank}
    The axiomatic rank of $\vhypo$ is $4$.
\end{corollary}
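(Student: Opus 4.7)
The plan has two parts: an easy upper bound and a more technical lower bound.

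For the upper bound, I invoke Theorem~\ref{theorem:vhypo_finite_basis}, whose finite basis $\{\hrLt, \hrRt, \hrMt\}$ consists of identities in $4$, $4$, and $3$ variables respectively. Thus $\vhypo$ is axiomatized by identities each using at most $4$ distinct variables, so its axiomatic rank is at most $4$.

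For the lower bound, I must show that no subset of the equational theory of $\vhypo$ consisting of identities in at most $3$ variables axiomatizes $\vhypo$. Let $\V$ denote the variety defined by all balanced identities in at most $3$ variables satisfying property \hlB{}, which, by Theorem~\ref{theorem:vhypo_identities}, coincides with the class of all $\leq 3$-variable identities satisfied by $\vhypo$. Then $\vhypo \subseteq \V$ and my goal is to show this containment is strict by exhibiting a $4$-variable identity satisfied by $\vhypo$ but not by $\V$. The natural candidate is \hrLt{}, and my plan is to construct a monoid $M \in \V$ together with an evaluation $\psi\colon \{x,y,z,t\}^{*} \to M$ with $\psi(xzytxy) \neq \psi(xzytyx)$.

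The main obstacle is the construction of such a separating monoid $M$ and the verification of both conditions: membership of $M$ in $\V$, and falsification of \hrLt{} in $M$. A workable strategy is to take $M$ to be the relatively free monoid of rank $3$ in $\V$, which automatically lies in $\V$ and satisfies every $\leq 3$-variable identity of $\vhypo$; one then has to verify that some evaluation of the four variables into $M$ distinguishes the two sides of \hrLt{}. Alternatively, one can produce an explicit finite monoid $M$ (for instance, a small transformation monoid or a syntactic monoid of a well-chosen regular language) and use the combinatorial characterization of $\equiv_\hypo$ via shared content and inversions to check both conditions by direct computation. The dual identity \hrRt{} provides a symmetric witness obtained by reversal.
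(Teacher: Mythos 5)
Your upper bound is correct and is the standard one: the basis $\{\hrLt{}, \hrRt{}, \hrMt{}\}$ of Theorem~\ref{theorem:vhypo_finite_basis} uses at most four variables. (Note that the paper does not prove this corollary itself but quotes it from the cited reference, so the comparison below is with the technique the paper uses for its analogous axiomatic-rank lemmas.)

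The lower bound, however, contains a genuine gap, and the specific ``workable strategy'' you propose cannot succeed. Let $\V$ be the variety defined by all identities of $\vhypo$ in at most three variables. The relatively free monoid $F_3(\V)$ of rank $3$ in $\V$ does \emph{not} falsify \hrLt{}: any evaluation of $x,y,z,t$ into $F_3(\V)$ replaces each variable by the class of a word over a three-letter alphabet, so equality of the two sides reduces to a substitution instance of \hrLt{} over at most three variables; such an instance is satisfied by $\vhypo$ (being a consequence of \hrLt{}) and involves at most three variables, hence lies in the defining set of $\V$ and therefore holds in $F_3(\V)$. So \emph{every} evaluation into $F_3(\V)$ identifies $\psi(xzytxy)$ and $\psi(xzytyx)$, and this monoid cannot serve as a witness. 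To separate $\vhypo$ from $\V$ you must work in the relatively free monoid of rank at least $4$, which amounts to showing syntactically that \hrLt{} (or \hrRt{}) is not a consequence of the set of identities of $\vhypo$ in at most three variables --- and that is precisely the nontrivial content of the lower bound, which your proposal leaves undone. The technique the paper uses for statements of exactly this shape (see Lemmas~\ref{lemma:L2_Md_R2_not_consequences}, \ref{lemma:O12_E12_O21_E21_not_consequences} and \ref{lemma:Mt_not_consequence}) is: suppose $xzytxy = \word_1\psi(\uord)\word_2$ for some identity $\uord \approx \vord$ of $\vhypo$ in at most three variables and a substitution $\psi$ with $\psi(\uord) \neq \psi(\vord)$; use a lower bound on the length of non-trivial identities of $\vhypo$ to show that the relevant proper factors of $xzytxy$ are isoterms, forcing $\word_1 = \word_2 = \varepsilon$; then analyse the content of $xzytxy$ to force $\psi$ to be essentially a renaming, contradicting the bound on the number of variables in $\uord \approx \vord$. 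Your alternative suggestion of an explicit finite separating monoid is not carried out, and verifying that a candidate satisfies \emph{all} three-variable identities of $\vhypo$ is itself nontrivial. As written, the lower bound is not established.
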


\hypertarget{lst_rst_definition}{The left-stalactic and right-stalactic congruences \cite{hnt_stalactic,aird_ribeiro_join_meet_stalactic_2023} are generated, respectively, by the relations
\begin{align*}
    \calR_\lst = \{(a \uord ab, a \uord ba) \colon a,b \in \bbN, \uord \in \bbN^*\} &\quad \text{and} \\ 
    \calR_\rst = \{(ab \uord a, ba \uord a) \colon a,b \in \bbN, \uord \in \bbN^*\}&.
\end{align*}
The left-stalactic monoids of countable rank and finite rank $n$ are denoted, respectively, by $\lst$ and $\lstn$, while the right-stalactic monoids of countable rank and finite rank $n$ are denoted, respectively, by $\rst$ and $\rstn$.}

Two words are $\equiv_\lst$-congruent if and only if they share the same content and order of first occurrences of symbols \cite[Subsection~3.7]{hnt_stalactic}. Similarly, two words are $\equiv_\rst$-congruent if and only if they share the same content and order of last occurrences of symbols.

The following was proven independently in \cite[Corollary~4.6]{cain_johnson_kambites_malheiro_representations_2022} and \cite[Lemma~2.1~and~Theorem~2.3]{han2021preprint}:

\begin{corollary} \label{cor:vlst_vrst_identities}
    The equational theory of $\vlst$ is the set of balanced identities that satisfy the property \hlDo{}, and the equational theory of $\vrst$ is the set of balanced identities that satisfy the property \hlDt{}.
\end{corollary}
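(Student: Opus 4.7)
The plan is to establish the characterisation for $\vlst$ directly and deduce the result for $\vrst$ by a dual argument (swapping prefixes for suffixes, and first for last occurrences throughout). The key tool is the description of $\equiv_\lst$ stated just before the corollary (from \cite[Subsection~3.7]{hnt_stalactic}): two words in $\N^*$ are $\equiv_\lst$-congruent if and only if they share the same content and the same order of first occurrences of letters.

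For sufficiency, suppose $\uord \approx \vord$ is a balanced identity satisfying \hlDo{}, and let $\psi \colon \X^* \to \N^*$ be any lift of an evaluation into $\lst$. Balancedness forces $\cont{\psi(\uord)} = \cont{\psi(\vord)}$. For the order of first occurrences, given a letter $a \in \N$, the first $a$ in $\psi(\uord)$ lies inside $\psi(x_a)$, where $x_a$ is the first variable of $\uord$ such that $a \in \supp{\psi(x_a)}$. Property \hlDo{} forces the order in which variables first appear to coincide in $\uord$ and $\vord$, so $x_a$ agrees on both sides, and the relative order of first occurrences of any two letters in $\psi(\uord)$ and $\psi(\vord)$ is the same. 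Hence $\psi(\uord) \equiv_\lst \psi(\vord)$.

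For necessity, let $\uord \approx \vord$ be satisfied by $\vlst$. Since $\equiv_\lst$ preserves content, the content map induces a surjective morphism $\lst \twoheadrightarrow \N^{(\N)}$ onto the free commutative monoid, so $\vlst$ is overcommutative and $\uord \approx \vord$ must be balanced; in particular $\supp{\uord} = \supp{\vord}$ and $\uord$ and $\vord$ share the same simple variables. Now suppose \hlDo{} fails: without loss of generality there exist distinct variables $x, y \in \supp{\uord \approx \vord}$ such that the first occurrence of $y$ is before the first occurrence of $x$ in $\uord$, but after it in $\vord$. Taking the evaluation that sends $x \mapsto 1$, $y \mapsto 2$ and every other variable to $\varepsilon$, the word $\psi(\uord)$ has $2$ before $1$ in its first-occurrence order while $\psi(\vord)$ has $1$ before $2$, so $\psi(\uord) \not\equiv_\lst \psi(\vord)$, contradicting the assumption.

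The main technical care is in the sufficiency step, where one must track first-occurrence orders of letters through evaluations of variables that may map to arbitrary words rather than single letters; once this is reduced to the first-occurrence order of variables, property \hlDo{} delivers the conclusion directly. The dual statement for $\vrst$ then follows by reading all words right-to-left: the analogous characterisation of $\equiv_\rst$ in terms of content and order of last occurrences, combined with the symmetric two-letter evaluation, yields the set of balanced identities satisfying \hlDt{}.
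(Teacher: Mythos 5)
Your argument is correct, but note that the paper does not actually prove this corollary: it is imported verbatim, with the statement attributed to \cite[Corollary~4.6]{cain_johnson_kambites_malheiro_representations_2022} and \cite[Lemma~2.1 and Theorem~2.3]{han2021preprint}, so there is no in-paper proof to compare against. What you have written is a valid self-contained derivation from the one fact the paper does record immediately beforehand, namely that $\uord \equiv_\lst \vord$ if and only if the two words share content and order of first occurrences of letters; necessity via the surjection onto the free commutative monoid plus the two-letter evaluation $x \mapsto 1$, $y \mapsto 2$, $z \mapsto \varepsilon$ is exactly the standard argument, and sufficiency by tracking, for each letter $a$, the variable $x_a$ of earliest first occurrence with $a \in \supp{\psi(x_a)}$ is sound. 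The only step worth spelling out is the tie case in sufficiency: when two letters $a,b$ satisfy $x_a = x_b$, their first occurrences both land inside the image of the same (first) occurrence of that variable, and their relative order is then determined by the single word $\psi(x_a)$ independently of whether one evaluates $\uord$ or $\vord$; together with the case $x_a \neq x_b$, which \hlDo{} handles, this completes the equality of first-occurrence orders. The dual reading for $\vrst$ and \hlDt{} is as you say.
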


The variety of left (resp. right) regular band monoids $\mathbf{LRB}$ (resp. $\mathbf{RRB}$) is defined by the set of identities that satisfy property \hlDo{} (resp. \hlDt{}) \cite{gerhard_petrich_orthogroups}, and is generated by the left (resp. right) flip-flop monoid, a two-element left (resp. right) zero semigroup with an identity adjoined \cite[Proposition~7.3.2]{rhodes_steinberg_q_theory}.

\begin{corollary}[{\cite[Corollary~4.6~(2)]{cain_johnson_kambites_malheiro_representations_2022}}] \label{cor:vlst_vrst_varietal_join}
    The variety $\vlst$ is the varietal join of $\mathbf{COM}$ and $\mathbf{LRB}$, and the variety $\vrst$ is the varietal join of $\mathbf{COM}$ and $\mathbf{RRB}$.
\end{corollary}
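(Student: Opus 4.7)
The plan is to compute the equational theory of each side and observe they coincide. Recall that the equational theory of a varietal join is the \emph{intersection} of the equational theories of its factors, so it suffices to identify the intersection of the equational theories of $\mathbb{COM}$ and $\mathbb{LRB}$ with the equational theory of $\vlst$ (and dually on the right).

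First I would write down the three ingredients explicitly: the equational theory of $\mathbb{COM}$ is the set of \emph{all} balanced identities (by definition); the equational theory of $\mathbb{LRB}$ is the set of identities satisfying \hlDo{} (by the result of Gerhard and Petrich cited just before the statement); and the equational theory of $\vlst$ is the set of \emph{balanced} identities satisfying \hlDo{}, by Corollary~\ref{cor:vlst_vrst_identities}. Since an identity satisfying \hlDo{} need not be balanced (the word $x$ versus $x^2$ fits \hlDo{} trivially), the two equational theories on the right do intersect non-trivially.

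Next I would observe that an identity lies in the equational theory of $\mathbb{COM} \vee \mathbb{LRB}$ if and only if it is both balanced (from $\mathbb{COM}$) and satisfies \hlDo{} (from $\mathbb{LRB}$), which is precisely the description of the equational theory of $\vlst$ given by Corollary~\ref{cor:vlst_vrst_identities}. Two varieties with the same equational theory coincide, so $\vlst = \mathbb{COM} \vee \mathbb{LRB}$. A strictly dual argument, using \hlDt{} and $\mathbb{RRB}$ (generated by the right flip-flop monoid), gives $\vrst = \mathbb{COM} \vee \mathbb{RRB}$.

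There is no real obstacle here; the whole corollary is essentially a bookkeeping consequence of Corollary~\ref{cor:vlst_vrst_identities} together with the standard fact that $\mathrm{Eq}(\V \vee \W) = \mathrm{Eq}(\V) \cap \mathrm{Eq}(\W)$ recorded in Subsection~\ref{subsection:identities_and_varieties}. The only point that merits a line of care is checking that \emph{balanced + \hlDo{}} really does give back the full equational theory of $\vlst$ rather than a proper subset, but this is immediate from the biconditional in Corollary~\ref{cor:vlst_vrst_identities}.
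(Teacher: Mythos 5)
Your argument is correct: the paper records that $\mathrm{Eq}(\V \vee \W) = \mathrm{Eq}(\V) \cap \mathrm{Eq}(\W)$, that $\mathbb{COM}$ is defined by all balanced identities, that $\mathbb{LRB}$ (resp.\ $\mathbb{RRB}$) is defined by the identities satisfying \hlDo{} (resp.\ \hlDt{}), and Corollary~\ref{cor:vlst_vrst_identities} identifies the intersection with the equational theory of $\vlst$ (resp.\ $\vrst$), so equality of varieties follows. The paper itself offers no proof --- it imports the statement as a citation --- but your derivation is exactly the bookkeeping argument the surrounding material is set up to support, and it is sound.
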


The following was proven independently in \cite[Corollary~4.6]{cain_johnson_kambites_malheiro_representations_2022} and \cite[Theorem~2.3]{han2021preprint}:

\begin{corollary} \label{cor:vlst_vrst_finite_basis}
    The variety $\vlst$ admits a finite equational basis consisting of the identity 
    \begin{align*}
        xyx &\approx x^2y, \tag{L\textsubscript{1}} \label{idlst}
    \end{align*}
    and the variety $\vrst$ admits a finite equational basis consisting of the identity
    \begin{align*}
        xyx &\approx yx^2. \tag{R\textsubscript{1}} \label{idrst}
    \end{align*}
\end{corollary}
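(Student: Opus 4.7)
I would prove the claim for $\vlst$ first; the claim for $\vrst$ then follows by reversing words, since reversal is an anti-automorphism of $\X^*$ exchanging \hrLo{} with \hrRo{}, exchanging properties \hlDo{} and \hlDt{}, and consequently exchanging $\vlst$ with $\vrst$ via Corollary~\ref{cor:vlst_vrst_identities}.

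First, I would verify that \hrLo{} is satisfied by $\vlst$: it is balanced with $\cont{xyx} = \cont{x^2y}$, and on each side the shortest prefix containing $x$ has support $\{x\}$ while the shortest prefix containing $y$ has support $\{x,y\}$, so property \hlDo{} holds and Corollary~\ref{cor:vlst_vrst_identities} places the identity in the equational theory of $\vlst$. Conversely, I must show that every balanced identity $\uord \approx \vord$ satisfying \hlDo{} is a consequence of \hrLo{}.

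The strategy is a normal form reduction. Define $N \colon \X^* \to \X^*$ inductively by $N(\varepsilon) = \varepsilon$ and, for non-empty $\word$ with first letter $x$, $N(\word) := x^{|\word|_x} \cdot N(\word')$, where $\word'$ is obtained from $\word$ by deleting all occurrences of $x$. The output $N(\word)$ depends only on $\cont{\word}$ and on the order in which variables first occur in $\word$. Since a balanced identity satisfying \hlDo{} forces $\uord$ and $\vord$ to share both content and order of first occurrences of variables (the latter being exactly what is encoded by the supports of the shortest prefixes containing each variable), one obtains $N(\uord) = N(\vord)$ immediately.

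The main work is then to show that $\word \approx N(\word)$ is a consequence of \hrLo{} for every $\word \in \X^*$, which I would establish by induction on $|\word|$. Writing $\word = x \word_1 x \word_2 \cdots x \word_m$ with $x$ the first letter of $\word$ and each $\word_i$ free of $x$, the substitution $y \mapsto \word_1$ in \hrLo{} rewrites $x \word_1 x$ to $x^2 \word_1$, transforming $\word$ into $x^2 \word_1 \word_2 x \word_3 \cdots x \word_m$. Iterating this move $m - 1$ times yields $\word \approx x^m \word_1 \word_2 \cdots \word_m$, and the induction hypothesis applied to the strictly shorter word $\word_1 \cdots \word_m$ then gives $\word \approx x^m \cdot N(\word_1 \cdots \word_m) = N(\word)$. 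The main obstacle is the bookkeeping in this iteration: checking that at each step the leftmost remaining non-adjacent occurrence of $x$ is indeed preceded by a block free of $x$, so that the substitution into \hrLo{} is legitimate. Everything else is mechanical, and the combined chain $\uord \approx N(\uord) = N(\vord) \approx \vord$ closes the argument.
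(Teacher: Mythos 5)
Your argument is correct, but note that the paper does not actually prove this statement: Corollary~\ref{cor:vlst_vrst_finite_basis} is imported verbatim from the literature (it is attributed to \cite[Corollary~4.6]{cain_johnson_kambites_malheiro_representations_2022} and \cite[Theorem~2.3]{han2021preprint}), so there is no in-paper proof to compare against. What you have written is a legitimate self-contained derivation of the finite-basis statement from the characterisation of the equational theory in Corollary~\ref{cor:vlst_vrst_identities} (itself also cited rather than proved in the paper, but available at this point). The soundness direction is fine, the observation that a balanced identity satisfies \hlDo{} if and only if both sides share content and order of first occurrences is correct, and your normal form $N(\word) = x_1^{k_1}\cdots x_r^{k_r}$ (variables listed in order of first occurrence, with multiplicities) is exactly the canonical form of a $\equiv_\lst$-class, so $N(\uord)=N(\vord)$ follows. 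The iteration collecting the occurrences of the first letter is valid: after $k$ steps the word is $x^{k+1}\word_1\cdots\word_{k+1}x\word_{k+2}\cdots x\word_m$, and the factor between the $(k{+}1)$-th and $(k{+}2)$-th occurrences of $x$ is $x$-free, so each step is an instance of \hrLo{} applied to a factor; prepending $x^m$ to a deduction chain is harmless, so the induction closes. (One small phrasing quibble: \emph{any} substitution instance of \hrLo{} is ``legitimate''; the $x$-freeness of the substituted block matters only so that the iteration actually gathers all occurrences of $x$ at the front, not for the validity of the deduction step.) The duality via word reversal for $\vrst$ is the same device the paper uses throughout (``a dual argument works''). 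Structurally, your normal-form reduction is a mild variant of the induction scheme the paper employs for the analogous in-paper results such as Proposition~\ref{prop:vlst_meet_vsylv_vrst_meet_vsylvh_identities}, which instead inducts on the length of the common suffix of the two sides; your version buys a cleaner global target (a canonical word) at the cost of having to define and analyse $N$, while the paper's scheme generalises more readily to the meets and joins where no simple canonical form is available.
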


\begin{corollary} \label{cor:vlst_vrst_axiomatic_rank}
    The axiomatic rank of $\vlst$ and $\vrst$ is $2$.
\end{corollary}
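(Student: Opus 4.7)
The statement asserts that the minimum number of distinct variables needed in any equational basis for $\vlst$ (and $\vrst$) is exactly $2$. The plan is to establish the upper bound from the basis already in hand, and then rule out the possibility of a basis using only one variable.

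For the upper bound, Corollary~\ref{cor:vlst_vrst_finite_basis} yields the finite bases $\{\hrLo\}$ for $\vlst$ and $\{\hrRo\}$ for $\vrst$; each consists of a single identity in the two variables $x$ and $y$. Hence the axiomatic rank of each variety is at most $2$.

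For the lower bound, I would argue by contradiction. Suppose $\vlst$ admits a basis $\Sigma$ in which every identity uses at most one variable; then each element of $\Sigma$ is of the form $x^n \approx x^m$ for some $n,m \geq 0$. By Corollary~\ref{cor:vlst_vrst_varietal_join}, $\mathbb{COM} \subseteq \vlst$, so every identity in $\Sigma$ must hold in the free commutative monoid on a countable alphabet. Evaluating $x$ to any generator of this monoid immediately forces $n = m$, so every element of $\Sigma$ is a trivial identity $x^n \approx x^n$. The only consequences of trivial identities are themselves trivial, so $\Sigma$ would define the whole variety $\mathbb{MON}$; this contradicts the fact that \hrLo{} is falsified in the free monoid of rank $\geq 2$ (taking $x,y$ to be two distinct free generators gives $xyx \neq x^2 y$). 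The argument for $\vrst$ is verbatim, using \hrRo{} in place of \hrLo{}.

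I do not anticipate any serious technical obstacle. The entire argument hinges on the observation that both varieties are overcommutative, and hence can satisfy no non-trivial one-variable identity; once this is noted, the existence of the two-variable basis from Corollary~\ref{cor:vlst_vrst_finite_basis} closes the argument.
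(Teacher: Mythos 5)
Your proposal is correct and is essentially the paper's own argument: the paper's proof reads simply ``Follows from Corollary~\ref{cor:vlst_vrst_finite_basis}, and $\vlst$ and $\vrst$ being overcommutative,'' which is precisely your combination of the two-variable basis for the upper bound and overcommutativity (forcing all one-variable identities to be balanced, hence trivial) for the lower bound. You have merely spelled out the details that the paper leaves implicit.
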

\begin{proof}
    Follows from Corollary~\ref{cor:vlst_vrst_finite_basis}, and $\vlst$ and $\vrst$ being overcommutative.
\end{proof}

\hypertarget{mst_definition}{The meet-stalactic congruence \cite{aird_ribeiro_join_meet_stalactic_2023} is the meet of the left and right-stalactic congruences, and it is generated by the relation
\begin{align*}
    \calR_\mst :=& \{ (b \uord ba \vord b,b \uord ab \vord b)\colon a,b \in \bbN, \uord,\vord \in \bbN^* \} \\
    & \cup \{ (a \uord ab \vord b,a \uord ba \vord b)\colon a,b \in \bbN, \uord,\vord \in \bbN^* \}.
\end{align*}
The meet-stalactic monoids of countable rank and finite rank $n$ are denoted, respectively, by $\mst$ and $\mstn$.}

Two words are $\equiv_\mst$-congruent if and only if they share the same content and order of first and last occurrences of symbols.

As a consequence of \cite[Proposition~7.3]{aird_ribeiro_join_meet_stalactic_2023}, we have that $\vrst \vee \vlst = \vmst$.

\begin{corollary}[{\cite[Corollary~7.5]{aird_ribeiro_join_meet_stalactic_2023}}] \label{cor:vmst_identities}
    The equational theory of $\vmst$ is the set of balanced identities that satisfy the properties \hlDo{} and \hlDt{}.
\end{corollary}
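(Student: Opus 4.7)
The plan is to exploit the fact, recalled just before the corollary, that $\vmst = \vrst \vee \vlst$. Since equational theories of varietal joins are intersections of the equational theories of the joined varieties (as noted in Subsection~\ref{subsection:identities_and_varieties}), the equational theory of $\vmst$ is precisely the intersection of the equational theories of $\vlst$ and $\vrst$.

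Applying Corollary~\ref{cor:vlst_vrst_identities}, the equational theory of $\vlst$ consists of balanced identities satisfying \hlDo{}, and the equational theory of $\vrst$ consists of balanced identities satisfying \hlDt{}. Thus an identity lies in the equational theory of $\vmst$ if and only if it is balanced and satisfies both \hlDo{} and \hlDt{}, which is exactly the claim. Since the identities that are both balanced and satisfy \hlDo{} and \hlDt{} form the intersection, no further argument is required.

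There is no substantive obstacle: this is a direct application of the duality between joins of varieties and intersections of equational theories, combined with the already-established characterisations for $\vlst$ and $\vrst$. The only thing worth stating carefully is that the ``balanced'' hypothesis passes through the intersection unchanged (since it appears on both sides), so the resulting condition is exactly ``balanced and satisfies both \hlDo{} and \hlDt{}''.
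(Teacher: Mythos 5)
Your proposal is correct and matches the intended derivation: the paper states this as a corollary immediately after recalling that $\vrst \vee \vlst = \vmst$, so the argument is exactly the one you give — the equational theory of a varietal join is the intersection of the equational theories, and intersecting the characterisations of $\vlst$ and $\vrst$ from Corollary~\ref{cor:vlst_vrst_identities} yields the balanced identities satisfying both \hlDo{} and \hlDt{}. No gap to report.
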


The variety of regular band monoids $\mathbf{RB}$ is the varietal join of $\mathbf{LRB}$ and $\mathbf{RRB}$. Thus, it is defined by the set of identities satisfying both properties \hlDo{} and \hlDt{}, and generated by the direct product of the left and right flip-flop monoids.

\begin{corollary}[{\cite[Corollary~7.13]{aird_ribeiro_join_meet_stalactic_2023}}] \label{cor:vmst_varietal_join}
    The variety $\vmst$ is the varietal join of $\mathbf{COM}$ and $\mathbf{RB}$.
\end{corollary}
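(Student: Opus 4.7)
The cleanest route is through the lattice-theoretic identities already recorded in the excerpt, rather than reasoning directly from evaluations. My plan is to use the identification $\vmst = \vrst \vee \vlst$ (the consequence of \cite[Proposition~7.3]{aird_ribeiro_join_meet_stalactic_2023} stated just before the corollary), combine it with the decompositions $\vlst = \mathbb{COM} \vee \mathbb{LRB}$ and $\vrst = \mathbb{COM} \vee \mathbb{RRB}$ from Corollary~\ref{cor:vlst_vrst_varietal_join}, and then invoke commutativity, associativity and idempotency of the join operation in the lattice of varieties. Writing $\mathbb{RB} = \mathbb{LRB} \vee \mathbb{RRB}$ (noted in the paragraph following Corollary~\ref{cor:vmst_identities}), we obtain
\[
    \vmst = \vlst \vee \vrst = (\mathbb{COM} \vee \mathbb{LRB}) \vee (\mathbb{COM} \vee \mathbb{RRB}) = \mathbb{COM} \vee (\mathbb{LRB} \vee \mathbb{RRB}) = \mathbb{COM} \vee \mathbb{RB}.
\]

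As a consistency check, I would verify the equality on the level of equational theories, since that makes no appeal to any external result beyond what is stated in the excerpt. By Corollary~\ref{cor:vmst_identities}, the equational theory of $\vmst$ consists of all balanced identities satisfying both \hlDo{} and \hlDt{}. On the other hand, the equational theory of a varietal join is the intersection of the two equational theories, so $\mathbb{COM} \vee \mathbb{RB}$ is axiomatised by the identities lying simultaneously in the equational theory of $\mathbb{COM}$ (all balanced identities) and in that of $\mathbb{RB} = \mathbb{LRB} \vee \mathbb{RRB}$ (identities satisfying both \hlDo{} and \hlDt{}, by intersecting the characterisations of $\mathbb{LRB}$ and $\mathbb{RRB}$ recorded after Corollary~\ref{cor:vlst_vrst_identities}). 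This intersection is precisely the set of balanced identities satisfying \hlDo{} and \hlDt{}, which matches the equational theory of $\vmst$.

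The argument has no real obstacle: everything needed is already assembled in the excerpt, so the proof reduces to bookkeeping in the lattice of varieties. The only subtle point worth stating explicitly is that one should not confuse this with the \emph{meet} description ($\vmst$ is generated by the factor of $\N^*$ by the meet of the left and right stalactic congruences): here we are using the varietal \emph{join} $\vrst \vee \vlst = \vmst$, which is the content of Proposition~7.3 of \cite{aird_ribeiro_join_meet_stalactic_2023} and is exactly the input that makes the four-line computation above go through.
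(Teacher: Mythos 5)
Your argument is correct. Note that the paper itself gives no proof of this corollary --- it is imported verbatim from \cite[Corollary~7.13]{aird_ribeiro_join_meet_stalactic_2023} --- so there is nothing to match it against line by line; what you have produced is a valid self-contained derivation from facts that are all quoted in the surrounding text. The lattice computation
\[
    \vmst = \vlst \vee \vrst = (\mathbb{COM} \vee \mathbb{LRB}) \vee (\mathbb{COM} \vee \mathbb{RRB}) = \mathbb{COM} \vee (\mathbb{LRB} \vee \mathbb{RRB}) = \mathbb{COM} \vee \mathbb{RB}
\]
uses only the identification $\vrst \vee \vlst = \vmst$ (stated as a consequence of \cite[Proposition~7.3]{aird_ribeiro_join_meet_stalactic_2023}), Corollary~\ref{cor:vlst_vrst_varietal_join}, the definition $\mathbb{RB} = \mathbb{LRB} \vee \mathbb{RRB}$, and the associativity, commutativity and idempotency of join in the lattice of varieties, all of which are unobjectionable. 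Your cross-check at the level of equational theories is also sound: the equational theory of $\mathbb{COM} \vee \mathbb{RB}$ is the intersection of the set of all balanced identities with the set of identities satisfying \hlDo{} and \hlDt{}, which by Corollary~\ref{cor:vmst_identities} is exactly the equational theory of $\vmst$. Either half of your write-up would suffice on its own.
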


\begin{corollary}[{\cite[Corollary~7.7]{aird_ribeiro_join_meet_stalactic_2023}}] \label{cor:vmst_finite_basis}
    The variety $\vmst$ admits a finite equational basis consisting of the identities \hrMt{} and
    \begin{align*}
        xzxyty &\approx xzyxty. \tag{M\textsubscript{2}} \label{idM2}
    \end{align*}
\end{corollary}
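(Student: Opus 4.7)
The plan is to establish the result in two steps. First, we verify that \hrMd{} and \hrMt{} lie in the equational theory of $\vmst$: by Corollary~\ref{cor:vmst_identities}, it suffices to check that these identities are balanced and satisfy both \hlDo{} and \hlDt{}. Both are visibly balanced (same content on the two sides), and a direct inspection of the shortest prefix and shortest suffix witnessing each variable confirms that the required supports coincide on both sides. For instance, in \hrMt{} the variable $x$ has supports $\{x\}$ on both sides for prefixes and suffixes, while the simple variables $y$ and $z$ have supports $\{x,y\}$, $\{x,y,z\}$, respectively, matching on both sides; the check for \hrMd{} is analogous.

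For the converse, take a balanced identity $\uord \approx \vord$ satisfying \hlDo{} and \hlDt{}. The goal is to reduce both sides to a common canonical form using applications of \hrMd{} and \hrMt{} together with equational logic. A natural choice of canonical form keeps the first and last occurrence of each variable in place, and collects all \emph{interior} occurrences of each variable into a single run placed immediately after that variable's first occurrence, arranged in a fixed order (say by the order of first occurrence in $\uord$). The identity \hrMt{}, of shape $xyxzx \approx x^2yzx$, is the principal tool: by a suitable substitution into its variables, it allows us to migrate an interior occurrence of a letter leftward toward its first occurrence, merging it into a consolidated run. Once the interior occurrences of every variable have been grouped in this way, \hrMd{} permits rearrangement of these interior blocks and of simple variables trapped between them, without disturbing any first or last occurrence.

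One then verifies by induction (on the number of interior letters not yet in canonical position) that every word reduces to its canonical form using the basis. Because two words sharing the same content, the same order of first occurrences, and the same order of last occurrences produce identical canonical words, and because these invariants are precisely what \hlDo{} and \hlDt{} impose on balanced identities, we conclude that $\uord$ and $\vord$ reduce to the same canonical form, so $\uord \approx \vord$ is a consequence of $\{\hrMd, \hrMt\}$. The main obstacle is the technical bookkeeping in the induction: the left-hand side $xyxzx$ of \hrMt{} requires specific flanking letters around the migrated occurrence, so one must prepare the neighbourhood of each interior letter before moving it, frequently invoking \hrMd{} to reposition adjacent letters while preserving the first- and last-occurrence invariants. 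Verifying that these preparatory steps are always available, and that the reductions terminate in the canonical form, is the heart of the argument.
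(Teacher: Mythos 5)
This corollary is not proved in the paper at all: it is imported verbatim from \cite[Corollary~7.7]{aird_ribeiro_join_meet_stalactic_2023}, so there is no in-paper argument to compare against and your attempt has to stand on its own. Your soundness direction is fine (both \hrMd{} and \hrMt{} are balanced and satisfy \hlDo{} and \hlDt{}, so by Corollary~\ref{cor:vmst_identities} they lie in the equational theory of $\vmst$), and the use of \hrMt{} to consolidate interior occurrences is also sound: an interior occurrence of $a$ always has an occurrence of $a$ on each side, so the substitution instance $a\,\uord\,a\,\vord\,a \approx a^2\uord\,\vord\,a$ needs no ``preparation'' of the neighbourhood at all.

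The completeness direction, however, has a genuine gap: your canonical form is \emph{not} determined by the invariants that \hlDo{} and \hlDt{} impose, so the key claim ``two words sharing the same content, the same order of first occurrences, and the same order of last occurrences produce identical canonical words'' is false. Concretely, take $\uord = x^2y^2$ and $\vord = xyxy$. Every occurrence in each word is a first or a last occurrence, so neither word has any interior occurrences and both are already in your canonical form; yet they are distinct words with the same content, the same order of first occurrences ($x$ then $y$) and the same order of last occurrences ($x$ then $y$). The identity $x^2y^2 \approx xyxy$ is balanced and satisfies \hlDo{} and \hlDt{} (indeed it is the instance of \hrMd{} with $z,t \mapsto \varepsilon$), so it must be derivable from the basis, but your reduction procedure, which by design ``keeps the first and last occurrence of each variable in place,'' can never transform one side into the other. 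The missing ingredient is a normalization of the \emph{interleaving} of the last occurrence of one variable with the first occurrences (and consolidated blocks) of other variables --- the two separate linear orders recorded by \hlDo{} and \hlDt{} do not determine this interleaving, and reconciling it is precisely the job of \hrMd{} (which lets you swap adjacent occurrences $ab \leftrightarrow ba$ whenever $a$ occurs earlier and $b$ occurs later in the word). Relatedly, your claim that \hrMd{} can reposition ``simple variables trapped between'' blocks cannot work as stated: both swapped variables in \hrMd{} occur twice on each side, so no substitution instance of \hrMd{} ever moves a letter occurring exactly once. A repaired argument needs a canonical form that fixes the full first/last interleaving (or an induction on a common suffix, as in the analogous completeness proofs of Propositions~\ref{prop:vlst_meet_vsylv_vrst_meet_vsylvh_identities} and \ref{prop:vmst_meet_VarS_identities} in this paper), with \hrMd{} doing that reordering and \hrMt{} doing the consolidation.
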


\begin{corollary}[{\cite[Corollary~7.10]{aird_ribeiro_join_meet_stalactic_2023}}] \label{cor:vmst_axiomatic_rank}
    The axiomatic rank of $\vmst$ is $4$.
\end{corollary}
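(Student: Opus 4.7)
The upper bound is immediate from Corollary~\ref{cor:vmst_finite_basis}. The basis given there consists of \hrMt{} (involving $3$ distinct variables) and \hrMd{} (involving $4$ distinct variables), so the axiomatic rank of $\vmst$ is at most $4$.

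The plan for the lower bound is to show that no subset of the equational theory of $\vmst$ consisting solely of identities on at most $3$ variables can serve as an equational basis. Equivalently, I want to exhibit a monoid $M$ outside $\vmst$ which nevertheless satisfies every $3$-variable identity in the equational theory of $\vmst$. By Corollary~\ref{cor:vmst_identities}, the $3$-variable identities in question are precisely the balanced $3$-variable identities satisfying both properties \hlDo{} and \hlDt{}, a combinatorially well-controlled class.

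To produce $M$, I would first enumerate, up to equivalence of identities, the essential balanced $3$-variable identities with these two properties, reducing the list to a small collection of canonical identities (notably \hrMt{} together with the commutativity-type identities forced by $\vmst$ being overcommutative). Then $M$ can be taken as the factor of the free monoid on $\{x,y,z,t\}$ by the fully invariant congruence generated by this collection, and the main step is to show that $xzxyty$ and $xzyxty$ represent distinct elements of $M$, so that \hrMd{} fails in $M$ and hence $M \notin \vmst$.

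The crux of the argument, and the main obstacle, lies in finding an invariant of words that is preserved under every $3$-variable rewrite from the enumerated collection but fails on \hrMd{}. Since both sides of \hrMd{} share the same content, the same order of first occurrences, and the same order of last occurrences, the invariant must record a strictly $4$-variable feature, for instance the relative position of the first occurrence of one variable against the last occurrence of another, measured across the four variables simultaneously. Once such an invariant is identified and verified to be stable under each $3$-variable rewrite (this being a case analysis over the finitely many canonical $3$-variable identities, each applied inside an arbitrary endomorphic substitution), the desired separation of $xzxyty$ and $xzyxty$ follows, and with it the bound axiomatic rank $\geq 4$.
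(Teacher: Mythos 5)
The paper does not prove this corollary at all: it is imported verbatim from the companion paper \cite[Corollary~7.10]{aird_ribeiro_join_meet_stalactic_2023}, so the only fair comparison is with the technique that source (and this paper, in Lemmas~\ref{lemma:Mt_not_consequence} and~\ref{lemma:O12_E12_O21_E21_not_consequences}) actually uses. Your upper bound is correct and immediate from Corollary~\ref{cor:vmst_finite_basis}. Your reduction of the lower bound is also the right target: it suffices to show that \hrMd{} is not a consequence of the set of identities of $\vmst$ in at most $3$ variables, equivalently that $xzxyty$ and $xzyxty$ are distinct in the relatively free monoid over that fragment. But the proof stops there; the ``crux'' you name is exactly the part you do not supply, so as written this is a plan rather than a proof.

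Moreover, the hint you give for the missing invariant points in the wrong direction. The two sides of \hrMd{} do \emph{not} differ only in a ``strictly $4$-variable feature'': they already differ in the $2$-variable feature \hlB{} (in $xzxyty$ the last $x$ precedes the first $y$, in $xzyxty$ it does not). The obstacle is that this feature, and more generally any ``first occurrence of one variable versus last occurrence of another'' record, is not preserved by the $3$-variable fragment of the theory of $\vmst$: for instance $xyxy \approx x^2y^2$ is a balanced $2$-variable identity with the same orders of first and last occurrences on both sides, hence satisfied by $\vmst$ by Corollary~\ref{cor:vmst_identities}, yet it destroys exactly this record. So the invariant you sketch would fail at the verification step. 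The argument that actually works (and is displayed in this paper for the analogous claims) is the deduction-step analysis: first prove that the shortest non-trivial $n$-variable identity of $\vmst$ has length $n+2$ (the analogue of Lemma~\ref{lemma:vmst_meet_VarS_shortest_identity}), conclude that every proper factor of $xzxyty$ is an isoterm so that the first deduction step must satisfy $\psi(\uord) = xzxyty$, and then use that all length-$2$ factors of $xzxyty$ are distinct and only $x$ and $y$ have multiplicity $2$ to force every variable of $\uord$ to map to a single letter and $\supp{\uord}$ to have four elements, contradicting the $3$-variable hypothesis. Without this (or a genuinely verified invariant), the lower bound is not established.
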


\hypertarget{jst_definition}{The join-stalactic congruence \cite{aird_ribeiro_join_meet_stalactic_2023} is the join of the left-stalactic and right-stalactic congruences and, as such, generated by the relations $\calR_\lst \cup \calR_\rst$. The join-stalactic monoids of countable rank and finite rank $n$ are denoted, respectively, by $\jst$ and $\jstn$.}

Two words are $\equiv_\jst$-congruent if and only if they share the same content and order of simple letters \cite[Proposition~3.2]{aird_ribeiro_join_meet_stalactic_2023}.

Another consequence of \cite[Proposition~7.3]{aird_ribeiro_join_meet_stalactic_2023} is that $\vrst \wedge \vlst = \vjst$.

\begin{corollary}[{\cite[Corollary~7.15]{aird_ribeiro_join_meet_stalactic_2023}}] \label{cor:vjst_identities}
    The equational theory of $\vjst$ is the set of balanced identities that satisfy the property \hlF{}.
\end{corollary}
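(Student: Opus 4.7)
The plan is to combine Corollary~\ref{cor:vlst_vrst_identities} with the decomposition $\vjst = \vrst \wedge \vlst$ recalled just before the statement. Since $\vjst$ arises as a meet, its equational theory is the join, in the lattice of fully invariant congruences on $\N^*$, of the equational theories of $\vrst$ and $\vlst$, which by Corollary~\ref{cor:vlst_vrst_identities} are the sets of balanced identities satisfying \hlDt{}\ and \hlDo{}\ respectively. The goal therefore reduces to identifying this join with the set of balanced identities satisfying \hlF{}, which I would prove as two inclusions.

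The forward inclusion is immediate from the implication diagram of Subsection~\ref{subsection:properties_equational_theories}: both \hlDo{}\ and \hlDt{}\ imply \hlF{}, and the set of balanced identities satisfying \hlF{}\ is itself a fully invariant congruence on $\N^*$ (by the references cited at the start of Subsection~\ref{subsection:properties_equational_theories}), so this congruence contains the join. For the reverse inclusion, I would take an arbitrary balanced identity $\uord \approx \vord$ satisfying \hlF{}\ and exhibit it as a consequence of identities in the equational theories of $\vrst$ and $\vlst$. Since $\uord$ and $\vord$ share the same ordered sequence of simple variables, they decompose into a common skeleton of simple letters separating blocks consisting solely of non-simple letters, and the task reduces to rewriting inside those blocks while keeping the skeleton fixed. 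Substitution instances of the basis \hrLo{}\ for $\vlst$ allow an occurrence of a non-simple variable to be pulled leftward past other letters, while instances of \hrRo{}\ for $\vrst$ allow it to be pushed rightward; combining these moves, I would iteratively reduce both $\uord$ and $\vord$ to a common normal form, for instance one that groups together all copies of each variable inside each non-simple block in a fixed order.

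The main obstacle is the reverse inclusion, and specifically the verification that the rewrite moves available from consequences of \hrLo{}\ and \hrRo{}\ are sufficient to realise every rearrangement of non-simple letters compatible with the fixed simple-letter skeleton. I would structure this step as an induction on a natural distance between the current word and the target normal form, reducing each inductive step to a single adjacent transposition of non-simple letters, which can then be justified by a short chain of substitution instances of \hrLo{}\ or \hrRo{}\ and, where needed, balanced identities in the two underlying theories applied to the surrounding context.
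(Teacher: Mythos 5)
This statement is quoted from \cite[Corollary~7.15]{aird_ribeiro_join_meet_stalactic_2023} and is not proved in the present paper, so there is no in-paper argument to compare against; your lattice-theoretic route via $\vjst = \vrst \wedge \vlst$ and Corollary~\ref{cor:vlst_vrst_identities} is a legitimate way to recover it. The forward inclusion is correctly set up: the set of identities satisfying \hlF{} is an equational theory (it defines $\Var_{S(\{ab\})}$, as recalled in Subsection~\ref{subsection:plactic_like_monoids}), so its intersection with the balanced identities is a fully invariant congruence containing the theories of both $\vlst$ and $\vrst$, hence containing their join.

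The gap is in the reverse inclusion, in the sentence claiming that ``the task reduces to rewriting inside those blocks while keeping the skeleton fixed.'' That reduction is false: two words can share the same content and the same restriction to simple variables while distributing their non-simple letters differently among the gaps of the simple-letter skeleton. For instance, $xaya \approx xaay$ (with $x,y$ simple and $a$ occurring twice) is balanced and satisfies \hlF{} --- indeed it is an instance of \hrLo{} --- yet its two sides have different block contents, so no rewriting confined to the blocks can transform one into the other. The common normal form must instead gather all non-simple letters into a single location (say, a sorted prefix), which requires moving them across simple letters of the skeleton. The moves you describe do suffice for this once stated globally: from $aqa \approx aaq$ and $aqa \approx qaa$ one gets $baa \approx aba \approx aab$ for any letter $b$, so a collected block $aa$ commutes with everything, and every non-simple letter can first be gathered into such a block and then transported anywhere. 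So the overall route is sound and easily repaired, but you must replace the block-preserving normal form by a global one, and the ``adjacent transpositions of non-simple letters'' in your induction must explicitly include transpositions of a non-simple letter with a \emph{simple} one.
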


Consider the monoid $S(\{ab\})$, the Rees factor monoid over the ideal of $\bbN^*$ consisting of all words that are not factors of $ab$, which is a finite monoid with zero. The variety $\bfV_{S(\{ab\})}$ is defined by the set of identities that satisfy property \hlF{} (see \cite[Table~1]{sapir_finitely_based_monoids}).

\begin{corollary} \label{cor:vjst_varietal_join}
    The variety $\vjst$ is the varietal join of $\mathbf{COM}$ and $\bfV_{S(\{ab\})}$.
\end{corollary}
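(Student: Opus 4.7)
The plan is to identify both varieties by computing and matching their equational theories, following the same blueprint as Corollary~\ref{cor:vhypo_varietal_join} and Corollary~\ref{cor:vmst_varietal_join}. Recall from Subsection~\ref{subsection:identities_and_varieties} that the equational theory of a varietal join is the intersection of the equational theories of its factors. Therefore, the equational theory of $\mathbb{COM} \vee \Var_{S(\{ab\})}$ is the intersection of the equational theory of $\mathbb{COM}$, which consists of all balanced identities, with the equational theory of $\Var_{S(\{ab\})}$, which (as stated immediately before the corollary, citing \cite[Table~1]{sapir_finitely_based_monoids}) consists of exactly the identities satisfying \hlF{}.

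The intersection is therefore the set of balanced identities satisfying \hlF{}, and by Corollary~\ref{cor:vjst_identities} this is precisely the equational theory of $\vjst$. Since two varieties coincide whenever their equational theories do, the equality $\vjst = \mathbb{COM} \vee \Var_{S(\{ab\})}$ follows immediately.

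There is essentially no obstacle here: all the heavy lifting was done in Corollary~\ref{cor:vjst_identities}, which characterises the equational theory of $\vjst$ in terms of property \hlF{}, and in the cited identification of the equational theory of $\Var_{S(\{ab\})}$. The only substantive check is that the overcommutative variety $\vjst$ indeed contains $\Var_{S(\{ab\})}$ implicitly---but this is automatic from the equational theory comparison, so the proof reduces to a single sentence once the correct references are assembled.
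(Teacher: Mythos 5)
Your proof is correct and is exactly the argument the paper intends (the paper states this corollary without proof precisely because it follows immediately): the equational theory of $\mathbb{COM} \vee \Var_{S(\{ab\})}$ is the intersection of the set of all balanced identities with the set of identities satisfying \hlF{}, which by Corollary~\ref{cor:vjst_identities} is the equational theory of $\vjst$. No gaps.
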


\begin{corollary}[{\cite[Corollary~7.17]{aird_ribeiro_join_meet_stalactic_2023}}] \label{cor:vjst_finite_basis}
    The variety $\vjst$ admits a finite equational basis consisting of the identities \hrLo{} and \hrRo{}.
\end{corollary}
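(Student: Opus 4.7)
The plan is to derive the finite basis as an immediate consequence of the decomposition $\vjst = \vrst \wedge \vlst$ (noted as a consequence of \cite[Proposition~7.3]{aird_ribeiro_join_meet_stalactic_2023} just before Corollary~\ref{cor:vjst_identities}) combined with the general principle, recalled in Subsection~\ref{subsection:identities_and_varieties}, that a varietal meet of two finitely based varieties is finitely based, with a basis given by the union of bases for the two varieties. Thus the proof should be a one-line citation rather than an inductive derivation of identities.

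Concretely, I would proceed as follows. First, invoke Corollary~\ref{cor:vlst_vrst_finite_basis} to record that $\{\hrLo{}\}$ is an equational basis for $\vlst$ and $\{\hrRo{}\}$ is an equational basis for $\vrst$. Second, invoke the meet identity $\vjst = \vrst \wedge \vlst$. Third, apply the general fact: the equational theory of $\V \wedge \W$ is the fully invariant congruence generated by the union of the equational theories of $\V$ and $\W$, so the union of any equational bases of $\V$ and $\W$ is an equational basis of $\V \wedge \W$. Conclude that $\{\hrLo{}, \hrRo{}\}$ is a finite equational basis for $\vjst$.

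There is essentially no obstacle, since every ingredient is already established in the paper. The only thing worth double-checking is that the meet decomposition $\vjst = \vrst \wedge \vlst$ has indeed been stated in the form required (as opposed to merely $\vjst \subseteq \vrst \wedge \vlst$); this is guaranteed by the earlier reference. A more self-contained alternative route would be to verify directly, via Corollary~\ref{cor:vjst_identities}, that every balanced identity satisfying property \hlF{} is a consequence of \hrLo{} together with \hrRo{}, by successively rewriting a word into a normal form where each non-simple variable appears in a single contiguous block placed to the left of all later-occurring simple variables; but this route is strictly longer than the meet-of-bases argument and is subsumed by it.
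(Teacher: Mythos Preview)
Your proposal is correct and matches the paper's treatment: the paper states this result as a cited corollary without giving an explicit proof, and your derivation via $\vjst = \vrst \wedge \vlst$ together with Corollary~\ref{cor:vlst_vrst_finite_basis} and the union-of-bases principle from Subsection~\ref{subsection:identities_and_varieties} is precisely the reasoning that makes it a corollary.
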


\begin{corollary}[{\cite[Corollary~7.18]{aird_ribeiro_join_meet_stalactic_2023}}] \label{cor:vjst_axiomatic_rank}
    The axiomatic rank of $\vjst$ is $2$.
\end{corollary}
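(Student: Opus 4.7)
The plan is to establish matching upper and lower bounds for the axiomatic rank.

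For the upper bound, I would invoke Corollary~\ref{cor:vjst_finite_basis}, which already exhibits a finite equational basis for $\vjst$ consisting of \hrLo{} and \hrRo{}. Both identities, $xyx \approx x^2y$ and $xyx \approx yx^2$, involve only the two variables $x$ and $y$. Hence the axiomatic rank of $\vjst$ is at most $2$.

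For the lower bound, I would argue that no basis using only identities in a single variable can define $\vjst$. The key observation is that $\vjst$ is overcommutative: it contains $\mathbb{COM}$, so every identity satisfied by $\vjst$ is balanced (this is implicit in the fact that its equational theory is described by property \hlF{} restricted to balanced identities, per Corollary~\ref{cor:vjst_identities}). Any identity $\uord \approx \vord$ in a single variable $x$ that is balanced forces $\cont{\uord} = \cont{\vord}$, whence $\uord = \vord$ and the identity is trivial. Consequently, a set $\Sigma$ of identities each using at most one variable can only consist of trivial identities, and the only consequences of such $\Sigma$ are again trivial identities. Since $\vjst$ satisfies the non-trivial identity \hrLo{}, no such $\Sigma$ can serve as a basis, so the axiomatic rank is strictly greater than $1$.

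Combining the two bounds, the axiomatic rank of $\vjst$ equals $2$. I do not anticipate any technical obstacle here: the upper bound is immediate from the previously established basis, and the lower bound is a short observation exploiting overcommutativity. The only minor subtlety is making explicit that overcommutativity (hence the balanced nature of every identity) is what rules out rank $1$.
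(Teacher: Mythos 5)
Your proposal is correct and follows exactly the argument this paper uses for the analogous results (see the proof of Corollary~\ref{cor:vlst_vrst_axiomatic_rank}): the two-variable basis \hrLo{}, \hrRo{} from Corollary~\ref{cor:vjst_finite_basis} gives the upper bound, and overcommutativity of $\vjst$ forces every one-variable identity in its equational theory to be trivial, ruling out rank~$1$. The statement itself is only cited from \cite[Corollary~7.18]{aird_ribeiro_join_meet_stalactic_2023} here, but your reasoning is precisely the intended one.
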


\begin{proposition}[{\cite[Proposition~7.19]{aird_ribeiro_join_meet_stalactic_2023}}] \label{prop:vjst_no_proper_overcommutative_varieties}
    $\vjst$ is the unique cover of $\mathbf{COM}$ in the lattice of all varieties of monoids. 
\end{proposition}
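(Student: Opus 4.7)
The plan is to establish two things: $\vjst$ strictly contains $\mathbb{COM}$, and every variety strictly containing $\mathbb{COM}$ already contains $\vjst$. Together these give that $\vjst$ is the unique cover of $\mathbb{COM}$. The first is immediate from Corollary~\ref{cor:vjst_varietal_join}: the monoid $S(\{ab\})$ lies in $\vjst$ but is not commutative (since $ab \neq 0 = ba$ in $S(\{ab\})$), so $\vjst \neq \mathbb{COM}$.

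For the main part, I would take any variety $\mathbb{V}$ with $\mathbb{COM} \subsetneq \mathbb{V}$ and prove $\vjst \subseteq \mathbb{V}$ by showing every identity satisfied by $\mathbb{V}$ is satisfied by $\vjst$. Since $\mathbb{V}$ is overcommutative, all such identities are balanced, so by Corollary~\ref{cor:vjst_identities} it suffices to verify that each of them also satisfies property \hlF{}. I would argue by contradiction: suppose $\mathbb{V}$ satisfies a balanced identity $\uord \approx \vord$ violating \hlF{}. Balancedness forces $\uord$ and $\vord$ to share the same simple variables, so the two restrictions to those simple variables are distinct permutations of the same set, which yields two simple variables $x, y$ whose relative order in $\uord$ is the reverse of their relative order in $\vord$.

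The key step is then a single substitution: for any $M \in \mathbb{V}$, consider an evaluation $\psi$ sending $x, y$ to arbitrary elements of $M$ and every other variable to $1_M$. This collapses every non-simple variable and every other simple variable to the identity, so $\psi(\uord) = \psi(x)\psi(y)$ while $\psi(\vord) = \psi(y)\psi(x)$. Since $\mathbb{V}$ satisfies $\uord \approx \vord$, these are equal for all choices of $\psi(x), \psi(y) \in M$, forcing $M$ to be commutative. Hence $\mathbb{V} \subseteq \mathbb{COM}$, contradicting $\mathbb{COM} \subsetneq \mathbb{V}$. The argument presents no real obstacle; the point worth emphasising is that balancedness of $\uord \approx \vord$ is precisely what guarantees the failure of \hlF{} manifests as a genuine reordering of shared simple variables, which the evaluation then converts into the commutativity identity --- the only identity capable of forcing $\mathbb{V}$ back down into $\mathbb{COM}$.
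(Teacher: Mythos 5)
Your argument is correct and complete: the reduction via Corollary~\ref{cor:vjst_identities} to showing that every balanced identity violating \hlF{} forces commutativity (by evaluating all variables except two reversed simple ones at the identity element) is exactly the right mechanism, and your two claims together do yield that $\vjst$ is the unique cover of $\mathbb{COM}$ --- indeed they give the slightly stronger statement that $\vjst$ lies below every variety strictly containing $\mathbb{COM}$. Note that the paper itself offers no proof here, as the proposition is quoted from Proposition~7.19 of the authors' earlier work on stalactic monoids, but your route is the natural one and matches the standard argument for this fact.
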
 

\section{Sublattice of $\mathbb{MON}$ generated by $\vsylvh$, $\vsylv$, $\vlst$ and $\vrst$.} \label{section:lattice_L1}

In this section, we construct \hrLato{}, the sublattice of $\mathbb{MON}$ generated by the varieties $\vsylvh$, $\vsylv$, $\vlst$ and $\vrst$. We start by studying the equational theories and bases of all possible varietal meets and joins obtained from the generators, and then from the obtained elements together with the generators, then we show that no more varieties occur in the lattice and the covers are well-defined, and finally we obtain their axiomatic ranks.

\begin{figure}[h] 
    \centering
    \begin{tikzpicture}
        \draw[very thick] (0,1) -- (-2,0) -- (-4,-1) -- (-4,-3) -- (-4,-8) -- (-2,-9) -- (0,-10);
        \draw[very thick] (0,1) -- (2,0) -- (4,-1) -- (4,-3) -- (4,-8) -- (2,-9) -- (0,-10);
        \draw[very thick] (-2,0) -- (0,-1) -- (2,0);
        \draw[very thick] (2,-9) -- (0,-8) -- (-2,-9);
        \draw[very thick] (-2,-4.5) -- (-4,-3) -- (0,-1) -- (4,-3) -- (-2,-4.5) -- (0,-8);
        \draw[very thick] (0,-1) -- (2,-4.5) -- (-4,-6) -- (0,-8) -- (4,-6) -- (2,-4.5);
        \filldraw[black] (0,1) circle (2pt) node[anchor=south]{$\vbaxt$};
        \filldraw[black] (-2,0) circle (2pt);
        \filldraw[black] (2,0) circle (2pt);
        \filldraw[black] (-4,-1) circle (3pt) node[anchor=east]{$\vsylvh$};
        \filldraw[black] (0,-1) circle (2pt);
        \filldraw[black] (4,-1) circle (3pt) node[anchor=west]{$\vsylv$};
        \filldraw[black] (-4,-3) circle (2pt);
        \filldraw[black] (4,-3) circle (2pt);
        \filldraw[black] (-2,-4.5) circle (2pt) node[above=2pt]{$\VarS$};
        \filldraw[black] (2,-4.5) circle (2pt) node[right=5pt]{$\vmst$};
        \filldraw[black] (-4,-6) circle (2pt);
        \filldraw[black] (4,-6) circle (2pt);
        \filldraw[black] (-4,-8) circle (3pt) node[anchor=east]{$\vlst$};
        \filldraw[black] (4,-8) circle (3pt) node[anchor=west]{$\vrst$};
        \filldraw[black] (0,-8) circle (2pt);
        \filldraw[black] (-2,-9) circle (2pt);
        \filldraw[black] (2,-9) circle (2pt);
        \filldraw[black] (0,-10) circle (2pt) node[anchor=north]{$\vjst$};
    \end{tikzpicture}
    \caption{Sublattice $\mathbb{L}_1$ of $\mathbb{MON}$ generated by the varieties generated by the \#-sylvester, sylvester, and left and right stalactic monoids.}
    \label{fig:lattice_with_sylv_stal}
\end{figure}
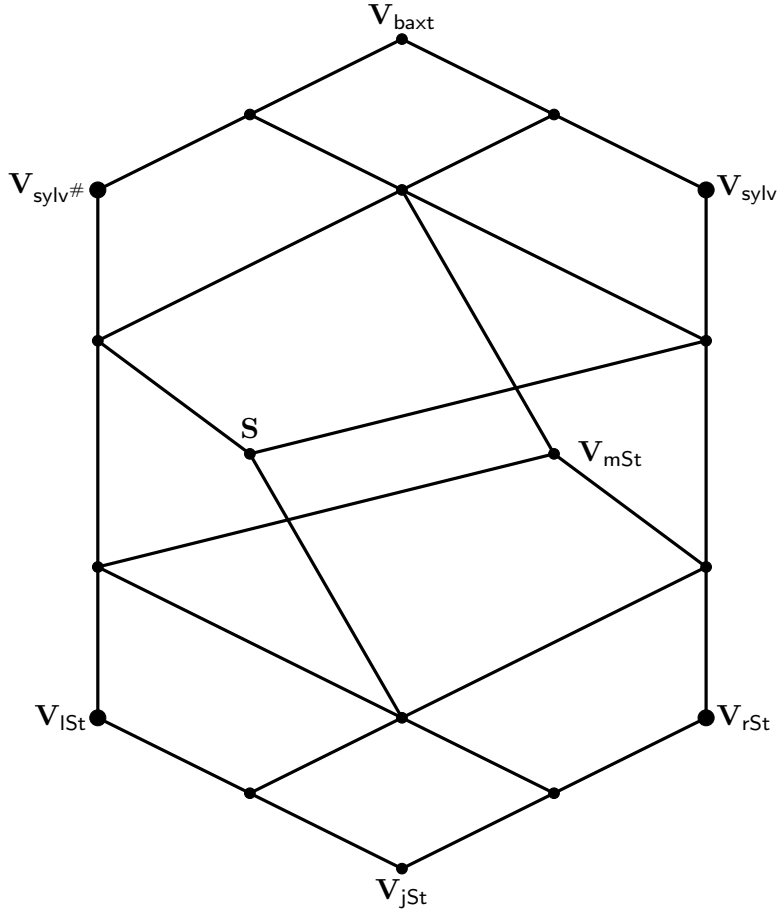

\begin{theorem} \label{theorem:lattice_1}
    The Hasse diagram of \hrLato{} is given in Figure~\ref{fig:lattice_with_sylv_stal}.
\end{theorem}

To simplify the notation, we denote $\vsylvh \wedge \vsylv$ by $\VarS$. This variety has been studied by Sapir \cite{sapir_finitely_based_monoids}, in a different context. The next result follows from Theorem~\ref{theorem:vsylvh_vsylv_finite_basis}:

\begin{corollary} \label{cor:VarS_finite_basis}
    The variety $\VarS$ admits a finite equational basis consisting of the identities \hrLt{} and \hrRt{}.
\end{corollary}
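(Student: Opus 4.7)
The plan is to apply directly the general observation recalled in Subsection~\ref{subsection:identities_and_varieties}: if two varieties $\V$ and $\W$ are both finitely based, then $\V \wedge \W$ is finitely based, and the union of equational bases for $\V$ and $\W$ is an equational basis for $\V \wedge \W$. This is an immediate consequence of the fact that the equational theory of $\V \wedge \W$ is the join (in the lattice of fully invariant congruences on $\X^*$) of the equational theories of $\V$ and $\W$.

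Since, by definition, $\VarS = \vsylvh \wedge \vsylv$, and Theorem~\ref{theorem:vsylvh_vsylv_finite_basis} provides the singleton bases $\{\text{\hrLt{}}\}$ for $\vsylvh$ and $\{\text{\hrRt{}}\}$ for $\vsylv$, the union $\{\text{\hrLt{}}, \text{\hrRt{}}\}$ is then a finite equational basis for $\VarS$. No genuine obstacle arises: this is a one-line application of the two cited results, and indeed the statement is phrased as a corollary precisely because the argument is purely formal.
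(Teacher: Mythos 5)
Your proof is correct and follows exactly the same route as the paper: the paper also derives this corollary directly from Theorem~\ref{theorem:vsylvh_vsylv_finite_basis} together with the fact, recorded in Subsection~\ref{subsection:identities_and_varieties}, that the meet of two finitely based varieties admits the union of their bases as a basis. Nothing further is needed.
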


\begin{proposition}[{\cite[Proposition~6.2]{sapir_finitely_based_monoids}}] \label{prop:VarS_identities}
    The equational theory of $\VarS$ is the set of balanced identities that satisfy the properties \hlB{} and \hlC{}.
\end{proposition}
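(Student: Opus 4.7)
The plan is to establish both inclusions of the equational theories. By the definition of the varietal meet and Corollary~\ref{cor:VarS_finite_basis}, the equational theory of $\VarS$ is the fully invariant congruence on $\N^*$ generated by $\{\text{\hrLt{}},\text{\hrRt{}}\}$; that is, the set of consequences of these two identities. I must show this coincides with the set of balanced identities satisfying both \hlB{} and \hlC{}.

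For the forward inclusion, I would first note that each of \hlB{} and \hlC{} individually defines an equational theory of monoids: the former is the theory of $\vhypo$ (and of $\mathbb{J}_2$) by Theorem~\ref{theorem:vhypo_identities}, and the latter is an equational theory by the references cited after the implication diagram in Subsection~\ref{subsection:properties_equational_theories}. Hence the set of balanced identities satisfying both properties is itself a fully invariant congruence on $\N^*$. It then suffices to verify that the generators \hrLt{} and \hrRt{} of the theory of $\VarS$ themselves satisfy \hlB{} and \hlC{}. This follows at once from the implication diagram: by Theorem~\ref{theorem:vsylvh_vsylv_identities}, \hrLt{} lies in the theory of $\vsylvh$ and thus satisfies \hlAo{}, which implies both \hlB{} and \hlC{}; dually, \hrRt{} satisfies \hlAt{} and hence both properties.

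For the backward inclusion, let $\uord \approx \vord$ be a balanced identity satisfying \hlB{} and \hlC{}. The aim is to derive $\uord \approx \vord$ as a consequence of $\{\text{\hrLt{}},\text{\hrRt{}}\}$. My plan is to proceed by induction on $|\uord|$, reducing a mismatch between $\uord$ and $\vord$ at each step. At the inductive step, I would consider the first letter $a$ of $\vord$, locate the first occurrence of $a$ in $\uord$, and show that $a$ can be brought to the leftmost position by a chain of applications of \hrLt{} or \hrRt{}; property \hlC{} ensures that no simple variable is disturbed during this movement, while \hlB{} guarantees that the ``first-before-last'' order on pairs of variables is already aligned, so that each individual swap is indeed an admissible instance of one of the two identities.

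The main obstacle is this backward derivation. The identities \hrLt{} and \hrRt{} are syntactically restrictive, each requiring a specific context of the form $xzyt$. Moreover, already the identity $xyyx \approx yxxy$ satisfies both \hlB{} and \hlC{} yet fails \hlAo{} and \hlAt{}, so it is not derivable from either \hrLt{} or \hrRt{} in isolation and any derivation must genuinely alternate between the two. The technical heart of the argument is to produce a canonical normal form within each congruence class of the fully invariant congruence generated by \hrLt{} and \hrRt{} and to show that two words with matching content, matching length-2 subsequence sets, and matching restrictions to each non-simple variable together with the simple variables reduce to the same normal form; this is essentially the content of~\cite[Proposition~6.2]{sapir_finitely_based_monoids}.
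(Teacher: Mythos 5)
The paper offers no proof of this proposition to compare against: it is imported verbatim from Sapir's work, as the bracketed citation indicates. Judged on its own terms, your argument for the forward inclusion is correct and complete: the set of balanced identities satisfying both \hlB{} and \hlC{} is an intersection of equational theories, hence a fully invariant congruence closed under taking consequences, and the two basis identities \hrLt{} and \hrRt{} of $\VarS$ satisfy \hlAo{} and \hlAt{} respectively, which imply both properties; so every consequence of the basis lies in that set. This is the same soft reasoning the paper uses in the first paragraphs of its own derivations, e.g.\ Propositions~\ref{prop:vlst_meet_vsylv_vrst_meet_vsylvh_identities} and~\ref{prop:vmst_meet_VarS_identities}.

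The backward inclusion is where all the content lies, and there your sketch has a genuine gap rather than a filled-in argument. The plan of bringing the first letter $a$ of $\vord$ to the front of $\uord$ by a chain of swaps, each ``an admissible instance of one of the two identities,'' does not survive contact with the identities themselves: an adjacent swap of a factor $ya$ into $ay$ is an instance of \hrRt{} only if the remainder of the word contains an occurrence of $y$ followed later by an occurrence of $a$ in that order, and an instance of \hrLt{} only if both variables already occur to the left; neither is guaranteed by \hlB{} and \hlC{}. For example, $yaay \approx ayya$ is balanced and satisfies both properties, yet no single application of \hrLt{} or \hrRt{} performs the leading swap (or indeed transforms one side into the other): one must first rewrite the tail, as in $yaay \approx yaya$ by \hrLt{} followed by $yaya \approx ayya$ by \hrRt{}. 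So the derivation must interleave auxiliary rearrangements elsewhere in the word with the intended leftward movement, and controlling that interleaving is precisely the hard part; your final paragraph concedes this and defers it back to the cited reference, so the proof is not self-contained where it needed to be. If you want a template for carrying the induction out, the paper's analogous derivations (Propositions~\ref{prop:vlst_meet_vsylv_vrst_meet_vsylvh_identities}, \ref{prop:vmst_meet_VarS_identities} and~\ref{prop:vmst_join_VarS_finite_basis}) proceed by growing a common \emph{suffix} of the two sides and splitting into cases according to whether the variable being moved is simple and where its remaining occurrences lie; that explicit case analysis is what your sketch is missing.
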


In order to construct the lattice, we begin by determining which varieties are incomparable, as these have non-trivial meets and joins:

\begin{lemma} \label{lemma:incomparabilities_containments}
	The following statements hold:
    \begin{enumerate}[label=(\roman*)]
    	\item $\vlst$, $\vrst$ and $\VarS$ are pairwise incomparable and contain $\vjst$;
    	\item $\vmst$, $\vsylvh$ and $\vsylv$ are pairwise incomparable and contained in $\vbaxt$;
        \item $\vlst$ is incomparable with $\vsylv$ and contained in $\vmst$ and $\vsylvh$;
        \item $\vrst$ is incomparable with $\vsylvh$ and contained in $\vmst$ and $\vsylv$;
        \item $\VarS$ is incomparable with $\vmst$ and contained in $\vsylvh$ and $\vsylv$.
    \end{enumerate} 
\end{lemma}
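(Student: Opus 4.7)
The strategy is to translate each claim into a statement about equational theories via the antitone correspondence $\V \subseteq \W$ if and only if $\mathrm{Eq}(\W) \subseteq \mathrm{Eq}(\V)$. All the equational theories appearing here were characterised in Section~\ref{section:background} as the sets of balanced identities obeying some conjunction of the syntactic properties \hlAo{}, \hlAt{}, \hlDo{}, \hlDt{}, \hlB{}, \hlC{}, \hlF{}. Consequently every inclusion reduces to a logical implication between such conjunctions, all of which are recorded in the diagram of Section~\ref{subsection:properties_equational_theories}, and every non-inclusion reduces to exhibiting a single identity that lies in one equational theory but not the other.

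First I would dispatch the positive containments. The three varieties in (i) each contain $\vjst$ because \hlDo{}, \hlDt{}, \hlB{}, \hlC{} all imply \hlF{}. The three varieties in (ii) are contained in $\vbaxt$ because the conjunction of \hlAo{} and \hlAt{} trivially implies each of them, and, since \hlAo{} implies \hlDo{} and \hlAt{} implies \hlDt{}, also implies the conjunction of \hlDo{} and \hlDt{}. The inclusions in (iii) and (iv) come from the trivial weakenings and from the implications \hlAo{} implies \hlDo{} (and dually \hlAt{} implies \hlDt{}); the inclusions $\VarS \subseteq \vsylvh$ and $\VarS \subseteq \vsylv$ in (v) are immediate from $\VarS = \vsylvh \wedge \vsylv$.

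For the incomparabilities I would exhibit, for each unordered pair, a witness identity satisfied by one side but not the other, drawn from the finite-basis identities already catalogued: \hrLo{} for $\vlst$, \hrRo{} for $\vrst$, \hrLt{} for $\vsylvh$, \hrRt{} for $\vsylv$, and \hrMd{} for $\vmst$. Note that \hrLt{} lies in $\mathrm{Eq}(\vsylvh) \subseteq \mathrm{Eq}(\VarS)$ and, dually, \hrRt{} lies in $\mathrm{Eq}(\vsylv) \subseteq \mathrm{Eq}(\VarS)$, so the same identities supply the witnesses involving $\VarS$. In each instance one selects the defining property of the opposing variety and inspects both sides directly: for example, \hrLo{} fails \hlDt{} because the shortest suffix of $xyx$ containing $x$ is $x$ while that of $x^2 y$ is $xy$; \hrLt{} fails \hlDt{} by comparing the shortest suffix of $xzytxy$ containing $x$, namely $xy$, with that of $xzytyx$, namely $x$; and \hrMd{} fails \hlB{} because $yx$ is a subsequence of $xzyxty$ but not of $xzxyty$.

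The argument is therefore almost entirely mechanical. The only real obstacle is bookkeeping: roughly a dozen pairs of equational theories must be compared, and for each incomparability one must keep the order-reversal between varieties and their equational theories straight and match the witness identity to the correct property it violates on the opposing side.
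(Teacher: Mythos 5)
Your proposal is correct and follows essentially the same strategy as the paper: containments are read off from the implications between the defining syntactic properties of the equational theories, and incomparabilities are witnessed by basis identities checked against the opposing variety's defining property. The only cosmetic difference is that the paper uses the single identity \hrMt{} (satisfied by $\vjst$, $\vlst$, $\vrst$, $\vmst$ but by none of $\VarS$, $\vsylvh$, $\vsylv$, $\vbaxt$) to handle several separations at once, where you use \hrLo{}, \hrRo{} and \hrMd{} individually; both choices work.
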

\begin{proof}
    It is clear that that $\vlst$ and $\vrst$ (resp. $\vsylvh$ and $\vsylv$) are incomparable, since they are defined by dual identities. Furthermore, $\vlst$ (resp. $\vrst$) is contained in $\vsylvh$ (resp. $\vsylv$), by Theorem~\ref{theorem:vsylvh_vsylv_identities} and Corollary~\ref{cor:vlst_vrst_identities}. By Corollary~\ref{cor:vjst_identities} and Proposition~\ref{prop:VarS_identities}, $\vjst$ is contained in $\VarS$, and by Theorem~\ref{theorem:vbaxt_identities} and Corollary~\ref{cor:vmst_identities}, $\vmst$ is contained in $\vbaxt$.
    
    By Corollary~\ref{cor:vmst_finite_basis} and Proposition~\ref{prop:VarS_identities}, the identity \hrMt{} is satisfied by $\vjst, \vlst, \vrst$ and $\vmst$, but not by $\VarS, \vsylv, \vsylvh$ or $\vbaxt$. 
    Furthermore, by Theorem~\ref{theorem:vsylvh_vsylv_finite_basis}, \hrRt{} is satisfied by $\vsylv$ and $\VarS$, and \hrLt{} is satisfied by $\vsylvh$ and $\VarS$. However, by Corollary~\ref{cor:vlst_vrst_identities}, \hrRt{} is not satisfied by $\vlst$ or $\vmst$, and \hrLt{} is not satisfied by $\vrst$ or $\vmst$.
\end{proof}

\subsection{Varietal meets} \label{subsection:L1_meets}

\begin{proposition} \label{prop:vlst_meet_vsylv_vrst_meet_vsylvh_finite_basis}
    The variety $\vlst \wedge \vsylv$ admits a finite equational basis consisting of the identities \hrLo{} and
    \begin{align*}
        x^2y^2 \approx y^2x^2, \tag{M\textsubscript{4}} \label{idM4}
    \end{align*}
    and the variety $\vrst \wedge \vsylvh$ admits a finite equational basis consisting of the identities \hrRo{} and \hrMf{}.
\end{proposition}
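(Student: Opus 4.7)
The plan is to combine Theorem~\ref{theorem:vsylvh_vsylv_finite_basis} and Corollary~\ref{cor:vlst_vrst_finite_basis} with the general fact, recalled in Subsection~\ref{subsection:identities_and_varieties}, that a meet of finitely based varieties admits the union of the respective bases as a basis. This immediately gives \hrLo{} and \hrRt{} as a basis for $\vlst \wedge \vsylv$; hence it suffices to show that the pair of identities \hrLo{}, \hrMf{} has the same consequences as the pair \hrLo{}, \hrRt{}. By duality (reversing all words), the same reasoning will handle $\vrst \wedge \vsylvh$.

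For the direction that \hrMf{} is a consequence of $\{$\hrLo{}, \hrRt{}$\}$, I would exhibit the short chain
\[
    x^2y^2 \approx xyxy \approx yx^2y \approx y^2x^2,
\]
in which the outer two steps are instances of \hrLo{} (the first applied in the context with suffix $y$; the third obtained from \hrLo{} via the substitution $x \mapsto y$, $y \mapsto x^2$), and the middle step is obtained from \hrRt{} by applying the endomorphism that sends $z$ and $t$ to the empty word.

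For the reverse direction, deriving \hrRt{} from $\{$\hrLo{}, \hrMf{}$\}$, the key observation is that \hrLo{}, applied iteratively in its substituted form $xvx \approx x^2v$ (where $v$ ranges over arbitrary words), allows one to bring all occurrences of any given variable together at the position of its first occurrence. Applied to both sides of \hrRt{}, this yields the canonical forms
\[
    xyzxty \approx x^2y^2zt \quad \text{and} \quad yxzxty \approx y^2x^2zt,
\]
each derivable from \hrLo{} alone in two steps (successively grouping the two $x$'s and the two $y$'s). These canonical forms differ precisely by the swap $x^2y^2 \leftrightarrow y^2x^2$, which is \hrMf{} applied in the context with suffix $zt$, and chaining the three derivations produces \hrRt{}.

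The statement for $\vrst \wedge \vsylvh$ is obtained by the mirror argument: under \hrRo{} (in its substituted form $xvx \approx vx^2$), iterated rewriting instead collects the occurrences of each variable at its \emph{last} occurrence, so both sides of \hrLt{} reduce to the canonical forms $ztx^2y^2$ and $zty^2x^2$, again bridged by \hrMf{} in the context with prefix $zt$. No serious obstacle is expected here; the only work beyond these observations is the routine verification of the rewriting chains.
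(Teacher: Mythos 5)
Your proposal is correct and follows essentially the same route as the paper: start from the basis $\{$\hrLo{}, \hrRt{}$\}$ for the meet, derive \hrMf{} via the chain $x^2y^2 \approx xyxy \approx yx^2y \approx y^2x^2$, and recover \hrRt{} from $\{$\hrLo{}, \hrMf{}$\}$ via $xyzxty \approx x^2y^2zt \approx y^2x^2zt \approx yxzxty$, with the dual argument for $\vrst \wedge \vsylvh$. These are exactly the two chains in the paper's proof, so nothing further is needed.
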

\begin{proof}
    By Theorem~\ref{theorem:vsylvh_vsylv_finite_basis} and Corollary~\ref{cor:vlst_vrst_finite_basis}, $\vlst \wedge \vsylv$ admits an equational basis consisting of \hrLo{} and \hrRt{}. Note that
    \[ x^2y^2 \approx xyxy \approx yx^2y \approx y^2x^2\]
    is satisfied by $\vlst \wedge \vsylv$, as the first and third identities are consequences of \hrLo{}, and the second is a consequence of \hrRt{}. Moreover,
    \[xyzxty \approx x^2y^2zt \approx y^2x^2zt \approx yxzxty\]
    is in the equational theory given by \hrLo{} and \hrMf{}, as the first and third identities are consequences of \hrLo{}, and the second is a consequence of \hrMf{}. 
    
    \par Hence, \hrLo{} and \hrMf{} form an equational basis for $\vlst \wedge \vsylv$. A dual argument works for the case of $\vrst \wedge \vsylvh$.
\end{proof}

\begin{proposition} \label{prop:vlst_meet_vsylv_vrst_meet_vsylvh_identities}
    The equational theory of $\vlst \wedge \vsylv$ is the set of balanced identities that satisfy the property \hlEo{}, and the equational theory of $\vrst \wedge \vsylvh$ is the set of balanced identities that satisfy the property \hlEt{}.
\end{proposition}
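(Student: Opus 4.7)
The plan is to prove the assertion for $\vlst \wedge \vsylv$; the statement for $\vrst \wedge \vsylvh$ will follow by the completely symmetric left--right dual argument throughout.

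\emph{Forward inclusion.} Being overcommutative (each of $\vlst$ and $\vsylv$ contains $\mathbb{COM}$), $\vlst \wedge \vsylv$ satisfies only balanced identities. Moreover, its equational theory is the join, in the lattice of fully invariant congruences on $\N^*$, of the theories of $\vlst$ and $\vsylv$. By Corollary~\ref{cor:vlst_vrst_identities} and Theorem~\ref{theorem:vsylvh_vsylv_identities}, these two theories consist of balanced identities satisfying \hlDo{} and \hlAt{} respectively, and both of these properties imply \hlEo{} by the diagram in Subsection~\ref{subsection:properties_equational_theories}. Since, by Lemma~\ref{lemma:J1_dual_identities}, the set of identities satisfying \hlEo{} is itself a fully invariant congruence, so is its intersection with the set of balanced identities, and this intersection contains the union -- hence the join -- of the two theories.

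\emph{Reverse inclusion.} By Proposition~\ref{prop:vlst_meet_vsylv_vrst_meet_vsylvh_finite_basis}, it suffices to show that every balanced identity $\uord \approx \vord$ satisfying \hlEo{} is a consequence of \hrLo{} and \hrMf{}. We argue by a normal-form reduction. Iterating the substitution-instance $xux \approx x^2 u$ of \hrLo{} (valid for any word $u$), we rewrite $\uord$ and $\vord$ so that every letter appears as a single consecutive block; each side then takes the shape $x_{i_1}^{e_1} \cdots x_{i_k}^{e_k}$, where the letters $x_{i_j}$ are listed in the order of their first occurrences in the respective original word and $e_j$ is the multiplicity. Balancedness ensures the multisets $\{(x_{i_j}, e_j)\}$ coincide on the two sides. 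Property \hlEo{} then forces each simple letter (those with $e_j = 1$) to occur in both normal forms with the same set of letters preceding it, so the simple letters occupy identical positions in the two forms and the multisets of non-simple blocks in each segment delimited by consecutive simple letters (including the initial and trailing segments) agree. From \hrLo{} and \hrMf{} a short induction on $m+n$ yields the commutations $x^m y^n \approx y^n x^m$ for all $m, n \geq 2$; applied within each non-simple segment, these sort the blocks into a common canonical order (say, by increasing letter index), reducing both sides to the same word.

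The main obstacle is the derivation of the commutations $x^m y^n \approx y^n x^m$ (for $m, n \geq 2$) from \hrLo{} and \hrMf{}, since \hrMf{} alone only delivers even exponents; the trick is to combine $xy^k x \approx x^2 y^k$ (from \hrLo{} with $y \mapsto y^k$) with \hrMf{} to bootstrap the odd cases. Once this is in hand, verifying that the normal-form reduction respects the positions of simple letters dictated by \hlEo{} is routine combinatorial bookkeeping.
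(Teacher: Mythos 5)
Your argument is correct, but it takes a genuinely different route from the paper's on both halves. For the forward inclusion, the paper simply checks that the basis identities \hrLo{} and \hrRt{} satisfy \hlEo{} and invokes closure of the property under deduction; you instead use the fact that the equational theory of the meet is the join of the two equational theories, together with Lemma~\ref{lemma:J1_dual_identities} (which makes the set of identities satisfying \hlEo{} a fully invariant congruence) and the implication diagram --- equally valid, and arguably it makes the closure step more explicit. For the reverse inclusion, the paper runs an induction on the length of the prefix preceding the common suffix of the two sides, peeling one letter at a time using \hrLo{} and \hrRt{}; this suffix induction is then reused as the template for nearly all later equational-theory propositions. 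You instead reduce both sides to the left-stalactic block normal form via \hrLo{}, observe that \hlEo{} together with balancedness pins down the positions of the simple letters and the multisets of non-simple blocks in each segment between them, and then sort each segment using the derived commutations $x^m y^n \approx y^n x^m$ for $m,n\geq 2$. That commutation lemma is the one place you only sketch: it does hold --- one first gets $x^2y^n \approx y^2x^2y^{n-2} \approx y^nx^2$ from \hrMf{} followed by \hrLo{}-normalisation, then handles $m=3$ via $x^3y^n \approx xy^nx^2 \approx x^2y^nx \approx y^nx^3$, and finally inducts in steps of two --- so your "bootstrap the odd cases" hint is accurate, but a referee would want those three lines written out. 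The canonical-form argument is more self-contained and transparent for this single proposition; the paper's suffix induction has the advantage of generalising uniformly to the dozen analogous results that follow, which is why the authors adopt it and thereafter only sketch the variations.
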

\begin{proof}
    Clearly, the identities \hrLo{} and \hrMf{}, which form an equational basis for $\vlst \wedge \vsylv$ by Proposition~\ref{prop:vlst_meet_vsylv_vrst_meet_vsylvh_finite_basis}, satisfy property \hlEo{}. As such, all identities in the equational theory of $\vlst \wedge \vsylv$ satisfy this property as well.

    Let $\uord \approx \vord$ be a balanced identity satisfying property \hlEo{}. By Theorem~\ref{theorem:vsylvh_vsylv_finite_basis} and Corollary~\ref{cor:vlst_vrst_identities}, we have that $\vlst \wedge \vsylv$ satisfies the identities \hrLt{}, \hrRt{} and \hrMd{}. As such, it follows from \cite[Proposition~11.2]{lee_book} that 
    \[
        \vlst \wedge \vsylv \wedge \bfV_{\{\uord \approx \vord\}} = \vlst \wedge \vsylv \wedge \bfV_\Sigma
    \]
    for some set $\Sigma$ of non-trivial identities of the form
    \[
        x^{e_0} \prod_{i=1}^{r} (h_i x^{e_i}) \approx x^{f_0} \prod_{i=1}^{r} (h_i x^{f_i}),
    \]
    which are balanced and satisfy property \hlEo{}. Notice that, in \cite[Proposition~11.2]{lee_book}, two sets of identities, of different forms, are given. The identities in the second set are consequences of the identity \hrRt{}, so we do not need to consider them in this case. Clearly, identities in $\Sigma$ are consequences of the identity \hrLo{} and so, by Proposition~\ref{prop:vlst_meet_vsylv_vrst_meet_vsylvh_finite_basis}, they are satisfied by $\vlst \wedge \vsylv$. Hence, 
    \[
        \vlst \wedge \vsylv \wedge \bfV_{\{\uord \approx \vord\}} = \vlst \wedge \vsylv
    \]
    and, therefore, $\uord \approx \vord$ is satisfied by $\vlst \wedge \vsylv$. A dual argument works for the case of $\vrst \wedge \vsylvh$.
\end{proof}

\begin{proposition} \label{prop:vlst_meet_vsylv_vrst_meet_vsylvh_generators}
    The variety $\vlst \wedge \vsylv$ is generated by the factor monoid $\bbN^*/{(\equiv_\lst \vee \equiv_\sylv)}$, and the variety $\vrst \wedge \vsylvh$ is generated by the factor monoid $\bbN^*/{(\equiv_\rst \vee \equiv_\sylvh)}$.
\end{proposition}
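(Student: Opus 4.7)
The plan is to reduce this to the general principle, recalled in Section~\ref{subsection:identities_and_varieties}, that any variety is generated by the factor monoid of $\N^*$ by its equational theory, by identifying $\equiv_\lst \vee \equiv_\sylv$ with the equational theory of $\vlst \wedge \vsylv$ viewed as a fully invariant congruence on $\N^*$.

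First, I would verify that $\equiv_\lst$ is a fully invariant congruence on $\N^*$, using the characterisation that $\uord \equiv_\lst \vord$ if and only if they share the same content and order of first occurrences of symbols---both data being preserved under any endomorphism of $\N^*$. The analogous fact holds for $\equiv_\sylv$. Consequently, the equational theory of $\vlst = \Var_\lst$, viewed as a fully invariant congruence on $\N^*$, coincides with $\equiv_\lst$: for $\uord, \vord \in \N^*$, the monoid $\lst = \N^*/\equiv_\lst$ satisfies $\uord \approx \vord$ precisely when $\uord \equiv_\lst \vord$, the non-trivial direction lifting an arbitrary evaluation $\N^* \to \lst$ to an endomorphism of $\N^*$ and invoking full invariance. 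An analogous statement holds for $\vsylv$ and $\equiv_\sylv$.

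Now, as recalled in Section~\ref{subsection:identities_and_varieties}, the equational theory of a varietal meet is the join of the equational theories of its components in the lattice of fully invariant congruences. Applied here, this makes the equational theory of $\vlst \wedge \vsylv$ on $\N^*$ equal to the fully invariant join of $\equiv_\lst$ and $\equiv_\sylv$. The key observation is that the ordinary congruence join of two fully invariant congruences on $\N^*$ is itself fully invariant: the union of their graphs is stable under endomorphisms, hence so is the congruence it generates. Therefore this join coincides with the ordinary congruence join $\equiv_\lst \vee \equiv_\sylv$, and invoking the generating principle above, $\vlst \wedge \vsylv$ is generated by $\N^*/(\equiv_\lst \vee \equiv_\sylv)$. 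The statement for $\vrst \wedge \vsylvh$ follows by the symmetric dual argument, using the dual characterisation of $\equiv_\rst$ in terms of order of last occurrences.

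The main subtle point is the equality of the ordinary congruence join with the join in the sublattice of fully invariant congruences, together with the identification of the equational theory of $\vlst$ on $\N^*$ with $\equiv_\lst$; once these observations are in place, the remainder is a formal application of definitions.
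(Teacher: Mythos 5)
Your strategy hinges on the claim that the equational theory of $\vsylv$, viewed as a fully invariant congruence on $\N^*$, coincides with $\equiv_\sylv$; this claim is false, and the gap is fatal to the whole approach. Unlike $\equiv_\lst$ (for which the analogous claim does hold, since the content-plus-order-of-first-occurrences characterisation is stable under arbitrary endomorphisms of $\N^*$), the sylvester congruence is defined in terms of the linear order on $\N$, which endomorphisms of $\N^*$ do not respect, so $\equiv_\sylv$ is \emph{not} fully invariant. Concretely, $211 \equiv_\sylv 121$ (take $a=b=1$, $c=2$ and $\uord=\varepsilon$ in $\cR_\sylv$), but the corresponding identity $xy^2 \approx yxy$ is not satisfied by $\sylv$: it violates \hlAt{}, and indeed the evaluation $x \mapsto 1$, $y \mapsto 2$ sends the two sides to $122$ and $212$, which have different right precedences. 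Hence the equational theory of $\vsylv$ on $\N^*$ is a proper fully invariant refinement of $\equiv_\sylv$, and consequently $\equiv_\lst \vee \equiv_\sylv$ is not the equational theory of $\vlst \wedge \vsylv$: the same pair $211$, $121$ is identified by $\equiv_\lst \vee \equiv_\sylv$, yet $xy^2 \approx yxy$ fails \hlEo{} and so, by Proposition~\ref{prop:vlst_meet_vsylv_vrst_meet_vsylvh_identities}, does not lie in that equational theory. Your reduction to the principle that a variety is generated by $\N^*$ modulo its equational theory therefore cannot be carried out, because $\N^*/{(\equiv_\lst \vee \equiv_\sylv)}$ is not that relatively free monoid.

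The proposition is nonetheless true, but it must be proved directly, which is what the paper does: one first observes that $\N^*/{(\equiv_\lst \vee \equiv_\sylv)}$ lies in $\vlst \wedge \vsylv$ because it is a homomorphic image of both $\lst$ and $\sylv$, and then shows that every identity it satisfies obeys \hlEo{}, by exhibiting for each identity violating \hlEo{} an explicit falsifying evaluation (the offending simple variable goes to $[1]$, the witness variable to $[2]$, all other variables to $[\varepsilon]$, and one uses that $12^{k}$ forms a singleton class in both $\lst$ and $\sylv$). The conceptual point your argument misses is that an identity being satisfied by $\N^*/\theta$ requires \emph{all} evaluations to agree, not merely the tautological one, so the equational theory of $\N^*/\theta$ is the largest fully invariant congruence contained in $\theta$ --- which equals $\theta$ only when $\theta$ is itself fully invariant. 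Your observation that the ordinary join of fully invariant congruences is again fully invariant is correct, but it is moot here since $\equiv_\sylv$ is not fully invariant to begin with.
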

\begin{proof}
    It is clear that $\bbN^*/{(\equiv_\lst \vee \equiv_\sylv)} \in \vlst \wedge \vsylv$, since it is a homomorphic image of both $\lst$ and $\sylv$. On the other hand, let $\uord \approx \vord$ be a non-trivial identity satisfied by $\bbN^*/{(\equiv_\lst \vee \equiv_\sylv)}$. Clearly, it is a balanced identity, and if no variable occurring in it is simple, it trivially satisfies \hlEo{}. 
    
    Suppose, in order to obtain a contradiction, that there exist $x,y \in \supp{\uord \approx \vord}$ such that $x$ is simple and $y$ occurs before $x$ in $\uord$, but not in $\vord$. Consider the evaluation $\psi$ such that $x \mapsto [1]_{(\equiv_\lst \vee \equiv_\sylv)}$, $y \mapsto [2]_{(\equiv_\lst \vee \equiv_\sylv)}$ and $z \mapsto [\varepsilon]_{(\equiv_\lst \vee \equiv_\sylv)}$, for any other variable $z \in \supp{\uord \approx \vord}$. Then, the only word in $\psi(\vord)$ is $12^{|\vord|_y}$, since no reordering of $12^{|\vord|_y}$ starts with the letter $1$ or has a $2$-$1$ right precedence of index $|\vord|_y$, and as such, $12^{|\vord|_y}$ forms a singleton class in both $\lst$ and $\sylv$. Hence, $\psi$ falsifies the identity, and we obtain a contradiction. 
    
    Hence, all identities in the equational theory of $\bbN^*/{(\equiv_\lst \vee \equiv_\sylv)}$ satisfy \hlEo{}, and the result follows. A dual argument works for the case of $\vrst \wedge \vsylvh$.
\end{proof}

\begin{corollary} \label{cor:vlst_meet_vsylv_vrst_meet_vsylvh_varietal_join}
    The variety $\vlst \wedge \vsylv$ is the varietal join of $\mathbf{COM}$ and $\bfV_{\overleftarrow{J^1}}$, and the variety $\vrst \wedge \vsylvh$ is the varietal join of $\mathbf{COM}$ and $\bfV_{J^1}$.
\end{corollary}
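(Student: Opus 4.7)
The plan is to prove the corollary by comparing equational theories, using the fact that two varieties coincide if and only if they have the same equational theory, and that the equational theory of a varietal join is the intersection of the equational theories of its summands.

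First I would note, as already established in the background, that the equational theory of $\mathbb{COM}$ is the set of all balanced identities, and by Lemma~\ref{lemma:J1_dual_identities} the equational theory of $\Var_{\overleftarrow{J^1}}$ is the set of all identities (balanced or not) satisfying property \hlEo{}. Intersecting these two sets yields precisely the set of \emph{balanced} identities satisfying \hlEo{}. By Proposition~\ref{prop:vlst_meet_vsylv_vrst_meet_vsylvh_identities}, this is exactly the equational theory of $\vlst \wedge \vsylv$. Hence $\vlst \wedge \vsylv = \mathbb{COM} \vee \Var_{\overleftarrow{J^1}}$.

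The argument for $\vrst \wedge \vsylvh = \mathbb{COM} \vee \Var_{J^1}$ is completely dual, using the other half of Lemma~\ref{lemma:J1_dual_identities} (which characterises $\Var_{J^1}$ by property \hlEt{}) together with the other half of Proposition~\ref{prop:vlst_meet_vsylv_vrst_meet_vsylvh_identities}.

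There is no real obstacle here: all the substantive content has already been done in Lemma~\ref{lemma:J1_dual_identities} and in Proposition~\ref{prop:vlst_meet_vsylv_vrst_meet_vsylvh_identities}, and the only remaining ingredient is the general fact that the equational theory of a varietal join is the intersection of the equational theories, which was recorded in Section~\ref{subsection:identities_and_varieties}. So the proof will be a short two-sentence corollary, essentially a formal combination of the two cited results.
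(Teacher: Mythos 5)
Your argument is correct and is exactly the paper's proof, which simply cites Lemma~\ref{lemma:J1_dual_identities} and Proposition~\ref{prop:vlst_meet_vsylv_vrst_meet_vsylvh_identities}; you have merely spelled out the routine step that the equational theory of the join is the intersection of the equational theories, and that intersecting the balanced identities with the identities satisfying \hlEo{} (resp.\ \hlEt{}) gives precisely the characterisation in Proposition~\ref{prop:vlst_meet_vsylv_vrst_meet_vsylvh_identities}. No gap.
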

\begin{proof}
    Follows from Lemma~\ref{lemma:J1_dual_identities} and Proposition~\ref{prop:vlst_meet_vsylv_vrst_meet_vsylvh_identities}.
\end{proof}

\begin{corollary} \label{cor:vmst_meet_VarS_finite_basis}
    The variety $\vmst \wedge \VarS$ admits a finite equational basis consisting of the identities \hrLt{}, \hrRt{}, \hrMd{} and \hrMt{}.
\end{corollary}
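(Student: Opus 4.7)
The plan is to invoke the general fact, recorded in Subsection~\ref{subsection:identities_and_varieties}, that if $\V$ and $\W$ are both finitely based varieties, then $\V \wedge \W$ is finitely based and the union of any finite bases for $\V$ and $\W$ is a finite basis for $\V \wedge \W$. This reduces the corollary to plugging in known bases for the two factors.

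Concretely, I would cite Corollary~\ref{cor:vmst_finite_basis} to obtain $\{\hrMd{}, \hrMt{}\}$ as a finite basis for $\vmst$, and Corollary~\ref{cor:VarS_finite_basis} to obtain $\{\hrLt{}, \hrRt{}\}$ as a finite basis for $\VarS$. Taking the union then yields $\{\hrLt{}, \hrRt{}, \hrMd{}, \hrMt{}\}$ as a finite equational basis for $\vmst \wedge \VarS$, which is exactly the statement to be proved.

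There is no real obstacle here: the only content is the general principle about meets of finitely based varieties together with the two previously established bases. The proof will be a single short sentence, essentially of the form ``Follows from Corollary~\ref{cor:vmst_finite_basis} and Corollary~\ref{cor:VarS_finite_basis}.''
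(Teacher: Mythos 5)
Your proposal is correct and matches the paper exactly: the paper's proof is the one-line ``Follows from Corollaries~\ref{cor:vmst_finite_basis} and \ref{cor:VarS_finite_basis}'', relying implicitly on the same general fact from Subsection~\ref{subsection:identities_and_varieties} that the union of finite bases for $\vmst$ and $\VarS$ is a finite basis for their meet. Nothing to add.
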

\begin{proof}
    Follows from Corollaries~\ref{cor:vmst_finite_basis} and \ref{cor:VarS_finite_basis}.
\end{proof}

\begin{proposition} \label{prop:vmst_meet_VarS_identities}
    The equational theory of $\vmst \wedge \VarS$ is the set of balanced identities that satisfy the properties \hlEo{} and \hlEt{}.
\end{proposition}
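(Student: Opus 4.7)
The strategy follows that of Proposition~\ref{prop:vlst_meet_vsylv_vrst_meet_vsylvh_identities}, so I would prove each inclusion in turn.

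For the forward inclusion, I would directly verify that each of the basis identities \hrLt{}, \hrRt{}, \hrMd{} and \hrMt{} from Corollary~\ref{cor:vmst_meet_VarS_finite_basis} satisfies both \hlEo{} and \hlEt{}: in \hrLt{}, \hrRt{} and \hrMd{} the simple variables are $z$ and $t$, while in \hrMt{} they are $y$ and $z$, and in each case one checks that, around each simple variable, the shortest prefix and the shortest suffix containing it on the two sides have matching supports. By Lemma~\ref{lemma:J1_dual_identities} the properties \hlEo{} and \hlEt{} define the equational theories of $\Var_{\overleftarrow{J^1}}$ and $\Var_{J^1}$ respectively, and so are closed under taking consequences; hence every identity in $\mathrm{Eq}(\vmst \wedge \VarS)$ satisfies both.

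For the converse, I would reuse the induction scheme set up just after Proposition~\ref{prop:vlst_meet_vsylv_vrst_meet_vsylvh_identities}: given a balanced identity $\uord \approx \vord$ satisfying both properties, decompose it as $\uord = \uord' x \word$ and $\vord = \vord_1' x \vord_2' \word$ with $\word$ the common suffix and $\vord_2' \in (\X \setminus \{x\})^+$, and induct on $|\uord| - |\word|$. The base cases $|\uord|-|\word|=2$ are identities of the forms $xy\word \approx yx\word$ (with $x,y$ non-simple) and $x^2 y\word \approx xyx\word$, which are handled by \hrRt{} or \hrLt{} in the first case and by \hrMt{} in the second. The inductive step aims to derive $\vord \approx \vord_1' \vord_2' x \word$ from the basis, i.e., to move $x$ rightward past $\vord_2'$ one letter at a time, so that the common suffix extends to $x\word$ and the induction applies. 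When $x$ is non-simple, each individual swap $x y_i \to y_i x$ is handled by \hrMd{} when $x$ occurs in $\vord_1'$, and by \hrLt{} or \hrRt{} otherwise, the occurrences of $y_i$ in $\vord_1'$ and $\word$ providing the context required by the chosen basis identity.

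The main obstacle I expect is the case where $x$ is simple. Here \hlEo{} forces $\supp{\uord'} = \supp{\vord_1'}$ and \hlEt{} forces $\supp{\vord_2'} \subseteq \supp{\word}$, so every letter $y_i$ of $\vord_2'$ has occurrences both in $\word$ (directly) and in $\vord_1'$ (since $\supp{\vord_1' \vord_2'} = \supp{\uord'} = \supp{\vord_1'}$ by balanced-ness and \hlEo{}). Since a simple $x$ cannot play the role of a repeated variable in any basis identity, the swap $x y_i \to y_i x$ must be realised by substituting $x$ into a simple position ($z$ or $t$) of \hrLt{} or \hrMd{}, with the two required occurrences of $y_i$ supplied by its copies in $\vord_1'$ and $\word$. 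Verifying that such a substitution is always available is the point where both \hlEo{} and \hlEt{} are genuinely needed simultaneously, in contrast to the one-sided setting of Proposition~\ref{prop:vlst_meet_vsylv_vrst_meet_vsylvh_identities}; once this case is handled, the induction completes.
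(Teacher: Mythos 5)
Your overall strategy coincides with the paper's: the forward inclusion by checking the four basis identities and appealing to closure of \hlEo{} and \hlEt{} under consequences, and the converse by the same suffix-extending induction with the same decomposition. However, there is a concrete error at precisely the step you single out as the main obstacle. When $x$ is simple, you propose to realise the swap $x y_i \to y_i x$ by ``substituting $x$ into a simple position ($z$ or $t$) of \hrLt{} or \hrMd{}''. This cannot work: in \hrLt{} ($xzytxy \approx xzytyx$) and \hrMd{} ($xzxyty \approx xzyxty$) the simple variables $z$ and $t$ occupy identical positions on both sides --- they are spectators, and only the repeated variables are permuted --- so no substitution into $z$ or $t$ can displace your simple $x$. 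The identity you need is \hrMt{} ($xyxzx \approx x^2yzx$), the unique basis identity whose \emph{simple} variable is the one that moves; apply it with its $y$ mapped to your simple $x$ and its $x$ mapped to $y_i$, the three required occurrences of $y_i$ being supplied by its copies in $\vord_1'$ (which you correctly established via \hlEo{} and balancedness), in $\vord_2'$ itself, and in $\word$ (via \hlEt{}). This is exactly what the paper does in this case.

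Two smaller slips of the same nature: first, in the base case $x^2y\word \approx xyx\word$ with $y$ non-simple, $x$ need not occur in $\word$ (e.g.\ $x^2y^2 \approx xyxy$), so \hrMt{} is unavailable --- indeed $x^2y^2 \approx xyxy$ fails \hlB{}, which all consequences of \hrMt{} satisfy --- and one must use \hrMd{}; \hrMt{} is only forced when $y$ is simple, in which case \hlEt{} guarantees $x \in \supp{\word}$. Second, in the inductive step with $x$ non-simple and occurring in $\vord_1'$, your assignment of \hrMd{} fails when some $y_i$ of $\vord_2'$ is itself simple and occurs nowhere else: both \hrLt{} and \hrMd{} need two occurrences of the swapped partner, and the correct move is again \hrMt{}, using \hlEt{} applied to $y_i$ to conclude that $x$ also occurs in $\word$. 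The paper's case analysis covers both situations; with these corrections your argument goes through.
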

\begin{proof}
    Clearly, the identities \hrLt{}, \hrRt{}, \hrMd{} and \hrMt{}, which form an equational basis for $\vmst \wedge \VarS$ by Corollary~\ref{cor:vmst_meet_VarS_finite_basis}, satisfy both properties \hlEo{} and \hlEt{}. As such, all identities in the equational theory of $\vmst \wedge \VarS$ satisfy these properties as well.

    Let $\uord \approx \vord$ be a balanced identity satisfying properties \hlEo{} and \hlEt{}. It follows from \cite[Proposition~11.2]{lee_book} that 
    \[
        \vmst \wedge \VarS \wedge \bfV_{\{\uord \approx \vord\}} = \vmst \wedge \VarS \wedge \bfV_\Sigma
    \]
    for some set $\Sigma$ of non-trivial identities of the form
    \[
        x^{e_0} \prod_{i=1}^{r} (h_i x^{e_i}) \approx x^{f_0} \prod_{i=1}^{r} (h_i x^{f_i}),
    \]
    which are balanced and satisfy properties \hlEo{} and \hlEt{}. Clearly, the identities in $\Sigma$ are consequences of the identity \hrMt{} and so, by Corollary~\ref{cor:vmst_meet_VarS_finite_basis}, they are satisfied by $\vmst \wedge \VarS$. Hence, 
    \[  
        \vmst \wedge \VarS \wedge \bfV_{\{\uord \approx \vord\}} = \vmst \wedge \VarS
    \]
    and, therefore, $\uord \approx \vord$ is satisfied by $\vmst \wedge \VarS$. 
\end{proof}

\begin{corollary} \label{cor:vmst_meet_VarS_varietal_join}
    The variety $\vmst \wedge \VarS$ is the varietal join of $\mathbf{COM}$ and $\bfV_{\overleftarrow{J^1} \times J^1}$.
\end{corollary}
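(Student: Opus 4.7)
The plan is to mirror the proof of Corollary~\ref{cor:vlst_meet_vsylv_vrst_meet_vsylvh_varietal_join}, reducing everything to a comparison of equational theories, using Proposition~\ref{prop:vmst_meet_VarS_identities} on one side and Lemma~\ref{lemma:J1_dual_identities} on the other.

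First I would observe that $\Var_{\overleftarrow{J^1} \times J^1} = \Var_{\overleftarrow{J^1}} \vee \Var_{J^1}$, since the variety generated by a direct product of finitely many monoids is the varietal join of the varieties they each generate. Consequently, the equational theory of $\Var_{\overleftarrow{J^1} \times J^1}$ is the intersection of the equational theories of $\Var_{\overleftarrow{J^1}}$ and $\Var_{J^1}$. By Lemma~\ref{lemma:J1_dual_identities}, the former is the set of identities satisfying \hlEo{} and the latter the set of identities satisfying \hlEt{}, so their intersection is precisely the set of identities that satisfy \emph{both} \hlEo{} and \hlEt{}.

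Next, I would take the varietal join with $\mathbb{COM}$: its equational theory is the set of all balanced identities, so the equational theory of $\mathbb{COM} \vee \Var_{\overleftarrow{J^1} \times J^1}$ is the intersection of this set with the set of identities satisfying \hlEo{} and \hlEt{}, which is exactly the set of \emph{balanced} identities satisfying both \hlEo{} and \hlEt{}. By Proposition~\ref{prop:vmst_meet_VarS_identities}, this is precisely the equational theory of $\vmst \wedge \VarS$, and the equality of varieties follows.

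There is no real obstacle here: the content of the statement is packaged into Proposition~\ref{prop:vmst_meet_VarS_identities} and Lemma~\ref{lemma:J1_dual_identities}, and the remaining work is the standard translation between joins of varieties and intersections of equational theories. The proof in the paper should be essentially a two-line citation, as in the analogous Corollary~\ref{cor:vlst_meet_vsylv_vrst_meet_vsylvh_varietal_join}.
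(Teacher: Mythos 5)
Your proposal is correct and matches the paper's argument exactly: the paper's proof is the one-line citation ``Follows from Lemma~\ref{lemma:J1_dual_identities} and Proposition~\ref{prop:vmst_meet_VarS_identities}'', and the steps you spell out (the join decomposition of $\Var_{\overleftarrow{J^1} \times J^1}$, intersecting equational theories, and restricting to balanced identities via $\mathbb{COM}$) are precisely the routine translation the authors leave implicit.
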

\begin{proof}
    Follows from Lemma~\ref{lemma:J1_dual_identities} and Proposition~\ref{prop:vmst_meet_VarS_identities}.
\end{proof}

\begin{proposition} \label{prop:vmst_meet_VarS_generators}
    The variety $\vmst \wedge \VarS$ is generated by the factor monoid $\bbN^*/{(\equiv_\hypo \vee \equiv_\mst)}$.
\end{proposition}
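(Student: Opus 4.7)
The plan is to adapt the strategy from the proof of Proposition~\ref{prop:vlst_meet_vsylv_vrst_meet_vsylvh_generators}. First, I would establish the containment direction: the quotient $\N^*/{(\equiv_\hypo \vee \equiv_\mst)}$ is a homomorphic image of both $\hypo$ and $\mst$, hence lies in $\vhypo \wedge \vmst$. Comparing Theorem~\ref{theorem:vhypo_finite_basis} with Corollary~\ref{cor:VarS_finite_basis}, the basis $\hrLt{}, \hrRt{}, \hrMt{}$ of $\vhypo$ contains the basis $\hrLt{}, \hrRt{}$ of $\VarS$, so $\vhypo \subseteq \VarS$ and hence the quotient lies in $\vmst \wedge \VarS$.

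For the reverse direction, I would take a non-trivial identity $\uord \approx \vord$ satisfied by the quotient. Since the quotient is overcommutative, the identity is balanced, and if no variable occurring in it is simple, then it satisfies \hlEo{} (and dually \hlEt{}) vacuously. In the general case, my aim is to show that the identity satisfies both \hlEo{} and \hlEt{}; by Proposition~\ref{prop:vmst_meet_VarS_identities}, this places the identity in the equational theory of $\vmst \wedge \VarS$, completing the argument. I would focus on \hlEo{}, with \hlEt{} following by a dual argument. Assuming \hlEo{} fails, there exist $x, y \in \supp{\uord \approx \vord}$ such that $x$ is simple and $y$ occurs before $x$ in $\uord$ but not in $\vord$. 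Writing $\uord = \uord_1 x \uord_2$ and $\vord = \vord_1 x \vord_2$ with $p := |\uord_1|_y \geq 1$, $q := |\uord_2|_y$ and $|\vord_1|_y = 0$, I would consider the evaluation $\psi$ sending $x \mapsto [1]$, $y \mapsto [2]$ and every other variable to $[\varepsilon]$, producing $\psi(\uord) = [2^{p} 1 2^{q}]$ and $\psi(\vord) = [1 2^{p+q}]$.

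The main obstacle, and heart of the proof, is to verify that these two classes are distinct in the quotient. I would argue that $1 \, 2^{n}$ (with $n = p+q \geq 1$) forms a singleton class under each of $\equiv_\hypo$ and $\equiv_\mst$: every other word of content $\{1 \mapsto 1, 2 \mapsto n\}$ has a $2$-$1$ inversion whereas $1 \, 2^{n}$ has none, and every such word has first-occurrence order $(2,1)$ whereas $1 \, 2^{n}$ has $(1,2)$. Consequently $\{1 \, 2^{n}\}$ is a singleton under both congruences, and hence under their join, so $[1 \, 2^{p+q}] \neq [2^{p} 1 2^{q}]$ in the quotient and $\psi$ falsifies the identity, yielding the desired contradiction.
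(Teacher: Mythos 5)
Your proof is correct and follows essentially the same route as the paper's: membership via homomorphic images, then a falsifying evaluation into $\{[1],[2],[\varepsilon]\}$ exploiting the fact that $12^{n}$ forms a singleton class under both $\equiv_\hypo$ and $\equiv_\mst$, hence under their join. The only cosmetic differences are that you verify \hlEo{} directly and obtain \hlEt{} by duality (the paper does the reverse), and you route the containment through $\vhypo \subseteq \VarS$ rather than viewing the quotient as a homomorphic image of $\sylvh$ and $\sylv$; your justification of the singleton-class claim is, if anything, slightly more explicit than the paper's.
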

\begin{proof}
    It is clear that $\bbN^*/{(\equiv_\hypo \vee \equiv_\mst)} \in \vmst \wedge \VarS$, since it is a homomorphic image of $\mst$, $\sylvh$ and $\sylv$. 

    Let $\uord \approx \vord$ be a non-trivial identity satisfied by $\bbN^*/{(\equiv_\hypo \vee \equiv_\mst)}$.  Suppose now that $x,y \in \supp{\uord \approx \vord}$ are such that $x$ is simple and $y$ occurs after $x$ in $\uord$, but not in $\vord$. For $\psi \colon x \mapsto [2]_{(\equiv_\hypo \vee \equiv_\mst)}$, $y \mapsto [1]_{(\equiv_\hypo \vee \equiv_\mst)}$ and $z \mapsto [\varepsilon]_{(\equiv_\hypo \vee \equiv_\mst)}$, for any other variable $z \in \supp{\uord \approx \vord}$, since no reordering of $12^{|\vord|_y}$ starts with the letter $1$, all reorderings have a $2$-$1$ inversion, thus the only word in $\psi(\vord)$ is $12^{|\vord|_y}$. Therefore, $\psi$ falsifies the identity, hence $\uord \approx \vord$ satisfies \hlEt{}. By dual reasoning, this identity satisfies \hlEo{}, and the result follows.
\end{proof}

\begin{corollary} \label{cor:vmst_meet_vsylvh_vmst_meet_vsylv_finite_basis}
    The variety $\vmst \wedge \vsylvh$ admits a finite equational basis consisting of the identities \hrLt{}, \hrMd{} and \hrMt{}, and the variety $\vmst \wedge \vsylv$ admits a finite equational basis consisting of the identities \hrRt{}, \hrMd{} and \hrMt{}.
\end{corollary}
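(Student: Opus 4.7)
The plan is to apply the general principle, recorded in Subsection~\ref{subsection:identities_and_varieties}, that if $\V$ and $\W$ are finitely based varieties then $\V \wedge \W$ is finitely based, with the union of any two respective bases serving as a basis for the meet. Concretely, I would first recall that by Corollary~\ref{cor:vmst_finite_basis} the variety $\vmst$ admits $\{\hrMd{}, \hrMt{}\}$ as a finite equational basis, and by Theorem~\ref{theorem:vsylvh_vsylv_finite_basis} the varieties $\vsylvh$ and $\vsylv$ admit, respectively, $\{\hrLt{}\}$ and $\{\hrRt{}\}$ as finite equational bases.

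Taking the union in each case then immediately yields that $\{\hrLt{}, \hrMd{}, \hrMt{}\}$ is a basis for $\vmst \wedge \vsylvh$ and that $\{\hrRt{}, \hrMd{}, \hrMt{}\}$ is a basis for $\vmst \wedge \vsylv$. The argument is symmetric in the two halves of the statement, so a single sentence suffices for the $\vsylv$ case once the $\vsylvh$ case is written out, invoking the dual of Theorem~\ref{theorem:vsylvh_vsylv_finite_basis}.

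There is no real obstacle to overcome: the corollary is an immediate consequence of two earlier results together with the general fact about meets of finitely based varieties, so the proof is a one-line citation. In the paper itself I would phrase it as a short ``Follows from Corollary~\ref{cor:vmst_finite_basis} and Theorem~\ref{theorem:vsylvh_vsylv_finite_basis}'' in the style used for several preceding corollaries in this subsection, for instance Corollary~\ref{cor:vmst_meet_VarS_finite_basis}.
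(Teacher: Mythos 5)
Your proposal is correct and is essentially identical to the paper's own proof, which reads simply ``Follows from Theorem~\ref{theorem:vsylvh_vsylv_finite_basis} and Corollary~\ref{cor:vmst_finite_basis}'' and relies on the same general fact that the meet of finitely based varieties is based by the union of the bases. The only superfluous remark is the appeal to a ``dual'' of Theorem~\ref{theorem:vsylvh_vsylv_finite_basis}, since that theorem already states the bases for both $\vsylvh$ and $\vsylv$ explicitly.
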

\begin{proof}
    Follows from Theorem~\ref{theorem:vsylvh_vsylv_finite_basis} and Corollary~\ref{cor:vmst_finite_basis}.
\end{proof}

\begin{proposition} \label{prop:vmst_meet_vsylvh_vmst_meet_vsylv_identities}
    The equational theory of $\vmst \wedge \vsylvh$ is the set of balanced identities that satisfy the properties \hlDo{} and \hlEt{}, and the equational theory of $\vmst \wedge \vsylv$ is the set of balanced identities that satisfy the properties \hlDt{} and \hlEo{}.
\end{proposition}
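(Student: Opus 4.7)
The plan is to mirror the inductive arguments of Propositions~\ref{prop:vlst_meet_vsylv_vrst_meet_vsylvh_identities} and~\ref{prop:vmst_meet_VarS_identities}, handling only the case $\vmst \wedge \vsylvh$ (the case $\vmst \wedge \vsylv$ follows by a dual argument). The forward direction is routine: each of the three basis identities \hrLt{}, \hrMd{} and \hrMt{} from Corollary~\ref{cor:vmst_meet_vsylvh_vmst_meet_vsylv_finite_basis} satisfies both \hlDo{} and \hlEt{}, and the balanced identities satisfying the conjunction of these two properties form a fully invariant congruence, hence contain the whole equational theory of $\vmst \wedge \vsylvh$.

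For the reverse direction, I induct on the length of the prefix before the common suffix, using the convention $\uord = \uord' x \word$ and $\vord = \vord_1' x \vord_2' \word$ with $\word$ the common suffix and $\vord_2' \in (\X \setminus \{x\})^+$. When $x$ is simple, \hlDo{} applied to $x$ gives $\supp{\uord'} = \supp{\vord_1'}$ and \hlEt{} applied to $x$ gives $\supp{\vord_2'} \subseteq \supp{\word}$; the balance $|\uord'|_a = |\vord_1'|_a + |\vord_2'|_a$ for each $a \in \supp{\vord_2'}$ then forces $\supp{\vord_2'} \subseteq \supp{\vord_1'} \cap \supp{\word}$, and the reduction $\vord \approx \vord'' x \word$ follows from \hrMt{} exactly as in Proposition~\ref{prop:vmst_meet_VarS_identities}. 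When $x$ is non-simple and $x \in \supp{\vord_1'}$, the case analysis of Proposition~\ref{prop:vmst_meet_VarS_identities} applies verbatim using \hrLt{}, \hrMd{} and \hrMt{}.

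The genuinely new situation is $x$ non-simple with $x \notin \supp{\vord_1'}$, which was handled in Proposition~\ref{prop:vmst_meet_VarS_identities} using \hrRt{}; here a \hrMd{}-based argument takes its place, and this is where I expect the main obstacle to lie. A content balance on $x$ forces $x \in \supp{\word}$ and $x \notin \supp{\uord'}$. Applying \hlDo{} to any $a \in \supp{\vord_2'}$ then forces $a \in \supp{\vord_1'}$: otherwise the shortest prefix of $\uord$ reaching $a$ would sit entirely inside $\uord'$ and hence miss $x$, whereas the corresponding prefix of $\vord$ necessarily contains $x$. Under the endomorphic substitution $x \mapsto a$, $y \mapsto x$, with $z$ and $t$ specialised to appropriate words $z', t'$, identity \hrMd{} yields the consequence $a z' x a t' x \approx a z' a x t' x$. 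Anchoring the leading $a$ at an occurrence of $a$ in $\vord_1'$ and the trailing $x$ at an occurrence of $x$ in $\word$ performs the swap of the marked $x$ with the first letter of $\vord_2'$ in the appropriate context; iterating over the letters of $\vord_2'$ (the hypotheses of this case are preserved at each step, as neither $\vord_1' \setminus \{x\}$ nor $\supp{\word}$ changes) reduces $\vord$ to $\vord_1' \vord_2' x \word$, whose common suffix with $\uord$ is $x \word$, one letter longer. The induction hypothesis then finishes the proof.
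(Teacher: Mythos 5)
Your proposal is correct and follows essentially the same route as the paper's proof: the same induction on the length of the common suffix, the same three-way case split on the marked variable $x$ (simple; non-simple occurring in $\vord_1'$; non-simple occurring only in $\word$), and the same assignment of \hrMt{}, \{\hrLt{}, \hrMd{}, \hrMt{}\} and \hrMd{} to those respective cases. Your treatment of the last case merely makes explicit the substitution into \hrMd{} and the derivation of $\supp{\vord_2'} \subseteq \supp{\vord_1'}$ from \hlDo{} together with content balance, details the paper compresses into a single sentence since it declares it is only sketching these arguments.
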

\begin{proof}
    Clearly, the identities \hrLt{}, \hrMd{} and \hrMt{}, which form an equational basis for $\vmst \wedge \vsylvh$ by Proposition~\ref{prop:vlst_meet_vsylv_vrst_meet_vsylvh_finite_basis}, satisfy properties \hlDo{} and \hlEt{}. As such, all identities in the equational theory of $\vmst \wedge \vsylvh$ satisfy these properties as well.

    Let $\uord \approx \vord$ be a balanced identity satisfying properties \hlDo{} and \hlEt{}. As such, it follows from \cite[Proposition~11.2]{lee_book} that 
    \[
        \vmst \wedge \vsylvh \wedge \bfV_{\{\uord \approx \vord\}} = \vmst \wedge \vsylvh \wedge \bfV_\Sigma
    \]
    for some set $\Sigma$ of non-trivial identities of the form
    \[
        x^{e_0} \prod_{i=1}^{r} (h_i x^{e_i}) \approx x^{f_0} \prod_{i=1}^{r} (h_i x^{f_i}),
    \]
    which are balanced and satisfy properties \hlDo{} and \hlEt{}. Notice that, in this case, the second set of identities given in \cite[Proposition~11.2]{lee_book} do not satisfy property \hlDo{}. Clearly, identities in $\Sigma$ are consequences of the identities \hrLt{} and \hrMt{} and so, by Corollary~\ref{cor:vmst_meet_vsylvh_vmst_meet_vsylv_finite_basis}, they are satisfied by $\vmst \wedge \vsylvh$. Hence, 
    \[
        \vmst \wedge \vsylvh \wedge \bfV_{\{\uord \approx \vord\}} = \vmst \wedge \vsylvh
    \]
    and, therefore, $\uord \approx \vord$ is satisfied by $\vmst \wedge \vsylvh$. A dual argument works for the case of $\vmst \wedge \vsylv$.
\end{proof}

\begin{corollary} \label{cor:vmst_meet_vsylvh_vmst_meet_vsylv_varietal_join}
    The variety $\vmst \wedge \vsylvh$ is the varietal join of $\mathbf{COM}$ and $\mathbf{LRB} \vee \bfV_{J^1}$, and the variety $\vmst \wedge \vsylv$ is the varietal join of $\mathbf{COM}$ and $\mathbf{RRB} \vee \bfV_{\overleftarrow{J^1}}$.
\end{corollary}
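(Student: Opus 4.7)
The plan is to follow the same pattern used in the previous corollaries of this subsection (notably Corollaries~\ref{cor:vlst_meet_vsylv_vrst_meet_vsylvh_varietal_join} and \ref{cor:vmst_meet_VarS_varietal_join}), by matching equational theories directly. Recall that the equational theory of a varietal join is the intersection of the equational theories of the joined varieties, and the equational theory of $\mathbb{COM}$ is the set of all balanced identities.

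For the first claim, I would first combine Lemma~\ref{lemma:J1_dual_identities} (which says the equational theory of $\Var_{J^1}$ consists of all identities satisfying \hlEt{}) with the characterization of $\mathbb{LRB}$ given after Corollary~\ref{cor:vlst_vrst_identities} (which says the equational theory of $\mathbb{LRB}$ consists of all identities satisfying \hlDo{}). Intersecting these two theories yields the set of identities satisfying both \hlDo{} and \hlEt{}. Intersecting further with the theory of $\mathbb{COM}$ restricts this to balanced identities satisfying \hlDo{} and \hlEt{}. By Proposition~\ref{prop:vmst_meet_vsylvh_vmst_meet_vsylv_identities}, this is precisely the equational theory of $\vmst \wedge \vsylvh$, so the two varieties coincide.

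The second claim follows by the dual argument, using that the theory of $\Var_{\overleftarrow{J^1}}$ consists of identities satisfying \hlEo{} (Lemma~\ref{lemma:J1_dual_identities}), that the theory of $\mathbb{RRB}$ consists of identities satisfying \hlDt{}, and applying the second half of Proposition~\ref{prop:vmst_meet_vsylvh_vmst_meet_vsylv_identities}.

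There is no real obstacle here: all the ingredients have already been assembled in earlier results, and the proof is essentially a bookkeeping exercise on equational theories. The only mild point to be careful about is that $\mathbb{LRB}$ and $\Var_{J^1}$ are characterised by identities that need not a priori be balanced, but joining with $\mathbb{COM}$ enforces balance at the level of the intersected theories, which is exactly what the characterization in Proposition~\ref{prop:vmst_meet_vsylvh_vmst_meet_vsylv_identities} requires.
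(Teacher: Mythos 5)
Your argument is correct and is essentially the paper's own proof: the paper likewise derives the result by intersecting the equational theories of $\mathbb{COM}$, $\mathbb{LRB}$ (resp. $\mathbb{RRB}$) and $\Var_{J^1}$ (resp. $\Var_{\overleftarrow{J^1}}$) via Lemma~\ref{lemma:J1_dual_identities} and the material around Corollaries~\ref{cor:vlst_vrst_identities} and \ref{cor:vlst_vrst_varietal_join}, and matching the result against Proposition~\ref{prop:vmst_meet_vsylvh_vmst_meet_vsylv_identities}. Your closing remark about balance being imposed by the join with $\mathbb{COM}$ is exactly the right point of care.
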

\begin{proof}
    Follows from Lemma~\ref{lemma:J1_dual_identities}, Corollaries~\ref{cor:vlst_vrst_identities} and \ref{cor:vlst_vrst_varietal_join}, and Proposition~\ref{prop:vmst_meet_vsylvh_vmst_meet_vsylv_identities}.
\end{proof}

\begin{proposition} \label{prop:vmst_meet_vsylvh_vmst_meet_vsylv_generators}
    The variety $\vmst \wedge \vsylvh$ is generated by the factor monoid $\bbN^*/{(\equiv_\mst \vee \equiv_\sylvh)}$, and the variety $\vmst \wedge \vsylv$ is generated by the factor monoid $\bbN^*/{(\equiv_\mst \vee \equiv_\sylv)}$.
\end{proposition}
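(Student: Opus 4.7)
Set $M := \N^*/(\equiv_\mst \vee \equiv_\sylvh)$. The plan is to mirror the strategy of Propositions~\ref{prop:vlst_meet_vsylv_vrst_meet_vsylvh_generators} and \ref{prop:vmst_meet_VarS_generators}. Since $\equiv_\mst \vee \equiv_\sylvh$ contains each of $\equiv_\mst$ and $\equiv_\sylvh$, the monoid $M$ is a homomorphic image of both $\mst$ and $\sylvh$ and hence lies in $\vmst \wedge \vsylvh$. For the reverse containment it suffices, by Proposition~\ref{prop:vmst_meet_vsylvh_vmst_meet_vsylv_identities}, to prove that every identity $\uord \approx \vord$ satisfied by $M$ is balanced and satisfies properties \hlDo{} and \hlEt{}. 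Balancedness follows because $\equiv_\mst \vee \equiv_\sylvh$ preserves content, so the evaluation sending each variable $x$ to $[x]_{(\equiv_\mst \vee \equiv_\sylvh)}$ forces any identity satisfied by $M$ to have equal contents on its two sides.

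For \hlDo{}, I would rely on the observation that every pair in $\cR_\mst \cup \cR_\sylvh$ consists of two words that begin with the same letter, so $\equiv_\mst \vee \equiv_\sylvh$ preserves the first letter of any word. If \hlDo{} fails for an identity $\uord \approx \vord$ satisfied by $M$, then there exist $x, y \in \supp{\uord \approx \vord}$ such that, without loss of generality, $y$ occurs before the first $x$ in $\uord$ but not before the first $x$ in $\vord$. The evaluation $\psi$ sending $x \mapsto [1]_{(\equiv_\mst \vee \equiv_\sylvh)}$, $y \mapsto [2]_{(\equiv_\mst \vee \equiv_\sylvh)}$ and every other variable to $[\varepsilon]_{(\equiv_\mst \vee \equiv_\sylvh)}$ then sends $\uord$ to the class of a word beginning with $2$ and $\vord$ to the class of a word beginning with $1$, falsifying the identity.

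For \hlEt{}, suppose $x \in \supp{\uord \approx \vord}$ is simple and some $y$ occurs after $x$ in $\uord$ but not after $x$ in $\vord$. Consider the evaluation $\psi$ sending $x \mapsto [2]_{(\equiv_\mst \vee \equiv_\sylvh)}$, $y \mapsto [1]_{(\equiv_\mst \vee \equiv_\sylvh)}$ and other variables to $[\varepsilon]_{(\equiv_\mst \vee \equiv_\sylvh)}$. Then $\psi(\vord)$ is the class of $1^{k} 2$ with $k = |\vord|_y$, whereas $\psi(\uord)$ is the class of a word of the form $1^a 2 1^b$ with $a + b = k$ and $b \geq 1$. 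I would then check that $1^k 2$ is a singleton class in both $\mst$ (the orders of first and last occurrences fix the word) and in $\sylvh$ (the $1$-$2$ left precedence equals $k$), so no single application of a relation from $\cR_\mst \cup \cR_\sylvh$ rewrites $1^k 2$; iterating, the $(\equiv_\mst \vee \equiv_\sylvh)$-class of $1^k 2$ is also $\{1^k 2\}$, which does not contain any $1^a 2 1^b$ with $b \geq 1$. The dual statement for $\vmst \wedge \vsylv$ follows by exchanging prefixes with suffixes, first letters with last letters, and $1^k 2$ with $2 \cdot 1^k$. The main obstacle is verifying the singleton-class claim for $1^k 2$; once this and the first-letter preservation are in hand, the falsifying evaluations for failures of \hlDo{} and \hlEt{} are immediate.
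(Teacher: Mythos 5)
Your proposal is correct and follows essentially the same route as the paper: membership of the factor monoid in $\vmst \wedge \vsylvh$ via homomorphic images, then falsifying evaluations into $\{1,2\}$ that exploit preservation of the first letter (for \hlDo{}) and the fact that $1^k2$ is a singleton class in both $\mst$ and $\sylvh$, hence in their join (for \hlEt{}), exactly as in the paper's proof and the dual reasoning it cites from Proposition~\ref{prop:vlst_meet_vsylv_vrst_meet_vsylvh_generators}. The only cosmetic difference is that you justify first-letter preservation directly from the generating relations, where the paper phrases it via left precedences and orders of first occurrences; both are valid.
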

\begin{proof}
    It is clear that $\bbN^*/{(\equiv_\mst \vee \equiv_\sylvh)} \in \vmst \wedge \vsylvh$, since it is a homomorphic image of both $\mst$ and $\sylvh$. 

    Let $\uord \approx \vord$ be a non-trivial identity satisfied by $\bbN^*/{(\equiv_\mst \vee \equiv_\sylvh)}$. By a dual reasoning to that given in the proof of Proposition~\ref{prop:vlst_meet_vsylv_vrst_meet_vsylvh_generators}, this identity satisfies \hlEt{}.
        
    Suppose now that $x,y \in \supp{\uord \approx \vord}$ are such that $y$ occurs before the first occurrence of $x$ in $\uord$, but not in $\vord$. For $\psi \colon x \mapsto [1]_{(\equiv_\mst \vee \equiv_\sylvh)}$, $y \mapsto [2]_{(\equiv_\mst \vee \equiv_\sylvh)}$ and $z \mapsto [\varepsilon]_{(\equiv_\mst \vee \equiv_\sylvh)}$, for any other variable $z \in \supp{\uord \approx \vord}$, we have that words in $\psi(\vord)$ start with the letter $1$, but words in $\psi(\uord)$ start with $2$, thus they do not share the same left precedences. Therefore, $\psi$ falsifies the identity, hence $\uord \approx \vord$ satisfies \hlDo{}, and the result follows. A dual argument works for the case of $\vmst \wedge \vsylv$.
\end{proof}

\subsection{Varietal joins} \label{subsection:L1_joins}

\begin{corollary} \label{cor:vrst_join_vsylvh_vlst_join_vsylv_identities} 
    The equational theory of $\vrst \vee \vsylvh$ is the set of balanced identities that satisfy the properties \hlAo{} and \hlDt{}, and the equational theory of $\vlst \vee \vsylv$ is the set of balanced identities that satisfy the properties \hlAt{} and \hlDo{}.
\end{corollary}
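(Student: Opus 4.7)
The plan is essentially a direct invocation of the basic duality between joins and equational theories. Recall from Subsection~\ref{subsection:identities_and_varieties} that the equational theory of the varietal join $\V \vee \W$ is the intersection of the equational theories of $\V$ and $\W$. So the whole corollary reduces to intersecting two already-characterised equational theories.

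Concretely, for the first statement I would argue as follows. By Theorem~\ref{theorem:vsylvh_vsylv_identities}, the equational theory of $\vsylvh$ consists exactly of those balanced identities satisfying \hlAo{}, and by Corollary~\ref{cor:vlst_vrst_identities}, the equational theory of $\vrst$ consists exactly of those balanced identities satisfying \hlDt{}. An identity $\uord \approx \vord$ lies in the equational theory of $\vrst \vee \vsylvh$ if and only if it is satisfied by both varieties, i.e.\ if and only if it is balanced and satisfies \hlAo{} and \hlDt{} simultaneously. The second statement follows by the exact dual argument, swapping the roles of prefixes and suffixes and using that $\vsylv$ is characterised by \hlAt{} and $\vlst$ by \hlDo{}.

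There is no real obstacle here: the result is a formal corollary of previously recorded theorems together with the standard fact about equational theories of joins. I would therefore keep the proof to a single sentence or two, citing Theorem~\ref{theorem:vsylvh_vsylv_identities} and Corollary~\ref{cor:vlst_vrst_identities} (and noting the dual case), rather than re-proving any of the characterisations by hand.
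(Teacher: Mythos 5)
Your proposal is correct and matches the paper's own proof exactly: the paper also derives this as an immediate corollary of Theorem~\ref{theorem:vsylvh_vsylv_identities} and Corollary~\ref{cor:vlst_vrst_identities}, using the standard fact that the equational theory of a varietal join is the intersection of the equational theories. Nothing further is needed.
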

\begin{proof}
    Follows from Theorem~\ref{theorem:vsylvh_vsylv_identities} and Corollary~\ref{cor:vlst_vrst_identities}.
\end{proof}

\begin{corollary} \label{cor:vrst_join_vsylvh_vlst_join_vsylv_generators}
    The variety $\vrst \vee \vsylvh$ is generated by the factor monoid $\bbN^*/{(\equiv_\rst \wedge \equiv_\sylvh)}$, and the variety $\vlst \vee \vsylv$ is generated by the factor monoid $\bbN^*/{(\equiv_\lst \wedge \equiv_\sylv)}$.
\end{corollary}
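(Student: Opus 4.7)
The plan is to establish both containments between $\Var_{\N^*/(\equiv_\rst \wedge \equiv_\sylvh)}$ and $\vrst \vee \vsylvh$; the second assertion of the corollary follows from the first by the obvious left-right duality, so I will only describe the proof of the first.

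For one containment, I would exploit the fact that the meet of two congruences on $\N^*$ coincides with their set-theoretic intersection. Hence the canonical map
\[
    \phi\colon \N^*/(\equiv_\rst \wedge \equiv_\sylvh) \longrightarrow \rst \times \sylvh, \qquad [\word] \longmapsto \bigl([\word]_\rst,\, [\word]_\sylvh\bigr)
\]
is a well-defined, injective monoid homomorphism. Since varieties are closed under direct products and submonoids, this places $\N^*/(\equiv_\rst \wedge \equiv_\sylvh)$ inside $\vrst \vee \vsylvh$, giving $\Var_{\N^*/(\equiv_\rst \wedge \equiv_\sylvh)} \subseteq \vrst \vee \vsylvh$.

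For the reverse containment, the inclusions $\equiv_\rst \wedge \equiv_\sylvh \subseteq \equiv_\rst$ and $\equiv_\rst \wedge \equiv_\sylvh \subseteq \equiv_\sylvh$ produce, via the correspondence theorem, surjective homomorphisms from $\N^*/(\equiv_\rst \wedge \equiv_\sylvh)$ onto $\rst$ and onto $\sylvh$. Since varieties are closed under homomorphic images, both $\rst$ and $\sylvh$ lie in $\Var_{\N^*/(\equiv_\rst \wedge \equiv_\sylvh)}$, forcing $\vrst \vee \vsylvh \subseteq \Var_{\N^*/(\equiv_\rst \wedge \equiv_\sylvh)}$.

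I do not expect any substantive obstacle. The argument is purely structural: unlike the meet-side generator results of Section~\ref{subsection:L1_meets}, no syntactic analysis of identities or falsifying evaluations is needed. The single ingredient beyond Birkhoff's closure properties is the elementary identification of the meet of two congruences with their intersection as relations, which is exactly what allows the embedding into the direct product to be defined.
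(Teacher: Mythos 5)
Your proposal is correct and matches the paper's intent exactly: the paper states this corollary without proof precisely because the embedding of $\N^*/(\equiv_\rst \wedge \equiv_\sylvh)$ into $\rst \times \sylvh$ (via the identification of the meet of congruences with their intersection) together with the two canonical surjections onto $\rst$ and $\sylvh$ makes the result immediate, as the authors note in their concluding remarks. Nothing is missing.
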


\begin{proposition} \label{prop:vrst_join_vsylvh_vlst_join_vsylv_finite_basis}
    The variety $\vrst \vee \vsylvh$ admits a finite equational basis consisting of the identities 
    \begin{align*}
        xzytxyry &\approx xzytyxry, \tag{O\textsubscript{2,1}} \label{idO21}\\
        xzytxyrx &\approx xzytyxrx, \tag{E\textsubscript{2,1}} \label{idT21}
    \end{align*}
    and the variety $\vlst \vee \vsylv$ admits a finite equational basis consisting of the identities 
    \begin{align*}
        xzxytxry &\approx xzyxtxry, \tag{O\textsubscript{1,2}} \label{idO12} \\
        xzxytyrx &\approx xzyxtyrx. \tag{E\textsubscript{1,2}} \label{idT12}
    \end{align*}
\end{proposition}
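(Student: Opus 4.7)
The plan is to argue in two halves, mirroring the proofs of Propositions~\ref{prop:vlst_meet_vsylv_vrst_meet_vsylvh_identities} and \ref{prop:vmst_meet_VarS_identities}. First I would check directly that each of \hrOto{} and \hrEto{} satisfies both \hlAo{} and \hlDt{}: the only change each identity effects is a local swap $xy \leftrightarrow yx$ at positions where both letters already occur in the prefix and where either $y$ (for \hrOto{}) or $x$ (for \hrEto{}) still occurs in the later context, so the shortest prefix reaching any variable has the same content and the shortest suffix reaching any variable has the same support on both sides. By Corollary~\ref{cor:vrst_join_vsylvh_vlst_join_vsylv_identities}, this places both identities in the equational theory of $\vrst \vee \vsylvh$.

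For the converse I induct on the length of the prefix preceding the longest common suffix of the two sides, showing that every balanced identity $\hlId$ satisfying \hlAo{} and \hlDt{} is a consequence of \hrOto{} and \hrEto{}. Writing such an identity in the \hyperlink{identity_form}{standard form} $\uord = \uord' x \word$, $\vord = \vord_1' x \vord_2' \word$, I first observe that $x$ must be non-simple: if $x$ were simple, \hlAo{} applied at $x$ would force $\cont{\uord'} = \cont{\vord_1'}$ and hence $\vord_2' = \varepsilon$, contradicting the maximality of the common suffix. If $x$ occurs in $\uord'$ (and hence in $\vord_1'$ by \hlAo{}), I use \hrOto{} to shift the letters of $\vord_2'$ leftward past the central $x$ one at a time: each such swap is licensed because both swapped letters already appear in $\vord_1'$ by \hlAo{} and because \hlDt{} supplies the required later occurrence. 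If $x$ does not occur in $\uord'$ but does occur in $\word$ (which must happen since $x$ is non-simple), the central $x$ itself provides that later occurrence and \hrEto{} applies instead. After each swap $|\vord_2'|$ strictly decreases while \hlAo{} and \hlDt{} are preserved, and eventually the common suffix is extended by one letter, at which point the inductive hypothesis closes the step.

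The main obstacle I expect is the bookkeeping in the inductive step: in every configuration of $\vord_2'$ one must verify that the premise of \hrOto{} or \hrEto{} is genuinely met (in particular, choosing the correct identity according to whether the later occurrence forced by \hlDt{} lies in $\word$ or only at the moved position), and that each intermediate identity remains balanced and continues to satisfy \hlAo{} and \hlDt{}. The dual statement for $\vlst \vee \vsylv$ then follows by a left-right reversal, with \hrOot{} and \hrEot{} replacing \hrOto{} and \hrEto{}, and \hlAt{}, \hlDo{} replacing \hlAo{}, \hlDt{}.
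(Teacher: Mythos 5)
Your architecture is the paper's: one inclusion via Corollary~\ref{cor:vrst_join_vsylvh_vlst_join_vsylv_identities} by checking that \hrOto{} and \hrEto{} satisfy \hlAo{} and \hlDt{}, and the converse by induction on the prefix preceding the common suffix, pushing the distinguished $x$ through $\vord_2'$ one adjacent swap at a time. The gap is in the rule you give for deciding which identity licenses each swap. First, your second case ($x$ not occurring in $\uord'$ but occurring in $\word$) is vacuous: the same computation you use to show $x$ is non-simple (\hlAo{} at $x$ together with balancedness, which gives $|\uord'|_x = |\vord_1'|_x$) in fact shows that $x$ must occur in $\vord_1'$, hence in $\uord'$, whenever $\vord_2' \neq \varepsilon$. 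Second, in your first case the blanket use of \hrOto{} can fail: \hrOto{} requires the letter being moved leftwards to occur \emph{again later}, and \hlDt{} only guarantees $\supp{\vord_2'} \subseteq \supp{\word}$ when $x$ does \emph{not} occur in $\word$ (apply \hlDt{} to $x$, whose last occurrence in $\vord$ is then the displayed one). A concrete instance is $xccxx \approx xcxcx$, which satisfies \hlAo{} and \hlDt{} and has $\uord' = xcc$, $\vord_1' = xc$, $\vord_2' = c$, $\word = x$: here $x$ occurs in $\uord'$, so your rule prescribes \hrOto{}, but the single required swap $xc \mapsto cx$ has no later occurrence of $c$ available; it is instead a substitution instance of \hrEto{}.

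The repair is exactly the paper's dichotomy, and it is governed by $\word$, not by $\uord'$: if $x \in \supp{\word}$, then every swap of $x$ past a letter of $\vord_2'$ is an instance of \hrEto{} (the occurrence of $x$ in $\word$ supplies the later occurrence); if $x \notin \supp{\word}$, then \hlDt{} forces $\supp{\vord_2'} \subseteq \supp{\word}$, so every letter of $\vord_2'$ does occur later and \hrOto{} applies. In either case the requirement that both swapped letters occur earlier is supplied by $\supp{x\vord_2'} \subseteq \supp{\vord_1'}$, which follows from \hlAo{} and balancedness as above. With this correction your induction, your base case, and the dualisation to $\vlst \vee \vsylv$ with \hrOot{}, \hrEot{}, \hlAt{} and \hlDo{} line up with the paper's argument.
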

\begin{proof}
    Clearly, the identities \hrOto{} and \hrEto{} satisfy both properties \hlAo{} and \hlDt{}. So, as these properties define the equational theory of $\vrst \vee \vsylvh$ by Corollary~\ref{cor:vrst_join_vsylvh_vlst_join_vsylv_identities}, all consequences of these identities satisfy these properties as well. 

    We now show that any balanced non-trivial identity satisfying \hlAo{} and \hlDt{} must be a consequence of \hrOto{} and \hrEto{}. The proof will be by induction, in the following sense: We order identities by the length of the common suffix of both sides of the identity, with the induction being on the length of the prefix up to the common suffix. 

    The base case for the induction is the identities of the form
    \[ xyxy \word \approx xyyx \word, \]
    where $x,y \in \calX$ and $\word \in \calX^+$. Notice that $x$ or $y$ must occur in $\word$ by \hlDt{}, hence the identity is a consequence of either \hrOto{} and \hrEto{}, depending on whether $x$ or $y$ occur in $\word$.
        
    Now, let $\uord \approx \vord$ be a balanced non-trivial identity satisfying \hlAo{} and \hlDt{}, of length $n \geq 5$, such that the common suffix $\word \in \calX^*$ of $\uord$ and $\vord$ is such that $|\word| \leq n-4$. Since $\uord \approx \vord$ is a non-trivial identity, we must have 
    \[\uord = \uord' x \word \quad \text{and} \quad \vord = \vord' \word,\]
    for some $x \in \calX$ and $\uord',\vord' \in \calX^+$ such that $x$ is not the last variable of $\vord'$. Furthermore, $\uord$ and $\vord$ share the same content, hence $x$ occurs in $\vord'$. Therefore, we can write 
    \[ \vord' = \vord_1' x \vord_2', \]
    for some words $\vord_1' \in \calX^*$, $\vord_2' \in (\calX\setminus \{x\})^+$.

    Notice that $\supp{x \vord_2'} \subseteq \supp{\vord_1'}$ by \hlAo{}. If $x$ does not occur in $\word$, then $\supp{\vord_2'} \subseteq \supp{\word}$ by \hlDt{}. Thus, the identity $\vord_1' x \vord_2' \word \approx \vord_1'\vord_2' x \word$ is a consequence of \hrOto{} or \hrEto{}, depending on whether $x$ or all the variables of $\vord_2'$ occur in $\word$, and where $x$ and variables of $\vord_2'$ occur in $\vord_1'$. 
    
    As previously mentioned, any consequence of \hrOto{} and \hrEto{} must also satisfy properties \hlAo{} and \hlDt{}. As such, $\vord \approx \vord_1'\vord_2' x \word$ satisfies said properties. By the definition of $\uord \approx \vord$, we can then conclude that the identity $\uord \approx \vord_1'\vord_2' x \word$ is a balanced identity of length $n \geq 5$, satisfying \hlAo{} and \hlDt{}, with a common suffix of length $|\word|+1$. As such, by the induction hypothesis, it is a consequence of \hrOto{} and \hrEto{}, and so is $\uord \approx \vord$. A dual argument works for the case of $\vlst \vee \vsylv$.
\end{proof}

\hypertarget{identity_form}{In the following, we only give the sketches of proofs for results on characterizations of equational bases, as they follow the same reasoning as the one given in the previous proof. For the induction steps, we assume identities $\uord \approx \vord$ are such that
\[ \uord = \uord' x \word \quad \text{and} \quad \vord = \vord_1' x \vord_2' \word, \]
for some $x \in \calX$ and $\uord' \in \calX^+$, $\vord_1' \in \calX^*$, $\vord_2' \in (\calX\setminus \{x\})^+$, where $\word \in \calX^*$ is the common suffix.}

\begin{corollary}
\label{cor:vmst_join_VarS_identities}
    The equational theory of $\vmst \vee \VarS$ is the set of balanced identities that satisfy the properties \hlB{}, \hlC{}, \hlDo{} and \hlDt{}.
\end{corollary}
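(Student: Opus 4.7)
The plan is to apply the general fact, recalled in Subsection~\ref{subsection:identities_and_varieties}, that the equational theory of a varietal join is the meet (that is, the intersection) of the equational theories of the joined varieties. Combining this with the two known characterisations of the equational theories of the summands will directly yield the claim.

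Concretely, I would first invoke Corollary~\ref{cor:vmst_identities} to identify the equational theory of $\vmst$ with the set of balanced identities satisfying both \hlDo{} and \hlDt{}, and then invoke Proposition~\ref{prop:VarS_identities} to identify the equational theory of $\VarS$ with the set of balanced identities satisfying both \hlB{} and \hlC{}. Since both of these equational theories consist only of balanced identities, their intersection also consists only of balanced identities, and an identity lies in the intersection if and only if it simultaneously satisfies the four properties \hlDo{}, \hlDt{}, \hlB{} and \hlC{}.

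Finally, I would conclude that this intersection is precisely the equational theory of $\vmst \vee \VarS$, giving the stated description. There is no real obstacle here: the corollary is a direct consequence of the two prior characterisations and the lattice-theoretic description of equational theories of joins, so the proof amounts to chaining these three observations together.
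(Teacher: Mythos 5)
Your argument is exactly the paper's: the proof there is simply ``Follows from Corollary~\ref{cor:vmst_identities} and Proposition~\ref{prop:VarS_identities}'', relying on the same background fact from Subsection~\ref{subsection:identities_and_varieties} that the equational theory of a varietal join is the intersection of the equational theories. Your write-up just makes that one-line deduction explicit, so it is correct and matches the paper's approach.
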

\begin{proof}
    Follows from Corollary~\ref{cor:vmst_identities} and Proposition~\ref{prop:VarS_identities}.
\end{proof}

\begin{proposition} \label{prop:vmst_join_VarS_finite_basis}
    The variety $\vmst \vee \VarS$ admits a finite equational basis consisting of the identities \hrOot{}, \hrEot{}, \hrOto{} and \hrEto{}.
\end{proposition}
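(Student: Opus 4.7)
The plan is to show that the four identities \hrOot{}, \hrEot{}, \hrOto{} and \hrEto{} form an equational basis for $\vmst \vee \VarS$. By Corollary~\ref{cor:vmst_join_VarS_identities}, the equational theory of this variety consists of the balanced identities satisfying \hlB{}, \hlC{}, \hlDo{} and \hlDt{}. The first step is to verify directly that each of the four basis identities satisfies all four properties: the swap $xy \leftrightarrow yx$ in the middle of each identity preserves the set of length-$2$ subsequences (because both $x$ and $y$ occur before the swap region, and the later occurrences of $x$ and/or $y$ on either side of the swap control the remaining pairs), preserves the restriction to each variable together with the simple letters $z, t, r$ (and $s$, when present), and preserves the orderings of first and last occurrences. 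Hence each basis identity lies in the equational theory of $\vmst \vee \VarS$, and the variety they define contains $\vmst \vee \VarS$.

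For the reverse inclusion, I would follow the inductive scheme of Propositions~\ref{prop:vlst_meet_vsylv_vrst_meet_vsylvh_identities}, \ref{prop:vmst_meet_VarS_identities}, \ref{prop:vmst_meet_vsylvh_vmst_meet_vsylv_identities} and, most closely, \ref{prop:vrst_join_vsylvh_vlst_join_vsylv_finite_basis}, whose basis has the same four-identity shape. Given a balanced non-trivial identity $\hlId$ of length $n \geq 5$ satisfying the four properties, with common suffix $\word$ of length $|\word| \leq n-4$, I would write it in the \hyperlink{identity_form}{standard form} $\uord = \uord' x \word$ and $\vord = \vord_1' x \vord_2' \word$, and show that $\vord$ can be rewritten into some $\vord'' x \word$ via a single application of one of the four basis identities, strictly enlarging the common suffix and invoking the induction hypothesis. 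The base case (differing prefix $xyxy$ versus $xyyx$) is handled directly: by \hlDt{} applied to $\{x,y\}$, at least one of $x$ and $y$ occurs in $\word$, and the resulting identity is a consequence of \hrOto{} or \hrEto{} (or of \hrOot{}/\hrEot{} when both occur) after sending the auxiliary simple variables $z, t, r, s$ of the chosen basis identity to suitable words.

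For the induction step, properties \hlDo{}, \hlDt{}, \hlB{} and \hlC{} together control where $x$ and each variable $y \in \supp{\vord_2'}$ must occur relative to $\vord_1'$, $\vord_2'$ and $\word$, in such a way that exactly one of the four basis identities can be applied to commute $x$ past some letter of $\vord_2'$: \hrOot{} and \hrEot{} correspond to the cases in which both $x$ and $y$ occur again after the swap (in the two possible orders), whereas \hrOto{} and \hrEto{} handle the cases in which only $y$ or only $x$ occurs after the swap. The main obstacle will be the bookkeeping in the case where $x$ is a simple variable, since then \hlC{} forces every letter of $\vord_2'$ to be non-simple and one must verify at each rewriting step that the resulting intermediate identity still satisfies \hlB{}, \hlC{}, \hlDo{} and \hlDt{} so that the induction hypothesis applies; the key observation is that each basis identity effects a single adjacent swap $xy \leftrightarrow yx$ in a context where both variables occur before the swap and, in three of the four identities, also after the swap, which is precisely what is needed to preserve all four properties under rewriting.
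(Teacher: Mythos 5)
Your overall strategy---verify that the four identities satisfy \hlB{}, \hlC{}, \hlDo{} and \hlDt{}, then argue by induction on the length of the prefix preceding the longest common suffix that every balanced identity with these properties is a consequence of the four---is the same as the paper's. The gap is in your base case. You transplant the base case of Proposition~\ref{prop:vrst_join_vsylvh_vlst_join_vsylv_finite_basis} (differing prefix $xyxy$ versus $xyyx$, hence $|\word| \leq n-4$), but that base case is the right one there only because \hlAo{} forces \emph{both} swapped variables to occur, with matching content, before the first position at which the two sides differ. The variety $\vmst \vee \VarS$ does not satisfy \hlAo{}: it satisfies only the weaker \hlDo{}, which forces just \emph{one} of the two swapped variables to occur before the point of difference. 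Consequently its equational theory contains identities whose differing prefix has length $3$, for instance $xyxxy \approx xxyxy$ (same content, same orders of first and last occurrences, same length-$2$ subsequences, no simple variables, so all four properties hold). Such an identity is unreachable by your scheme: it is not an instance of your base case, it violates your standing hypothesis $|\word| \leq n-4$, and your inductive step only ever passes to identities with a strictly longer common suffix, never to a shorter differing prefix.

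The paper's base case is instead $xyx\word \approx xxy\word$ (so the bound is $|\word| \leq n-3$), where \hlB{} forces $x$ to occur in $\word$ and \hlC{} forces $y$ to occur in $\word$; the identity is then an instance of \hrOot{} or \hrEot{} according to the order of the last occurrences of $x$ and $y$ in $\word$. Note that in this minimal configuration only $x$ occurs before the swap and both later occurrences land in $\word$, so it is precisely the identities \hrOot{} and \hrEot{} that are needed there, not \hrOto{} or \hrEto{}. Once the base case is corrected, your inductive step is essentially the paper's: one splits according to whether $x$ occurs in $\vord_1'$, in $\word$, or in both, observes that in each case $\supp{\vord_2'} \subseteq \supp{\vord_1'} \cap \supp{\word}$, and applies the corresponding subset of the four identities (your remark that ``exactly one'' of them applies is also slightly off---when $x$ occurs in both $\vord_1'$ and $\word$ several may be usable---but that is harmless).
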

\begin{proof}
    Clearly, the identities \hrOot{}, \hrEot{}, \hrOto{} and \hrEto{} satisfy properties \hlDo{}, \hlDt{}, \hlB{} and \hlC{}, which define the equational theory of $\vmst \vee \VarS$ by Corollary~\ref{cor:vmst_join_VarS_identities}. Now, we prove by induction that any identity satisfying these properties is a consequence of said identities.

    The base case for the induction is the identities of the form
    \[ xyx \word \approx xxy \word. \]
    Notice that $x$ and $y$ must occur in $\word$ by \hlB{} and \hlC{}, respectively. Thus, the identity is a consequence of either \hrOot{} and \hrEot{}, depending on where $x$ and $y$ occur in $\word$.

    Let $\hlId$ be a balanced non-trivial identity satisfying \hlDo{}, \hlDt{}, \hlB{} and \hlC{}, of length $n \geq 5$, with common suffix $\word \in \calX^*$ such that $|\word| \leq n-3$. Notice that no variable in $x \vord_2'$ is simple by \hlC{}, thus $x$ must occur in $\vord_1'$ or $\word$. 
    
    If $x$ does not occur in $\vord_1'$, then we have $\supp{\vord_2'} \subseteq \supp{\vord_1'} \cap \supp{\word}$ by \hlDo{} and \hlB{}. Thus, the identity $\vord_1' x \vord_2' \word \approx \vord_1'\vord_2' x \word$ is a consequence of a subset of \hrOot{} and \hrEot{}, depending on where $x$ and the variables of $\vord_2'$ occur in $\word$.

    On the other hand, if $x$ does not occur in $\word$, then $\supp{\vord_2'} \subseteq \supp{\vord_1'} \cap \supp{\word}$ by \hlB{} and \hlDt{}. Thus, $\vord_1' x \vord_2' \word \approx \vord_1'\vord_2' x \word$ is a consequence of a subset of \hrOto{} and \hrEto{}, depending on where $x$ and the variables of $\vord_2'$ occur in $\vord_1'$.

    Finally, if $x$ occurs in both $\vord_1'$ and $\word$, then $\vord_1' x \vord_2' \word \approx \vord_1'\vord_2' x \word$ is a consequence of a subset of \hrOot{}, \hrEot{}, \hrOto{} and \hrEto{}, depending on where $x$ and the variables of $\vord_2'$ occur in $\vord_1'$ or $\word$. The result follows.
\end{proof}

\begin{corollary} \label{cor:vlst_join_VarS_vrst_join_VarS_identities} 
    The equational theory of $\vlst \vee \VarS$ is the set of balanced identities that satisfy the properties \hlB{}, \hlC{} and \hlDo{}, and the equational theory of $\vrst \vee \VarS$ is the set of balanced identities that satisfy the properties \hlB{}, \hlC{} and \hlDt{}.
\end{corollary}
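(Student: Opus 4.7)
The plan is to deduce this directly from the characterisations of the equational theories of $\vlst$, $\vrst$, and $\VarS$, together with the general fact recalled in Subsection~\ref{subsection:identities_and_varieties} that the equational theory of a varietal join $\V \vee \W$ is the meet (that is, intersection) of the equational theories of $\V$ and $\W$.

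Concretely, I would argue as follows for the first statement. By Corollary~\ref{cor:vlst_vrst_identities}, the equational theory of $\vlst$ consists exactly of the balanced identities satisfying \hlDo{}. By Proposition~\ref{prop:VarS_identities}, the equational theory of $\VarS$ consists exactly of the balanced identities satisfying both \hlB{} and \hlC{}. Hence the equational theory of $\vlst \vee \VarS$, being the intersection of these two sets, is precisely the set of balanced identities simultaneously satisfying \hlB{}, \hlC{} and \hlDo{}. The second statement follows by the symmetric argument, replacing $\vlst$ with $\vrst$ and \hlDo{} with \hlDt{}.

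Since this is a routine combination of prior results, there is no real obstacle: no induction on identity length is needed here (in contrast to the finite basis results earlier in the subsection), because the properties \hlB{}, \hlC{}, \hlDo{}, \hlDt{} are themselves closure conditions, so intersecting the property-defined equational theories immediately gives the property-defined equational theory of the join. Accordingly, the proof can simply cite Corollary~\ref{cor:vlst_vrst_identities} and Proposition~\ref{prop:VarS_identities} in one line.
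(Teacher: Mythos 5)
Your proposal is correct and matches the paper's proof exactly: the paper's own justification is the one-line ``Follows from Corollary~\ref{cor:vlst_vrst_identities} and Proposition~\ref{prop:VarS_identities}'', i.e.\ the equational theory of the join is the intersection of the two property-defined equational theories. No further comment is needed.
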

\begin{proof}
    Follows from Corollary~\ref{cor:vlst_vrst_identities} and Proposition~\ref{prop:VarS_identities}.
\end{proof}

\begin{proposition}\label{prop:vlst_join_VarS_vrst_join_VarS_finite_basis}
    The variety $\vlst \vee \VarS$ admits a finite equational basis consisting of the identities \hrOot{}, \hrEot{}, and \hrLt{}, and the variety $\vrst \vee \VarS$ admits a finite equational basis consisting of the identities \hrOto{}, \hrEto{}, and \hrRt{}.
\end{proposition}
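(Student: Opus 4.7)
The plan is to follow the same inductive scheme used throughout Section~\ref{section:lattice_L1}. First, one checks by direct inspection that each of \hrOot{}, \hrEot{}, and \hrLt{} satisfies the three properties \hlB{}, \hlC{}, and \hlDo{} which, by Corollary~\ref{cor:vlst_join_VarS_vrst_join_VarS_identities}, characterise the equational theory of $\vlst \vee \VarS$; hence these three identities belong to that theory. It then remains to establish the converse, that every balanced identity satisfying \hlB{}, \hlC{}, and \hlDo{} is a consequence of \hrOot{}, \hrEot{}, and \hrLt{}.

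I would prove this by induction on the length of the common suffix of the two sides, using the standard decomposition $\uord = \uord' x \word$ and $\vord = \vord_1' x \vord_2' \word$ with $\vord_2' \in (\X \setminus \{x\})^+$. The base case consists of identities of the form $xyx\word \approx xxy\word$: property \hlB{} forces $x \in \word$, since otherwise the subsequence $yx$ on the left would not appear on the right, and property \hlC{} forces $y$ to be non-simple (were $y$ simple, restricting to $x$ and the simple variables would give the unequal words $xyx$ and $xxy$), hence $y \in \word$ as well. Such an identity is then a direct consequence of \hrOot{} or \hrEot{}, chosen according to whether the next occurrence of $x$ in $\word$ precedes or follows that of $y$.

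For the induction step, property \hlC{} again forces $x$ and every variable in $\vord_2'$ to be non-simple. If $x$ occurs in $\vord_1'$, then the rewrite $\vord_1' x \vord_2' \word \approx \vord'' x \word$ is carried out one letter of $\vord_2'$ at a time: for a letter with a matching copy already in $\vord_1'$, apply \hrLt{} to move it past $x$; for a letter appearing again only in $\word$, apply \hrOot{} or \hrEot{}, selected by the relative positions of its second occurrence and of the next copy of $x$ in $\word$. If instead $x \notin \vord_1'$ but $x \in \word$, then \hlDo{} combined with \hlB{} yields $\supp{\vord_2'} \subseteq \supp{\vord_1'} \cap \supp{\word}$, and \hrOot{} or \hrEot{} applies directly to each letter of $\vord_2'$. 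In every case the new identity $\uord \approx \vord'' x \word$ has common suffix $x\word$, strictly longer than $\word$, so the inductive hypothesis finishes the argument. The dual chain of rewrites, replacing \hrLt{} by \hrRt{} and swapping \hlDo{} for \hlDt{}, delivers the basis \hrOto{}, \hrEto{}, \hrRt{} for $\vrst \vee \VarS$.

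The main obstacle will be the bookkeeping in the induction step when $x$ lies in both $\vord_1'$ and $\word$: one must ensure that the planned combinations of \hrLt{} with \hrOot{} or \hrEot{} cover every derivation step, without ever needing \hrRt{}, which does not hold in $\vlst \vee \VarS$.
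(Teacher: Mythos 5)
Your proposal is correct and follows essentially the same route as the paper's proof: verify that \hrOot{}, \hrEot{} and \hrLt{} satisfy \hlB{}, \hlC{} and \hlDo{}, then induct on the length of the common suffix using the standard decomposition, with the base case $xyx\word \approx xxy\word$ and the induction step split according to whether $x$ occurs in $\vord_1'$ and/or $\word$, using \hrLt{} for letters of $\vord_2'$ duplicated in $\vord_1'$ and \hrOot{}/\hrEot{} for those duplicated in $\word$. The only difference is cosmetic: you merge the paper's cases ``$x$ occurs in $\vord_1'$ but not $\word$'' and ``$x$ occurs in both'' into one branch, which is harmless since \hlB{} forces every letter of $\vord_2'$ into $\vord_1'$ when $x \notin \word$.
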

\begin{proof}
    Clearly, the identities \hrOot{}, \hrEot{} and \hrLt{} satisfy properties \hlDo{}, \hlB{} and \hlC{}, which define the equational theory of $\vlst \vee \VarS$ by Corollary~\ref{cor:vlst_join_VarS_vrst_join_VarS_identities}. Now, we prove by induction that any identity satisfying these properties is a consequence of said identities.

    The base case for the induction is the identities of the form
    \[ xyx \word \approx xxy \word. \]
    Notice that $x$ and $y$ must occur in $\word$ by \hlB{} and \hlC{}, respectively. Thus, the identity is a consequence of either \hrOot{} and \hrEot{}, depending on where $x$ and $y$ occur in $\word$.

    Let $\hlId$ be a balanced non-trivial identity satisfying \hlDo{}, \hlB{} and \hlC{}, of length $n \geq 4$, with common suffix $\word \in \calX^*$ such that $|\word| \leq n-3$. Notice that no variable in $x \vord_2'$ is simple by \hlC{}, thus $x$ must occur in $\vord_1'$ or $\word$. 
    If $x$ does not occur in $\vord_1'$, then we have $\supp{\vord_2'} \subseteq \supp{\vord_1'} \cap \supp{\word}$ by \hlDo{} and \hlB{}. Thus, the identity $\vord_1' x \vord_2' \word \approx \vord_1'\vord_2' x \word$ is a consequence of a subset of \hrOot{} and \hrEot{}, depending on where $x$ and the variables of $\vord_2'$ occur in $\word$.

    On the other hand, if $x$ does not occur in $\word$, then $\supp{\vord_2'} \subseteq \supp{\vord_1'}$ by \hlB{}. Thus, $\vord_1' x \vord_2' \word \approx \vord_1'\vord_2' x \word$ is a consequence of \hrLt{}.

    Finally, if $x$ occurs in both $\vord_1'$ and $\word$, then $\vord_1' x \vord_2' \word \approx \vord_1'\vord_2' x \word$ is a consequence of a subset of \hrOot{}, \hrEot{} and \hrLt{}, depending on where $x$ and the variables of $\vord_2'$ occur in $\vord_1'$ or $\word$. The result follows. A dual argument works for the case of $\vrst \wedge \VarS$.
\end{proof}

\subsection{Proving Theorem~\ref{theorem:lattice_1}} \label{subsection:L1_correctness}

The following results allow us to conclude the proof of Theorem~\ref{theorem:lattice_1}, by showing that the lattice \hrLato{} has no missing varieties, and the covers are well-determined. We only state the results that do not follow immediately from algebraic manipulation or previously established strict containments of varieties. Recall that, if any three elements $a,b,c$ of a partial order are such that $a<b$ and $c$ is incomparable with both $a$ and $b$, then $c$ is also incomparable with any element between $a$ and $b$, and if $b < a \vee c$ (resp. $a > b \wedge c$), then $b \vee c = a \vee c$ (resp. $a \wedge c = b \wedge c$).

\begin{lemma} \label{lemma:incomparabilities_lattice_1}
    The following statements hold:
    \begin{enumerate}[label=(\roman*)]
        \item $\vmst \wedge \VarS$ is incomparable with $\vlst$ and $\vrst$;
        \item $\vmst \vee \VarS$ is incomparable with $\vsylvh$ and $\vsylv$;
        \item $\vlst \wedge \vsylv$ is incomparable with $\vrst \wedge \vsylvh$ and $\vrst$;
        \item $\vrst \wedge \vsylvh$ is incomparable with $\vlst \wedge \vsylv$ and $\vlst$;
        \item $\vrst \vee \vsylvh$ is incomparable with $\vlst \vee \vsylv$ and $\vsylv$;
        \item $\vlst \vee \vsylv$ is incomparable with $\vrst \vee \vsylvh$ and $\vsylvh$;
        \item $\vmst \wedge \vsylvh$ is incomparable with $\VarS$, $\vrst$ and $\vsylv$;
        \item $\vmst \wedge \vsylv$ is incomparable with $\VarS$, $\vlst$ and $\vsylvh$;
        \item $\vlst \vee \VarS$ is incomparable with $\vmst$, $\vrst$ and $\vsylv$;
        \item $\vrst \vee \VarS$ is incomparable with $\vmst$, $\vlst$ and $\vsylvh$.
    \end{enumerate}
\end{lemma}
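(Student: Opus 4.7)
The plan is to invoke the characterizations of each equational theory in \hrLato{} by combinations of properties \hlAo{}--\hlF{} from the previous subsections. For each asserted incomparability $\mathbb{U}$ versus $\mathbb{V}$, it suffices to exhibit two balanced witness identities: one in the equational theory of $\mathbb{U}$ that fails a property characterizing $\mathbb{V}$, and a symmetric witness the other way. The monotonicity principle recalled immediately before the statement lets me collapse many items to the same core checks, since incomparability with the extremes of an interval propagates to every variety lying strictly between them.

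I would draw witnesses from two sources. The first is the basis identities of the varieties established earlier in the section: each realizes its targeted properties exactly and fails the complementary ones. For example, \hrLo{} satisfies \hlDo{} (so it lies in $\vlst$ by Corollary~\ref{cor:vlst_vrst_finite_basis}) but violates \hlEt{}, because $y$ is simple and the shortest $y$-containing suffixes of the two sides have different supports. By Proposition~\ref{prop:vmst_meet_VarS_identities} this gives $\vlst \not\subseteq \vmst \wedge \VarS$, settling one half of (i); dualising with \hrRo{} handles $\vrst$. Analogous uses of \hrLt{}, \hrRt{}, \hrMd{}, \hrMt{}, \hrOot{}, \hrEot{}, \hrOto{}, \hrEto{} will supply one direction of each of (iii)--(x), always by citing the corresponding basis corollary for the source variety and then checking that the selected property from the characterization of the target variety fails.

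The second source is short ad hoc identities with no simple variables, which makes \hlC{}, \hlEo{} and \hlEt{} hold vacuously. The key example is $\hrMf{}: x^2y^2 \approx y^2x^2$, which vacuously satisfies \hlEo{} and \hlEt{}, hence lies in $\vmst \wedge \VarS$ by Proposition~\ref{prop:vmst_meet_VarS_identities}, but fails both \hlDo{} and \hlDt{}, completing (i). For (ii), I would verify that $xyxyxy \approx x^2yxy^2$ satisfies \hlB{}, \hlC{}, \hlDo{} and \hlDt{} (so lies in $\vmst \vee \VarS$ by Corollary~\ref{cor:vmst_join_VarS_identities}) while failing both \hlAo{} and \hlAt{}, so a single identity separates $\vmst \vee \VarS$ from $\vsylvh$ and from $\vsylv$. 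The items (iii)--(vi) can be dealt with by combining \hrLo{}, \hrRo{}, \hrMf{} and small perturbations, citing Theorem~\ref{theorem:vsylvh_vsylv_identities}, Corollaries~\ref{cor:vlst_vrst_identities} and \ref{cor:vmst_identities}, and Propositions~\ref{prop:vlst_meet_vsylv_vrst_meet_vsylvh_identities} and \ref{prop:vmst_meet_vsylvh_vmst_meet_vsylv_identities}.

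The main obstacle I anticipate lies in the asymmetric items (vii)--(x), where the target variety blends a prefix-type condition with a suffix-type condition of different strength (for instance, $\vmst \wedge \vsylvh$ satisfies \hlDo{} and \hlEt{} but not \hlDt{} nor \hlAo{}). For these, rather than reusing \hrMf{} directly, I would perturb one of \hrLo{}, \hrMd{} or \hrMt{} by inserting extra non-simple variables on one side, calibrated to break the strong condition outside the target equational theory while preserving the weaker condition that should hold. These perturbations require careful case-by-case verification against all four characterizations involved in each item, which will be the most delicate step in executing the plan.
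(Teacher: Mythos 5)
Your proposal is correct and follows essentially the same route as the paper: for each pair, exhibit explicit balanced witness identities (the basis identities established earlier, plus a short ad hoc identity with no simple variables for items (i) and (ii)) and test them against the property characterizations of the two equational theories. The only divergence is that the difficulty you anticipate in items (vii)--(x) does not materialise: the paper shows that the basis identities \hrLt{}, \hrMd{} and \hrRt{} already serve directly as the separating witnesses there, so no perturbation by extra non-simple variables is needed.
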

\begin{proof}
    By Proposition~\ref{prop:vmst_meet_VarS_identities}, the identity $xy^2x \approx yx^2y$ is satisfied by $\vmst \wedge \VarS$, while neither \hrLo{} nor \hrRo{} are. Moreover, by Corollary~\ref{cor:vlst_vrst_identities}, \hrLo{} is satisfied by $\vlst$, \hrRo{} is satisfied by $\vrst$, and $xy^2x \approx yx^2y$ is satisfied by neither of them. Therefore, \textit{(i)} holds.

    By Corollary~\ref{cor:vmst_join_VarS_identities}, the identity $xyxyxy \approx x^2yxy^2$ is satisfied by $\vmst \vee \VarS$, but neither \hrLt{} nor \hrRt{} are. However, by Theorem~\ref{theorem:vsylvh_vsylv_identities}, \hrRt{} is satisfied by $\vsylv$, \hrLt{} is satisfied by $\vsylvh$, and $xyxyxy \approx x^2yxy^2$ is satisfied by neither of them. Thus, \textit{(ii)} holds.

    By Corollary~\ref{cor:vjst_identities} and Proposition~\ref{prop:vlst_meet_vsylv_vrst_meet_vsylvh_identities}, \hrLo{} is satisfied by $\vlst \wedge \vsylv$ but neither by $\vrst \wedge \vsylvh$ nor $\vrst$, and \hrRo{} is satisfied by $\vrst \wedge \vsylvh$ and $\vrst$ but not by $\vlst \wedge \vsylv$. Hence \textit{(iii)} holds. Case \textit{(iv)} holds by a dual argument.

    By Theorem~\ref{theorem:vbaxt_identities} and Corollary~\ref{cor:vrst_join_vsylvh_vlst_join_vsylv_identities}, \hrOto{} is satisfied by $\vrst \wedge \vsylvh$, but neither by $\vlst \vee \vsylv$ nor $\vsylv$, and \hrOot{} is satisfied by $\vlst \wedge \vsylv$ and $\vsylv$, but not by $\vrst \vee \vsylvh$. As such, \textit{(v)} holds. Case \textit{(vi)} holds by a dual argument.

    By Proposition~\ref{prop:vmst_meet_vsylvh_vmst_meet_vsylv_identities}, \hrLt{} and \hrMd{} are satisfied by $\vmst \wedge \vsylvh$. By Proposition~\ref{prop:VarS_identities}, \hrMd{} is not satisfied by $\VarS$, and by Theorem~\ref{theorem:vsylvh_vsylv_identities} and Corollary~\ref{cor:vlst_vrst_identities}, \hrLt{} is satisfied by neither $\vrst$ nor $\vsylv$. On the other hand, by the same results, \hrRt{} is satisfied by $\VarS$, $\vrst$ and $\vsylv$, but not by $\vmst \wedge \vsylvh$. Therefore, \textit{(vii)} holds. Case \textit{(viii)} holds by a dual argument.

    By Corollary~\ref{cor:vlst_join_VarS_vrst_join_VarS_identities}, \hrLt{} is satisfied by $\vlst \vee \VarS$. By Corollary~\ref{cor:vmst_identities} and the results mentioned in the previous paragraph, \hrLt{} is satisfied by neither $\vmst$, $\vrst$ nor $\vsylv$. On the other hand, \hrRt{} is satisfied by $\vrst$ and $\vsylv$, but not by $\vmst \wedge \vsylvh$. Furthermore, \hrMd{} is satisfied by $\vmst$, but not by $\vlst \vee \VarS$. Therefore, \textit{(ix)} holds. Case \textit{(x)} holds by a dual argument.
\end{proof}

\begin{lemma} \label{lemma:equalities_lattice_1}
    The following equalities hold:
    \begin{enumerate}[label=(\roman*)]
        \item $\vlst \wedge \vsylv = (\vmst \wedge \VarS) \wedge \vlst$;
        \item $\vrst \wedge \vsylvh = (\vmst \wedge \VarS) \wedge \vrst$;
        \item $\vmst \wedge \VarS = (\vlst \wedge \vsylv) \vee (\vrst \wedge \vsylvh)$;
        \item $\vmst \wedge \vsylvh = \vlst \vee (\vrst \wedge \vsylvh) = \vmst \wedge (\vlst \vee \VarS)$;
        \item $\vmst \wedge \vsylv = \vrst \vee (\vlst \wedge \vsylv) = \vmst \wedge (\vrst \vee \VarS)$;
        \item $\vrst \vee \vsylvh = (\vmst \vee \VarS) \vee \vsylvh$;
        \item $\vlst \vee \vsylv = (\vmst \vee \VarS) \vee \vsylv$;
        \item $\vmst \vee \VarS = (\vrst \vee \vsylvh) \wedge (\vlst \vee \vsylv)$;
        \item $\vlst \vee \VarS = \vsylvh \wedge (\vlst \vee \vsylv) = (\vmst \wedge \vsylvh) \vee \VarS$;
        \item $\vrst \vee \VarS = \vsylv \wedge (\vrst \vee \vsylvh) = (\vmst \wedge \vsylv) \vee \VarS$.
    \end{enumerate}
\end{lemma}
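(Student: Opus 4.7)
The plan is to prove the ten equalities by splitting them into three groups by style of argument, using as ingredients: (a) the containments $\vlst \subseteq \vsylvh \cap \vmst$, $\vrst \subseteq \vsylv \cap \vmst$, $\VarS = \vsylvh \wedge \vsylv$ and $\vmst = \vlst \vee \vrst$; (b) the characterisations of equational theories via the properties \hlAo{}--\hlF{} together with the implication diagram among them from Subsection~\ref{subsection:properties_equational_theories}; and (c) the finite bases established earlier in the section.

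Equalities (i), (ii), (vi), (vii) will be handled by pure lattice-theoretic manipulation using (a). For instance, $(\vmst \wedge \VarS) \wedge \vlst = (\vmst \wedge \vlst) \wedge \vsylvh \wedge \vsylv = \vlst \wedge \vsylv$ since $\vlst \subseteq \vmst \cap \vsylvh$; and $(\vmst \vee \VarS) \vee \vsylvh = \vlst \vee \vrst \vee \VarS \vee \vsylvh = \vrst \vee \vsylvh$ since $\vlst, \VarS \subseteq \vsylvh$. Items (ii) and (vii) follow by dual arguments.

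For (iii), (iv), (v), (ix)(b), (x)(b), one containment is immediate from (a) and the other will be obtained by comparing equational theories using (b). In the direct cases (iii), (iv)(a), (v)(a), both sides are already characterised by the same conjunction of properties: for example, both sides of (iii) are the balanced identities satisfying \hlEo{} and \hlEt{}, by Propositions~\ref{prop:vlst_meet_vsylv_vrst_meet_vsylvh_identities} and \ref{prop:vmst_meet_VarS_identities}; both sides of (iv)(a) are balanced with \hlDo{} and \hlEt{}. For (iv)(b), (v)(b), (ix)(b), (x)(b), I would use a short implication argument: in (iv)(b), both the equational theory of $\vmst$ and that of $\vlst \vee \VarS$ consist of identities satisfying \hlDo{} and \hlEt{}, since \hlDt{} implies \hlEt{} and \hlC{} implies \hlEt{}; in (ix)(b), the equational theory of $(\vmst \wedge \vsylvh) \vee \VarS$ is the set of balanced identities satisfying \hlDo{}, \hlEt{}, \hlB{} and \hlC{}, which collapses to that of $\vlst \vee \VarS$ (balanced with \hlB{}, \hlC{}, \hlDo{}) again via \hlC{} implies \hlEt{}; and (x)(b) is dual, using \hlB{} implies \hlEo{}.

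Finally, for (viii), (ix)(a), (x)(a), the plan is to compare finite bases from (c). Proposition~\ref{prop:vmst_join_VarS_finite_basis} gives $\{$\hrOot{}, \hrEot{}, \hrOto{}, \hrEto{}$\}$ as a basis for $\vmst \vee \VarS$, and Proposition~\ref{prop:vrst_join_vsylvh_vlst_join_vsylv_finite_basis} shows that its two halves are bases for $\vlst \vee \vsylv$ and $\vrst \vee \vsylvh$; since a basis of a meet is the union of bases, (viii) follows. Likewise, the basis $\{$\hrOot{}, \hrEot{}, \hrLt{}$\}$ of $\vlst \vee \VarS$ from Proposition~\ref{prop:vlst_join_VarS_vrst_join_VarS_finite_basis} is the union of a basis of $\vsylvh$ (namely $\{$\hrLt{}$\}$, Theorem~\ref{theorem:vsylvh_vsylv_finite_basis}) and a basis of $\vlst \vee \vsylv$, giving (ix)(a), and dually for (x)(a). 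The main obstacle I anticipate is the bookkeeping across all ten cases: making sure the property-conjunctions on the two sides really do coincide after applying the correct implications from the diagram, and that each basis decomposes along the intended lattice operation. No single step is deep, but the volume of cases and the ease of overlooking a required simplification are where care will be needed.
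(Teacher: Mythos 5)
Your proposal is correct, and it reaches all ten equalities, but it routes several cases differently from the paper. The paper handles (i), (ii), (iv)(b), (v)(b), (viii), (ix)(a), (x)(a) by comparing finite bases (using that the union of bases is a basis for the meet, together with consequence relations such as \hrLt{}, \hrMd{}, \hrMt{} being consequences of \hrLo{}), and handles (iii), (iv)(a), (v)(a), (vi), (vii), (ix)(b), (x)(b) by intersecting the property-characterised equational theories. You instead dispose of (i), (ii), (vi), (vii) by pure lattice absorption from the containments $\vlst \subseteq \vmst \wedge \vsylvh$, $\vrst \subseteq \vmst \wedge \vsylv$, $\VarS \subseteq \vsylvh \wedge \vsylv$ and $\vmst = \vlst \vee \vrst$ — e.g.\ $(\vmst \wedge \VarS) \wedge \vlst = (\vmst \wedge \vlst) \wedge (\vsylvh \wedge \vlst) \wedge \vsylv = \vlst \wedge \vsylv$ — which is shorter and avoids any appeal to bases for those four items; all the required containments are available from the unnumbered lemma preceding Subsection~\ref{subsection:L1_meets}. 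Your treatment of (iii), (iv)(a), (v)(a), (viii), (ix), (x) coincides in substance with the paper's.

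One spot deserves tightening: in (iv)(b) and (v)(b) you write that the equational theories of $\vmst$ and of $\vlst \vee \VarS$ ``consist of'' identities satisfying \hlDo{} and \hlEt{} — they do not equal that set, they are merely \emph{contained} in it. The argument still closes, but only because the equational theory of a meet of varieties is the \emph{join} of the equational theories (the set of consequences of their union), not their intersection: since the set of balanced identities satisfying \hlDo{} and \hlEt{} is itself an equational theory (namely that of $\vmst \wedge \vsylvh$, by Proposition~\ref{prop:vmst_meet_vsylvh_vmst_meet_vsylv_identities}) and contains both generating theories, it contains their join, giving $\vmst \wedge \vsylvh \subseteq \vmst \wedge (\vlst \vee \VarS)$; the reverse inclusion is the trivial one from $\vlst \vee \VarS \subseteq \vsylvh$. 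You should state this explicitly, since the analogous ``intersect the properties'' shortcut that works for joins of varieties is not valid for meets.
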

\begin{proof}
    Cases \textit{(i)} and \textit{(ii)} follow from Theorem~\ref{theorem:vsylvh_vsylv_finite_basis} and Corollaries~\ref{cor:vlst_vrst_finite_basis} and \ref{cor:vmst_meet_VarS_finite_basis}, as \hrLt{}, \hrMd{}, and \hrMt{} are consequences of \hrLo{}, and \hrRt{}, \hrMd{}, and \hrMt{} are consequences of \hrRo{}.

    Case \textit{(iii)} follows from Propositions~\ref{prop:vlst_meet_vsylv_vrst_meet_vsylvh_identities} and \ref{prop:vmst_meet_VarS_identities}. 
    
    The first equalities in cases \textit{(iv)} and \textit{(v)} follow from Corollary~\ref{cor:vlst_vrst_identities} and Propositions~\ref{prop:vmst_meet_VarS_identities} and \ref{prop:vmst_meet_vsylvh_vmst_meet_vsylv_identities} as \hlDo{} implies \hlEo{} and \hlDt{} implies \hlEt{}. The second equalities follow from Corollary~\ref{cor:vmst_finite_basis} and Proposition~\ref{prop:vlst_join_VarS_vrst_join_VarS_finite_basis} as \hrOto{}, \hrEto{},\hrOot{} and \hrEot{} are consequences of either \hrMd{} or \hrMt{}.

    Cases \textit{(vi)} and \textit{(vii)} follow from Theorem~\ref{theorem:vsylvh_vsylv_identities} and Corollaries~\ref{cor:vrst_join_vsylvh_vlst_join_vsylv_identities} and \ref{cor:vmst_join_VarS_identities} as \hlAo{} implies \hlDo{}, \hlB{}, and \hlC{}, and \hlAt{} implies \hlDt{}, \hlB{}, and \hlC{}.
    
    Case \textit{(viii)} follows from Propositions~\ref{prop:vrst_join_vsylvh_vlst_join_vsylv_finite_basis} and \ref{prop:vmst_join_VarS_finite_basis}. 
    
    The first equalities in cases \textit{(ix)} and \textit{(x)} follow from Theorem~\ref{theorem:vsylvh_vsylv_finite_basis} and Propositions~\ref{prop:vmst_join_VarS_finite_basis} and \ref{prop:vlst_join_VarS_vrst_join_VarS_finite_basis}, as \hrOto{} and \hrEto{} are consequences of \hrLt{}, and \hrOot{} and \hrEot{} are consequences of \hrRt{}.  The second equalities follow from Propositions~\ref{prop:VarS_identities} and \ref{prop:vmst_meet_vsylvh_vmst_meet_vsylv_identities} as \hlB{} and \hlC{} both imply \hlEo{} and \hlEt{}.
\end{proof}

The correctness of Theorem~\ref{theorem:lattice_1} then follows from Lemmas~\ref{lemma:incomparabilities_lattice_1} and \ref{lemma:equalities_lattice_1}.

\subsection{Axiomatic ranks} \label{subsection:L1_axiomatic_ranks}

\begin{lemma}
\label{lemma:vmst_meet_VarS_shortest_identity}
    The shortest non-trivial identity, with n variables, satisfied by $\vmst \wedge \VarS$, is of length $n+2$.
\end{lemma}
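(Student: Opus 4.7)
The plan is to establish the lemma through matching upper and lower bounds, both of which proceed via the description of the equational theory in Proposition~\ref{prop:vmst_meet_VarS_identities}: a non-trivial identity lies in the theory of $\vmst \wedge \VarS$ exactly when it is balanced, its two sides share the same support and simple variables, and it satisfies both \hlEo{} and \hlEt{}.

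For the upper bound, I would exhibit, for each $n \geq 2$, the identity
\[
    xyxy\, z_1 z_2 \cdots z_{n-2} \approx x^2 y^2 z_1 z_2 \cdots z_{n-2},
\]
understood as $xyxy \approx x^2 y^2$ when $n = 2$. It involves exactly $n$ variables, has length $n+2$, and is non-trivial. Its only simple variables are $z_1, \ldots, z_{n-2}$, and for each such $z_i$ both the shortest prefix and the shortest suffix containing $z_i$ have the same support on either side, so \hlEo{} and \hlEt{} hold trivially.

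For the lower bound, I would let $\uord \approx \vord$ be a non-trivial identity in the theory with $n$ variables and write its common length as $n + k$, where $k \geq 0$ is the total excess $\sum_{x \in \supp{\uord}} (|\uord|_x - 1)$, and show that $k \geq 2$. In the case $k = 0$, every variable is simple; then, applied at each variable, \hlEo{} forces the set of variables occurring in the shortest prefix up to any given one to be the same on both sides, so the relative order of every pair of variables is identical in $\uord$ and $\vord$, giving $\uord = \vord$. In the case $k = 1$, exactly one variable $x$ is non-simple, with two occurrences on each side, and all others are simple; the same argument still fixes the relative order of the simple variables, and \hlEo{} (respectively, \hlEt{}) applied at each simple variable $y$ determines whether the first (respectively, last) occurrence of $x$ precedes or follows $y$. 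This partitions the simple variables into three consecutive blocks --- those appearing before both $x$'s, those between the two $x$'s, and those after both $x$'s --- whose identities, internal orders, and membership assignments are the same on both sides. Reassembling $\uord$ and $\vord$ from this data gives $\uord = \vord$, contradicting non-triviality.

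The main obstacle is the $k = 1$ case: one has to verify that \hlEo{} and \hlEt{} together genuinely determine the classification of each simple variable into its block relative to the two occurrences of $x$, so that both words are pinned down by the same combinatorial data. Once this three-way partition is made explicit the reconstruction step is immediate, and the $k = 0$ argument serves as a clean warm-up.
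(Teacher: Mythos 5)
Your proposal is correct, and for the lower bound it takes a genuinely different route from the paper's. Both proofs get the upper bound by exhibiting a witness of length $n+2$: yours is $xyxy\,z_1\cdots z_{n-2} \approx x^2y^2z_1\cdots z_{n-2}$ (where \hlEo{} and \hlEt{} hold trivially at each simple $z_i$ and vacuously elsewhere), the paper's is $xyxa_1\cdots a_{n-2}x \approx x^2ya_1\cdots a_{n-2}x$; either works, and both implicitly assume $n\geq 2$. For the lower bound, the paper localises at the first position where the two sides disagree, writing $\uord = \word x\uord'$ and $\vord = \word y\vord'$, and applies \hlEo{} and \hlEt{} to whichever of $x,y$ is simple to force the other to occur in $\word$, at the differing position, and after $x$ in $\vord'$ --- hence at least three times; the upshot is that either two variables are non-simple or a single non-simple variable occurs thrice, which already gives length at least $n+2$. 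You instead stratify by the total excess $k=|\uord|-n$ and show that $k\leq 1$ forces $\uord=\vord$: for $k=0$ the prefix supports pin down a permutation, and for $k=1$ the prefix and suffix support conditions at the simple variables determine both their mutual order and the three-block position of each relative to the two occurrences of the unique non-simple variable, so both sides are reconstructed from identical data. Your reconstruction argument is sound (the three-block partition is exactly the right invariant and does close the $k=1$ case you flag as the main obstacle), but it is necessarily more global and longer; the paper's argument buys brevity by only ever counting occurrences at one position of disagreement, never reconstructing the words, while yours has the mild advantage of making explicit exactly which identities of excess at most $1$ exist (namely, none that are non-trivial).
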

\begin{proof} 
    Let $\uord \approx \vord$ be a non-trivial identity, with $|\supp{\uord \approx \vord}|=n$, satisfied by $\vmst \wedge \VarS$. By Proposition~\ref{prop:vmst_meet_VarS_identities}, $\uord \approx \vord$ is balanced, thus we can write $\uord = \word x \uord'$ and $\vord=\word y \vord'$, where $x,y \in \calX$ and $\word,\uord',\vord' \in \calX^*$ are such that $x$ occurs in $\vord'$ and $y$ occurs in $\uord'$. Furthermore, if $x$ is simple, then $y$ also occurs in $\word$ by \hlEo{}, and in $\vord'$ (after $x$) by \hlEt{}. As such, at least one variable is non-simple, and if it is the only non-simple variable, it must occur at least three times, from which we conclude that the length of $\uord \approx \vord$ is at least $n+2$.
    
    On the other hand, by Proposition~\ref{prop:vmst_meet_VarS_identities}, for variables $x,y,a_1, \dots, a_{n-2} \in \calX$, the identity
    \[ x y x a_1 \cdots a_{n-2} x \approx x x y a_1 \cdots a_{n-2} x, \]
    of length $n+2$, is satisfied by $\vmst \wedge \VarS$. The result follows.
\end{proof}

The proof of the following result uses the same technique as the one given in the proof of \cite[Proposition~7.9]{aird_ribeiro_join_meet_stalactic_2023}, using Proposition~\ref{prop:vmst_meet_VarS_identities} and Lemma~\ref{lemma:vmst_meet_VarS_shortest_identity} accordingly, so we omit it.

\begin{lemma} \label{lemma:L2_Md_R2_not_consequences} 
    None of the identities \hrLt{}, \hrMd{} or \hrRt{} is a consequence of the set of non-trivial identities, satisfied by $\vmst \wedge \VarS$, over an alphabet with four variables, excluding itself and equivalent identities. 
\end{lemma}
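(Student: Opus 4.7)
The plan is to argue by contradiction, handling the three identities separately but by the same strategy used in \cite[Proposition~7.9]{aird_ribeiro_join_meet_stalactic_2023}. I describe the argument for \hrLt{} in detail; the cases of \hrMd{} and \hrRt{} follow the same scheme after adjusting the pair of words involved. Assume that \hrLt{} is a consequence of $\Sigma$, the set of non-trivial identities satisfied by $\vmst \wedge \VarS$ over a $4$-letter alphabet, with \hrLt{} and its equivalents removed. Then there is a derivation $xzytxy = \word_1 \to \word_2 \to \cdots \to \word_{k+1} = xzytyx$ in which each step uses an identity of $\Sigma$ via an endomorphism. Because every identity in $\Sigma$ is balanced, each $\word_i$ has content $\{x{:}2,\,y{:}2,\,z{:}1,\,t{:}1\}$ and length $6$, and belongs to the $\approx_{\vmst\wedge \VarS}$-class of $xzytxy$.

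The first step is to describe this equivalence class explicitly from Proposition~\ref{prop:vmst_meet_VarS_identities}. The simple variables are $z$ and $t$; the constraints at $z$ force the prefix up to $z$ to have support $\{x,z\}$ and the suffix from $z$ to have support $\{x,y,z,t\}$, and the constraints at $t$ force the prefix up to $t$ to have support $\{x,y,z,t\}$ and the suffix from $t$ to have support $\{x,y,t\}$. Distributing the two $x$'s and two $y$'s accordingly (one $x$ before $z$, one $x$ between $z$ and $t$, one $y$ between $z$ and $t$, one $y$ after $t$) pins the class down to exactly $\{xzytxy,\; xzytyx\}$. Hence the derivation must be an alternating sequence between these two words, and each non-trivial step is a swap $xzytxy \leftrightarrow xzytyx$.

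Now I would analyse which identities $\pord \approx \qord \in \Sigma$ can realise such a swap. After the standard normalisation $\psi(a) \neq \varepsilon$ for every variable $a$ of $\pord$, we have $|\pord| \leq |\psi(\pord)|$. The factor $\psi(\pord)$ must cover the two positions where $xzytxy$ and $xzytyx$ differ, so $|\psi(\pord)| \in \{2,3,4,5,6\}$. When $|\psi(\pord)| = 2$ the identity collapses to $ab \approx ba$, which violates \hlEo{} and is not in $\Sigma$. When $|\psi(\pord)| = 6$, forcing $\psi(\pord) = xzytxy$ and using Lemma~\ref{lemma:vmst_meet_VarS_shortest_identity} to get $|\pord| \geq n+2$ for $n$ variables, the only feasible configuration has $n = 4$, $|\pord| = 6$ and $\psi$ a bijection onto $\{x,y,z,t\}$; in that case $\pord \approx \qord$ is $xzytxy \approx xzytyx$ up to renaming, hence equivalent to \hrLt{}, contradicting the exclusion. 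For the intermediate sizes $|\psi(\pord)| \in \{3,4,5\}$ the argument is that any solution of $\sum_a k_a\, |\psi(a)| = |\psi(\pord)|$ with $\sum_a k_a = |\pord| \geq n+2$ and $n \geq 2$ either puts some letter in $\psi(\pord)$ with multiplicity greater than $2$, or yields fewer than four distinct letters in $\psi(\pord)$, while the corresponding factor of $xzytxy$ has at most multiplicity $2$ per letter and contains three or four distinct letters; both rule out the existence of such an identity.

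The main obstacle is the intermediate-length case analysis $|\psi(\pord)| \in \{3,4,5\}$, which boils down to enumerating a short list of Diophantine possibilities for the pair $(|\pord|, n)$ and for each individual $|\psi(a)|$ and then eliminating them via the multiplicity cap. For \hrMd{} and \hrRt{} the same scheme applies once one checks, by the same support-of-prefix/suffix bookkeeping at the simple variables $z$ and $t$, that the corresponding equivalence classes are exactly $\{xzxyty,\, xzyxty\}$ and $\{xyzxty,\, yxzxty\}$; the factor analysis then targets the middle positions $3,4$ and the leading positions $1,2$, respectively, with no change in structure.
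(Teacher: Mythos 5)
Your proposal is correct and follows essentially the technique the paper intends for this (omitted) proof, i.e.\ the one carried out explicitly in Lemmas~\ref{lemma:Mt_not_consequence} and~\ref{lemma:O12_E12_O21_E21_not_consequences}: reduce to a single application of an identity of $\Sigma$ inside the word, use Lemma~\ref{lemma:vmst_meet_VarS_shortest_identity} to force $\psi(\pord)$ to be the entire word, show $\psi$ is a renaming, and conclude that the identity used is equivalent to the excluded one. Two small repairs: the parenthetical ``one $x$ between $z$ and $t$'' should read ``one $x$ after $t$'' (your stated class $\{xzytxy,\,xzytyx\}$ is nevertheless correct), and your intermediate cases $|\psi(\pord)|\in\{3,4,5\}$ are dispatched more cleanly --- and without the slightly imprecise distinct-letter dichotomy, which as literally stated does not eliminate the length-$3$ and length-$4$ factors having only three distinct letters --- by observing that each proper factor of $xzytxy$ covering the two differing positions has length strictly less than its number of distinct variables plus two, hence is an isoterm for $\vmst \wedge \VarS$ by Lemma~\ref{lemma:vmst_meet_VarS_shortest_identity} and cannot occur as $\psi(\pord)$ with $\psi(\pord)\neq\psi(\qord)$.
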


\begin{lemma}
\label{lemma:vlst_join_VarS_vrst_join_VarS_shortest_identity}
    The shortest non-trivial identity, with n variables, satisfied by $\vlst \vee \VarS$ or $\vrst \vee \VarS$, is of length $n+2$.
\end{lemma}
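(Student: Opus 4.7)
The plan is to sandwich the minimum length between matching lower and upper bounds of $n + 2$, invoking Lemma~\ref{lemma:vmst_meet_VarS_shortest_identity} for the former and giving an explicit identity for the latter.

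For the lower bound, I would note the inclusions $\vmst \wedge \VarS \subseteq \VarS \subseteq \vlst \vee \VarS$, and dually $\vmst \wedge \VarS \subseteq \vrst \vee \VarS$, so that every identity satisfied by $\vlst \vee \VarS$ or $\vrst \vee \VarS$ lies in the equational theory of $\vmst \wedge \VarS$. The bound from Lemma~\ref{lemma:vmst_meet_VarS_shortest_identity} therefore transfers at once: any non-trivial identity with $n$ variables satisfied by either variety has length at least $n+2$.

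For the upper bound on $\vlst \vee \VarS$, I would exhibit the identity
\[
xy a_1 \cdots a_{n-2} xy \approx xy a_1 \cdots a_{n-2} yx,
\]
which has $n$ distinct variables, length $n+2$, and non-coinciding sides. It arises as the image of \hrLt{} under the endomorphism of $\X^*$ that fixes $x$ and $y$, sends $z$ to $\varepsilon$, and sends $t$ to $a_1 \cdots a_{n-2}$. Since \hrLt{} belongs to the basis for $\vlst \vee \VarS$ from Proposition~\ref{prop:vlst_join_VarS_vrst_join_VarS_finite_basis}, this substitution instance is automatically in the equational theory. The case of $\vrst \vee \VarS$ is handled dually by the identity $xyx a_1 \cdots a_{n-2} y \approx yxx a_1 \cdots a_{n-2} y$, realised as a substitution of \hrRt{}.

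There is no substantial obstacle: the lower bound is an immediate transfer from Lemma~\ref{lemma:vmst_meet_VarS_shortest_identity}, and the upper bound only requires producing a short identity and recognising it as a substitution of a basis identity. The verification that both constructed identities are non-trivial and contain exactly $n$ variables is routine inspection.
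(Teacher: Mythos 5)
Your proposal is correct and follows essentially the same route as the paper: the lower bound is transferred from Lemma~\ref{lemma:vmst_meet_VarS_shortest_identity} via the containment $\vmst \wedge \VarS \subseteq \vlst \vee \VarS$ (and dually), and the upper bound is witnessed by the identity $xy a_1 \cdots a_{n-2} xy \approx xy a_1 \cdots a_{n-2} yx$. The only cosmetic difference is that you verify the witness as a substitution instance of the basis identity \hrLt{} (resp.\ \hrRt{} for the dual case, for which you supply an explicit dual witness), whereas the paper checks it directly against the property characterization in Corollary~\ref{cor:vlst_join_VarS_vrst_join_VarS_identities}; both verifications are valid.
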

\begin{proof}
    This follows from Lemma~\ref{lemma:vmst_meet_VarS_shortest_identity}, which gives the lower bound, and Corollary~\ref{cor:vlst_join_VarS_vrst_join_VarS_identities}, since for variables $x,y,a_1, \dots, a_{n-2} \in \calX$, the identity
    \[ x y a_1 \cdots a_{n-2} x y \approx x y a_1 \cdots a_{n-2} y x, \]
    of length $n+2$, is satisfied by $\vlst \vee \VarS$.
\end{proof}

\begin{lemma} \label{lemma:O12_E12_O21_E21_not_consequences}
    Neither of the identities \hrOot{}, \hrEot{}, \hrOto{} or \hrEto{} is a consequence of the set of non-trivial identities satisfied by $\vlst \vee \VarS$, in the case of the first two, and $\vrst \vee \VarS$ otherwise, over an alphabet with five variables, excluding itself and equivalent identities.
\end{lemma}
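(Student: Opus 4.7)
The plan is to adapt the technique of the proof of Lemma~\ref{lemma:L2_Md_R2_not_consequences}, itself modelled on \cite[Proposition~7.9]{aird_ribeiro_join_meet_stalactic_2023}, to the five-variable setting. We argue for \hrOot{} in detail; \hrEot{} is entirely analogous, and \hrOto{}, \hrEto{} follow from \hrOot{} and \hrEot{} respectively by the left--right duality that exchanges $\vlst \vee \VarS$ with $\vrst \vee \VarS$ and reverses words, together with an appropriate renaming of variables.

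Set $\uord = xzxytxry$ and $\vord = xzyxtxry$, and let $\Sigma$ denote the set of non-trivial identities over $\X = \{x, y, z, t, r\}$ satisfied by $\vlst \vee \VarS$, with \hrOot{} and its equivalents removed. Suppose, for contradiction, that \hrOot{} is a consequence of $\Sigma$ and take a derivation
\[
\uord = \word_1 \to \word_2 \to \cdots \to \word_{k+1} = \vord
\]
of minimal length, where each step rewrites a factor $\psi_i(\pord_i)$ of $\word_i$ as $\psi_i(\qord_i)$, with $\pord_i \approx \qord_i \in \Sigma$, $\psi_i$ an endomorphism of $\X^*$, and $\psi_i(\pord_i) \neq \psi_i(\qord_i)$. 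By Corollary~\ref{cor:vlst_join_VarS_vrst_join_VarS_identities}, every identity in $\Sigma$ and every substitution instance $\psi_i(\pord_i) \approx \psi_i(\qord_i)$ is balanced and satisfies \hlDo{}, \hlB{} and \hlC{}; by Lemma~\ref{lemma:vlst_join_VarS_vrst_join_VarS_shortest_identity}, any non-trivial such identity using exactly $m$ variables has length at least $m+2$.

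The argument then proceeds by an explicit case analysis on the rewriting steps. For each step $\word_i \to \word_{i+1}$, write $\word_i = \rord_i \psi_i(\pord_i) \sord_i$ and split into cases by the length $|\psi_i(\pord_i)| \in \{2, \ldots, |\word_i|\}$ and the starting position of that factor within $\word_i$. For each choice, $\psi_i(\pord_i) \approx \psi_i(\qord_i)$ is a specific explicit identity determined by the relevant factor of $\word_i$; applying \hlDo{}, \hlB{} and \hlC{} to these substitution instances rules out the overwhelming majority of cases (for instance, length-two swaps such as $xy \approx yx$ and length-three rewrites such as $zxy \approx zyx$ or $xyt \approx yxt$ fail \hlDo{}, and rewrites such as $xzxy \approx xzyx$ and $xzxytx \approx xzyxtx$ fail \hlC{}, since restricting to $x$ and the simple variables already gives two different words). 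In the remaining cases, the length bound from Lemma~\ref{lemma:vlst_join_VarS_vrst_join_VarS_shortest_identity} restricts the underlying triple $(\pord_i, \qord_i, \psi_i)$ to a handful of shapes (a renaming, a single variable sent to a two-letter word, and so on), each of which either fails to be an identity of $\vlst \vee \VarS$ or is equivalent to \hrOot{}, contradicting the definition of $\Sigma$.

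The main obstacle lies in the regime of large rewritten factors, where the substitution $\psi_i$ need not be a renaming and one must enumerate all admissible shapes of $(\pord_i, \qord_i, \psi_i)$, including substitutions that send some variables to the empty word (in which case the underlying identity reduces to a shorter one and the argument recurses). The finiteness of this enumeration is guaranteed by the minimum-length bound of Lemma~\ref{lemma:vlst_join_VarS_vrst_join_VarS_shortest_identity}, and the elimination of each remaining subcase reduces to a direct verification using \hlDo{}, \hlB{} and \hlC{}.
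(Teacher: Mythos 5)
Your proposal follows essentially the same route as the paper's proof: both reduce to analysing a single application of a five-variable identity of the variety to a factor of $xzxytxry$, use Lemma~\ref{lemma:vlst_join_VarS_vrst_join_VarS_shortest_identity} together with properties \hlDo{}, \hlB{} and \hlC{} to show that every proper factor is an isoterm (so the whole word must be rewritten and the substitution cannot erase or expand variables), and then conclude that the substitution is a renaming and the identity applied is equivalent to \hrOot{}, with \hrEot{}, \hrOto{} and \hrEto{} handled analogously or by left--right duality. The only real difference is presentational: the paper explicitly carries out the content-and-factor analysis (forcing $\psi(x)=x$, $\psi(y)=y$, the multiplicities of $z,t,r$, etc.) that your sketch defers to ``a handful of shapes,'' but that deferred verification is precisely the routine check you describe and does go through.
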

\begin{proof}
    Let $\mathcal{S}$ be the set of all non-trivial identities, satisfied by $\vlst \vee \VarS$, over an alphabet with five variables. By definition, \hrOot{} is a consequence of $\mathcal{S}$. As such, there exists a non-trivial identity $\uord \approx \vord$ in $\mathcal{S}$, and a substitution $\psi$, such that
    \[
    xzxytxry = \word_1 \psi(\uord) \word_2,
    \]
    where $\word_1, \word_2$ are words over the five-variable alphabet, and $\psi(\uord) \neq \psi(\vord)$. We can assume, without loss of generality, that $\psi$ does not map any variable to the empty word.
    By Lemma~\ref{lemma:vlst_join_VarS_vrst_join_VarS_shortest_identity}, any proper factor of $xzxytxry$ is an isoterm for $\vmst \wedge \VarS$, with the possible exceptions of $xzxytxr$, $zxytxry$, $xzxytx$ and $xytxry$. By Corollary~\ref{cor:vlst_join_VarS_vrst_join_VarS_identities}, in particular property \hlC{}, it is clear that if $xzxytxr$ (resp. $zxytxry$) is not an isoterm, then $xzxytx$ (resp. $xytxry$) is also not. On one hand, $xzxytx$ is an isoterm by \hlC{}, since the only non-simple variable is $x$. On the other, $xytxry$ is an isoterm by \hlC{} and \hlDo{}. Hence, $\word_1$ and $\word_2$ are the empty word, that is, $xzxytxry = \psi(\uord)$.
    
    Since $\cont{xzxytxry}=\bigl(\begin{smallmatrix} x & y & z & t & r \\ 3 & 2 & 1 & 1 & 1 \end{smallmatrix}\bigr)$ and, by Lemma~\ref{lemma:vlst_join_VarS_vrst_join_VarS_shortest_identity}, there is a lower bound on the length of identities satisfied by $\vlst \vee \VarS$, we can conclude that, up to renaming of variables, $x$ occurs at least twice and at most thrice, $y$ occurs at least once and at most twice, and $z$, $t$ and $r$ can each occur at most once in $\uord \approx \vord$. Furthermore, if $x$ occurs only twice, then $y$ occurs twice and, on the other hand, if $y$ occurs only once, then $x$ occurs thrice.
    
    Notice that, since all factors of length $2$ of $xzxytxry$ are distinct, then $\psi(x) = x$ and $\psi(y) = y$. As such, $\psi(ztr)$ can have at most one occurrence of $x$ or $y$. Thus, since the shortest factor of $xzxytxry$ where $z$ and $t$ occur is $zxyt$, at least $z$ and $t$ (or $r$) occur in $\uord \approx \vord$. Assume, without loss of generality, that $t$ occurs.
    
    Suppose $x$ only occurs twice in $\uord \approx \vord$. If $x$ occurs in $\psi(z)$, then $\psi(t) = t$, which implies that $r$ occurs in $\uord \approx \vord$ and $\psi(r) = r$. But $\psi(z) \neq xz$ since $zxytxry$ is an isoterm and $\psi(z) = zx$ since $xzytxry$ is an isoterm by \hlDo{}, hence $x$ cannot occur in $\psi(z)$. On the other hand, if $x$ occurs in $\psi(t)$, then $\psi(t) = tx$ or $\psi(t) = txr$, which is impossible since $xzxyty$ is an isoterm by \hlB{}. By the same reasoning, if $r$ occurs in $\uord \approx \vord$, $x$ cannot occur in $\psi(r)$. Therefore, $x$ must occur thrice in $\uord \approx \vord$. Then, since all factors of $xzxytxry$ where at least two of $z$, $t$ or $r$ occur also have an occurrence of $x$, we can conclude that $z$, $t$ and $r$ all occur in $\uord \approx \vord$. Furthermore, $\psi(z) = z$.

    Suppose now that $y$ occurs only once in $\uord \approx \vord$. Then, either $\psi(t) = yt$ or $\psi(r) = ry$. Neither case can happen, since $xzxtxry$ and $xzxytx$ are isoterms by \hlC{}. Thus, $y$ occurs twice in $\uord \approx \vord$, and $\psi(t) = t$ and $\psi(r) = r$.
    
    As such, since we are considering only substitutions that do not map variables to the empty word, we have that $\psi$ is only a renaming of variables. Notice that $xzxytxry$ is the left-hand side of a non-trivial identity satisfied by $\vlst \vee \VarS$ if and only if the right-hand side is $xzyxtxry$, by Corollary~\ref{cor:vlst_join_VarS_vrst_join_VarS_identities}. Hence $\uord \approx \vord$ is equivalent to \hrOot{}.
    
    The proof for the case of \hrEot{} follows a similar reasoning. By dual reasoning, we prove the cases of \hrOto{} and \hrEto{}.
\end{proof}

\begin{corollary} \label{cor:L1_lattice_axiomatic_rank}
    The axiomatic rank of the varieties
    \begin{enumerate}[label=(\roman*)]
        \item $\vlst \wedge \vsylv$ and $\vrst \wedge \vsylvh$ is 2; 
        \item $\VarS$, $\vmst \wedge \vsylvh$, $\vmst \wedge \vsylv$, and $\vmst \wedge \VarS$ is 4;
        \item  $\vrst \vee \vsylvh$, $\vlst \vee \vsylv$, $\vmst \vee \VarS$, $\vlst \vee \VarS$, and $\vrst \vee \VarS$ is 5.
    \end{enumerate}
\end{corollary}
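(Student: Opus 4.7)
The statement packages bounds of two kinds, so the plan is to pair an upper bound and a lower bound for each case. The upper bounds can be read straight off the bases already exhibited in this section: the two-variable basis from Proposition~\ref{prop:vlst_meet_vsylv_vrst_meet_vsylvh_finite_basis} gives (i); the bases in at most four variables from Corollaries~\ref{cor:VarS_finite_basis}, \ref{cor:vmst_meet_vsylvh_vmst_meet_vsylv_finite_basis}, and \ref{cor:vmst_meet_VarS_finite_basis} give (ii); and the bases in at most five variables from Propositions~\ref{prop:vrst_join_vsylvh_vlst_join_vsylv_finite_basis}, \ref{prop:vmst_join_VarS_finite_basis}, and \ref{prop:vlst_join_VarS_vrst_join_VarS_finite_basis} give (iii).

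For the lower bound in (i), I would observe that by Proposition~\ref{prop:vlst_meet_vsylv_vrst_meet_vsylvh_identities} the equational theories of $\vlst \wedge \vsylv$ and $\vrst \wedge \vsylvh$ consist only of balanced identities; thus both varieties are overcommutative and admit no non-trivial identity in fewer than two variables, forcing rank $\geq 2$.

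For (ii) I would proceed contrapositively. If one of the four listed varieties $V$ admitted a basis of $\leq 3$-variable identities, then the 4-variable identity \hrLt{} or \hrRt{} appearing in the cited basis for $V$ would have to be a consequence of those $\leq 3$-variable identities of $V$. Since $\vmst \wedge \VarS$ is contained in each of the four varieties in question (as established in Lemmas~\ref{lemma:incomparabilities_lattice_1} and \ref{lemma:equalities_lattice_1}), those $\leq 3$-variable identities also lie in the equational theory of $\vmst \wedge \VarS$, and, viewed over a 4-letter alphabet, belong to the set of 4-variable identities of $\vmst \wedge \VarS$ distinct from \hrLt{} (resp.\ \hrRt{}). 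This directly contradicts Lemma~\ref{lemma:L2_Md_R2_not_consequences}, yielding rank $\geq 4$.

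The argument for (iii) is entirely analogous, with Lemma~\ref{lemma:O12_E12_O21_E21_not_consequences} playing the role of Lemma~\ref{lemma:L2_Md_R2_not_consequences}: each of the five listed varieties contains either $\vlst \vee \VarS$ or $\vrst \vee \VarS$, and each satisfies at least one of the 5-variable identities \hrOot{}, \hrEot{}, \hrOto{}, \hrEto{} that appears in its basis. A hypothetical basis in $\leq 4$ variables would then derive such a 5-variable identity from $\leq 4$-variable identities of the enclosed subvariety, contradicting the lemma. The only point requiring care is the correct pairing: \hrOot{} and \hrEot{} with $\vlst \vee \VarS$, and \hrOto{} and \hrEto{} with $\vrst \vee \VarS$. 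Once this is set up, no further computation is needed, and I do not anticipate any serious obstacle beyond checking that the variety inclusions point the right way so that shrinking the variety enlarges the available pool of identities.
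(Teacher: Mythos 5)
Your proposal is correct and follows exactly the paper's argument: upper bounds from the exhibited bases (Proposition~\ref{prop:vlst_meet_vsylv_vrst_meet_vsylvh_finite_basis}, Corollaries~\ref{cor:VarS_finite_basis}, \ref{cor:vmst_meet_vsylvh_vmst_meet_vsylv_finite_basis}, \ref{cor:vmst_meet_VarS_finite_basis}, Propositions~\ref{prop:vrst_join_vsylvh_vlst_join_vsylv_finite_basis}, \ref{prop:vmst_join_VarS_finite_basis}, \ref{prop:vlst_join_VarS_vrst_join_VarS_finite_basis}), and lower bounds from overcommutativity for (i) and from Lemmas~\ref{lemma:L2_Md_R2_not_consequences} and \ref{lemma:O12_E12_O21_E21_not_consequences} for (ii) and (iii). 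The monotonicity step you spell out (a containment $\W \subseteq \V$ gives $\mathrm{Eq}(\V) \subseteq \mathrm{Eq}(\W)$, so the non-consequence lemmas for the smaller varieties transfer to the larger ones) is exactly the glue the paper leaves implicit, and your pairing of \hrOot{}, \hrEot{} with $\vlst \vee \VarS$ and \hrOto{}, \hrEto{} with $\vrst \vee \VarS$ is the right one.
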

\begin{proof}
    Case \textit{(i)} follows from Proposition~\ref{prop:vlst_meet_vsylv_vrst_meet_vsylvh_finite_basis} and these varieties being overcommutative, while case \textit{(ii)} follows from Corollaries~\ref{cor:VarS_finite_basis}, \ref{cor:vmst_meet_VarS_finite_basis} and \ref{cor:vmst_meet_vsylvh_vmst_meet_vsylv_finite_basis} and Lemma~\ref{lemma:L2_Md_R2_not_consequences}, and case \textit{(iii)} follows from Propositions~\ref{prop:vrst_join_vsylvh_vlst_join_vsylv_finite_basis}, \ref{prop:vmst_join_VarS_finite_basis} and \ref{prop:vlst_join_VarS_vrst_join_VarS_finite_basis} and Lemma~\ref{lemma:O12_E12_O21_E21_not_consequences}.
\end{proof}

\section{Sublattice of $\mathbb{MON}$ generated by $\vsylvh$, $\vsylv$, $\vlst$, $\vrst$ and $\vhypo$.} \label{section:lattice_L2}

We now consider the lattice \hrLatd{}, obtained from \hrLato{} by adding a new generator $\vhypo$.

\begin{figure}[h]
    \centering
    \begin{tikzpicture}
        \draw[very thick] (0,1) -- (-2.25,0) -- (-4.5,-1) -- (-4.5,-3) -- (-4.5,-6) -- (-4.5,-11) -- (-2.25,-12) -- (0,-13);
        \draw[very thick] (0,1) -- (2.25,0) -- (4.5,-1) -- (4.5,-3) -- (4.5,-6) -- (4.5,-11) -- (2.25,-12) -- (0,-13);
        \draw[very thick] (-2.25,0) -- (0,-1) -- (-4.5,-3) -- (-1.5,-6) -- (0,-9) -- (0,-11) -- (-2.25,-12);
        \draw[very thick] (2.25,0) -- (0,-1) -- (4.5,-3) -- (-1.5,-6);
        \draw[very thick] (0,-1) -- (0,-3) -- (-4.5,-6) -- (0,-9);
        \draw[very thick] (0,-3) -- (1.5,-6) -- (4.5,-9) -- (0,-11) -- (2.25,-12);
        \draw[very thick] (0,-3) -- (4.5,-6) -- (0,-9);
        \draw[very thick] (1.5,-6) -- (-4.5,-9) -- (0,-11);
        \filldraw[black] (0,1) circle (2pt) node[anchor=south]{$\vbaxt$};
        \filldraw[black] (-2.25,0) circle (2pt);
        \filldraw[black] (2.25,0) circle (2pt);
        \filldraw[black] (-4.5,-1) circle (3pt) node[anchor=east]{$\vsylvh$};
        \filldraw[black] (0,-1) circle (2pt);
        \filldraw[black] (4.5,-1) circle (3pt) node[anchor=west]{$\vsylv$};
        \filldraw[black] (-4.5,-3) circle (2pt);
        \filldraw[red] (0,-3) circle (2pt);
        \filldraw[black] (4.5,-3) circle (2pt);
        \filldraw[red] (-4.5,-6) circle (2pt);
        \filldraw[black] (-1.5,-6) circle (2pt) node[above=3pt]{$\VarS$};
        \filldraw[black] (1.5,-6) circle (2pt) node[right=5pt]{$\vmst$};
        \filldraw[red] (4.5,-6) circle (2pt);
        \filldraw[black] (-4.5,-9) circle (2pt);
        \filldraw[red] (0,-9) circle (3pt) node[anchor=north east]{$\vhypo$};
        \filldraw[black] (4.5,-9) circle (2pt);
        \filldraw[black] (-4.5,-11) circle (3pt) node[anchor=east]{$\vlst$};
        \filldraw[black] (4.5,-11) circle (3pt) node[anchor=west]{$\vrst$};
        \filldraw[black] (0,-11) circle (2pt);
        \filldraw[black] (-2.25,-12) circle (2pt);
        \filldraw[black] (2.25,-12) circle (2pt);
        \filldraw[black] (0,-13) circle (2pt) node[anchor=north]{$\vjst$};
    \end{tikzpicture}
    \caption{Sublattice $\mathbb{L}_2$ of $\mathbb{MON}$ generated by the varieties respectively generated by the \#-sylvester, sylvester, left and right stalactic, and hypoplactic monoids. The points in red correspond to the elements that are not in \hrLato{}.}
    \label{fig:lattice_with_sylv_stal_hypo}
\end{figure}

\begin{theorem} \label{theorem:lattice_2}
    The Hasse diagram of \hrLatd{} is given in Figure~\ref{fig:lattice_with_sylv_stal_hypo}.
\end{theorem}

\subsection{Varietal meets and joins} \label{subsection:L2_meets_joins}

\begin{corollary} \label{cor:vhypo_join_vlst_vhypo_join_vrst_identities}
    The equational theory of $\vhypo \vee \vlst$ is the set of balanced identities that satisfy the properties \hlB{} and \hlDo{}, and the equational theory of $\vhypo \vee \vrst$ is the set of balanced identities that satisfy the properties \hlB{} and \hlDt{}.
\end{corollary}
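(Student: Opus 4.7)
The plan is to observe that this is a direct consequence of already established results on the equational theories of the constituent varieties, together with the general fact recalled in Subsection~\ref{subsection:identities_and_varieties} that the equational theory of a varietal join is the intersection (that is, the meet in the lattice of fully invariant congruences) of the equational theories of its components.

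More precisely, I would invoke Theorem~\ref{theorem:vhypo_identities}, which states that the equational theory of $\vhypo$ is exactly the set of balanced identities satisfying \hlB{}, together with Corollary~\ref{cor:vlst_vrst_identities}, which states that the equational theory of $\vlst$ (respectively $\vrst$) is exactly the set of balanced identities satisfying \hlDo{} (respectively \hlDt{}). Intersecting the two equational theories then yields the set of balanced identities satisfying both \hlB{} and \hlDo{} in the case of $\vhypo \vee \vlst$, and both \hlB{} and \hlDt{} in the case of $\vhypo \vee \vrst$.

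There is essentially no obstacle here: once one recalls that joins of varieties correspond to meets of equational theories, and that meets of equational theories are just set-theoretic intersections, the result is immediate from the two cited characterizations. The proof can therefore be written as a single line citing Theorem~\ref{theorem:vhypo_identities} and Corollary~\ref{cor:vlst_vrst_identities}, in the same style as the other one-line corollaries earlier in the paper (for example, Corollary~\ref{cor:vrst_join_vsylvh_vlst_join_vsylv_identities}).
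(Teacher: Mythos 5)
Your proposal is correct and matches the paper's own proof exactly: the paper also derives this corollary in one line from Theorem~\ref{theorem:vhypo_identities} and Corollary~\ref{cor:vlst_vrst_identities}, using the fact that the equational theory of a varietal join is the intersection of the equational theories. No further comment is needed.
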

\begin{proof}
    Follows from Theorem~\ref{theorem:vhypo_identities} and Corollary~\ref{cor:vlst_vrst_identities}.
\end{proof}

\begin{corollary} \label{cor:vhypo_join_vlst_vhypo_join_vrst_varietal_join}
    The variety $\vhypo \vee \vlst$ is the varietal join of $\mathbf{COM}$ and $\mathbf{J}_2 \vee \mathbf{LRB}$, and the variety $\vhypo \vee \vrst$ is the varietal join of $\mathbf{COM}$ and $\mathbf{J}_2 \vee \mathbf{RRB}$.
\end{corollary}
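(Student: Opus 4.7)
The result is essentially an algebraic manipulation using previously established decompositions, so I expect no significant obstacle.

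The plan is to invoke Corollaries~\ref{cor:vhypo_varietal_join} and \ref{cor:vlst_vrst_varietal_join}, which respectively establish that $\vhypo = \mathbb{COM} \vee \mathbb{J}_2$ and $\vlst = \mathbb{COM} \vee \mathbb{LRB}$. Substituting these expressions and using associativity and idempotency of the join in the lattice of varieties of monoids, we obtain
\[
    \vhypo \vee \vlst = (\mathbb{COM} \vee \mathbb{J}_2) \vee (\mathbb{COM} \vee \mathbb{LRB}) = \mathbb{COM} \vee (\mathbb{J}_2 \vee \mathbb{LRB}),
\]
which is the desired equality. A dual argument, replacing $\vlst$ and $\mathbb{LRB}$ by $\vrst$ and $\mathbb{RRB}$, respectively, gives the statement for $\vhypo \vee \vrst$.

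As a cross-check one can verify this via equational theories: by Corollary~\ref{cor:vhypo_join_vlst_vhypo_join_vrst_identities}, the equational theory of $\vhypo \vee \vlst$ consists of the balanced identities satisfying both \hlB{} and \hlDo{}; on the other hand, by Theorem~\ref{theorem:vhypo_identities} (for $\mathbb{J}_2$) and Corollary~\ref{cor:vlst_vrst_identities} (for $\mathbb{LRB}$), together with the fact that the equational theory of a join of varieties is the intersection of their equational theories, the equational theory of $\mathbb{COM} \vee (\mathbb{J}_2 \vee \mathbb{LRB})$ consists precisely of those balanced identities satisfying both \hlB{} and \hlDo{}. The two theories coincide, confirming the equality.
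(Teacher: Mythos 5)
Your proposal is correct and follows essentially the same route as the paper, whose proof is simply ``Follows from Corollaries~\ref{cor:vhypo_varietal_join} and \ref{cor:vlst_vrst_varietal_join}'': you spell out the same substitution $\vhypo = \mathbb{COM} \vee \mathbb{J}_2$, $\vlst = \mathbb{COM} \vee \mathbb{LRB}$ together with associativity and idempotency of the join. The equational-theory cross-check is a harmless bonus (though for the theory of $\mathbb{J}_2$ itself the more precise citation is the paper's remark following Theorem~\ref{theorem:vhypo_identities}, attributed to Volkov, rather than that theorem, which concerns $\vhypo$).
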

\begin{proof}
    Follows from Corollaries~\ref{cor:vhypo_varietal_join} and \ref{cor:vlst_vrst_varietal_join}.
\end{proof}

\begin{corollary} \label{cor:vhypo_join_vlst_vhypo_join_vrst_generators}
    The variety $\vhypo \vee \vlst$ is generated by the factor monoid $\bbN^*/{(\equiv_\hypo \wedge \equiv_\lst)}$, and the variety $\vhypo \vee \vrst$ is generated by the factor monoid $\bbN^*/{(\equiv_\hypo \wedge \equiv_\rst)}$.
\end{corollary}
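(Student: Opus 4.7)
The plan is to prove both parts by a clean two-inclusion argument at the level of varieties, avoiding any direct identity manipulation, since the join structure makes this unnecessary. Write $M = \N^*/(\equiv_\hypo \wedge \equiv_\lst)$.

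First I would establish that $M \in \vhypo \vee \vlst$, so that $\Var_M \subseteq \vhypo \vee \vlst$. For this, I would consider the natural homomorphism $\N^* \to \hypo \times \lst$ sending $u \mapsto ([u]_{\equiv_\hypo}, [u]_{\equiv_\lst})$; its kernel is exactly $\equiv_\hypo \wedge \equiv_\lst$, so it factors through an \emph{embedding} $M \hookrightarrow \hypo \times \lst$. Since $\hypo \in \vhypo$ and $\lst \in \vlst$, their direct product lies in $\vhypo \vee \vlst$, and the latter is closed under submonoids, giving the desired containment.

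Next I would prove the reverse inclusion $\vhypo \vee \vlst \subseteq \Var_M$ by exhibiting $\hypo$ and $\lst$ as homomorphic images of $M$. This follows from the general fact that $\equiv_\hypo \wedge \equiv_\lst$ is contained in each of $\equiv_\hypo$ and $\equiv_\lst$, so the quotient maps $\N^* \to \hypo$ and $\N^* \to \lst$ factor through $M$. Since varieties are closed under homomorphic images, $\Var_M$ contains both $\vhypo$ and $\vlst$, and hence their varietal join. Combining the two inclusions yields $\Var_M = \vhypo \vee \vlst$.

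The corresponding statement for $\N^*/(\equiv_\hypo \wedge \equiv_\rst)$ and $\vhypo \vee \vrst$ follows by replacing $\lst$ with $\rst$ throughout. There is no real obstacle here: the only minor point to keep in mind is that the induced map $M \to \hypo \times \lst$ is well-defined and injective precisely because the meet of two congruences is their intersection as relations, which is entirely routine.
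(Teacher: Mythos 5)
Your argument is correct and is precisely the standard reasoning the paper relies on: it states this corollary without proof, treating it as immediate from the facts that $\N^*/(\equiv_\hypo \wedge \equiv_\lst)$ embeds in $\hypo \times \lst$ (since the meet of congruences is their intersection) and surjects onto each of $\hypo$ and $\lst$. Both inclusions in your two-step argument are valid, so the proposal matches the paper's (implicit) approach.
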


\begin{proposition} \label{prop:vhypo_join_vlst_vhypo_join_vrst_finite_basis}
    The variety $\vhypo \vee \vlst$ admits a finite equational basis consisting of the identities \hrMt{} and \hrLt{}, and the variety $\vhypo \vee \vrst$ admits a finite equational basis consisting of the identities \hrMt{} and \hrRt{}.
\end{proposition}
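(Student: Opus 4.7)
The plan is to show that \hrMt{} and \hrLt{} form a finite equational basis for $\vhypo \vee \vlst$, whose equational theory consists of the balanced identities satisfying \hlB{} and \hlDo{} by Corollary~\ref{cor:vhypo_join_vlst_vhypo_join_vrst_identities}. For soundness, \hrLt{} lies in the equational theory of $\vsylvh$ by Theorem~\ref{theorem:vsylvh_vsylv_identities}, so it satisfies \hlAo{}, which implies both \hlB{} and \hlDo{}; for \hrMt{}, a direct inspection of its length-2 subsequences and of the shortest prefix containing each variable verifies both properties.

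For completeness, I would follow the inductive template of Proposition~\ref{prop:vmst_meet_vsylvh_vmst_meet_vsylv_identities}. Given a balanced non-trivial identity $\hlId$ satisfying \hlB{} and \hlDo{}, write $\uord = \uord' x \word$ and $\vord = \vord_1' x \vord_2' \word$ in the standard form used in the previous proofs, and induct on the length of the common suffix $\word$. The base case is the family of identities $xyx\word \approx x^2 y \word$, where \hlB{} forces $x \in \supp{\word}$ (otherwise the subsequence $yx$ appears on the left but not the right); such identities are direct substitution instances of \hrMt{}. For the induction step, I would split according to whether $x$ is simple. If $x$ is simple, \hlDo{} yields $\supp{\uord'} = \supp{\vord_1'}$, and combining this with \hlB{} applied to first-$x$-versus-last-$y$ forces every $y \in \supp{\vord_2'}$ to appear both in $\vord_1'$ (via balance) and in $\word$, so each such $y$ can be rewritten to the left of $x$ using \hrMt{}. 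If $x$ is non-simple and occurs in $\vord_1'$, each letter of $\vord_2'$ can be commuted past $x$ using \hrLt{}, or via \hrMt{} when that letter also appears in $\word$. If $x$ is non-simple but not in $\vord_1'$, then $x \in \supp{\word}$, and \hlB{} together with \hlDo{} again yield $\supp{\vord_2'} \subseteq \supp{\vord_1'} \cap \supp{\word}$, so \hrMt{} suffices.

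In every branch, the resulting identity has strictly longer common suffix, so the induction hypothesis concludes the argument; the dual argument handles $\vhypo \vee \vrst$ with \hrRt{} in place of \hrLt{}. The main delicate step is confirming that, in the non-simple case with $x \in \supp{\vord_1'}$, each successive application of \hrLt{} is legal, since its rewrite pattern requires earlier occurrences of both swapped variables in the appropriate positions; this is guaranteed because $x$ already lies in $\vord_1'$ by hypothesis, and each $y \in \supp{\vord_2'}$ has further occurrences forced by balance together with \hlB{}.
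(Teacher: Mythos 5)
Your proposal is correct and follows essentially the same route as the paper: the same induction on the length of the common suffix, the same base case $x^2y\word \approx xyx\word$, and an equivalent case analysis reducing everything to applications of \hrMt{} and \hrLt{}. The one rough edge is in your second case: when a letter of $\vord_2'$ does \emph{not} occur in $\vord_1'$, the fallback to \hrMt{} works because balance and \hlB{} then force $x$ itself to occur in $\word$ (so \hrMt{} is applied with $x$ as the thrice-occurring variable, exactly the paper's case ``$x$ occurs in both $\vord_1'$ and $\word$''), not because that letter occurs in $\word$ --- but this is recoverable from the occurrence counts you already invoke in your final paragraph.
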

\begin{proof}
    Clearly, the identities \hrMt{} and \hrLt{} satisfy properties \hlDo{} and \hlB{}, which define the equational theory of $\vhypo \vee \vlst$ by Corollary~\ref{cor:vhypo_join_vlst_vhypo_join_vrst_identities}. Now, we prove by induction that any identity satisfying these properties is a consequence of said identities.

    The base case for the induction is the identities of the form
    \[ xxy \word \approx xyx \word. \]
    Since $x$ must occur in $\word$ by \hlB{}, the identity is a consequence of \hrMt{}.

    Let $\hlId$ be a balanced non-trivial identity satisfying \hlDo{} and \hlB{}, of length $n \geq 4$, with common suffix $\word \in \calX^*$ such that $|\word| \leq n-3$. Clearly, if $x$ occurs in both $\vord_1'$ and $\word$, then the identity $\vord_1' x \vord_2' \word \approx \vord_1'\vord_2' x \word$ is a consequence of \hrMt{}. In this case, there are only two deduction steps: we have 
    \[ \vord_1' x \vord_2' \word \approx \vord_1'' x^2 \vord_2'' \word \approx \vord_1'\vord_2' x \word, \]
    for $\vord_1'' \in \calX^*$, $\vord_2'' \in (\calX\setminus \{x\})^+$. 
    
    If $x$ is simple or it occurs in $\word$ but not in $\vord_1'$, then $\supp{\vord_2'} \subseteq \supp{\vord_1'} \cap \supp{\word}$ by \hlB{} alone in the first case and by and \hlDo{} as well in the second. As such, $\vord_1' x \vord_2' \word \approx \vord_1'\vord_2' x \word$ is a consequence of \hrMt{}. 

    On the other hand, if $x$ occurs in $\vord_1'$ but not in $\word$, then we have $\supp{\vord_2'} \subseteq \supp{\vord_1'}$ by \hlB{}. Thus, $\vord_1' x \vord_2' \word \approx \vord_1'\vord_2' x \word$ is a consequence of \hrLt{}. The result follows. A dual argument works for the case of $\vhypo \vee \vrst$.
\end{proof}

\begin{corollary} \label{cor:vhypo_join_vmst_identities}
    The equational theory of $\vhypo \vee \vmst$ is the set of balanced identities that satisfy the properties \hlB{}, \hlDo{} and \hlDt{}.
\end{corollary}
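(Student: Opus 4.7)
The plan is to observe that this corollary follows directly from the general principle, stated in Subsection~\ref{subsection:identities_and_varieties}, that the equational theory of a varietal join $\V \vee \W$ is the meet (that is, the intersection) of the equational theories of $\V$ and $\W$. Since we want to describe the equational theory of $\vhypo \vee \vmst$, it suffices to intersect the already-known equational theories of the two joinands.

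Concretely, I would invoke Theorem~\ref{theorem:vhypo_identities}, which states that the equational theory of $\vhypo$ is exactly the set of balanced identities satisfying property \hlB{}, together with Corollary~\ref{cor:vmst_identities}, which states that the equational theory of $\vmst$ is exactly the set of balanced identities satisfying both \hlDo{} and \hlDt{}. Taking the intersection of these two sets gives precisely the balanced identities that satisfy all three of \hlB{}, \hlDo{}, and \hlDt{}, which is the claimed characterization.

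There is no real obstacle here; the result is purely a restatement obtained by combining two earlier characterizations via the standard lattice-theoretic correspondence between varieties and fully invariant congruences on $\N^*$. The proof can therefore be written in a single sentence citing Theorem~\ref{theorem:vhypo_identities} and Corollary~\ref{cor:vmst_identities}, mirroring the style already used for Corollary~\ref{cor:vhypo_join_vlst_vhypo_join_vrst_identities}.
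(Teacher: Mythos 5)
Your proposal is correct and matches the paper's proof exactly: the paper's one-line argument is precisely ``Follows from Theorem~\ref{theorem:vhypo_identities} and Corollary~\ref{cor:vmst_identities}'', relying on the fact that the equational theory of a varietal join is the intersection of the equational theories of the joinands. Nothing further is needed.
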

\begin{proof}
    Follows from Theorem~\ref{theorem:vhypo_identities} and Corollary~\ref{cor:vmst_identities}.
\end{proof}

\begin{corollary} \label{cor:vhypo_join_vmst_varietal_join}
    The variety $\vhypo \vee \vmst$ is the varietal join of $\mathbf{COM}$ and $\mathbf{J}_2 \vee \mathbf{RB}$.
\end{corollary}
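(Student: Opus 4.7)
The plan is to derive this corollary directly from the two relevant decomposition results already established in the background section, namely Corollary~\ref{cor:vhypo_varietal_join} (which states $\vhypo = \mathbb{COM} \vee \mathbb{J}_2$) and Corollary~\ref{cor:vmst_varietal_join} (which states $\vmst = \mathbb{COM} \vee \mathbb{RB}$). Since the varietal join is an associative, commutative and idempotent operation in the lattice of varieties of monoids, one simply writes
\[
    \vhypo \vee \vmst = (\mathbb{COM} \vee \mathbb{J}_2) \vee (\mathbb{COM} \vee \mathbb{RB}) = \mathbb{COM} \vee (\mathbb{J}_2 \vee \mathbb{RB}),
\]
which gives exactly the desired equality. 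No further ingredients are needed, and there is no genuine obstacle: this is a one-line algebraic manipulation of previously proved identities, following the same template as the proof of the preceding Corollary~\ref{cor:vhypo_join_vlst_vhypo_join_vrst_varietal_join}. Thus the proof will simply read ``Follows from Corollaries~\ref{cor:vhypo_varietal_join} and \ref{cor:vmst_varietal_join}.''

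It is worth noting that one could alternatively give a proof via equational theories, by observing that the equational theory of $\mathbb{J}_2 \vee \mathbb{RB}$ consists of the balanced identities satisfying properties \hlB{}, \hlDo{} and \hlDt{} (since $\mathbb{J}_2$ is defined by \hlB{} and $\mathbb{RB}$ by \hlDo{} together with \hlDt{}), and then invoking Corollary~\ref{cor:vhypo_join_vmst_identities}, together with the fact that the varietal join with $\mathbb{COM}$ corresponds exactly to restricting to balanced identities. However, the first approach is cleaner and matches the style already used for the analogous results in the section, so that is what I would use.
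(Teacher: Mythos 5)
Your proposal is correct and is exactly the paper's own proof: the authors also write ``Follows from Corollaries~\ref{cor:vhypo_varietal_join} and \ref{cor:vmst_varietal_join},'' relying on the associativity, commutativity and idempotency of the varietal join just as you do. No discrepancies to report.
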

\begin{proof}
    Follows from Corollaries~\ref{cor:vhypo_varietal_join} and \ref{cor:vmst_varietal_join}.
\end{proof}

\begin{corollary} \label{cor:vhypo_join_vmst_generators}
    The variety $\vhypo \vee \vmst$ is generated by the factor monoid $\bbN^*/{(\equiv_\hypo \wedge \equiv_\mst)}$.
\end{corollary}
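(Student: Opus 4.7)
The plan is to establish the two containments needed for $\N^*/(\equiv_\hypo \wedge \equiv_\mst)$ to generate $\vhypo \vee \vmst$. For the forward containment, I would observe that the natural map $[w]_{\equiv_\hypo \wedge \equiv_\mst} \mapsto ([w]_\hypo, [w]_\mst)$ is a well-defined injective homomorphism, since by definition $\equiv_\hypo \wedge \equiv_\mst$ is the intersection of the two congruences. Hence $\N^*/(\equiv_\hypo \wedge \equiv_\mst)$ embeds into the direct product $\hypo \times \mst$, which lies in $\vhypo \vee \vmst$.

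For the reverse direction, I would use the identity characterisation from Corollary~\ref{cor:vhypo_join_vmst_identities}: the equational theory of $\vhypo \vee \vmst$ is precisely the set of balanced identities satisfying \hlB{}, \hlDo{} and \hlDt{}. It therefore suffices to show that every non-trivial identity $\uord \approx \vord$ satisfied by $\N^*/(\equiv_\hypo \wedge \equiv_\mst)$ is balanced and satisfies these three properties. This follows from falsification arguments closely parallel to those in Propositions~\ref{prop:vmst_meet_VarS_generators} and \ref{prop:vmst_meet_vsylvh_vmst_meet_vsylv_generators}, exploiting the characterisations of $\equiv_\hypo$ (same content and inversions) and $\equiv_\mst$ (same content and order of first and last occurrences).

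Concretely, to verify \hlDo{}: if variables $x, y \in \supp{\uord \approx \vord}$ witness failure, with $y$ occurring before the first occurrence of $x$ in $\uord$ but not in $\vord$, then the evaluation $\psi\colon x \mapsto [1]$, $y \mapsto [2]$, and $z \mapsto [\varepsilon]$ for all other variables, produces a word in $\psi(\uord)$ in which some $2$ precedes the first $1$, whereas every word in $\psi(\vord)$ begins with $1$; these classes differ already modulo $\equiv_\mst$ (distinct orders of first occurrences), hence also modulo $\equiv_\hypo \wedge \equiv_\mst$. The symmetric argument handles \hlDt{}, and for \hlB{} an evaluation of the type used in the proof of Theorem~\ref{theorem:vhypo_identities} (cf.\ \cite{cain_malheiro_ribeiro_hypoplactic_2022}), substituting two variables by consecutive letters and the rest by $[\varepsilon]$, produces words distinguishable by their inversions and hence by $\equiv_\hypo$.

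The main obstacle is verifying that each candidate falsifying substitution truly separates the images in the quotient by the intersected congruence. Since $\equiv_\hypo \wedge \equiv_\mst$ is finer than either factor, failure of congruence under \emph{either} $\equiv_\hypo$ or $\equiv_\mst$ suffices, and in every case above the chosen evaluation maps onto words of the form $12^k$ (or closely related small words) whose classes under each factor congruence are transparent to compute, so the falsification goes through routinely.
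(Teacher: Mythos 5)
Your proof is correct, but the second half is heavier than what the paper intends. The paper states this result as a corollary with no proof at all, because (as it remarks in the concluding section) generation by the factor over a \emph{meet} of congruences is immediate: your forward containment (the diagonal embedding of $\N^*/(\equiv_\hypo \wedge \equiv_\mst)$ into $\hypo \times \mst$, whose kernel is exactly the intersection of the two congruences) is the intended argument for one direction, and for the other it suffices to observe that $\equiv_\hypo \wedge \equiv_\mst$ is contained in each of $\equiv_\hypo$ and $\equiv_\mst$, so both $\hypo$ and $\mst$ are homomorphic images of the factor monoid; hence every identity it satisfies lies in $\mathrm{Eq}(\hypo) \cap \mathrm{Eq}(\mst)$, which is precisely the equational theory of $\vhypo \vee \vmst$. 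Your alternative for the reverse inclusion --- invoking Corollary~\ref{cor:vhypo_join_vmst_identities} and constructing falsifying evaluations for \hlB{}, \hlDo{} and \hlDt{} --- does go through (the evaluations you describe separate the images already modulo one of the two factor congruences, hence modulo their intersection), but it essentially re-derives by hand the containment $\mathrm{Eq}(\N^*/(\equiv_\hypo \wedge \equiv_\mst)) \subseteq \mathrm{Eq}(\hypo) \cap \mathrm{Eq}(\mst)$ that follows in one line from the homomorphic-image observation. The falsification machinery is genuinely needed only for the \emph{join}-of-congruences propositions (such as Propositions~\ref{prop:vmst_meet_VarS_generators} and \ref{prop:vmst_meet_vsylvh_vmst_meet_vsylv_generators}), where the factor monoid is a common image rather than a common preimage and no free containment is available; here it costs you extra work without buying additional generality.
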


\begin{proposition} \label{prop:vhypo_join_vmst_finite_basis}
    The variety $\vhypo \vee \vmst$ admits a finite equational basis consisting of the identity \hrMt{}.
\end{proposition}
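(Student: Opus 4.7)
The plan is to mimic the inductive scheme used in the proof of Proposition~\ref{prop:vhypo_join_vlst_vhypo_join_vrst_finite_basis}, exploiting the extra property \hlDt{} satisfied by $\vhypo \vee \vmst$ to eliminate the need for the identity \hrLt{} in the basis, so that \hrMt{} alone suffices. One direction is immediate: the identity \hrMt{} plainly satisfies \hlB{}, \hlDo{}, and \hlDt{}, hence by Corollary~\ref{cor:vhypo_join_vmst_identities} it lies in the equational theory of $\vhypo \vee \vmst$. For the converse, I would show that every balanced identity $\hlId$ satisfying \hlB{}, \hlDo{}, and \hlDt{} is a consequence of \hrMt{}, by induction on the length of the common suffix of both sides.

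The base case consists of identities of the form $xxy\word \approx xyx\word$. Property \hlB{} forces $x$ to occur in $\word$, so the identity is a direct instance of \hrMt{}. For the inductive step, I would adopt the standard decomposition $\uord = \uord' x \word$ and $\vord = \vord_1' x \vord_2' \word$ with $\vord_2' \in (\X\setminus\{x\})^+$ and $|\word| \leq n-3$, and produce a word $\vord'' \in \X^+$ such that $\vord_1' x \vord_2' \word \approx \vord'' x \word$ is a consequence of \hrMt{}; the inductive hypothesis then applies to $\uord \approx \vord'' x \word$, whose common suffix is strictly longer.

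The case analysis mirrors that of Proposition~\ref{prop:vhypo_join_vlst_vhypo_join_vrst_finite_basis}. If $x$ occurs in both $\vord_1'$ and $\word$, the three occurrences of $x$ act as the anchors of \hrMt{}, pulling the central $x$ leftward adjacent to an $x$ of $\vord_1'$. In the remaining cases, I would peel $\vord_2'$ apart letter by letter: for each $y \in \supp{\vord_2'}$, I locate an occurrence of $y$ in $\vord_1'$ and one in $\word$, and apply \hrMt{} with $y$ as the repeated variable to shift the $y$ in $\vord_2'$ leftward into $\vord_1'$. The required support inclusions $\supp{\vord_2'} \subseteq \supp{\vord_1'} \cap \supp{\word}$ come from \hlB{} (together with \hlDo{} when $x$ is simple or $x \in \supp{\word} \setminus \supp{\vord_1'}$, and with \hlDt{} when $x \in \supp{\vord_1'} \setminus \supp{\word}$).

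The main obstacle, compared with the $\vhypo \vee \vlst$ argument, is the final subcase $x \in \supp{\vord_1'} \setminus \supp{\word}$: there, having no $x$ in $\word$ previously meant that variables of $\vord_2'$ were not guaranteed to appear to the right of the central $x$, and \hrLt{} had to be invoked as a separate basis element. Here \hlDt{} applied to $x$ forces $\supp{\vord_2'} \subseteq \supp{\word}$, supplying the missing right anchor for \hrMt{} and allowing the iterative letter-by-letter reduction to go through with a single identity.
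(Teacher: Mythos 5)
Your proposal is correct and takes essentially the same route as the paper's proof: the same induction on the length of the common suffix, the same base case, and the same case analysis deriving $\supp{\vord_2'} \subseteq \supp{\vord_1'} \cap \supp{\word}$ from \hlB{}, \hlDo{} and \hlDt{} so that \hrMt{} alone drives the reduction. Your closing observation --- that \hlDt{} supplies the right-hand anchor whose absence forced \hrLt{} into the basis for $\vhypo \vee \vlst$ --- is precisely the point of the paper's argument.
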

\begin{proof}
    Clearly, the identity \hrMt{} satisfies properties \hlDo{}, \hlDt{} and \hlB{}, which define the equational theory of $\vhypo \vee \vmst$ by Corollary~\ref{cor:vhypo_join_vmst_identities}. Now, we prove by induction that any identity satisfying these properties is a consequence of said identity.

    The base case for the induction is the identities of the form
    \[ xxy \word \approx xyx \word. \]
    Since $x$ must occur in $\word$ by \hlB{}, the identity is a consequence of \hrMt{}.

    Let $\hlId$ be a balanced non-trivial identity satisfying \hlDo{}, \hlDt{} and \hlB{}, of length $n \geq 4$, with common suffix $\word \in \calX^*$ such that $|\word| \leq n-3$. If $x$ does not occur in both $\vord_1'$ and $\word$, then all variables in $\vord_2'$ do: if $x$ is simple, by \hlB{}; if $x$ occurs in $\vord_1'$ but not in $\word$, by \hlB{} and \hlDt{}; if $x$ occurs in $\word$ but not in $\vord_1'$, by \hlDo{} and \hlB{}.
        
    Thus, the identity $\vord_1' x \vord_2' \word \approx \vord_1'\vord_2' x \word$ is a consequence of \hrMt{}. 
    The result follows.
\end{proof}

\subsection{Proving Theorem~\ref{theorem:lattice_2}} \label{subsection:L2_correctness}
In order to prove Theorem~\ref{theorem:lattice_2}, we require the following two lemmas that show that the lattice \hrLatd{} has no missing varieties, and all the covers are well-determined.

\begin{lemma} \label{lemma:incomparabilities_lattice_2}
    The following statements hold
    \begin{enumerate}[label=(\roman*)]
        \item $\vhypo$ is incomparable with $\vlst$, $\vrst$, and $\vmst$;
        \item $\vhypo \vee \vlst$ is incomparable with $\VarS$, $\vmst$, $\vrst$ and $\vsylv$;
        \item $\vhypo \vee \vrst$ is incomparable with $\VarS$, $\vmst$, $\vlst$ and $\vsylvh$;
        \item $\vhypo \vee \vmst$ is incomparable with $\VarS$, $\vsylvh$ and $\vsylv$.
    \end{enumerate}
\end{lemma}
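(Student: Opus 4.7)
The plan is to prove each incomparability by exhibiting explicit balanced identities that separate the two varieties, appealing to the characterisations of the relevant equational theories in terms of the properties \hlAo{}--\hlF{} collected in Section~\ref{subsection:properties_equational_theories}, together with Theorem~\ref{theorem:vhypo_identities}, Proposition~\ref{prop:VarS_identities}, Corollaries~\ref{cor:vlst_vrst_identities}, \ref{cor:vmst_identities}, \ref{cor:vhypo_join_vlst_vhypo_join_vrst_identities}, and \ref{cor:vhypo_join_vmst_identities}. For each claim $\V \not\subseteq \W$, I would produce an identity lying in the equational theory of $\V$ but failing at least one property defining $\W$'s equational theory; each verification then reduces to a short syntactic check on a word of length at most six.

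For (i), I would use the identity $xyyx \approx yxxy$, which satisfies \hlB{} while failing both \hlDo{} and \hlDt{}, so it separates $\vhypo$ from each of $\vlst$, $\vrst$, $\vmst$ at once. Conversely, \hrLo{} and \hrRo{} respectively separate $\vlst$ and $\vrst$ from $\vhypo$ (each satisfies the appropriate one of \hlDo{}, \hlDt{} but fails \hlB{}), while $x^2y^2 \approx xyxy$ satisfies both \hlDo{} and \hlDt{} yet fails \hlB{}, dealing with the remaining incomparability of $\vmst$ with $\vhypo$.

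For (ii), my key separator out of $\vhypo \vee \vlst$ is \hrLt{}, which satisfies \hlAo{} — whence \hlB{} and \hlDo{} — but fails \hlDt{}, thereby separating $\vhypo \vee \vlst$ from $\vmst$, $\vrst$, and $\vsylv$ simultaneously; separation from $\VarS$ is witnessed by \hrMt{}, which lies in both the equational theories of $\vhypo$ and $\vlst$ but fails \hlC{}. The opposite inclusions would be refuted by \hrRt{} (in $\vsylv$ and $\VarS$, failing \hlDo{}), $x^2y^2 \approx xyxy$ (in $\vmst$, failing \hlB{}), and \hrRo{} (in $\vrst$, failing \hlDo{}). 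Case (iii) then follows by a symmetric argument that interchanges \hlDo{} with \hlDt{}.

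For (iv), the single identity \hrMt{} would deliver all three separations out of $\vhypo \vee \vmst$: it is a basis identity of both $\vhypo$ and $\vmst$ and so lies in their join, yet it fails \hlC{}, \hlAo{}, and \hlAt{}, excluding it from $\VarS$, $\vsylvh$, and $\vsylv$ respectively. The reverse direction would be handled by \hrLt{} (in the theories of $\vsylvh$ and $\VarS$, failing \hlDt{}) together with \hrRt{} (in the theory of $\vsylv$, failing \hlDo{}). The main obstacle is purely notational: each candidate identity demands small but careful subsequence and support calculations, and one must consistently remember that containment of varieties reverses to containment of equational theories, so that the direction of every separation is correctly interpreted.
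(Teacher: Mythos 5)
Your proposal is correct and follows essentially the same strategy as the paper's proof: for each pair, exhibit a balanced identity lying in one equational theory but violating a defining property of the other, using Theorem~\ref{theorem:vhypo_identities}, Proposition~\ref{prop:VarS_identities} and Corollaries~\ref{cor:vlst_vrst_identities}, \ref{cor:vmst_identities}, \ref{cor:vhypo_join_vlst_vhypo_join_vrst_identities} and \ref{cor:vhypo_join_vmst_identities}. The only difference is the choice of some witnesses (e.g.\ you use $xyyx \approx yxxy$ and $x^2y^2 \approx xyxy$ where the paper reuses \hrLt{}, \hrRt{} and \hrMd{}), and all of your witnesses check out.
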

\begin{proof}
    By Theorem~\ref{theorem:vhypo_identities} and Corollary~\ref{cor:vhypo_join_vlst_vhypo_join_vrst_identities}, the identity \hrLt{} is satisfied by $\vhypo$ and $\vhypo \vee \vlst$ and \hrRt{} is satisfied by $\vhypo$ and $\vhypo \vee \vrst$, but, by Theorem~\ref{theorem:vsylvh_vsylv_identities} and Corollaries~\ref{cor:vlst_vrst_identities} and \ref{cor:vmst_identities}, \hrLt{} is satisfied by neither $\vsylv$, $\vrst$ nor $\vmst$, and \hrRt{} is satisfied by neither $\vsylvh$, $\vlst$, nor $\vmst$. Moreover, by Corollary~\ref{cor:vhypo_join_vmst_identities}, the identity \hrMt{} is satisfied by $\vhypo \vee \vlst$, $\vhypo \vee \vrst$ and $\vhypo \vee \vmst$, but neither by $\vsylvh$, $\vsylv$ nor $\VarS$, due to Proposition~\ref{prop:VarS_identities}.

    \par Similarly, by Theorem~\ref{theorem:vsylvh_vsylv_identities} and Proposition~\ref{prop:VarS_identities}, the identity \hrLt{} is satisfied by $\vsylvh$ and $\VarS$, and \hrRt{} is satisfied by $\vsylv$ and $\VarS$, but, by Corollaries~\ref{cor:vhypo_join_vlst_vhypo_join_vrst_identities} and \ref{cor:vhypo_join_vmst_identities}, \hrLt{} is satisfied by neither $\vhypo \vee \vrst$ nor $\vhypo \vee \vmst$ and \hrRt{} is satisfied by neither $\vhypo \vee \vlst$ nor $\vhypo \vee \vmst$. Moreover, by Corollaries~\ref{cor:vlst_vrst_identities} and \ref{cor:vmst_identities}, the identity \hrMd{} is satisfied by $\vlst$, $\vrst$ and $\vmst$, but, by Theorem~\ref{theorem:vhypo_identities}, is satisfied by neither $\vhypo$, $\vhypo \vee \vlst$ nor $\vhypo \vee \vrst$.
\end{proof}

\begin{lemma} \label{lemma:equalities_lattice_2}
    The following equalities hold:
    \begin{enumerate}[label=(\roman*)]
        \item $\vhypo = \VarS \wedge (\vhypo \vee \vmst) = (\vhypo \vee \vlst) \wedge \vsylv = (\vhypo \vee \vrst) \wedge \vsylvh$;
        \item $\vhypo \vee \vlst = \vhypo \vee (\vmst \wedge \vsylvh) = (\vlst \vee \VarS)  \wedge (\vhypo \vee \vmst) = \vsylvh  \wedge (\vhypo \vee \vmst)$;
        \item $\vhypo \vee \vrst = \vhypo \vee (\vmst \wedge \vsylv) = (\vrst \vee \VarS)  \wedge (\vhypo \vee \vmst) = \vsylv \wedge (\vhypo \vee \vmst)$;
        \item $\vhypo \wedge \vmst = (\vhypo \vee \vlst) \wedge (\vmst \wedge \vsylv) = (\vhypo \vee \vrst) \wedge (\vmst \wedge \vsylvh) = \vmst \wedge \VarS$.
    \end{enumerate}
\end{lemma}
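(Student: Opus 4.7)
The plan is to exploit the fact, recalled in Subsection~\ref{subsection:identities_and_varieties}, that the meet of finitely based varieties is finitely based by the union of the respective bases, together with the dual fact that the equational theory of a varietal join is the intersection of the equational theories of its summands. Each of the ten equalities will then be settled either by a direct comparison of bases or by a comparison of equational theories through the properties \hlAo{}--\hlF{} of Subsection~\ref{subsection:properties_equational_theories}, using the implications recorded in the property diagram.

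For case (i), each equality is between $\vhypo$ and a varietal meet. Taking the union of bases, Corollary~\ref{cor:VarS_finite_basis} together with Proposition~\ref{prop:vhypo_join_vmst_finite_basis} will give $\{\hrLt,\hrRt,\hrMt\}$ as a basis for $\VarS \wedge (\vhypo \vee \vmst)$; Proposition~\ref{prop:vhypo_join_vlst_vhypo_join_vrst_finite_basis} together with Theorem~\ref{theorem:vsylvh_vsylv_finite_basis} will give the same set for $(\vhypo \vee \vlst) \wedge \vsylv$ and $(\vhypo \vee \vrst) \wedge \vsylvh$, which by Theorem~\ref{theorem:vhypo_finite_basis} is a basis of $\vhypo$. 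Case (iv) runs along the same lines: each meet on the right-hand side has union basis $\{\hrLt,\hrRt,\hrMd,\hrMt\}$, matching the basis of $\vmst \wedge \VarS$ provided by Corollary~\ref{cor:vmst_meet_VarS_finite_basis}.

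For cases (ii) and (iii), I would first handle the equality $\vhypo \vee \vlst = \vhypo \vee (\vmst \wedge \vsylvh)$ (and its dual) through equational theories: by Theorem~\ref{theorem:vhypo_identities} and Proposition~\ref{prop:vmst_meet_vsylvh_vmst_meet_vsylv_identities}, the equational theory of the right-hand side is the intersection of the balanced identities satisfying \hlB{} with those satisfying \hlDo{} and \hlEt{}; since \hlB{} implies \hlEt{} by the property diagram, this collapses to the balanced identities satisfying \hlB{} and \hlDo{}, which is precisely the equational theory of $\vhypo \vee \vlst$ by Corollary~\ref{cor:vhypo_join_vlst_vhypo_join_vrst_identities}. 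The remaining equalities in (ii) and (iii) are again basis comparisons: for $\vsylvh \wedge (\vhypo \vee \vmst)$ (and its dual), the union basis is immediately $\{\hrLt,\hrMt\}$, matching Proposition~\ref{prop:vhypo_join_vlst_vhypo_join_vrst_finite_basis}; for $(\vlst \vee \VarS) \wedge (\vhypo \vee \vmst)$, the union basis is $\{\hrOot,\hrEot,\hrLt,\hrMt\}$, which must be collapsed to $\{\hrLt,\hrMt\}$.

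The main technical step is this last collapse. I would verify that \hrOot{} and \hrEot{} satisfy properties \hlB{} and \hlDo{}, after which Corollary~\ref{cor:vhypo_join_vlst_vhypo_join_vrst_identities} and Proposition~\ref{prop:vhypo_join_vlst_vhypo_join_vrst_finite_basis} together place them in the equational theory generated by $\{\hrLt,\hrMt\}$. Both checks amount to comparing the length-two subsequences and the supports of the shortest prefixes around each variable on either side of the identity, and proceed without incident. Once this verification is in place, all ten equalities follow from the bookkeeping above.
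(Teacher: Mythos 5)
Your proposal is correct and follows essentially the same route as the paper: cases \textit{(i)} and \textit{(iv)} and the third equalities of \textit{(ii)}--\textit{(iii)} by comparing unions of the known finite bases, the first equalities of \textit{(ii)}--\textit{(iii)} by intersecting equational theories and using that \hlB{} implies \hlEt{} (resp.\ \hlEo{}), and the second equalities by collapsing the union basis $\{\hrOot{},\hrEot{},\hrLt{},\hrMt{}\}$ to $\{\hrLt{},\hrMt{}\}$. The only (harmless) difference is that the paper justifies this last collapse by asserting \hrOot{}, \hrEot{}, \hrOto{}, \hrEto{} are consequences of \hrMt{}, whereas you deduce their redundancy by checking they satisfy \hlB{} and \hlDo{} (resp.\ \hlDt{}) and invoking the characterisation of the equational theory of $\vhypo \vee \vlst$ (resp.\ $\vhypo \vee \vrst$); both verifications go through.
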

\begin{proof}
    The first equality in case \textit{(i)} follows from Theorem~\ref{theorem:vhypo_finite_basis}, Corollary~\ref{cor:VarS_finite_basis} and Proposition~\ref{prop:vhypo_join_vmst_finite_basis}, while the second and third equalities follow from Theorem~\ref{theorem:vsylvh_vsylv_finite_basis} and Proposition~\ref{prop:vhypo_join_vlst_vhypo_join_vrst_finite_basis}.

    The first equalities in cases \textit{(ii)} and \textit{(iii)} follow from Theorem~\ref{theorem:vhypo_identities}, Proposition~\ref{prop:vmst_meet_vsylvh_vmst_meet_vsylv_identities} and Corollary~\ref{cor:vhypo_join_vlst_vhypo_join_vrst_identities}. The second equalities follow from Propositions~\ref{prop:vlst_join_VarS_vrst_join_VarS_finite_basis}, \ref{prop:vhypo_join_vlst_vhypo_join_vrst_finite_basis} and \ref{prop:vhypo_join_vmst_finite_basis}, as \hrOto{}, \hrEto{},\hrOot{} and \hrEot{} are consequences of \hrMt{}. The third equalities follow from Theorem~\ref{theorem:vsylvh_vsylv_finite_basis}.

    The first and second equalities in case \textit{(iv)} follow from Theorem~\ref{theorem:vhypo_finite_basis}, Corollaries~\ref{cor:vmst_finite_basis} and \ref{cor:vmst_meet_vsylvh_vmst_meet_vsylv_finite_basis}, and Proposition~\ref{prop:vhypo_join_vlst_vhypo_join_vrst_finite_basis}, while the third equality follows from Corollary~\ref{cor:vmst_meet_VarS_finite_basis}.
\end{proof}

The correctness of Theorem~\ref{theorem:lattice_2} then follows from Theorem~\ref{theorem:lattice_1} and Lemmas~\ref{lemma:incomparabilities_lattice_2} and \ref{lemma:equalities_lattice_2}.

\subsection{Axiomatic ranks} \label{subsection:L2_axiomatic_ranks}

\begin{lemma} \label{lemma:Mt_not_consequence}
    The identity \hrMt{} is not a consequence of the set of non-trivial identities, satisfied by $\vmst \wedge \VarS$, over an alphabet with two variables.
\end{lemma}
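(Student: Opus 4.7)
The plan is to argue that the word $xyxzx$ is a ``dead end'' under the rewriting system defined by the given 2-variable identities, so no non-trivial deduction step can leave it, and consequently \hrMt{} cannot be derived. More precisely, I aim to show that there is no non-trivial step of the form $xyxzx = \rord \psi(\pord) \sord \to \rord \psi(\qord) \sord$, where $\pord \approx \qord$ is a non-trivial 2-variable identity in the equational theory of $\vmst \wedge \VarS$ and $\psi$ is an endomorphism of $\X^*$.

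The argument rests on two facts. First, by Lemma~\ref{lemma:vmst_meet_VarS_shortest_identity}, the shortest non-trivial 2-variable identity satisfied by $\vmst \wedge \VarS$ has length $4$. Second, by Proposition~\ref{prop:vmst_meet_VarS_identities} every such identity is balanced, so for the deduction step to be non-trivial the endomorphism $\psi$ must map both variables of $\pord$ to non-empty words (otherwise balancedness of $\pord \approx \qord$ yields $\psi(\pord) = \psi(\qord)$). Combining these gives $|\psi(\pord)| \geq |\pord| \geq 4$. Since the target word $xyxzx$ has length $5$ and each $\word_i$ in any derivation from it is balanced with the same content, the only possibilities are $|\psi(\pord)| = 4$ or $|\psi(\pord)| = 5$.

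The case $|\psi(\pord)| = 4$ forces $|\pord| = 4$ and both substitutions to be single letters, so $\psi(\pord)$ is a word on at most two distinct letters; however the two length-$4$ factors of $xyxzx$, namely $xyxz$ and $yxzx$, each contain all three letters $x, y, z$, giving a contradiction. In the case $|\psi(\pord)| = 5$ the ``factor'' is the whole of $xyxzx$, and there are two sub-cases according to whether $|\pord|$ equals $5$ or $4$. If $|\pord| = 5$, again both substitutions are single letters and we hit the same two-alphabet obstruction. If $|\pord| = 4$, an arithmetic check forces one of the substitutions to have length $2$ and the other length $1$; by the classification of length-$4$ non-trivial 2-variable identities of $\vmst \wedge \VarS$ coming from Proposition~\ref{prop:vmst_meet_VarS_identities}, $\pord$ must have the form $v^a u v^b$ (or the dual form) with $a + b = 3$ and $a, b \geq 1$, giving $\psi(\pord) = c^a AB c^b$ for some letter $c$ and two-letter word $AB$; matching this pattern against $xyxzx$ forces $c$ to be two different letters of $\{x, y, z\}$, a contradiction.

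The main (mild) obstacle is organising this case analysis carefully, in particular enumerating the length-$4$ 2-variable identities in $\vmst \wedge \VarS$ and verifying there are no other arithmetic patterns producing $|\psi(\pord)| \leq 5$. Once this is done, the conclusion that no step is applicable to $xyxzx$ is immediate, and hence \hrMt{} cannot be a consequence of the set of non-trivial 2-variable identities of $\vmst \wedge \VarS$.
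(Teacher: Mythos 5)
Your proposal is correct and follows essentially the same route as the paper: both arguments reduce to showing that no non-trivial deduction step using a two-variable identity of $\vmst \wedge \VarS$ can be applied to $xyxzx$, both rely on Lemma~\ref{lemma:vmst_meet_VarS_shortest_identity} to force $|\pord| \geq 4$ and hence to constrain the lengths of the substituted words, and both finish with a short combinatorial case analysis. The only difference is cosmetic: the paper dispatches the final case by observing that all length-$2$ factors of $xyxzx$ are distinct and contain $x$ (forcing single-letter substitutions and a length mismatch), whereas you invoke the \hlEo{}/\hlEt{} characterisation from Proposition~\ref{prop:vmst_meet_VarS_identities} to pin down $\pord$ as $v^a u v^b$ with $a,b \geq 1$ and rule it out by position matching.
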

\begin{proof}
    Let $\mathcal{S}$ be the set of all non-trivial identities, satisfied by $\vmst \wedge \VarS$, over an alphabet with two variables. Suppose, in order to obtain a contradiction, that \hrMt{} is a consequence of $\mathcal{S}$. As such, there exists a non-trivial identity $\uord \approx \vord$ in $\mathcal{S}$, and a substitution $\psi$, such that
    \[
    xyxzx = \word_1 \psi(\uord) \word_2,
	\]
	where $\word_1, \word_2$ are words over the two-variable alphabet, and $\psi(\uord) \neq \psi(\vord)$. We can assume, without loss of generality, that $\psi$ does not map any variable to the empty word.
    By Lemma~\ref{lemma:vmst_meet_VarS_shortest_identity}, any proper factor of $xyxzx$ is an isoterm for $\vmst \wedge \VarS$, hence $\word_1$ and $\word_2$ are the empty word, that is, $xyxzx = \psi(\uord)$.
	
	Since $\cont{xyxzx}=\bigl(\begin{smallmatrix} x & y & z \\ 3 & 1 & 1 \end{smallmatrix}\bigr)$ and, by Lemma~\ref{lemma:vmst_meet_VarS_shortest_identity}, there is a lower bound on the length of identities satisfied by $\vmst \wedge \VarS$, we can conclude that, up to the renaming of variables, $x$ occurs exactly thrice and $y$ occurs exactly once in $\uord \approx \vord$. But all factors of length $2$ of $xyxzx$ are distinct and have an occurrence of $x$, hence $\psi(x)$ and $\psi(y)$ must be single variables. As such, the length of $\uord$ is different from that of $xyxzx$, and we obtain a contradiction. Therefore, \hrMt{} is not a consequence of $\mathcal{S}$.
\end{proof}

\begin{corollary} \label{cor:L2_lattice_axiomatic_rank}
    The axiomatic rank of $\vhypo \vee \vlst$ and $\vhypo \vee \vrst$ is 4, and the axiomatic rank of $\vhypo \vee \vmst$ is 3.
\end{corollary}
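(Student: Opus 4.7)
The plan is to read off the upper bounds directly from the finite bases already established in this section, and then to obtain the matching lower bounds by pushing the non-derivability results known for $\vmst \wedge \VarS$ up to the three join varieties through a single containment in the lattice. First I would invoke Propositions~\ref{prop:vhypo_join_vlst_vhypo_join_vrst_finite_basis} and \ref{prop:vhypo_join_vmst_finite_basis}: the basis for $\vhypo \vee \vmst$ is the $3$-variable identity \hrMt{}, and the bases for $\vhypo \vee \vlst$ and $\vhypo \vee \vrst$ consist of \hrMt{} together with the $4$-variable identities \hrLt{} and \hrRt{} respectively, giving upper bounds of $3$, $4$, and $4$.

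The crucial structural step is to note that $\vmst \wedge \VarS \subseteq \vhypo$, which follows from Lemma~\ref{lemma:equalities_lattice_2}~(iv) (the equality $\vhypo \wedge \vmst = \vmst \wedge \VarS$). Since each of the three join varieties contains $\vhypo$, they each contain $\vmst \wedge \VarS$, and therefore their equational theories are all contained in the equational theory of $\vmst \wedge \VarS$. In particular, for each of these joins, the set of identities it satisfies using at most $k$ variables is a subset of the $k$-variable identities satisfied by $\vmst \wedge \VarS$, so non-derivability of an identity from $k$-variable identities of $\vmst \wedge \VarS$ transfers immediately to non-derivability from $k$-variable identities of the join.

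To match the upper bounds: for $\vhypo \vee \vmst$, I would apply Lemma~\ref{lemma:Mt_not_consequence}, which tells us that \hrMt{} is not a consequence of the $2$-variable identities of $\vmst \wedge \VarS$, hence neither of those of $\vhypo \vee \vmst$, ruling out any $2$-variable basis and forcing rank $\geq 3$. For $\vhypo \vee \vlst$ and $\vhypo \vee \vrst$, I would instead apply Lemma~\ref{lemma:L2_Md_R2_not_consequences}, which tells us that \hrLt{} and \hrRt{} are not consequences of the other identities of $\vmst \wedge \VarS$ over a four-variable alphabet; since $3$-variable identities are included in that allowed set (they cannot be equivalent to the genuinely $4$-variable identities \hrLt{} or \hrRt{}), the transfer argument shows neither identity is a consequence of the $3$-variable identities of the respective join, ruling out a $3$-variable basis and giving rank $\geq 4$. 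The main obstacle is just recognising the correct chain of containments; once that is in place the heavy combinatorial work has all been done in Lemmas~\ref{lemma:Mt_not_consequence} and \ref{lemma:L2_Md_R2_not_consequences}, so no additional long computations will be required.
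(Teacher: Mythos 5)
Your proposal is correct and follows essentially the same route as the paper: upper bounds from the bases in Propositions~\ref{prop:vhypo_join_vlst_vhypo_join_vrst_finite_basis} and \ref{prop:vhypo_join_vmst_finite_basis}, and lower bounds by transferring Lemmas~\ref{lemma:L2_Md_R2_not_consequences} and \ref{lemma:Mt_not_consequence} through the containment $\vmst \wedge \VarS \subseteq \vhypo \subseteq \vhypo \vee \vlst, \vhypo \vee \vrst, \vhypo \vee \vmst$, which reverses the inclusion of equational theories. The paper's proof is a bare citation of the same four results; you have simply made explicit the transfer argument (including the point that a $3$-variable identity cannot be equivalent to \hrLt{} or \hrRt{}) that the paper leaves implicit.
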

\begin{proof}
    The first case follows from Lemma~\ref{lemma:L2_Md_R2_not_consequences} and Proposition~\ref{prop:vhypo_join_vlst_vhypo_join_vrst_finite_basis}, and the second case follows from Proposition~\ref{prop:vhypo_join_vmst_finite_basis} and Lemma~\ref{lemma:Mt_not_consequence}.    
\end{proof}

\section{Sublattice of $\mathbb{MON}$ generated by $\vsylvh$, $\vsylv$, $\vlst$, $\vrst$, $\vhypo$ and $\VarMd$.} \label{section:lattice_L3}

\hypertarget{VarMd_definition}{Let $\VarMd$ be the variety that admits a finite equational basis consisting of the identity \hrMd{}.} We now construct the lattice \hrLatt{}, by adding to \hrLatd{} a new generator $\VarMd$.

\begin{figure}[h]
    \centering
    \begin{tikzpicture}
        \draw[very thick] (0,1) -- (-3,0) -- (-6,-1) -- (-6,-3) -- (-6,-6) -- (-6,-11) -- (-3,-12) -- (0,-13);
        \draw[very thick] (0,1) -- (3,0) -- (6,-1) -- (6,-3) -- (6,-6) -- (6,-11) -- (3,-12) -- (0,-13);
        \draw[very thick] (-3,0) -- (0,-1) -- (-2,-3) -- (-6,-6) -- (2,-9) -- (0,-11) -- (-3,-12);
        \draw[very thick] (3,0) -- (0,-1) -- (2,-3) -- (3.6,-6) -- (-2,-9) -- (0,-11) -- (3,-12);
        \draw[very thick] (0,-1) -- (-6,-3) -- (-3.6,-6) -- (-6,-9) -- (0,-11);
        \draw[very thick] (-6,-3) -- (-1.2,-6) -- (-2,-9);
        \draw[very thick] (6,-3) -- (-1.2,-6) -- (2,-9);
        \draw[very thick] (0,-1) -- (6,-3) -- (3.6,-6) -- (6,-9) -- (0,-11);
        \draw[very thick] (-2,-3) -- (1.2,-6) -- (-6,-9);
        \draw[very thick] (-2,-3) -- (6,-6) -- (2,-9);
        \draw[very thick] (2,-3) -- (-3.6,-6) -- (-2,-9);
        \draw[very thick] (2,-3) -- (1.2,-6) -- (6,-9);
        \filldraw[black] (0,1) circle (2pt) node[anchor=south]{$\vbaxt$};
        \filldraw[black] (-3,0) circle (2pt);
        \filldraw[black] (3,0) circle (2pt);
        \filldraw[black] (-6,-1) circle (3pt) node[anchor=east]{$\vsylvh$};
        \filldraw[black] (0,-1) circle (2pt);
        \filldraw[black] (6,-1) circle (3pt) node[anchor=west]{$\vsylv$};
        \filldraw[black] (-6,-3) circle (2pt);
        \filldraw[blue] (-2,-3) circle (3pt) node[anchor=south east]{$\VarMd$};
        \filldraw[red] (2,-3) circle (2pt);
        \filldraw[black] (6,-3) circle (2pt);
        \filldraw[blue] (-6,-6) circle (2pt);
        \filldraw[red] (-3.6,-6) circle (2pt);
        \filldraw[black] (-1.2,-6) circle (2pt) node[above=2pt]{$\VarS$};
        \filldraw[black] (1.2,-6) circle (2pt) node[right=5pt]{$\vmst$};
        \filldraw[red] (3.6,-6) circle (2pt);
        \filldraw[blue] (6,-6) circle (2pt);
        \filldraw[black] (-6,-9) circle (2pt);
        \filldraw[red] (-2,-9) circle (3pt) node[anchor=north east]{$\vhypo$};
        \filldraw[blue] (2,-9) circle (2pt);
        \filldraw[black] (6,-9) circle (2pt);
        \filldraw[black] (-6,-11) circle (3pt) node[anchor=east]{$\vlst$};
        \filldraw[black] (6,-11) circle (3pt) node[anchor=west]{$\vrst$};
        \filldraw[black] (0,-11) circle (2pt);
        \filldraw[black] (-3,-12) circle (2pt);
        \filldraw[black] (3,-12) circle (2pt);
        \filldraw[black] (0,-13) circle (2pt) node[anchor=north]{$\vjst$};
    \end{tikzpicture}
    \caption{Sublattice $\mathbb{L}_3$ of $\mathbb{MON}$ generated by the varieties respectively generated by the \#-sylvester, sylvester, left and right stalactic, and hypoplactic monoids, and the variety $\VarMd$. The points in blue correspond to the elements that are not in \hrLatd{}.}
    \label{fig:lattice_with_sylv_stal_hypo_and_Md}
\end{figure}

\begin{theorem} \label{theorem:lattice_3}
    The Hasse diagram of \hrLatt{} is given in Figure~\ref{fig:lattice_with_sylv_stal_hypo_and_Md}.
\end{theorem}

\subsection{Varietal meets and joins} \label{subsection:L3_meets_joins}

The varieties exclusive to \hrLatt{} have been widely studied, in particular, by Sapir \cite{sapir_finitely_based_monoids}:

\begin{corollary}[{\cite[Corollary~6.6~(i)]{sapir_finitely_based_monoids}}] \label{cor:VarMd_identities}
    The equational theory of $\VarMd$ is the set of balanced identities that satisfy the properties \hlC{}, \hlDo{} and \hlDt{}.
\end{corollary}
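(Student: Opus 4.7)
The plan is to mirror the inductive proofs of Propositions~\ref{prop:vmst_meet_VarS_identities} and~\ref{prop:vmst_meet_vsylvh_vmst_meet_vsylv_identities}. Since the result is attributed to Sapir~\cite{sapir_finitely_based_monoids}, one route is simply to cite it; however, a self-contained argument in the style developed in this paper is obtained as follows.

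One inclusion is routine: a direct inspection shows that the basis identity \hrMd{}, namely $xzxyty \approx xzyxty$, itself satisfies \hlC{}, \hlDo{}, and \hlDt{}. Since these three properties are phrased in terms of supports and contents of prefixes and suffixes, they are stable under every step involved in deriving a consequence (substitution followed by concatenation with a common prefix and suffix). Hence every identity in the equational theory of $\VarMd$ satisfies the three properties.

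For the converse, I would proceed by induction on the length of the common suffix $\word$ of a balanced identity $\hlId$ satisfying \hlC{}, \hlDo{}, and \hlDt{}, writing $\uord = \uord' x \word$ and $\vord = \vord_1' x \vord_2' \word$ with $\vord_2' \in (\X \setminus \{x\})^+$, following the template fixed before Proposition~\ref{prop:vlst_meet_vsylv_vrst_meet_vsylvh_generators}. By \hlC{}, the variable $x$ and every variable of $\vord_2'$ must be non-simple; otherwise a simple letter in $\vord_2'$ would destroy the coincidence of the restrictions to simple variables on the two sides. Combining \hlDo{} with \hlDt{}, each of $x$ and the leftmost $y \in \supp{\vord_2'}$ must have a further occurrence in $\vord_1'$ or in $\word$. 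These extra occurrences play the role of the witness letters $z$ and $t$ appearing in \hrMd{} (with either of them possibly realised as the empty word by the substitution), and one application of \hrMd{} swaps the adjacent $xy$ in $\vord$ to $yx$, producing an intermediate identity with a strictly longer common suffix with $\uord$. Iterating this drives the right-hand side toward the left-hand side.

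The main obstacle will be the case analysis verifying that the witness letters are genuinely available in every subcase, determined by whether $x$ reoccurs only in $\vord_1'$, only in $\word$, or in both, and similarly for $y$. Because \hrMd{} is the sole rewrite rule and $\VarMd$ is incomparable with the stalactic varieties appearing in \hrLato{}, the argument is tighter than the corresponding proofs in Section~\ref{subsection:L1_meets}, where auxiliary identities such as \hrLt{}, \hrRt{}, or \hrMt{} were available to handle boundary cases. The base case of the induction handles identities whose two sides differ close to the start; the shortest such identities (for example $x^2y^2 \approx xyxy$, $xzxy^2 \approx xzyxy$, and their variants) are immediate substitution instances of \hrMd{}, which is routine.
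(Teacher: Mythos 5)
First, a point of comparison: the paper does not prove this statement at all --- it is imported verbatim from \cite[Corollary~6.6~(i)]{sapir_finitely_based_monoids} --- so any self-contained argument is by definition a different route from the paper's. Your overall strategy (the forward direction by checking that \hrMd{} satisfies \hlC{}, \hlDo{} and \hlDt{} and that these properties are stable under deduction, and the converse by induction on the common suffix following the template fixed before Proposition~\ref{prop:vlst_meet_vsylv_vrst_meet_vsylvh_generators}) is viable and matches how the paper proves the analogous characterisations, e.g.\ Propositions~\ref{prop:vlst_meet_vsylv_vrst_meet_vsylvh_identities} and~\ref{prop:vmst_meet_VarS_identities}.

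That said, the step you defer is exactly where the content lies, and the condition you state for it is not the right one. A single application of \hrMd{} swaps an adjacent pair $ab \leftrightarrow ba$ only if one of the two letters has another occurrence \emph{strictly before} the pair and the other has another occurrence \emph{strictly after} it; these anchoring occurrences are the outer $x$ and the outer $y$ of $xzxyty$, not the variables $z$ and $t$, which are mere context and may be erased by the substitution, whereas the anchors may not (otherwise \hrMd{} would degenerate to commutativity). Knowing merely that ``each of $x$ and $y$ has a further occurrence in $\vord_1'$ or in $\word$'' is therefore insufficient: if both further occurrences sat in $\vord_1'$, no swap would be available. The case analysis does close, but only by applying \hlDt{} and \hlDo{} to the variable $x$ itself. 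If $x$ occurs in $\vord_1'$ but not in $\word$, the displayed $x$ is the last occurrence of $x$ on both sides, and comparing the supports of the shortest suffixes containing $x$ forces $\supp{\vord_2'} \subseteq \supp{\word}$, so every letter of $\vord_2'$ has an anchor after the pair while $x$ has one before. If $x$ occurs in $\word$ but not in $\vord_1'$, then \hlDo{} applied to $x$, together with balancedness (which places every letter of $\vord_2'$ in $\uord'$, hence in the shortest prefix of $\uord$ containing $x$), forces $\supp{\vord_2'} \subseteq \supp{\vord_1'}$, giving the anchors on the opposite sides. If $x$ occurs in both, the non-simplicity of the letters of $\vord_2'$ --- which is where \hlC{} is genuinely needed beyond showing $x$ non-simple --- supplies whichever anchor is missing. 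With this analysis filled in, your induction goes through and the argument is correct.
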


\begin{corollary} \label{cor:VarMd_meet_vsylvh_VarMd_meet_vsylv_finite_basis}
    The variety $\VarMd \wedge \vsylvh$ admits a finite equational basis consisting of the identities \hrMd{} and \hrLt{}, and the variety $\VarMd \wedge \vsylv$ admits a finite equational basis consisting of the identities \hrMd{} and \hrRt{}.
\end{corollary}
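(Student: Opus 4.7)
The plan is to derive this as an immediate consequence of the general principle recalled in Section~\ref{subsection:identities_and_varieties}: if both $\V$ and $\W$ are finitely based, then $\V \wedge \W$ is also finitely based, and the union of their respective finite bases serves as a basis for $\V \wedge \W$. This is because the equational theory of the meet is the join (i.e., the union closure under consequence) of the respective equational theories.

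First, I would recall that $\VarMd$ is defined to admit the finite equational basis consisting of the single identity \hrMd{}. Next, by Theorem~\ref{theorem:vsylvh_vsylv_finite_basis}, the variety $\vsylvh$ admits the finite basis consisting of \hrLt{}, and the variety $\vsylv$ admits the finite basis consisting of \hrRt{}. Applying the general principle, $\VarMd \wedge \vsylvh$ therefore admits $\{\hrMd{},\hrLt{}\}$ as a finite equational basis, and dually $\VarMd \wedge \vsylv$ admits $\{\hrMd{},\hrRt{}\}$ as a finite equational basis, which is exactly the statement.

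There is essentially no obstacle here — the result is a direct application of a standard fact already stated in the background section combined with the previously established finite bases for each factor. The only thing to be careful about is citing the appropriate earlier results (Theorem~\ref{theorem:vsylvh_vsylv_finite_basis} and the definition of $\VarMd$) rather than reproving anything.
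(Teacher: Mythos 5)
Your proposal is correct and follows exactly the paper's own (one-line) proof: the paper likewise derives the result from the definition of $\VarMd$ together with Theorem~\ref{theorem:vsylvh_vsylv_finite_basis}, using the background fact that a meet of finitely based varieties admits the union of the two bases as a basis. Nothing further is needed.
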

\begin{proof}
    Follows from the definition of $\VarMd$ and Theorem~\ref{theorem:vsylvh_vsylv_finite_basis}.
\end{proof}

\begin{corollary}[{\cite[Corollary~6.6~(ii)--(iii)]{sapir_finitely_based_monoids}}] \label{cor:VarMd_meet_vsylvh_VarMd_meet_vsylv_identities}
    The equational theory of $\VarMd \wedge \vsylvh$ is the set of balanced identities that satisfy the properties \hlC{} and \hlDo{}, and the equational theory of $\VarMd \wedge \vsylv$ is the set of balanced identities that satisfy the properties \hlC{} and \hlDt{}.
\end{corollary}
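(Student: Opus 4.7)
This statement is attributed to Sapir as \cite[Corollary~6.6 (ii)--(iii)]{sapir_finitely_based_monoids}, so the most direct route is simply to cite that reference. A self-contained proof in the framework of this paper can be assembled from Corollary~\ref{cor:VarMd_meet_vsylvh_VarMd_meet_vsylv_finite_basis} together with an inductive argument in the style of Propositions~\ref{prop:vlst_meet_vsylv_vrst_meet_vsylvh_identities} and \ref{prop:vmst_meet_vsylvh_vmst_meet_vsylv_identities}; I sketch this for the case of $\VarMd \wedge \vsylvh$.

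For the easy direction, direct inspection shows that both \hrMd{} and \hrLt{} satisfy \hlC{} (on each, the simple variables $z$ and $t$ occupy identical positions on the two sides when restricted together with any third variable) and \hlDo{} (each variable first occurs in the same support-prefix on both sides). Since the set of balanced identities satisfying \hlC{} and \hlDo{} is itself an equational theory (being the intersection of two equational theories from Subsection~\ref{subsection:properties_equational_theories}), it is closed under consequences, hence contains the equational theory generated by \hrMd{} and \hrLt{}. Equivalently, Corollary~\ref{cor:VarMd_identities} and Theorem~\ref{theorem:vsylvh_vsylv_identities} show that the equational theories of $\VarMd$ and $\vsylvh$ each lie in the set characterised by \hlC{} and \hlDo{} (in the latter case using that \hlAo{} implies both via the diagram of implications in Subsection~\ref{subsection:properties_equational_theories}).

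For the converse, I would induct on the length of the prefix up to the common suffix, showing that every balanced identity satisfying \hlC{} and \hlDo{} is a consequence of \hrMd{} and \hrLt{}. The base case handles identities of the form $x^2 y \word \approx xyx \word$: here $y$ cannot be simple, else the restriction to $\{x\}$ together with the simple variables would differ in the third position, contradicting \hlC{}; hence $y$ occurs in $\word$, and writing $\word = \word_1 y \word_2$ with $y \notin \supp{\word_1}$, the substitution $z \mapsto \varepsilon$, $t \mapsto \word_1$ in \hrMd{} followed by right-multiplication by $\word_2$ yields the identity directly. In the inductive step, writing $\uord = \uord' x \word$ and $\vord = \vord_1' x \vord_2' \word$ in the standard form, property \hlC{} forces every variable in $\vord_2'$ to be non-simple, while \hlDo{} constrains each such variable either to occur in $\vord_1'$ or else to force $x$ to occur in $\word$. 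These constraints permit each letter of $\vord_2'$ to be moved past $x$ via an application of \hrMd{} (when the variable also occurs in $\vord_1'$) or of \hrLt{} (otherwise, using the occurrence forced in $\word$), shortening the prefix before the common suffix.

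The main technical obstacle is the case analysis in the inductive step: choosing the correct axiom to apply and verifying that each intermediate identity remains in the set characterised by \hlC{} and \hlDo{} so that the induction hypothesis applies. This parallels the argument in Proposition~\ref{prop:vmst_meet_vsylvh_vmst_meet_vsylv_identities} with \hlEt{} replaced by the stronger condition \hlC{}, and the dual statement for $\VarMd \wedge \vsylv$ follows by a symmetric argument.
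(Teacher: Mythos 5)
The paper offers no proof of this corollary beyond the citation to Sapir, so your primary route (cite the reference) is exactly the paper's approach, and your observation that the containment direction also follows from Corollary~\ref{cor:VarMd_identities}, Theorem~\ref{theorem:vsylvh_vsylv_identities} and the implication diagram of Subsection~\ref{subsection:properties_equational_theories} is sound. In your supplementary induction sketch the roles of the two axioms are interchanged in the inductive step: \hrLt{} is the rule that swaps an adjacent pair when \emph{both} variables occur earlier, so it handles the case where the variable of $\vord_2'$ recurs in $\vord_1'$ together with $x$, whereas \hrMd{} handles the mixed case where one member of the pair occurs before the swap position and the other after (e.g.\ $y$ recurring only later in $\vord_2'\word$, or, when $x \notin \supp{\vord_1'}$, the occurrence of $x$ forced in $\word$ by \hlC{}); with that correction the sketch goes through in the same style as Proposition~\ref{prop:vmst_meet_vsylvh_vmst_meet_vsylv_identities}.
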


\begin{corollary} \label{cor:VarMd_meet_VarS_finite_basis}
    The variety $\VarMd \wedge \VarS$ admits a finite equational basis consisting of the identities \hrMd{}, \hrLt{}, and \hrRt{}.
\end{corollary}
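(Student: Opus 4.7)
The plan is to observe that this corollary is an immediate consequence of the general principle about bases of varietal meets, recalled in Subsection~\ref{subsection:identities_and_varieties}: if $\V$ and $\W$ are both finitely based varieties, then $\V \wedge \W$ is finitely based and the union of finite bases for $\V$ and $\W$ is a finite basis for $\V \wedge \W$. So the proof reduces to identifying a finite basis for each of the two varieties being met.

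First, by definition, $\VarMd$ admits the single identity \hrMd{} as a finite equational basis. Second, by Corollary~\ref{cor:VarS_finite_basis}, the variety $\VarS$ admits \hrLt{} and \hrRt{} as a finite basis. Taking the union gives the set $\{\hrMd{}, \hrLt{}, \hrRt{}\}$ as a finite basis of $\VarMd \wedge \VarS$, as claimed.

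There is essentially no obstacle here, since both ingredients are already in place and the result is a direct invocation of the meet-of-bases principle. I would therefore simply write a one-line proof of the form: \emph{Follows from the definition of $\VarMd$ and Corollary~\ref{cor:VarS_finite_basis}.}
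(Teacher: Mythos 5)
Your proposal is correct and matches the paper's proof exactly: the paper also invokes the meet-of-bases principle from Subsection~\ref{subsection:identities_and_varieties}, citing the definition of $\VarMd$ and Corollary~\ref{cor:VarS_finite_basis}, and its proof is precisely the one-liner you suggest.
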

\begin{proof}
    Follows from the definition of $\VarMd$ and Corollary~\ref{cor:VarS_finite_basis}.
\end{proof}

\begin{proposition}[{\cite[Proposition~6.1]{sapir_finitely_based_monoids}}] \label{prop:VarMd_meet_VarS_identities}
    The equational theory of $\VarMd \wedge \VarS$ is the set of balanced identities that satisfy the property \hlC{}.
\end{proposition}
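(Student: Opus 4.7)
The plan is to prove both inclusions. By Corollary~\ref{cor:VarMd_meet_VarS_finite_basis}, $\VarMd \wedge \VarS$ admits $\{\hrMd{}, \hrLt{}, \hrRt{}\}$ as an equational basis, and each of these three identities is directly seen to be balanced and to satisfy \hlC{}. Since any consequence of identities satisfying \hlC{} still satisfies \hlC{}, every identity in the equational theory of $\VarMd \wedge \VarS$ is balanced and satisfies \hlC{}.

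For the converse, I would follow the inductive scheme used in the proof of Proposition~\ref{prop:vmst_meet_VarS_identities}: given a balanced identity $\hlId$ satisfying \hlC{}, with longest common suffix $\word$, write $\uord = \uord' x \word$ and $\vord = \vord_1' x \vord_2' \word$ with $\vord_2' \in (\X \setminus \{x\})^+$, and induct on the length of $\uord$. Under \hlC{} combined with balance, $x$ cannot be simple: the equivalent form of \hlC{} given in the remark after Lemma~\ref{lemma:J1_dual_identities} would force $\cont{\uord'} = \cont{\vord_1'}$, which by balance yields $\cont{\vord_2'} = 0$, contradicting $\vord_2' \neq \varepsilon$. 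Hence $x$ is non-simple and occurs in $\vord_1'$ or in $\word$; the base cases $xy\word \approx yx\word$ and $x^2y\word \approx xyx\word$ reduce to \hrRt{} and \hrMd{} respectively, once \hlC{} has forced the relevant variables to be non-simple.

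The crucial strengthening over the proof of Proposition~\ref{prop:vmst_meet_VarS_identities} is that under \hlC{} no variable $y$ of $\vord_2'$ can be simple either: if it were, then comparing the restrictions of $\uord$ and $\vord$ to $\{x,y\}$ together with all simple variables of the identity would give one more $x$ occurring after $y$ on the $\uord$-side than on the $\vord$-side (since $\vord_2'$ contains no $x$, and $y$ would be absent from $\word$ by balance with simplicity), contradicting \hlC{}. Consequently, every letter of $\vord_2'$ also occurs in $\vord_1'$ or in $\word$, which eliminates precisely the sub-case that required \hrMt{} in the proof of Proposition~\ref{prop:vmst_meet_VarS_identities}, so that the induction closes using only \hrLt{} and \hrMd{} (when $x \in \vord_1'$) or \hrMd{} and \hrRt{} (when $x \in \word$ only), exactly as in that proof. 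The main obstacle is therefore the ``$y \in \vord_2'$ simple'' exclusion via \hlC{}; once it is in place, the detailed migration of letters of $\vord_2'$ past the distinguished $x$ carries over essentially verbatim, with each intermediate swap handled by one of $\{\hrMd{}, \hrLt{}, \hrRt{}\}$ applied with an appropriate substitution of identity variables (possibly in both directions to match the actual non-simple letters of our word).
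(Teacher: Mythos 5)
Your proof is correct, but note that the paper itself offers no proof of this statement --- it is imported directly from Sapir (\cite[Proposition~6.1]{sapir_finitely_based_monoids}) --- so what you have produced is a self-contained argument in the style of the paper's own inductive template for Propositions~\ref{prop:vlst_meet_vsylv_vrst_meet_vsylvh_identities} and~\ref{prop:vmst_meet_VarS_identities}, rather than a reconstruction of an in-paper proof. The two observations that make your induction close using only \hrMd{}, \hrLt{} and \hrRt{} are sound: under \hlC{} and balance the distinguished letter $x$ cannot be simple, and no letter of $\vord_2'$ can be simple, since otherwise the restrictions of the two sides to $x$ together with the simple variables would disagree on the number of occurrences of $x$ after the unique occurrence of that letter. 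One intermediate claim is stated too strongly: a non-simple letter of $\vord_2'$ need not occur in $\vord_1'$ or in $\word$, as all of its occurrences may lie inside $\vord_2'$. This does not break the argument --- each adjacent swap only needs both letters involved to have a companion occurrence somewhere before or after the swap position, which non-simplicity supplies, and this is exactly how the paper's template treats letters occurring at least twice in $\vord_2'\word$ in the proof of Proposition~\ref{prop:vmst_meet_VarS_identities} --- but you should restate it as ``every letter of $\vord_2'$ has a second occurrence in $\vord_1'$, in $\vord_2'$ itself, or in $\word$''. With that correction, each swap is an instance of \hrLt{} (both companion occurrences before the swap), \hrRt{} (both after), or \hrMd{} read in one of its two directions (one before, one after), and the induction, together with your base cases, goes through.
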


\begin{proposition} \label{prop:VarMd_meet_VarS_varietal_join}
    The variety $\VarMd \wedge \VarS$ is not contained in the join of $\mathbf{COM}$ and any finitely generated variety.
\end{proposition}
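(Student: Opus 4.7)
My plan is to argue by contradiction. Assume that $\VarMd \wedge \VarS \subseteq \mathbb{COM} \vee \V$ for some finitely generated variety $\V$. Since any finitely generated variety of monoids is generated by a single finite monoid (the direct product of a finite generating family), we may write $\V = \Var_M$ for some finite monoid $M$ of order $m$. As the equational theory of $\mathbb{COM}$ is the set of all balanced identities, the equational theory of $\mathbb{COM} \vee \Var_M$ is the set of balanced identities satisfied by $M$. The assumed inclusion, combined with Proposition~\ref{prop:VarMd_meet_VarS_identities}, then forces every balanced identity satisfied by $M$ to satisfy property \hlC{}. I aim to contradict this by exhibiting a single balanced identity satisfied by $M$ that fails \hlC{}.

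Setting $N = m!$, one verifies that $u^N$ is idempotent in $M$ for every $u \in M$: the cyclic submonoid generated by $u$ has both index and period at most $m$, and $m!$ exceeds $m$ and is divisible by every positive integer up to $m$. Consequently $u^{kN} = u^N$ in $M$ for every integer $k \geq 1$. With $N$ so chosen, I propose the witness identity
\[
    x^{2N} y x^{2N} \approx x^{N} y x^{3N}.
\]
The two sides share the content in which $x$ has multiplicity $4N$ and $y$ has multiplicity $1$, so the identity is balanced. Under any evaluation $x \mapsto u$, $y \mapsto v$ in $M$, both sides collapse to $u^N v u^N$, so $M$ satisfies this identity.

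However, $y$ is the unique simple variable of the identity and $x$ is non-simple, so property \hlC{} applied to $x$ requires that the restrictions of both sides to $\{x\} \cup \{y\} = \{x, y\}$—which is just the whole word on either side—coincide as words, and they clearly do not. Hence the identity does not belong to the equational theory of $\VarMd \wedge \VarS$, yielding the desired contradiction. There is no genuinely hard step in this plan: the crux is simply spotting a short balanced witness that turns the idempotency of $u^N$ into a violation of \hlC{}, and the subsequent verifications are routine.
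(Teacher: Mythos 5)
Your proof is correct. Both arguments ultimately rest on the same obstruction --- the eventual periodicity of powers in a finite monoid forces a balanced identity that violates \hlC{} --- but you package it differently. The paper works semantically: it takes the relatively free monoid $F$ of rank $2$ in $\VarMd \wedge \VarS$, represents it as an image of a submonoid $L$ of a product of copies of $\N$ with a finite monoid, finds $p \neq q$ with $b^p = b^q$ in the finite part, and concludes that $y^p x y^q$ and $y^q x y^p$ would coincide in $F$, contradicting Proposition~\ref{prop:VarMd_meet_VarS_identities}. You work syntactically: using that the equational theory of $\mathbb{COM} \vee \Var_M$ is the set of balanced identities satisfied by $M$, you produce the uniform witness $x^{2N} y x^{2N} \approx x^{N} y x^{3N}$ with $N = m!$, which is balanced, holds in every monoid of order $m$ because $u^N$ is idempotent, and fails \hlC{} at the non-simple variable $x$. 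Your route avoids the free object and the subdirect decomposition entirely and yields an explicit identity depending only on $|M|$, which is arguably more elementary and self-contained; the paper's route is shorter on the book-keeping side because the pigeonhole on $b^p = b^q$ replaces the computation of an idempotent power. (Two cosmetic points: ``$m!$ exceeds $m$'' should read ``$m!$ is at least $m$'', which is all you need since the index is at most $m$; and your verification that both sides evaluate to $u^N v u^N$ is exactly the idempotency $u^{kN} = u^N$ you established, so the argument is complete.)
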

\begin{proof}
    Let $F$ be the (relatively) free monoid of rank 2 in $\VarMd \wedge \VarS$, over the alphabet $\{x,y\}$. Suppose, in order to obtain a contradiction, that $F$ is contained in the join of $\mathbf{COM}$ and a finitely generated variety. Then, $F$ is the image of a submonoid $L$ of a direct product of copies of $\bbN$ and copies of a finite monoid under some surjective homomorphism $\phi \colon L \to F$. Let $a,b \in L$ such that $\phi(a) = x$ and $\phi(b) = y$. 
    Then, there exist $p,q \in \bbN$ with $p \neq q$ such that $b^p = b^q$ in the finite part, as such, $b^pab^q$ and $b^qab^p$ are equal in $L$.
    However, their images under $\phi$ are $y^pxy^q$ and $y^qxy^p$, respectively. As the identity $y^pxy^q \approx y^qxy^p$ does not satisfy property \hlC{}, it is not satisfied by the variety $\VarMd \wedge \VarS$. Thus, $y^pxy^q$ and $y^qxy^p$ are not equal in $F$, giving a contradiction.
\end{proof}

As a consequence, it follows that any variety containing $\VarMd \wedge \VarS$ is also not contained in the join of $\mathbf{COM}$ and any finitely generated variety.

\subsection{Proving Theorem~\ref{theorem:lattice_3}} \label{subsection:L3_correctness}

We now prove Theorem~\ref{theorem:lattice_3}, to do this, we require two lemmas that show that lattice \hrLatt{} has no missing varieties, and all the covers are well defined.

\begin{lemma} \label{lemma:incomparabilities_lattice_3}
    The following statements hold:
    \begin{enumerate}[label=(\roman*)]
        \item $\VarMd$ is incomparable with $\vhypo$, $\vhypo \vee \vmst$, $\vsylvh$ and $\vsylv$;
        \item $\VarMd \wedge \vsylvh$ is incomparable with $\vrst$, $\vhypo$, $\vhypo \vee \vmst$ and $\vsylv$;
        \item $\VarMd \wedge \vsylv$ is incomparable with $\vlst$, $\vhypo$, $\vhypo \vee \vmst$ and $\vsylvh$;
        \item $\VarMd \wedge \VarS$ is incomparable with $\vlst$, $\vrst$, $\vhypo$ and $\vhypo \vee \vmst$.
    \end{enumerate}
\end{lemma}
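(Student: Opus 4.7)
The plan is to prove each of the ten incomparabilities in the same style as Lemmas~\ref{lemma:incomparabilities_lattice_1} and~\ref{lemma:incomparabilities_lattice_2}: for each pair of varieties, exhibit one balanced identity in the equational theory of each side that is not in the equational theory of the other, and verify each membership or non-membership by inspecting the syntactic properties \hlAo{}--\hlF{}. The characterizations to invoke have all been recorded, namely Theorems~\ref{theorem:vhypo_identities} and~\ref{theorem:vsylvh_vsylv_identities}, Corollaries~\ref{cor:vlst_vrst_identities}, \ref{cor:vhypo_join_vmst_identities}, \ref{cor:VarMd_identities} and \ref{cor:VarMd_meet_vsylvh_VarMd_meet_vsylv_identities}, and Proposition~\ref{prop:VarMd_meet_VarS_identities}. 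By Remark~\ref{remark:polynomial_time}, checking each property on a short concrete identity is mechanical.

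A compact set of witnesses will suffice. The defining identity \hrMd{} lies in the equational theory of every $\VarMd$-meet under consideration, and it fails property \hlB{}: the subsequence $yx$ appears on the right-hand side but not the left. This single observation yields one direction of every incomparability of a $\VarMd$-meet with $\vhypo$, $\vhypo \vee \vmst$, $\vsylvh$ or $\vsylv$, since each of those varieties is characterized by a property implying \hlB{}. In the reverse directions, \hrMt{} satisfies \hlB{}, \hlDo{} and \hlDt{} but fails \hlC{}, which separates $\vhypo$ and $\vhypo \vee \vmst$ from each $\VarMd$-meet; and \hrLt{} and \hrRt{} lie in the equational theories of $\vsylvh$ and $\vsylv$ respectively by Theorem~\ref{theorem:vsylvh_vsylv_finite_basis} but fail \hlDt{} and \hlDo{}, separating those from each $\VarMd$-meet. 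The remaining pairs, which occur in (ii), (iii) and (iv) and involve $\vrst$ or $\vlst$, are handled by the identities \hrLo{} and \hrRo{} of Corollary~\ref{cor:vlst_vrst_finite_basis}, which satisfy \hlDo{} and \hlDt{} respectively but fail \hlC{}, together with \hrMf{} of Proposition~\ref{prop:vlst_meet_vsylv_vrst_meet_vsylvh_finite_basis}, which satisfies \hlC{} alone and simultaneously fails \hlB{}, \hlDo{} and \hlDt{}. Since $\VarMd \wedge \VarS$ is characterized by \hlC{} only, \hrMf{} handles one direction in all four comparisons of (iv) against $\vlst$, $\vrst$, $\vhypo$ and $\vhypo \vee \vmst$, with \hrLo{}, \hrRo{} and \hrMt{} covering the reverse directions.

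The proof itself is then a compact tabulation of witnesses in the same vein as the preceding two incomparability lemmas. The only subtlety is bookkeeping: each witness must be matched with the correct direction, and in parts (ii) and (iii) the asymmetric roles of $\vsylvh$ and $\vsylv$ force slightly different choices for the superficially symmetric cases — for instance, for $\VarMd \wedge \vsylvh$ opposed to $\vsylv$ one uses \hrRt{} in one direction (satisfies \hlAt{}, fails \hlDo{}) and \hrLt{} in the other (satisfies \hlC{} and \hlDo{}, fails \hlDt{}), and dually in (iii). I do not anticipate any genuine obstacle beyond this organizational care, since every verification reduces to checking a short, fixed property on a word of at most six letters.
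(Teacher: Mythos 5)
Your overall strategy is exactly the paper's: a tabulation of short witness identities checked against the syntactic characterizations, with \hrMd{} separating the $\VarMd$-meets from everything above $\vhypo$ and \hrMt{} giving the reverse direction for $\vhypo$ and $\vhypo \vee \vmst$. Those parts, and the $\vsylvh$/$\vsylv$ comparisons via \hrLt{} and \hrRt{}, are all correct and match the published argument.

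There is, however, one concrete misassignment of witnesses. You claim that the pairs in (ii) and (iii) involving $\vrst$ and $\vlst$ are handled by \hrLo{}, \hrRo{} and \hrMf{}. For the pair $(\VarMd \wedge \vsylvh, \vrst)$ this covers only one direction: \hrRo{} lies in the equational theory of $\vrst$ and fails \hlC{}, which shows $\VarMd \wedge \vsylvh \not\subseteq \vrst$. The other direction, $\vrst \not\subseteq \VarMd \wedge \vsylvh$, requires an identity satisfied by $\VarMd \wedge \vsylvh$ --- hence satisfying \emph{both} \hlC{} and \hlDo{}, by Corollary~\ref{cor:VarMd_meet_vsylvh_VarMd_meet_vsylv_identities} --- that fails \hlDt{}. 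None of your three designated witnesses qualifies: \hrLo{} and \hrRo{} fail \hlC{}, and \hrMf{} fails \hlDo{}, so none of them even lies in the equational theory of $\VarMd \wedge \vsylvh$. The same problem occurs dually for $\vlst \not\subseteq \VarMd \wedge \vsylv$. The fix is already in your toolkit: \hrLt{} satisfies \hlC{} and \hlDo{} but fails \hlDt{}, so it lies in the equational theory of $\VarMd \wedge \vsylvh$ but not in that of $\vrst$ (and dually \hrRt{} for $\vlst$ against $\VarMd \wedge \vsylv$); this is precisely the witness the paper uses for these two directions. Alternatively, these directions follow from the previously established fact that $\vrst$ is incomparable with $\vsylvh$, since $\VarMd \wedge \vsylvh \subseteq \vsylvh$. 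With that reassignment the proof goes through and coincides with the paper's.
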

\begin{proof}
    By the definition of $\VarMd$, the identity \hrMd{} is satisfied by $\VarMd$, $\VarMd \wedge \vsylvh$, $\VarMd \wedge \vsylv$, and $\VarMd \wedge \VarS$. On the other hand, by Theorem~\ref{theorem:vhypo_identities}, \hrMd{} is not satisfied by $\vhypo$, hence, it is satisfied by neither $\vhypo \vee \vmst$, $\vsylvh$ nor $\vsylv$. Moreover, by Corollaries~\ref{cor:vlst_vrst_identities} and \ref{cor:VarMd_meet_vsylvh_VarMd_meet_vsylv_identities}, the identity \hrLt{} is satisfied by $\VarMd \wedge \vsylvh$ and $\VarMd \wedge \VarS$ but not by $\vrst$, and \hrRt{} is satisfied by $\VarMd \wedge \vsylv$ and $\VarMd \wedge \VarS$ but not by $\vlst$.
    
    Similarly, by Proposition~\ref{cor:vhypo_join_vmst_identities}, the identity \hrMt{} is satisfied by $\vhypo \vee \vmst$ but, by Proposition~\ref{prop:VarMd_meet_VarS_identities}, is not satisfied by $\VarMd \wedge \VarS$, and therefore is satisfied by neither $\VarMd$, $\VarMd \wedge \vsylvh$ nor $\VarMd \wedge \vsylv$. By Theorem~\ref{theorem:vsylvh_vsylv_identities} and Corollary~\ref{cor:VarMd_meet_vsylvh_VarMd_meet_vsylv_identities}, the identity \hrRt{} is satisfied by $\vsylv$ but neither by $\VarMd \wedge \vsylvh$ nor $\VarMd$ and \hrLt{} is satisfied by $\vsylvh$ but neither by $\VarMd \wedge \vsylv$ nor $\VarMd$.
\end{proof}

\begin{lemma}  \label{lemma:equalities_lattice_3}
    The following equalities hold:
    \begin{enumerate}[label=(\roman*)]
        \item $\VarMd = \vmst \vee (\VarMd \wedge \VarS) = (\VarMd \wedge \vsylvh) \vee \vrst = (\VarMd \wedge \vsylv) \vee \vlst$;
        \item $\VarMd \wedge \vsylvh = \VarMd \wedge (\vlst \vee \VarS) = (\vmst \wedge \vsylvh) \vee (\VarMd \wedge \VarS) = \vlst \vee (\VarMd \wedge \VarS)$;
        \item $\VarMd \wedge \vsylv = \VarMd \wedge (\vrst \vee \VarS) = (\vmst \wedge \vsylv) \vee (\VarMd \wedge \VarS) = \vrst \vee (\VarMd \wedge \VarS)$;
        \item $\VarMd \vee \vhypo = (\VarMd \wedge \vsylvh) \vee (\vhypo \vee \vrst) = (\VarMd \wedge \vsylv) \vee (\vhypo \vee \vlst) = \vmst \vee \VarS$;
        \item $\VarMd \wedge \vhypo = \vmst \wedge \VarS$;
        \item $\VarMd \wedge (\vhypo \vee \vmst) = \vmst$;
        \item $(\VarMd \wedge \VarS) \vee \vhypo = \VarS$.
    \end{enumerate}
\end{lemma}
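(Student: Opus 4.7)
The plan is to verify each of the seven equalities using the general principle that the equational theory of a join of varieties is the intersection of their individual equational theories, while the equational theory of a meet admits the union of individual finite bases as a basis. Since every variety appearing in the statement has either an explicit characterisation of its equational theory in terms of the properties \hlAo{}--\hlF{}, or an explicit finite basis already established, each equality reduces to routinely intersecting or uniting these descriptions.

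For the equalities involving joins, I would compare equational theories. Equality (vii) is the cleanest: intersecting \hlC{} (Proposition~\ref{prop:VarMd_meet_VarS_identities}) with \hlB{} (Theorem~\ref{theorem:vhypo_identities}) gives identities satisfying both \hlB{} and \hlC{}, which by Proposition~\ref{prop:VarS_identities} is $\VarS$. For (i), the equational theory of each of the three expressions collapses to the intersection \hlC{}, \hlDo{}, \hlDt{} via Corollaries~\ref{cor:vmst_identities}, \ref{cor:vlst_vrst_identities} and \ref{cor:VarMd_meet_vsylvh_VarMd_meet_vsylv_identities} together with Proposition~\ref{prop:VarMd_meet_VarS_identities}, which by Corollary~\ref{cor:VarMd_identities} is exactly $\VarMd$. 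For (iv), each of the three joins has equational theory given by \hlB{}, \hlC{}, \hlDo{} and \hlDt{}, which by Corollary~\ref{cor:vmst_join_VarS_identities} is $\vmst \vee \VarS$; here I invoke Theorem~\ref{theorem:vhypo_identities}, Corollary~\ref{cor:VarMd_identities}, Corollary~\ref{cor:VarMd_meet_vsylvh_VarMd_meet_vsylv_identities} and Corollary~\ref{cor:vhypo_join_vlst_vhypo_join_vrst_identities}.

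For the equalities given by meets I would combine finite bases directly. In (v), the union of the basis \hrMd{} of $\VarMd$ with the basis \hrLt{}, \hrRt{}, \hrMt{} of $\vhypo$ (Theorem~\ref{theorem:vhypo_finite_basis}) is exactly the basis of $\vmst \wedge \VarS$ from Corollary~\ref{cor:vmst_meet_VarS_finite_basis}. In (vi), the union of \hrMd{} with the basis \hrMt{} of $\vhypo \vee \vmst$ (Proposition~\ref{prop:vhypo_join_vmst_finite_basis}) is the basis of $\vmst$ from Corollary~\ref{cor:vmst_finite_basis}. The two outer equalities in each of (ii) and (iii) follow analogously, by combining Corollaries~\ref{cor:VarMd_meet_vsylvh_VarMd_meet_vsylv_finite_basis}, \ref{cor:VarMd_meet_VarS_finite_basis}, \ref{cor:vmst_meet_vsylvh_vmst_meet_vsylv_finite_basis} and \ref{cor:vlst_vrst_finite_basis}, or alternatively through equational theories; in the latter route one must observe that \hrOot{}, \hrEot{}, \hrOto{} and \hrEto{} all satisfy \hlC{}, \hlDo{} and \hlDt{}, so by Corollary~\ref{cor:VarMd_identities} they are consequences of \hrMd{}.

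The only real subtlety lies in careful bookkeeping of the implication diagram between properties, so as to recognise that certain longer conjunctions collapse: most notably, \hlC{} implies both \hlEo{} and \hlEt{}, and \hlDo{} (resp.\ \hlDt{}) implies \hlEo{} (resp.\ \hlEt{}). For instance, the middle equality of (ii) a priori produces an intersection described by \hlC{}, \hlDo{} and \hlEt{}, which must be recognised as the defining \hlC{} and \hlDo{} of $\VarMd \wedge \vsylvh$ given in Corollary~\ref{cor:VarMd_meet_vsylvh_VarMd_meet_vsylv_identities}. Beyond this bookkeeping, the seven equalities form a routine verification with no single step posing a serious obstacle.
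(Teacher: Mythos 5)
Your proposal is correct and follows essentially the same route as the paper: for each equality, joins are handled by intersecting the known characterisations of equational theories via the properties \hlAo{}--\hlF{} (collapsing redundant conjuncts using the implication diagram), and meets are handled by uniting the known finite bases, exactly as in cases \textit{(v)}--\textit{(vii)} and the first equalities of \textit{(i)}--\textit{(iii)}. The only minor slip is attributing the observation that \hrOot{}, \hrEot{}, \hrOto{} and \hrEto{} are consequences of \hrMd{} to the equational-theory route, when it is in fact what the finite-basis route for $\VarMd \wedge (\vlst \vee \VarS)$ and $\VarMd \wedge (\vrst \vee \VarS)$ requires; this does not affect correctness.
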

\begin{proof}
    The first equality in case \textit{(i)} follows from the definition of $\VarMd$, Corollary~\ref{cor:vmst_identities} and Proposition~\ref{prop:VarMd_meet_VarS_identities}, while the second and third equalities follow from Corollaries~\ref{cor:vlst_vrst_identities} and \ref{cor:VarMd_meet_vsylvh_VarMd_meet_vsylv_identities}.

    The first equalities in cases \textit{(ii)} and \textit{(iii)} follow from the definition of $\VarMd$ and Proposition~\ref{prop:vlst_join_VarS_vrst_join_VarS_finite_basis}. The second equalities follow from Propositions~\ref{prop:vmst_meet_vsylvh_vmst_meet_vsylv_identities} and \ref{prop:VarMd_meet_VarS_identities}, while the third equalities follow from Corollary~\ref{cor:vlst_vrst_identities}.

    The first and second equalities in case \textit{(iv)} follow from Corollaries~\ref{cor:vhypo_join_vlst_vhypo_join_vrst_identities} and \ref{cor:VarMd_meet_vsylvh_VarMd_meet_vsylv_identities}, while the third equality follows from Corollary~\ref{cor:vmst_join_VarS_identities}.

    Case \textit{(v)} follows from the definition of $\VarMd$, Theorem~\ref{theorem:vhypo_finite_basis} and Corollary~\ref{cor:vmst_meet_VarS_finite_basis}.

    Case \textit{(vi)} follows from the definition of $\VarMd$, Corollary~\ref{cor:vmst_finite_basis} and Proposition~\ref{prop:vhypo_join_vmst_finite_basis}.

    Case \textit{(vii)} follows from Theorem~\ref{theorem:vhypo_identities} and Propositions~\ref{prop:VarS_identities} and \ref{prop:VarMd_meet_VarS_identities}.
\end{proof}

The correctness of Theorem~\ref{theorem:lattice_3} then follows from Theorem~\ref{theorem:lattice_2} and Lemmas~\ref{lemma:incomparabilities_lattice_3} and \ref{lemma:equalities_lattice_3}.

\subsection{Axiomatic ranks} \label{subsection:L3_axiomatic_ranks}

\begin{corollary} \label{cor:L3_lattice_axiomatic_rank}
    The axiomatic rank of the varieties $\VarMd$, $\VarMd \wedge \vsylvh$, $\VarMd \wedge \vsylv$ and $\VarMd \wedge \VarS$ is 4.
\end{corollary}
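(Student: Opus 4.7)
The plan is to bound the axiomatic rank of each of the four varieties both from above and from below by $4$. The upper bound is immediate from the finite bases already established: $\VarMd$ is defined by \hrMd{}, Corollary~\ref{cor:VarMd_meet_vsylvh_VarMd_meet_vsylv_finite_basis} gives 4-variable bases for $\VarMd \wedge \vsylvh$ and $\VarMd \wedge \vsylv$, and Corollary~\ref{cor:VarMd_meet_VarS_finite_basis} gives a 4-variable basis for $\VarMd \wedge \VarS$. In each case every identity in the given basis uses at most four variables, so the axiomatic rank is at most $4$.

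For the lower bound, let $V$ denote any one of the four varieties. The key intermediate step I would carry out is to verify the containment $\vmst \wedge \VarS \subseteq V$. This follows by a short property-chase: by Corollary~\ref{cor:VarMd_identities}, Corollary~\ref{cor:VarMd_meet_vsylvh_VarMd_meet_vsylv_identities}, and Proposition~\ref{prop:VarMd_meet_VarS_identities}, the equational theory of $V$ is the set of balanced identities satisfying some combination of the properties \hlC{}, \hlDo{}, and \hlDt{}. Since \hlC{} implies both \hlEo{} and \hlEt{} (see the diagram in Subsection~\ref{subsection:properties_equational_theories}), every such identity automatically satisfies \hlEo{} and \hlEt{}, and Proposition~\ref{prop:vmst_meet_VarS_identities} then places it in the equational theory of $\vmst \wedge \VarS$.

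With this containment in hand, suppose for contradiction that $V$ admits an equational basis $\Sigma$ in which every identity uses at most three variables. Then \hrMd{}, which lies in the equational theory of $V$, would be a consequence of $\Sigma$. The chain of containments gives that $\Sigma$ is included in the equational theory of $\vmst \wedge \VarS$, so $\Sigma$ consists of non-trivial identities satisfied by $\vmst \wedge \VarS$ over an alphabet of at most four variables, and because \hrMd{} uses four distinct variables no element of $\Sigma$ can be equivalent to it. This directly contradicts Lemma~\ref{lemma:L2_Md_R2_not_consequences}, so the axiomatic rank of $V$ is exactly $4$. The only substantive point is the property-chase linking the defining properties of each $V$ to those of $\vmst \wedge \VarS$; once that is established, the corollary reduces to the previously proved Lemma~\ref{lemma:L2_Md_R2_not_consequences}.
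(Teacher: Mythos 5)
Your proposal is correct and follows essentially the same route as the paper: the $4$-variable bases from the definition of $\VarMd$ and Corollaries~\ref{cor:VarMd_meet_vsylvh_VarMd_meet_vsylv_finite_basis} and \ref{cor:VarMd_meet_VarS_finite_basis} give the upper bound, and Lemma~\ref{lemma:L2_Md_R2_not_consequences} gives the lower bound. Your explicit verification that $\vmst \wedge \VarS$ is contained in each of the four varieties (so that any putative three-variable basis would sit inside the set of identities to which Lemma~\ref{lemma:L2_Md_R2_not_consequences} applies) is exactly the step the paper leaves implicit.
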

\begin{proof}
    The result follows from the definition of $\VarMd$, Lemma~\ref{lemma:L2_Md_R2_not_consequences}, and Corollaries~\ref{cor:VarMd_meet_vsylvh_VarMd_meet_vsylv_finite_basis} and \ref{cor:VarMd_meet_VarS_finite_basis}.    
\end{proof}

\section{Equational theories of hyposylvester and metasylvester monoids} \label{section:hypo_metasylvester}

We now look at two recently introduced plactic-like monoids, the hyposylvester and metasylvester monoids, closely connected to the sylvester monoid, and show that they generate the same variety.

The hyposylvester and metasylvester congruences \cite{novelli_hypo_meta} are generated, respectively, by the relations
\begin{align*}
    \calR_\hs =& \{ (ca\uord b, ac\uord b): a \leq b < c, \uord \in \bbN^* \} \\
    & \cup \{ (b\uord ac, b\uord ca): a < b < c, \uord \in \bbN^* \}, \quad \text{and} \\
    \calR_\ms =& \{ (ab\uord a, ba\uord a): a < b, \uord \in \bbN^* \} \\
    & \cup \{ (b\uord ac\vord b, b\uord ca\vord b): a < b < c, \uord,\vord \in \bbN^* \}. 
\end{align*}

The hyposylvester monoids of countable rank and finite rank $n$ are denoted, respectively, by $\hs$ and $\hsn$, while the metasylvester monoids of countable rank and finite rank $n$ are denoted, respectively, by $\ms$ and $\msn$.

\begin{proposition}
    The hyposylvester monoids of rank greater than or equal to $2$ generates the same variety as the infinite-rank sylvester monoid.
\end{proposition}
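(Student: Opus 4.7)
The plan is to establish the two inclusions $\vhsn \subseteq \vsylv$ and $\vsylv \subseteq \vhsn$ for each $n \geq 2$, together with the analogous statement for the countable-rank $\hs$.

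The upward inclusion is nearly immediate from a direct comparison of the generating relations. The family $\{(ca\uord b, ac\uord b) \colon a \leq b < c, \uord \in \N^*\}$ appearing in $\cR_\hs$ is exactly $\cR_\sylv$, so $\cR_\sylv \subseteq \cR_\hs$ and hence $\equiv_\sylv$ is contained in $\equiv_\hs$ on $\N^*$ and on each $[n]^*$. Therefore $\hsn$ is a homomorphic image of $\sylv_n$ and $\hs$ is a homomorphic image of $\sylv$, so $\hsn, \hs \in \vsylv$.

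For the reverse inclusion, the main step is to exhibit $\sylv_2$ as a homomorphic image of $\hsn$ for each $n \geq 2$, via the map $\phi_n \colon \hsn \to \sylv_2$ induced by sending $i \mapsto [i]_{\equiv_\sylv}$ for $i \in \{1,2\}$ and $i \mapsto [\varepsilon]_{\equiv_\sylv}$ for $i \geq 3$. The hard part is to verify well-definedness, which amounts to checking that every pair in $\cR_\hs$ projects to a $\equiv_\sylv$-equivalent pair in $\{1,2\}^*$. For a sylvester-type pair $(ca\uord b, ac\uord b)$ with $a \leq b < c$ in $[n]$, either $c \geq 3$ and both sides project to the same word after deleting $c$, or $c \leq 2$ forcing $a = b = 1, c = 2$ so that the two projected sides differ only by a sylvester relation in $\sylv_2$. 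For an additional hyposylvester pair $(b\uord ac, b\uord ca)$ with $a < b < c$ in $[n]$, the condition $a < b < c$ cannot be realised within $\{1,2\}$, so at least one of $a,b,c$ is $\geq 3$; a short case analysis shows that in each case both sides project to the same element of $\sylv_2$.

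With $\phi_n$ well-defined, $\sylv_2 \in \vhsn$, so $\vsylv_2 \subseteq \vhsn$. Appealing to the previously established fact (mentioned in the introduction) that all sylvester monoids of rank $\geq 2$ generate the same variety $\vsylv$, we conclude $\vsylv = \vsylv_2 \subseteq \vhsn$, giving $\vhsn = \vsylv$ for every $n \geq 2$. The countable-rank statement follows similarly: $\hs$ is a homomorphic image of $\sylv$ (so $\vhs \subseteq \vsylv$) and contains each $\hsn$ as a submonoid (so $\vhs \supseteq \vhsn = \vsylv$), yielding $\vhs = \vsylv$ as well.
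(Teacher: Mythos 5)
Your proof is correct and rests on the same two pillars as the paper's: the containment $\cR_\sylv \subseteq \cR_\hs$ gives the upper bound $\Var_{\hsn} \subseteq \vsylv$, and the known fact that $\Var_{\sylv_2} = \vsylv$ gives the lower bound once $\sylv_2$ is located inside $\Var_{\hsn}$. Where you differ is in how you locate it: the paper simply observes that the extra relations in $\cR_\hs$ require three distinct letters, so $\cR_\hs$ and $\cR_\sylv$ coincide on a two-letter alphabet and $\hs_2 = \sylv_2$ as monoids; you instead build, for each $n \geq 2$, a surjection $\hsn \twoheadrightarrow \sylv_2$ by deleting all letters $\geq 3$ and verify that it respects the defining relations. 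Your route costs a well-definedness check --- which, incidentally, needs no case analysis for the pairs $(b\uord ac, b\uord ca)$: the chain $a < b < c$ forces $c \geq 3$, and $c$ is precisely the letter whose position changes, so both sides always project to the same word --- but it buys a statement that applies to each finite rank $n$ directly, without relying on the embedding of $\hs_2$ into $\hsn$ and $\hs$ that the paper's chain $\Var_{\hs_2} \subseteq \vhs$ leaves implicit. Both arguments are valid, and yours is, if anything, slightly more self-contained.
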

\begin{proof}
    Notice that the relations $\calR_\hs$ and $\calR_\sylv$ coincide when restricting them to a two-letter alphabet. Therefore, $\hs_2 = \sylv_2$. On the other hand, since $\calR_\sylv \subseteq \calR_\hs$, we have that $\uord \equiv_\sylv \vord$ implies $\uord \equiv_\hs \vord$, thus $\hs$ is a homomorphic image of $\sylv$. Therefore, we have that
    \[
        \vhs \subseteq \vsylv = \bfV_{\sylv_2} = \bfV_{\hs_2} \subseteq \vhs.
    \]
    Hence, $\vhs = \vsylv$.
\end{proof}

Notice that the previous proof can be generalised to show that any monoid obtained by adding relations to $\calR_\sylv$ that require at least three different letters generates the same variety as $\sylv$.

\begin{proposition}
    The metasylvester monoids of rank greater than or equal to $2$ generates the same variety as the infinite-rank sylvester monoid.
\end{proposition}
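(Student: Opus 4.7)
The plan is to follow the template of the hyposylvester proposition, adapted for the fact that here the natural quotient map runs $\ms \twoheadrightarrow \sylv$ rather than $\sylv \twoheadrightarrow \hs$. I would first establish $\vsylv \subseteq \vms$ by exhibiting $\sylv$ as a homomorphic image of $\ms$, and then establish $\vms \subseteq \vsylv$ by verifying directly that $\ms$ satisfies the defining identity \eqref{idsylv} of $\vsylv$. Combined with the observation that $\ms_2 = \sylv_2$ (on a two-letter alphabet the second part of $\cR_\ms$ is vacuous and the first part coincides with $\cR_\sylv$), this also yields $\Var_{\ms_n} = \vsylv$ for every $n \geq 2$.

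For the first inclusion, I check that every pair in $\cR_\ms$ already lies in $\equiv_\sylv$. The first-part pair $(ab\uord a, ba\uord a)$ with $a < b$ is the instance of the $\cR_\sylv$ pair $(ca\uord b, ac\uord b)$ (with $a \leq b < c$) in which sylvester's $a$ and $b$ are taken equal: the sylvester choice $(a,b,c) = (a,a,b)$ yields exactly the metasylvester first-part pair. The second-part pair $(b\uord ac\vord b, b\uord ca\vord b)$ with $a < b < c$ is the $\cR_\sylv$ pair $(ca\vord b, ac\vord b)$ (valid since $a \leq b < c$) placed inside the context $(b\uord, \varepsilon)$, so it also belongs to the congruence $\equiv_\sylv$. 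Hence $\equiv_\ms \subseteq \equiv_\sylv$ as congruences on $\N^*$, which produces a surjection $\ms \twoheadrightarrow \sylv$; thus $\sylv \in \vms$ and $\vsylv \subseteq \vms$.

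For the reverse inclusion I cannot simply imitate the hyposylvester argument, because $\ms$ is strictly finer than $\sylv$ (for instance $acb \equiv_\sylv cab$ but $acb \not\equiv_\ms cab$, since no pair in $\cR_\ms$ applies to either word). Instead, by Theorem~\ref{theorem:vsylvh_vsylv_finite_basis}, it suffices to show that $\ms$ satisfies \eqref{idsylv}, namely $XYZXTY \equiv_\ms YXZXTY$ for every $X, Y, Z, T \in \N^*$. I would proceed by induction on $|Y|$. The case $|Y| = 0$ is immediate. For the inductive step, write $Y = \alpha Y'$ with $\alpha \in \N$ and split the derivation in two: first I establish the subclaim $X \alpha \word \equiv_\ms \alpha X \word$ with $\word = Y' Z X T \alpha Y'$; then I invoke the induction hypothesis on the data $(X, Y', Z, T\alpha)$ (which has strictly smaller second component) and prefix the resulting congruence by $\alpha$, obtaining $\alpha X Y' Z X T\alpha Y' \equiv_\ms \alpha Y' X Z X T\alpha Y' = YXZXTY$.

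The main obstacle is the subclaim, which I would prove by moving $\alpha$ through $X = x_1 \cdots x_m$ one letter at a time, from right to left, using the first-part relation of $\cR_\ms$. At step $i$ the word takes the form $x_1 \cdots x_{m-i} \alpha x_{m-i+1} \cdots x_m \word$, and I need to swap $x_{m-i}$ with the adjacent $\alpha$. If $x_{m-i} = \alpha$ nothing is required; if $x_{m-i} < \alpha$, I apply $(ab\uord a, ba\uord a)$ with $a = x_{m-i}$, $b = \alpha$, using the re-occurrence of $x_{m-i}$ inside the second copy of $X$ (sitting after $Y'Z$ within $\word$) as the terminating $a$; if $x_{m-i} > \alpha$, I apply the same relation with $a = \alpha$, $b = x_{m-i}$, using the $\alpha$ lying between $T$ and the trailing $Y'$ (the head of the second $Y$) as the terminating $a$. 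The crucial structural feature of $XYZXTY$ that makes this uniform is that both $X$ and $\alpha$ occur twice in the word, so a suitable terminating letter is always available. Combining the subclaim with the induction yields $\vms \subseteq \vsylv$, which together with the first inclusion gives $\vms = \vsylv$.
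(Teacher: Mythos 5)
Your proposal is correct and follows essentially the same route as the paper: both directions are handled identically, with $\vsylv \subseteq \vms$ coming from $\equiv_\ms \subseteq \equiv_\sylv$, and $\vms \subseteq \vsylv$ coming from verifying the basis identity \hrRt{} in $\ms$ via the first defining relation of $\cR_\ms$, exploiting that every letter of the prefix $XY$ reoccurs in the suffix $ZXTY$. Your induction on $|Y|$ with the letter-by-letter commutation subclaim is just a more explicit bookkeeping of the paper's informal ``we can reorder the letters in the evaluation of $xy$'' step.
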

\begin{proof}
    On one hand, it is clear that the sylvester congruence contains the metasylvester congruence, hence $\sylv$ is a homomorphic image of $\ms$. As such, all identities satisfied by $\ms$ must be satisfied by $\sylv$. On the other hand, $\ms$ satisfies the identity \hrRt{}: We can use the first defining relation of $\calR_\ms$,
    \[
        ab\uord a \equiv_\ms ba\uord a,
    \]
    for $a < b, \uord \in \bbN^*$, to show that the words obtained from $xyzxty$ and $yxzxty$ by replacing each variable by a word over $\bbN$ are $\equiv_\ms$-congruent. Notice that, if $x$ or $y$ are replaced by the empty word, the evaluations are the same, and if $x$ and $y$ are replaced by non-empty words, then all letters occurring in the evaluations of the prefixes $xy$ and $yx$ also occur in the evaluation of the common suffix $zxty$. Thus, we can reorder the letters in the evaluation of $xy$, using the first defining relation, to obtain the evaluation of $yxzxty$ from the evaluation of $xyzxty$. Therefore, since $\ms$ satisfies an identity that forms a basis for the equational theory of $\sylv$, then all identities satisfied by $\sylv$ must be satisfied by $\ms$. The result follows.
\end{proof}

\section{Concluding remarks} \label{section:conclusions}

The varieties $\VarMd \wedge \vsylvh$ and $\VarMd \wedge \vsylv$ are the only maximal hereditarily finitely based overcommutative varieties of monoids \cite[Theorem~1.60]{lee_book}. In \hrLatt{}, $\vmst$ and $\vhypo$ are the minimal varieties that are not hereditarily finitely based: Consider the varieties $\mathbf{J}_1$ and $\mathbf{J}_2$ generated, respectively, by the Rees factor monoids of $\bbN^*$ over the ideals of all words that are not factors of $xzxyty$, and of $xzytxy$ or $xyzxty$. These varieties are not finitely based, but in fact, they are minimal non-finitely based varieties, or \emph{limit varieties} \cite[Proposition 5.1]{jackson_finiteness_properties}. A consequence of \cite[Lemma~3.3]{jackson_finiteness_properties} is that a variety contains $\mathbf{J}_1$ (resp. $\mathbf{J}_2$) if and only if $xzxyty$ is an isoterm (resp. $xzytxy$ and $xyzxty$ are isoterms) of its equational theory. It is easy to check that $xzxyty$ is an isoterm for $\vhypo$, and $xzytxy$ and $xyzxty$ are isoterms for $\vmst$. 

\begin{corollary}
    $\vmst$ is the minimal non-hereditarily finitely based variety in \hrLato{}, and $\vmst$ and $\vhypo$ are the minimal non-hereditarily finitely based varieties in \hrLatd{} and \hrLatt{}.
\end{corollary}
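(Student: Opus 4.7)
My plan is to exploit the observations just recorded: $\vmst \supseteq \mathbb{J}_2$ because both $xzytxy$ and $xyzxty$ are isoterms for $\vmst$, and $\vhypo \supseteq \mathbb{J}_1$ because $xzxyty$ is an isoterm for $\vhypo$. Since Jackson's limit varieties $\mathbb{J}_1$ and $\mathbb{J}_2$ are non-finitely-based, this immediately gives that $\vmst$ and $\vhypo$ are not hereditarily finitely based. The remaining work is to establish minimality: that every proper subvariety of $\vmst$, respectively $\vhypo$, in the relevant lattice is hereditarily finitely based.

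For minimality I would invoke the fact cited in the preamble that $\VarMd \wedge \vsylvh$ and $\VarMd \wedge \vsylv$ are the only maximal hereditarily finitely based overcommutative varieties. Since every variety in $\mathcal{L}_1$, $\mathcal{L}_2$, $\mathcal{L}_3$ sits above $\vjst \supseteq \mathbb{COM}$, every variety in play is overcommutative; hence being hereditarily finitely based is equivalent to being contained in $\VarMd \wedge \vsylvh$ or $\VarMd \wedge \vsylv$. All required containments follow from the implication diagram of Section~\ref{subsection:properties_equational_theories}, specifically \hlDo{} $\Rightarrow$ \hlEo{}, \hlDt{} $\Rightarrow$ \hlEt{}, and \hlC{} $\Rightarrow$ \hlEo{}, \hlEt{}. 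For instance, $\vmst \wedge \VarS$ is defined by \hlEo{} and \hlEt{} (Proposition~\ref{prop:vmst_meet_VarS_identities}), whereas $\VarMd \wedge \vsylvh$ is defined by \hlDo{} and \hlC{} (Corollary~\ref{cor:VarMd_meet_vsylvh_VarMd_meet_vsylv_identities}); the stated implications therefore give $\vmst \wedge \VarS \subseteq \VarMd \wedge \vsylvh$, so $\vmst \wedge \VarS$ is hereditarily finitely based. Analogous one-line checks dispatch the remaining proper subvarieties of $\vmst$ in $\mathcal{L}_1$ --- namely $\vlst$, $\vjst$, $\vlst \wedge \vsylv$, $\vmst \wedge \vsylvh$, and their right-hand duals --- each landing in $\VarMd \wedge \vsylvh$ or $\VarMd \wedge \vsylv$ according to its left/right character.

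For $\vhypo$ in $\mathcal{L}_2$, Lemma~\ref{lemma:equalities_lattice_2}(iv) gives $\vhypo \wedge \vmst = \vmst \wedge \VarS$, and the remaining candidates $\vhypo \wedge \vlst$, $\vhypo \wedge \vrst$ sit inside $\vlst$, $\vrst$ respectively, so every proper subvariety of $\vhypo$ in $\mathcal{L}_2$ is handled by the previous step. Passing to $\mathcal{L}_3$, Lemma~\ref{lemma:equalities_lattice_3}(v)--(vi) yields $\VarMd \wedge \vhypo = \vmst \wedge \VarS$ and $\VarMd \wedge (\vhypo \vee \vmst) = \vmst$, so the new generator $\VarMd$ contributes no fresh proper subvariety of $\vmst$ or $\vhypo$ beyond those already present in $\mathcal{L}_2$. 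The main obstacle is the bookkeeping --- matching each subvariety to the correct maximal hereditarily finitely based overcommutative variety --- but each individual containment is immediate from the implication diagram together with the equational-theory characterizations established in Sections~\ref{section:lattice_L1}--\ref{section:lattice_L3}.
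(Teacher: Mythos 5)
Your proposal follows the same route as the paper's own justification (the paragraph immediately preceding the corollary): non-hereditary finite basedness of $\vmst$ and $\vhypo$ via the isoterm criterion for containing the limit varieties $\mathbb{J}_2$ and $\mathbb{J}_1$, and hereditary finite basedness of everything strictly below them by placing each proper subvariety inside $\VarMd \wedge \vsylvh$ or $\VarMd \wedge \vsylv$ using the implication diagram. Your individual containment checks are correct, and you compare equational theories in the right direction.

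Two caveats. First, the cited fact that $\VarMd \wedge \vsylvh$ and $\VarMd \wedge \vsylv$ are the only \emph{maximal} hereditarily finitely based overcommutative varieties licenses only the implication ``contained in one of them $\Rightarrow$ hereditarily finitely based'', which is the only direction you use; upgrading this to an equivalence requires knowing in addition that every hereditarily finitely based overcommutative variety lies below a maximal one, which does not follow from maximality alone. Second, and more substantively: what you actually prove is that no proper subvariety of $\vmst$ (resp.\ $\vhypo$) in the relevant lattice fails to be hereditarily finitely based. If ``the minimal non-hereditarily finitely based variety'' is read as a uniqueness assertion, your argument (like the paper's) never examines the elements incomparable with $\vmst$, and for $\mathcal{L}_1$ this genuinely matters: since $\vhypo \subseteq \VarS$, the variety $\VarS$ contains $\mathbb{J}_1$ and so is not hereditarily finitely based; $\VarS$ is incomparable with $\vmst$; and the only element of $\mathcal{L}_1$ covered by $\VarS$ is $\vmst \wedge \VarS \subseteq \VarMd \wedge \vsylvh$, so $\VarS$ is a second minimal non-hereditarily finitely based element of $\mathcal{L}_1$. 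For $\mathcal{L}_2$ and $\mathcal{L}_3$ the uniqueness reading is fine (every element not above $\vmst$ or $\vhypo$ is contained in $\VarMd \wedge \vsylvh$ or $\VarMd \wedge \vsylv$), but you should state explicitly which reading of ``minimal'' your proof establishes, since under the stronger reading the $\mathcal{L}_1$ claim is not reachable by this (or any) argument.
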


From the characterizations of equational theories of varieties in \hrLatt{}, collected in the second column of Table~\ref{table:results}, and Remark~\ref{remark:polynomial_time}, we have the following:
\begin{corollary}
    The identity-checking problem of any variety in \hrLatt{} is decidable in polynomial time.
\end{corollary}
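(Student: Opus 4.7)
The plan is to reduce the identity-checking problem for each variety in $\mathcal{L}_3$ to a combinatorial check on the input identity, and then invoke Remark~\ref{remark:polynomial_time}. First, I would observe that every variety in $\mathcal{L}_3$ sits above $\vjst$ in the lattice, and $\vjst$ is overcommutative by Proposition~\ref{prop:vjst_no_proper_overcommutative_varieties}; hence every variety $\V \in \mathcal{L}_3$ is overcommutative. Consequently, every identity in the equational theory of $\V$ is balanced, and for a non-balanced input $\uord \approx \vord$ we may immediately answer ``not satisfied''. Checking balance reduces to comparing the contents $\cont{\uord}$ and $\cont{\vord}$, which is clearly computable in time polynomial in $|\uord| + |\vord|$.

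Next, I would use the characterizations of equational theories collected throughout Sections~\ref{section:lattice_L1}--\ref{section:lattice_L3} (summarized in the second column of Table~\ref{table:results}). Concretely, for each $\V \in \mathcal{L}_3$ the results given in the excerpt (Theorems~\ref{theorem:vsylvh_vsylv_identities}, \ref{theorem:vbaxt_identities}, \ref{theorem:vhypo_identities}, Corollaries~\ref{cor:vlst_vrst_identities}, \ref{cor:vmst_identities}, \ref{cor:vjst_identities}, \ref{cor:vrst_join_vsylvh_vlst_join_vsylv_identities}, \ref{cor:vmst_join_VarS_identities}, \ref{cor:vlst_join_VarS_vrst_join_VarS_identities}, \ref{cor:vhypo_join_vlst_vhypo_join_vrst_identities}, \ref{cor:vhypo_join_vmst_identities}, \ref{cor:VarMd_identities}, \ref{cor:VarMd_meet_vsylvh_VarMd_meet_vsylv_identities}, and Propositions~\ref{prop:VarS_identities}, \ref{prop:vlst_meet_vsylv_vrst_meet_vsylvh_identities}, \ref{prop:vmst_meet_VarS_identities}, \ref{prop:vmst_meet_vsylvh_vmst_meet_vsylv_identities}, \ref{prop:VarMd_meet_VarS_identities}) exhibit the equational theory of $\V$ as precisely the set of balanced identities satisfying a fixed finite combination of properties drawn from the list \hlAo{}--\hlF{}. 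Thus deciding whether a balanced identity $\uord \approx \vord$ lies in the equational theory of $\V$ amounts to checking this fixed conjunction of properties.

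Finally, I would invoke Remark~\ref{remark:polynomial_time}, which asserts that checking any of the properties \hlAo{}--\hlF{}, or any combination of them, on a balanced identity can be carried out in polynomial time. Combining this with the balance check gives a polynomial-time algorithm for the identity-checking problem of each $\V \in \mathcal{L}_3$. There is no serious obstacle here: the entire content of the proof is already encoded in the combinatorial descriptions of the equational theories obtained earlier in the paper together with the elementary observation that overcommutativity disposes of the non-balanced case. The only point requiring minor care is to note that, although the lattice has many elements, it is \emph{finite}, so one may argue uniformly (one constant-size combination of properties per node) rather than having to bound anything in terms of the lattice.
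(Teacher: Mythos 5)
Your proposal is correct and follows essentially the same route as the paper, which derives the corollary directly from the characterizations of the equational theories (each being the set of balanced identities satisfying a fixed combination of the properties \hlAo{}--\hlF{}, as collected in Table~\ref{table:results}) together with Remark~\ref{remark:polynomial_time}. Your additional observations --- that overcommutativity disposes of non-balanced inputs and that the lattice is finite so only finitely many fixed property-combinations arise --- are sound but are already implicit in the paper's one-line justification.
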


Examples of finite bases for the varieties in \hrLatt{} are collected in the third column of Table~\ref{table:results}. The number of variables occurring in each basis is equal to its corresponding axiomatic rank, given in the fourth column.

In the fifth column of Table~\ref{table:results}, we give congruences built from plactic-like congruences, whose corresponding factor monoids generate varieties in \hrLatt{}. Notice that no congruences were obtained for any variety in between, and including, $\VarMd \wedge \VarS$ and $\vmst \vee \VarS$. To obtain such a congruence for a variety $\bfV$, we either take the meet of congruences corresponding to two varieties whose join is $\bfV$, in which case the result is immediate, or we take the join of congruences corresponding to two varieties whose meet is $\bfV$, in which case, we need to verify if the factor monoid indeed generates the variety. For example, as mentioned in Subsection~\ref{subsection:plactic_like_monoids}, the hypoplactic congruence is the join of the \#-sylvester and sylvester congruences, however, $\hypo$ does not generate $\VarS$. If one can find a plactic-like monoid that generates $\VarMd \wedge \VarS$, then one can immediately obtain congruences for the unknown cases.

\begin{question}
    Is there a plactic-like monoid that generates $\VarMd \wedge \VarS$?
\end{question}

As a consequence of Theorem~\ref{theorem:lattice_3} and Proposition~\ref{prop:VarMd_meet_VarS_varietal_join} on one hand, and Corollaries~\ref{cor:vhypo_varietal_join}, \ref{cor:vlst_vrst_varietal_join}, \ref{cor:vmst_varietal_join}, \ref{cor:vjst_varietal_join}, \ref{cor:vlst_meet_vsylv_vrst_meet_vsylvh_varietal_join},  \ref{cor:vmst_meet_VarS_varietal_join}, \ref{cor:vmst_meet_vsylvh_vmst_meet_vsylv_varietal_join}, \ref{cor:vhypo_join_vlst_vhypo_join_vrst_varietal_join} and \ref{cor:vhypo_join_vmst_varietal_join} on the other, we have the following:  

\begin{theorem}
    Let $\bfV$ be a variety in \hrLatt{}. Then, $\bfV$ is the varietal join of $\mathbf{COM}$ and a finitely generated variety if and only if $\bfV$ does not contain the variety $\VarMd \wedge \VarS$, or equivalently, is contained in the variety $\vhypo \vee \vmst$. Otherwise, is not contained in any varietal join of $\mathbf{COM}$ and a finitely generated variety.
\end{theorem}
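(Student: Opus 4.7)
The plan is to exploit the explicit structure of the lattice $\mathcal{L}_3$, already proven in Theorem~\ref{theorem:lattice_3}, so that the claim reduces to checking the dichotomy on each of its finitely many elements. First I would verify the lattice-theoretic equivalence stated in the theorem: inspecting Figure~\ref{fig:lattice_with_sylv_stal_hypo_and_Md}, one sees that the principal filter generated by $\VarMd \wedge \VarS$ and the principal ideal generated by $\vhypo \vee \vmst$ partition $\mathcal{L}_3$, so for $\V \in \mathcal{L}_3$ we have $\V \not\supseteq \VarMd \wedge \VarS$ if and only if $\V \subseteq \vhypo \vee \vmst$. This reduces the theorem to two independent statements, one for each side of the partition.

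For the forward implication, I would establish that every variety $\V$ with $\V \subseteq \vhypo \vee \vmst$ is the varietal join of $\mathbb{COM}$ with a finitely generated variety. The list of such $\V$ in $\mathcal{L}_3$ is explicit: the eight varieties $\vjst$, $\vlst$, $\vrst$, $\vlst \wedge \vsylv$, $\vrst \wedge \vsylvh$, $\vmst \wedge \VarS$, $\vmst \wedge \vsylvh$, $\vmst \wedge \vsylv$, $\vmst$, $\vhypo$, $\vhypo \vee \vlst$, $\vhypo \vee \vrst$ and $\vhypo \vee \vmst$. Each of these appears in one of Corollaries~\ref{cor:vhypo_varietal_join}, \ref{cor:vlst_vrst_varietal_join}, \ref{cor:vmst_varietal_join}, \ref{cor:vjst_varietal_join}, \ref{cor:vlst_meet_vsylv_vrst_meet_vsylvh_varietal_join}, \ref{cor:vmst_meet_VarS_varietal_join}, \ref{cor:vmst_meet_vsylvh_vmst_meet_vsylv_varietal_join}, \ref{cor:vhypo_join_vlst_vhypo_join_vrst_varietal_join} and \ref{cor:vhypo_join_vmst_varietal_join} as a join of $\mathbb{COM}$ with an explicit finitely generated variety, so the forward direction follows by direct citation.

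For the reverse implication, I would show that every variety containing $\VarMd \wedge \VarS$ fails to be contained in any join of $\mathbb{COM}$ with a finitely generated variety. The remark immediately following Proposition~\ref{prop:VarMd_meet_VarS_varietal_join} already records that \emph{any} variety containing $\VarMd \wedge \VarS$ is not contained in the join of $\mathbb{COM}$ and a finitely generated variety. In particular, if $\V \supseteq \VarMd \wedge \VarS$ were such a join, then $\VarMd \wedge \VarS$ itself would be contained in such a join, contradicting Proposition~\ref{prop:VarMd_meet_VarS_varietal_join}. This handles the remaining elements of $\mathcal{L}_3$ uniformly.

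I do not foresee a genuine obstacle: the content of the theorem is a packaging result combining the lattice calculation of Section~\ref{section:lattice_L3} with the case-by-case join descriptions established in Sections~\ref{section:background}--\ref{section:lattice_L2} and the obstruction of Proposition~\ref{prop:VarMd_meet_VarS_varietal_join}. The mild bookkeeping step is confirming that the enumeration in the forward direction really exhausts the ideal below $\vhypo \vee \vmst$ in $\mathcal{L}_3$, which is immediate from the Hasse diagram.
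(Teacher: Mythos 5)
Your proposal is correct and follows exactly the paper's own argument: the result is obtained by combining Theorem~\ref{theorem:lattice_3} with Proposition~\ref{prop:VarMd_meet_VarS_varietal_join} (and the remark following it) for the varieties above $\VarMd \wedge \VarS$, and with the cited corollaries giving explicit joins with $\mathbb{COM}$ for the varieties below $\vhypo \vee \vmst$. The only slip is calling your enumeration ``the eight varieties'' when the list you give (correctly) contains thirteen elements; this does not affect the argument.
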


Exactly half of the varieties in \hrLatt{} are the varietal join of $\mathbf{COM}$ and a finitely generated variety. The sixth column of Table~\ref{table:results} either gives the varietal join, or states that the variety is not contained in such a join.

\begin{landscape}
    
{\renewcommand{\arraystretch}{1.2} 
\begin{longtable}[c]{cccccc}
    \captionsetup{width=.9\linewidth}
    \caption{Results on the varieties in \hrLatt{}. The second column gives the properties that define the equational theories; the fifth gives plactic-like congruences whose corresponding factor monoids generate the varieties; the sixth states if a variety is the join of $\mathbf{COM}$ and a finitely generated variety (giving said variety) or if it is not contained in such a join (`N.c.' stands for `not contained').\label{table:results}}\\
    \toprule
    \textbf{Variety} & \textbf{Eq. properties} & \textbf{Finite basis} & \textbf{Ax. rank} & \textbf{Plactic-like} & \textbf{F.g.v.}\\
    \midrule
    \endfirsthead
    
    \caption{(continued)}\\
    \toprule
    \textbf{Variety} & \textbf{Eq. properties} & \textbf{Finite basis} & \textbf{Ax. rank} & \textbf{Plactic-like} & \textbf{F.g.v.}\\
    \midrule
    \endhead

    $\vbaxt$ & \hyperref[theorem:vbaxt_identities]{(C\textsubscript{pre}),(C\textsubscript{suf})} & \hyperref[theorem:vbaxt_finite_basis]{\begin{tabular}{c} $xzytxyrxsy \approx xzytyxrxsy$, \\ $xzytxyrysx \approx xzytyxrysx$ \end{tabular}} & \hyperref[cor:vbaxt_axiomatic_rank]{6} & \hyperlink{baxt_definition}{$\equiv_\baxt$} & \hyperref[prop:vbaxt_varietal_join]{N.c.} \\ 
    
    \arrayrulecolor{lightgray}\midrule
    
    $\vrst \vee \vsylvh$ & \hyperref[cor:vrst_join_vsylvh_vlst_join_vsylv_identities]{(C\textsubscript{pre}),(S\textsubscript{suf})} & \hyperref[prop:vrst_join_vsylvh_vlst_join_vsylv_finite_basis]{\begin{tabular}{c} $xzytxyry \approx xzytyxry$, \\ $xzytxyrx \approx xzytyxrx$ \end{tabular}} & \hyperref[cor:L1_lattice_axiomatic_rank]{5} & \hyperref[cor:vrst_join_vsylvh_vlst_join_vsylv_generators]{$\equiv_\rst \wedge \equiv_\sylvh$} & \hyperref[prop:VarMd_meet_VarS_varietal_join]{N.c.} \\ 
    
    \midrule
    
    $\vlst \vee \vsylv$ & \hyperref[cor:vrst_join_vsylvh_vlst_join_vsylv_identities]{(S\textsubscript{pre}),(C\textsubscript{suf})} & \hyperref[prop:vrst_join_vsylvh_vlst_join_vsylv_finite_basis]{\begin{tabular}{c} $xzxytxry \approx xzyxtxry$, \\ $xzxytyrx \approx xzyxtyrx$ \end{tabular}} & \hyperref[cor:L1_lattice_axiomatic_rank]{5} & \hyperref[cor:vrst_join_vsylvh_vlst_join_vsylv_generators]{$\equiv_\lst \wedge \equiv_\sylv$} & \hyperref[prop:VarMd_meet_VarS_varietal_join]{N.c.} \\ 
    
    \midrule
    
    $\vsylvh$ & \hyperref[theorem:vsylvh_vsylv_identities]{(C\textsubscript{pre})} & \hyperref[theorem:vsylvh_vsylv_finite_basis]{$xzytxy \approx xzytyx$} & \hyperref[cor:vsylvh_vsylv_axiomatic_rank]{4} & \hyperlink{sylvh_sylv_definition}{$\equiv_\sylvh$} & \hyperref[prop:vsylvh_vsylv_varietal_join]{N.c.} \\ 
    
    \midrule
    
    $\vsylv$ & \hyperref[theorem:vsylvh_vsylv_identities]{(C\textsubscript{suf})} & \hyperref[theorem:vsylvh_vsylv_finite_basis]{$xyzxty \approx yxzxty$} & \hyperref[cor:vsylvh_vsylv_axiomatic_rank]{4} & \hyperlink{sylvh_sylv_definition}{$\equiv_\sylv$} & \hyperref[prop:vsylvh_vsylv_varietal_join]{N.c.} \\ 
    
    \midrule
    
    $\vmst \vee \VarS$ & \hyperref[cor:vmst_join_VarS_identities]{(Sub\textsubscript{2}),(Rst\textsubscript{1,v}),(S\textsubscript{pre}),(S\textsubscript{suf})} & \hyperref[prop:vmst_join_VarS_finite_basis]{\begin{tabular}{c} $xzytxyry \approx xzytyxry$,\\ $xzytxyrx \approx xzytyxrx$,\\ $xzxytxry \approx xzyxtxry$,\\ $xzxytyrx \approx xzyxtyrx$ \end{tabular}} & \hyperref[cor:L1_lattice_axiomatic_rank]{5} & ? & \hyperref[prop:VarMd_meet_VarS_varietal_join]{N.c.} \\ 
    
    \arrayrulecolor{black}\bottomrule
    \newpage
    
    $\vlst \vee \VarS$ & \hyperref[cor:vlst_join_VarS_vrst_join_VarS_identities]{(Sub\textsubscript{2}),(Rst\textsubscript{1,v}),(S\textsubscript{pre})} & \hyperref[prop:vlst_join_VarS_vrst_join_VarS_finite_basis]{\begin{tabular}{c} $xzxytxry \approx xzyxtxry$,\\ $xzxytyrx \approx xzyxtyrx$,\\ $xzytxy \approx xzytyx$ \end{tabular}} & \hyperref[cor:L1_lattice_axiomatic_rank]{5} & ? & \hyperref[prop:VarMd_meet_VarS_varietal_join]{N.c.} \\ 
    
    \arrayrulecolor{lightgray}\midrule
    
    $\vrst \vee \VarS$ & \hyperref[cor:vlst_join_VarS_vrst_join_VarS_identities]{(Sub\textsubscript{2}),(Rst\textsubscript{1,v}),(S\textsubscript{suf})} & \hyperref[prop:vlst_join_VarS_vrst_join_VarS_finite_basis]{\begin{tabular}{c} $xzytxyry \approx xzytyxry$, \\ $xzytxyrx \approx xzytyxrx$, \\ $xyzxty \approx yxzxty$ \end{tabular}} & \hyperref[cor:L1_lattice_axiomatic_rank]{5} & ? & \hyperref[prop:VarMd_meet_VarS_varietal_join]{N.c.} \\
    
    \midrule
    
    $\VarMd$ & \hyperref[cor:VarMd_identities]{(Rst\textsubscript{1,v}),(S\textsubscript{pre}),(S\textsubscript{suf})} & \hyperlink{VarMd_definition}{$xzxyty \approx xzyxty$} & \hyperref[cor:L3_lattice_axiomatic_rank]{4} & ? & \hyperref[prop:VarMd_meet_VarS_varietal_join]{N.c.} \\ 
    
    \midrule
    
    $\vhypo \vee \vmst$ & \hyperref[cor:vhypo_join_vmst_identities]{(Sub\textsubscript{2}),(S\textsubscript{pre}),(S\textsubscript{suf})} & \hyperref[prop:vhypo_join_vmst_finite_basis]{$xyxzx \approx x^2yzx$} & \hyperref[cor:L2_lattice_axiomatic_rank]{3} & \hyperref[cor:vhypo_join_vmst_generators]{$\equiv_\hypo \wedge \equiv_\mst$} & \hyperref[cor:vhypo_join_vmst_varietal_join]{$\mathbf{J}_2 \vee \mathbf{RB}$} \\ 
    
    \midrule
    
    $\VarMd \wedge \vsylvh$ & \hyperref[cor:VarMd_meet_vsylvh_VarMd_meet_vsylv_identities]{(Rst\textsubscript{1,v}),(S\textsubscript{pre})} & \hyperref[cor:VarMd_meet_vsylvh_VarMd_meet_vsylv_finite_basis]{\begin{tabular}{c} $xzxyty \approx xzyxty$, \\ $xzytxy \approx xzytyx$ \end{tabular}} & \hyperref[cor:L3_lattice_axiomatic_rank]{4} & ? & \hyperref[prop:VarMd_meet_VarS_varietal_join]{N.c.} \\ 
    
    \midrule
    
    $\VarMd \wedge \vsylv$ & \hyperref[cor:VarMd_meet_vsylvh_VarMd_meet_vsylv_identities]{(Rst\textsubscript{1,v}),(S\textsubscript{suf})} & \hyperref[cor:VarMd_meet_vsylvh_VarMd_meet_vsylv_finite_basis]{\begin{tabular}{c} $xzxyty \approx xzyxty$, \\ $xyzxty \approx yxzxty$ \end{tabular}} & \hyperref[cor:L3_lattice_axiomatic_rank]{4} & ? & \hyperref[prop:VarMd_meet_VarS_varietal_join]{N.c.} \\ 
    
    \midrule

    $\vhypo \vee \vlst$ & \hyperref[cor:vhypo_join_vlst_vhypo_join_vrst_identities]{(Sub\textsubscript{2}),(S\textsubscript{pre})} & \hyperref[prop:vhypo_join_vlst_vhypo_join_vrst_finite_basis]{\begin{tabular}{c} $xyxzx \approx x^2yzx$, \\ $ xzytxy \approx xzytyx$ \end{tabular}} & \hyperref[cor:L2_lattice_axiomatic_rank]{4} & \hyperref[cor:vhypo_join_vlst_vhypo_join_vrst_generators]{$\equiv_\hypo \wedge \equiv_\lst$} & \hyperref[cor:vhypo_join_vlst_vhypo_join_vrst_varietal_join]{$\mathbf{J}_2 \vee \mathbf{LRB}$} \\ 
    
    \midrule
    
    $\vhypo \vee \vrst$ & \hyperref[cor:vhypo_join_vlst_vhypo_join_vrst_identities]{(Sub\textsubscript{2}),(S\textsubscript{suf})} & \hyperref[prop:vhypo_join_vlst_vhypo_join_vrst_finite_basis]{\begin{tabular}{c} $xyxzx \approx x^2yzx$, \\ $xyzxty \approx yxzxty$ \end{tabular}} & \hyperref[cor:L2_lattice_axiomatic_rank]{4} & \hyperref[cor:vhypo_join_vlst_vhypo_join_vrst_generators]{$\equiv_\hypo \wedge \equiv_\rst$} & \hyperref[cor:vhypo_join_vlst_vhypo_join_vrst_varietal_join]{$\mathbf{J}_2 \vee \mathbf{RRB}$} \\ 
    
    \arrayrulecolor{black}\bottomrule
    
    $\VarS$ & \hyperref[prop:VarS_identities]{(Sub\textsubscript{2}),(Rst\textsubscript{1,v})} & \hyperref[cor:VarS_finite_basis]{\begin{tabular}{c} $xzytxy \approx xzytyx$, \\ $xyzxty \approx yxzxty$\end{tabular}} & \hyperref[cor:L1_lattice_axiomatic_rank]{4} & ? & \hyperref[prop:VarMd_meet_VarS_varietal_join]{N.c.} \\ 
    
    \arrayrulecolor{lightgray}\midrule
    
    $\vmst$ & \hyperref[cor:vmst_identities]{(S\textsubscript{pre}),(S\textsubscript{suf})} & \hyperref[cor:vmst_finite_basis]{\begin{tabular}{c} $xyxzx \approx x^2yzx$, \\ $xzxyty \approx xzyxty$ \end{tabular}} & \hyperref[cor:vmst_axiomatic_rank]{4} & \hyperlink{mst_definition}{$\equiv_\mst$} & \hyperref[cor:vmst_varietal_join]{$\mathbf{RB}$} \\ 
    
    \midrule
    
    $\vmst \wedge \vsylvh$ & \hyperref[prop:vmst_meet_vsylvh_vmst_meet_vsylv_identities]{(S\textsubscript{pre}),(S\textsubscript{1,suf})} & \hyperref[cor:vmst_meet_vsylvh_vmst_meet_vsylv_finite_basis]{\begin{tabular}{c} $xzytxy \approx xzytyx$, \\ $xyxzx \approx x^2yzx$,\\ $xzxyty \approx xzyxty$ \end{tabular}} & \hyperref[cor:L1_lattice_axiomatic_rank]{4} & \hyperref[prop:vmst_meet_vsylvh_vmst_meet_vsylv_generators]{$\equiv_\mst \vee \equiv_\sylvh$} & \hyperref[cor:vmst_meet_vsylvh_vmst_meet_vsylv_varietal_join]{$\mathbf{LRB} \vee \bfV_{J^1}$} \\ 
    
    \midrule
    
    $\vmst \wedge \vsylv$ & \hyperref[prop:vmst_meet_vsylvh_vmst_meet_vsylv_identities]{(S\textsubscript{1,pre}),(S\textsubscript{suf})} & \hyperref[cor:vmst_meet_vsylvh_vmst_meet_vsylv_finite_basis]{\begin{tabular}{c} $xyzxty \approx yxzxty$,\\ $xyxzx \approx x^2yzx$,\\ $xzxyty \approx xzyxty$ \end{tabular}} & \hyperref[cor:L1_lattice_axiomatic_rank]{4} & \hyperref[prop:vmst_meet_vsylvh_vmst_meet_vsylv_generators]{$\equiv_\mst \vee \equiv_\sylv$} & \hyperref[cor:vmst_meet_vsylvh_vmst_meet_vsylv_varietal_join]{$\mathbf{RRB} \vee \bfV_{\overleftarrow{J^1}}$} \\
    
    \midrule
    
    $\vhypo$ & \hyperref[theorem:vhypo_identities]{(Sub\textsubscript{2})} & \hyperref[theorem:vhypo_finite_basis]{\begin{tabular}{c} $xyxzx \approx x^2yzx$,\\ $xzytxy \approx xzytyx$,\\ $xyzxty \approx yxzxty$ \end{tabular}} & \hyperref[cor:vhypo_axiomatic_rank]{4} & \hyperlink{hypo_definition}{$\equiv_\hypo$} & \hyperref[cor:vhypo_varietal_join]{$\mathbf{J}_2$} \\ 
    
    \midrule
    
    $\VarMd \wedge \VarS$ & \hyperref[prop:VarMd_meet_VarS_identities]{(Rst\textsubscript{1,v})} & \hyperref[cor:VarMd_meet_VarS_finite_basis]{\begin{tabular}{c} $xzxyty \approx xzyxty$, \\ $xzytxy \approx xzytyx$, \\ $xyzxty \approx yxzxty$ \end{tabular}} & \hyperref[cor:L3_lattice_axiomatic_rank]{4} & ? & \hyperref[prop:VarMd_meet_VarS_varietal_join]{N.c.} \\
    
    \midrule
    
    $\vlst$ & \hyperref[cor:vlst_vrst_identities]{(S\textsubscript{pre})} & \hyperref[cor:vlst_vrst_finite_basis]{$xyx \approx x^2y$} & \hyperref[cor:vlst_vrst_axiomatic_rank]{2} & \hyperlink{lst_rst_definition}{$\equiv_\lst$} & \hyperref[cor:vlst_vrst_varietal_join]{$\mathbf{LRB}$} \\ 
    
    \arrayrulecolor{black}\bottomrule
    
    $\vrst$ & \hyperref[cor:vlst_vrst_identities]{(S\textsubscript{suf})} & \hyperref[cor:vlst_vrst_finite_basis]{$xyx \approx yx^2$} & \hyperref[cor:vlst_vrst_axiomatic_rank]{2} & \hyperlink{lst_rst_definition}{$\equiv_\rst$} & \hyperref[cor:vlst_vrst_varietal_join]{$\mathbf{RRB}$} \\ 
    
    \arrayrulecolor{lightgray}\midrule
    
    $\vmst \wedge \VarS$ & \hyperref[prop:vmst_meet_VarS_identities]{(S\textsubscript{1,pre}),(S\textsubscript{1,suf})} & \hyperref[cor:vmst_meet_VarS_finite_basis]{\begin{tabular}{c} $xzytxy \approx xzytyx$, \\ $xyzxty \approx yxzxty$, \\ $xyxzx \approx x^2yzx$, \\ $xzxyty \approx xzyxty$ \end{tabular}} & \hyperref[cor:L1_lattice_axiomatic_rank]{4} & \hyperref[prop:vmst_meet_VarS_generators]{$\equiv_\hypo \vee \equiv_\mst$} & \hyperref[cor:vmst_meet_VarS_varietal_join]{$\bfV_{\overleftarrow{J^1} \times J^1}$} \\ 
    
    \midrule
    
    $\vlst \wedge \vsylv$ & \hyperref[prop:vlst_meet_vsylv_vrst_meet_vsylvh_identities]{(S\textsubscript{1,pre})} & \hyperref[prop:vlst_meet_vsylv_vrst_meet_vsylvh_finite_basis]{\begin{tabular}{c} $xyx \approx x^2y$, \\ $x^2y^2 \approx y^2x^2$ \end{tabular}} & \hyperref[cor:L1_lattice_axiomatic_rank]{2} & \hyperref[prop:vlst_meet_vsylv_vrst_meet_vsylvh_generators]{$\equiv_\lst \vee \equiv_\sylv$} & \hyperref[cor:vlst_meet_vsylv_vrst_meet_vsylvh_varietal_join]{$\bfV_{\overleftarrow{J^1}}$} \\ 
    
    \midrule
    
    $\vrst \wedge \vsylvh$ & \hyperref[prop:vlst_meet_vsylv_vrst_meet_vsylvh_identities]{(S\textsubscript{1,suf})} & \hyperref[prop:vlst_meet_vsylv_vrst_meet_vsylvh_finite_basis]{\begin{tabular}{c} $xyx \approx yx^2$, \\ $x^2y^2 \approx y^2x^2$ \end{tabular}} & \hyperref[cor:L1_lattice_axiomatic_rank]{2} & \hyperref[prop:vlst_meet_vsylv_vrst_meet_vsylvh_generators]{$\equiv_\rst \vee \equiv_\sylvh$} & \hyperref[cor:vlst_meet_vsylv_vrst_meet_vsylvh_varietal_join]{$\bfV_{J^1}$} \\ 
    
    \midrule
    
    $\vjst$ & \hyperref[cor:vjst_identities]{(Rst\textsubscript{1})} & \hyperref[cor:vjst_finite_basis]{\begin{tabular}{c} $xyx \approx x^2y$, \\ $xyx \approx yx^2$ \end{tabular}} & \hyperref[cor:vjst_axiomatic_rank]{2} & \hyperlink{jst_definition}{$\equiv_\jst$} & \hyperref[cor:vjst_varietal_join]{$\bfV_{S(\{ab\})}$} \\  
    
    \arrayrulecolor{black}\bottomrule
\end{longtable}
}
\end{landscape}

\section*{Acknowledgements}
The authors thank Alan Cain, António Malheiro, Marianne Johnson and Mark Kambites for their suggestions and helpful comments, and the anonymous referee for their careful reading, providing a reference for Lemma~\ref{lemma:J1_dual_identities} and simpler proofs for Propositions~\ref{prop:vlst_meet_vsylv_vrst_meet_vsylvh_identities}, \ref{prop:vmst_meet_VarS_identities} and \ref{prop:vmst_meet_vsylvh_vmst_meet_vsylv_identities}.

\bibliographystyle{plain}
\bibliography{lvplm.bib}

\end{document}